\newcounter{stepnb}
\newtheorem{theorem}{Theorem}[section]
\newtheorem{lemma}[theorem]{Lemma}
\newtheorem{proposition}[theorem]{Proposition}
\newtheorem{corol}[theorem]{Corollary}
\newtheorem{hyp}{Hypothesis}
\newtheorem{remark}{Remark}[theorem]
\numberwithin{equation}{section}
\newcommand{\R}{\mathbb{R}}
\newcommand{\unpo}{\mathcal{O}(1)}
\DeclareMathOperator*{\monconc}{monconc}
\DeclareMathOperator*{\monconv}{monconv}
\newcommand{\ee}{\varepsilon}
\newcommand{\mf}{\mathbf}
\newcommand{\be}{\begin{equation}}
\newcommand{\eq}{\end{equation}}
\newcommand{\comment}{\color{blue}}
\begin{document}

\title[]{Characteristic boundary layers for mixed hyperbolic-parabolic systems in one space dimension, and applications to the Navier-Stokes and MHD equations}

\author[S.~Bianchini]{Stefano Bianchini}
\address{S.B. SISSA, via Bonomea 265, I-34136, Trieste, Italy}
\email{bianchin@sissa.it}
\author[L.~V.~Spinolo]{Laura V.~Spinolo}
\address{L.V.S. IMATI-CNR, via Ferrata 5, I-27100 Pavia, Italy.}
\email{spinolo@imati.cnr.it}
\maketitle
{
\rightskip .85 cm
\leftskip .85 cm
\parindent 0 pt
\begin{footnotesize}

{\sc Abstract.}
We provide a detailed analysis of the boundary layers for mixed hyperbolic-parabolic systems in one space dimension and  small amplitude regimes. As an application of our results, we describe the solution of the so-called boundary Riemann problem recovered as the zero viscosity limit of the physical viscous approximation. In particular, we tackle the so called \emph{doubly characteristic} case, which is considerably more demanding from the technical viewpoint and occurs when the boundary is characteristic for both the mixed hyperbolic-parabolic system and for the hyperbolic system obtained by neglecting the second order terms. Our analysis applies in particular to the compressible Navier-Stokes and MHD equations in Eulerian coordinates, with both positive and null conductivity. In these cases, the  
doubly characteristic case occurs when the velocity is close to $0$.  The analysis extends to non-conservative systems.

\medskip\noindent
{\sc Keywords:} boundary layers, traveling waves, characteristic boundary, physical viscosity, mixed hyperbolic-parabolic systems, Riemann problem, boundary Riemann problem, Navier-Stokes equations, MHD equations, Euler equations.

\medskip\noindent
{\sc MSC (2010):  35L65, 35L67, 35M10}

\end{footnotesize}
}
\section{Introduction and main results}
We deal with boundary layers for mixed hyperbolic-parabolic systems in the form 
\be 
\label{e:viscouscl}
    \mf w_{t}^{ \ee}  +
     \mf f (\mf w^{ \ee})_x = \ee  
   [\mf D(\mf w^{ \ee}) \mf w_x^{ \ee}]_{x}.
\eq 
In the previous expression, the unknown $\mf w$ attains values in $\R^N$, $\mf f: \R^N \to \R^N$ is a regular function, the viscosity matrix $\mf D$ attains values in $\mathbb M^{N \times N}$ and is 
positive semi-definite, $\ee$ is a positive parameter and the subscripts $_t$ and $_x$ denote the partial derivatives with respect to the time and space variable, respectively. We later discuss the precise hypotheses we impose on $\mf f$ and $\mf D$, here we only refer to the fundamental work by Kawashima and Shizuta~\cite{KawashimaShizuta1} and we point out that in the present work we handle the case when $\mf D$ is singular and the boundary $x=0$ is \emph{doubly characteristic}, i.e. is characteristic for both~\eqref{e:viscouscl} and the conservation law  
 \be 
\label{e:cl}
   \mf w_t  +
    [ \mf f (\mf w)]_x = \mf 0,
\eq
which is formally recovered in the limit $\ee \to 0^+$. We specify in the following the notion of characteristic boundary. Our analysis applies to the Navier-Stokes and MHD equations and in these cases the doubly characteristic case occurs when the fluid velocity is close to $0$ at the boudary. As an application of our results, we provide a precise description of the solution of~\eqref{e:cl} that is recovered in the limit $\ee \to 0^+$ of~\eqref{e:viscouscl} in the case of  small amplitude, Riemann-type data. 

There is an extremely large number of works devoted to the analysis of the boundary layers for parabolic and mixed hyperbolic-parabolic systems. Here we only quote the 
works~\cite{BenzoniGavageSerreZumbrun,Gisclon,GrenierRousset,GuesMetivierWilliamsZumbrun,JosephLeFloch,JosephLeFloch2,MatsumuraNishida,MetivierZumbrun,NakamuraNishibata,Rousset2,Rousset,SerreZumbrun,Xin} and we refer to the review paper by Grenier~\cite{Grenier} for a more extended discussion and list of references. We also point out that the analysis in the case where the viscosity matrix $\mf D$ in~\eqref{e:viscouscl} is singular involves severe technical challenges, but it is the most interesting from the physical viewpoint because it is the case of the compressible Navier-Stokes and MHD equations. The characteristic and doubly characteristic case involve further technical challenges that we outline in the following. 
Note that the analysis in previous works like~\cite{BianchiniSpinolo:ARMA,Rousset} applies to the 
compressible Navier-Stokes and MHD equations written in Lagrangian coordinates, but does not directly apply to the same equations written in Eulerian coordinates. To the best of our knowledge, the present work is the first to provide a complete boundary layers analysis for mixed hyperbolic-parabolic systems that  directly applies to the compressible Navier-Stokes and MHD equations in Eulerian coordinates with fluid velocity close to $0$.  Our analysis applies to small amplitude, one-dimensional boundary layers.  

Concerning the applications to~\eqref{e:cl}, we refer to the classical books by Dafermos and Serre~\cite{Dafermos,Serre1} for a comprehensive introduction to systems of conservation laws. Existence and uniqueness results for~\emph{admissible} solutions of~\eqref{e:cl} are presently only available under the assumptions that the data have sufficiently small total variation, or in the case of special system, see~\cite{Bressan:book,Dafermos,Serre1}. In the present paper we focus on the so-called boundary Riemann problem, i.e. we assume that the initial and boundary data are constant. The Riemann {problem} and boundary Riemann problem play a key role in the analysis of conservation laws from both the theoretical and numerical viepoint, see~\cite{Bressan:book,LeVeque}. 

The analysis of the viscous approximation~\eqref{e:viscouscl} is extremely relevant for~\eqref{e:cl} and it is  
particularly interesting in the case of boundary problems  because it turns out that the limit \emph{depends} on the choice of the viscosity matrix $\mf D$, i.e. in general it changes when we change $\mf D$, see Gisclon~\cite{Gisclon}. Remarkably, this can happen even in the simplest possible case where $\mf f$ is a linear function and $\mf D$ is a constant matrix, provided $N \ge 2$. 
Note that fact that the solution of an initial-boundary value problem depends on the underlying viscous mechanism has also numerical implications, see~\cite{MishraSpinolo}. 
On the other hand, note that, in the case of the Riemann problem (with no boundary), the analysis in~\cite{Bianchini} strongly suggests that the limit of~\eqref{e:viscouscl} does not depend on $\mf D$.  Finally, we point out that establishing the convergence $\ee \to 0^+$ of~\eqref{e:viscouscl} is presently a challenging open problem for both the initial value (Cauchy) problem and the initial-boundary value problem. 
There are, however, partial results, see in particular~\cite{AnconaBianchini,Gisclon,Rousset2,Rousset,Spinolo:2bvp}
 for the initial-boundary value problem.     

In a previous work~\cite{BianchiniSpinolo:ARMA} the authors provided a detailed description of the solution of the boundary Riemann problem obtained by taking the limit $\ee \to 0^+$ in~\eqref{e:viscouscl}. The analysis in~\cite{BianchiniSpinolo:ARMA} applies to the Navier-Stokes and MHD equation written in Lagrangian coordinates, but does not apply to the same equations written in Eulerian coordinates\footnote{This was first pointed out to us by Fr\'ed\'eric Rousset} owing to severe technical obstructions. In the present paper we overcome these obstructions by relying on a very careful study of the structure of boundary layers and traveling waves profiles. Note that in the following we will often use the Euler equations written in Eulerian coordinates as a guiding example, but our goal is to develop an analysis that applies to general systems (as general as possible).  Note furthermore that, besides the applications to the boundary Riemann problem, the boundary layers analysis is of independent interest as it provides very precise information on the transient behavior $\ee \to 0^+$ and on the limit.

We now highlight the main technical challenges we have to tackle in our analysis. First, we rely on the analysis by Kawashima and Shizuta~\cite{KawashimaShizuta1} and we realize that, under physically sounded assumptions (see Theorem~\ref{t:ks} in~\S~\ref{t:ks} here), there is a change of variables  
$\mf w^{\ee} \longleftrightarrow \mf u^{\ee}$ such that in the new dependent variables~\eqref{e:viscouscl} rewrites as 
\be
\label{e:symmetricee}
   \mf E (\mf u^{ \ee}) \mathbf{u}_t^{ \ee}  +
    \mf A(\mf u^{ \ee})  \mf u_x^{ \ee}=  {\ee} \mf B(\mf u^{\ee}) \mf u_{xx}^{\ee} +
    \mf G (\mf u^{ \ee}, { \ee} \mf u_x^{ \ee} )  \mf u_x^{ \ee}
\eq 
for suitable matrices $\mf E$, $\mf A$, $\mf B$ and $\mf G$ satisfing the properties described in~\S~\ref{s:h}. In particular, the matrix $\mf A$ is symmetric, the matrix $\mf B$ is positive semi-definite and block diagonal and the matrix $\mf E$ is positive definite. Next, by using the change of variables $(x, t) \mapsto (\ee x, \ee t)$ we reduce to the case where $\ee =1$ and we arrive at 
\be
\label{e:symmetric}
   \mf E (\mf u) \mathbf{u}_{t}  +
    \mf A(\mf u)  \mf u_{ x}=  \mf B(\mf u) \mf u_{ xx} +
    \mf G (\mf u,  \mf u_{x} )  \mf u_{ x} {\comment .}
\eq 
To highlight the heart of the matter and avoid some technicalities, we now focus on the case where~\eqref{e:symmetric} are the Navier-Stokes equations for a polytropic gas written in Eulerian coordinates, but our considerations apply in much greater generality. In the case of the Navier-Stokes equations~\eqref{e:symmetric} is a system of $3$ equations and the components of the unknown $\mf u$  
are the fluid density $\rho>0$, the fluid velocity $u$ and the temperature $\theta>0$. It turns out (see
~\S~\ref{ss:h:ns} for the explicit computations) that the matrices in~\eqref{e:symmetric} are 
\be
\label{e:matrici}
    \mf E (\mf u)  \! \!= \! \! 
    \left(
    \begin{array}{cc}
    R \theta /\rho^2 & \mf 0_2^t \\
    \mf 0_2 & \mf E_{22} \\
    \end{array}
    \right) \! \!,
    \; \;
    \mf A (\mf u)  \! \!= \! \! 
    \left(
    \begin{array}{cc}
    R \theta u /\rho^2 & \mf a_{21}^t \\
    \mf a_{21} & \mf A_{22} \\
    \end{array}
    \right) 
    \! \!,
    \; \;
    \mf B (\mf u)  \! \!= \! \! 
    \left(
    \begin{array}{cc}
    0 &  \mf 0_2^t \\
    \mf 0_2^t & \mf B_{22} \\
    \end{array}
    \right) 
    \! \!,
    \; \;
    \mf G (\mf u, \mf u_x)  \! \!= \! \! 
    \left(
    \begin{array}{cc}
    0 & \mf 0_2^t \\
    \mf g_1 & \mf G_{22} \\
    \end{array}
    \right). 
\eq
In the previous expression, $R$ is the universal gas constant, $\mf 0_2$ denotes the column vector $(0, 0)^t$ and the symbol $^t$ denotes the transpose. The explicit expression of the vectors $\mf a_{21}, \mf g_1 \in \R^2$ and of the matrices $\mf E_{22}, \mf A_{22}, \mf B_{22}, \mf G_{22} \in \mathbb{M}^{2 \times 2}$ is not important here, but one should keep in mind that $\mf E_{22}$ and $\mf B_{22}$ are both positive definite, and that $\mf g_1$ and $\mf G_{22}$ represent higher order terms that vanish when $\mf u_x = \mf 0$. We set $\mf z_2 : = (u_x, \theta_x)^t$ and we conclude that the equation at the first line of~\eqref{e:symmetric} reads
\be 
\label{e:intro:ns1}
\frac{R \theta}{\rho^2} \big( \rho_t + u \rho_x )+ \mf a_{21}^t \mf z_2  =0.  
\eq 
In other words, since the matrix $\mf B$ is singular, then~\eqref{e:symmetric} is a mixed hyperbolic-parabolic system and contains the ``hyperbolic part" given by equation~\eqref{e:intro:ns1}.  Note that~\eqref{e:intro:ns1} implies that we cannot assign a boundary condition on the component $\rho$ in the case where $u \leq 0$. When $u=0$, the number of boundary conditions we can impose on~\eqref{e:symmetric} changes and this is the reason why we say that the boundary is characteristic for the mixed hyperbolic-parabolic system~\eqref{e:symmetric}.  

From the technical viewpoint, the main challenge in the case where $u$ can attain the value $0$ is the following.  Let us focus on the boundary layers, i.e. on the steady solutions of~\eqref{e:symmetric}: in the case of the Navier-Stokes equations, by plugging $\mf u_t= \mf 0$ in~\eqref{e:symmetric}, using~\eqref{e:matrici} and recalling that $\mf z_2 : = (u_x, \theta_x)^t$ we arrive at  
$$ 
     \left\{ 
     \begin{array}{ll}
     \displaystyle{\frac{R \theta}{\rho^2}  u \rho_x + \mf a_{21}^t \mf z_2  =0} {,}\\
     \mf a_{21} \rho_x + \mf A_{22} \mf z_2 = \mf B_{22} \mf z_{2x} + \mf g_1 \rho_x + \mf G_2 \mf z_{2}. \\
     \end{array}
     \right. 
$$
If $u \neq 0$, we can solve for $\rho_x$ the first line and arrive at 
\be 
\label{e:intro:ns2}
\left\{ 
     \begin{array}{ll}
\rho_x = -\displaystyle{\frac{\rho^2}{R \theta}   \frac{ \mf a_{21}^t \mf z_2}{u}} {,}\\
      \mf z_{2x} = \mf B_{22}^{-1} \left[ - \displaystyle{\frac{\rho^2}{R \theta} \frac{\mf a_{21} \mf a_{21}^t  \mf z_2}{u}}
      +  \mf A_{22} \mf z_2 + \displaystyle{\frac{\rho^2}{R \theta}   \frac{ \mf g_1 \mf a_{21}^t \mf z_2}{u}} 
      - \mf G_2 \mf z_{2}\right] {.}
 \end{array}
     \right. 
\eq
Note, however, that the above equations are singular at $u=0$. As a matter of fact, the boundary layers of the MHD equations are also singular at $u=0$. This is the reason why the analysis in~\cite{BianchiniSpinolo:ARMA} does not apply the Navier-Stokes and MHD equations written in Eulerian coordinates. To tackle this challenge we rely 
on invariant manifold techniques introduced in~\cite{BianchiniSpinolo:JDE}. We refer to~\S~\ref{ss:roadmap} for an overview of the main ideas involved  in the analysis. 

Before stating our main results we make a further observation. Let us consider the hyperbolic equations obtained by setting $\ee =0$ in~\eqref{e:symmetricee}, i.e.
\be
\label{e:hyperbolic}
   \mf E (\mf u) \mathbf{u}_{t}  +
    \mf A(\mf u)  \mf u_{x}= \mf 0 {.}
\eq
In the case where~\eqref{e:symmetricee} are the Navier-Stokes equations, we obtain the Euler equations. The number of boundary conditions we impose on $\mf u$ is then equal to the number of strictly positive eigenvalues of the matrix $\mf A$. In the case where~\eqref{e:hyperbolic} are the Euler equations, one eigenvalue of $\mf A$ vanishes when $u$ vanishes. This implies that  when $u=0$ the boundary is characteristic not only for~\eqref{e:symmetric}, but also for~\eqref{e:hyperbolic} and this is the reason why we term this case  \emph{doubly characteristic}\footnote{We owe this name to Denis Serre}. Note that an analogous (but more complicated) situation occurs  in the case of the MHD equations. From the technical viewpoint, the fact that the boundary is characteristic also at the hyperbolic level implies severe complications. In particular, one has to take into account the possibility of contact discontinuities with $0$ or slightly positive speed. In other words, one cannot separate the analysis of boundary layers from the analysis of traveling waves, but has to {consider possible interactions among them}.

We now informally discuss the hypotheses we impose on~\eqref{e:viscouscl} and we refer to~\S~\ref{s:h} for the precise statement, which requires some heavy notation. As a matter of fact, our hypotheses are all satisfied by the Navier-Stokes and MHD equations (with both positive and null conductivity), see~\S~\ref{ss:h:ns} and~\S~\ref{ss:h:mhd}. First, we require that the hypotheses of a theorem due to Kawashima and Shizuta~\cite{KawashimaShizuta1}, which is Theorem~\ref{t:ks} in~\S~\ref{s:h}, are satisfied. Theorem~\ref{t:ks} states that, under physically sounded assumptions,~\eqref{e:viscouscl} can be re-written as~\eqref{e:symmetricee} and~\eqref{e:symmetricee}  is in the \emph{normal form}, in the Kawashima-Shizuta sense~\cite{KawashimaShizuta1}, i.e. the coefficients $\mf E$, $\mf A$, $\mf B$ and $\mf G$ satisfy a set of properties that in~\S~\ref{s:h} we collect as Hypothesis~\ref{h:normal}. 
In particular, Hypothesis~\ref{h:normal} says that the matrix $\mf A$ is symmetric, the matrix $\mf B$ is positive semi-definite and block diagonal and the matrix $\mf E$ is positive definite.  

Hypothesis~\ref{h:ks} is the so-called \emph{Kawashima-Shizuta} condition. Very loosely speaking, it is a coupling condition that rules out the possibility of decomposing~\eqref{e:symmetricee} into a purely hyperbolic and a viscous part. Hypothesis~\ref{h:sh} states that system~\eqref{e:hyperbolic} is \emph{strictly hyperbolic}, i.e. $\mf E^{-1} \mf A$ has $N$ real and distinct eigenvalues. Hypothesis~\ref{h:sing1d} states that the number of boundary conditions we have to impose on the mixed hyperbolic-parabolic equation~\eqref{e:symmetricee} depends on the sign of a scalar function $\alpha (\mf u)$. In the case of the Navier-Stokes equations, $\alpha (\mf u) =u$, but Hypothesis~\ref{h:sing1d} is actually trivial in the case where the kernel of $\mf B$ is one dimensional, which is the case of the Navier-Stokes equations and the MHD equations with positive conductivity. Hypothesis~\ref{h:sing1d} is only meaningful in the case where the kernel of $\mf B$ has larger dimension, as in the case of the MHD equations with null conductivity. Hypothesis~\ref{h:jde} is a technical assumption that pops out when we construct the invariant manifolds containing the traveling waves and boundary layers of~\eqref{e:symmetricee}. Remarkably, Hypothesis~\ref{h:jde} is the very condition that allows us to assign in a consistent and fairly natural way the boundary condition on~\eqref{e:symmetricee}. We refer to~\S~\ref{ss:bc} for the technical details concerning the boundary condition, but loosely speaking we can construct a function $\boldsymbol{\beta}: \R^N \times \R^N \to \R^N$, depending on system~\eqref{e:symmetricee} and satisfying the following. Given $\mf u_b \in \R^N$, by imposing the condition $\boldsymbol{\beta} ( \mf u^{\ee}(x=0), \mf u_b) = \mf 0$ we are imposing the correct boundary conditions: for instance, if~\eqref{e:symmetricee} are the Navier-Stokes equations we are imposing
\begin{eqnarray*}
     \rho(x=0) - \rho_b=0, \; u(x=0)- u_b=0,  \;  \theta (x=0)-\theta_b =0 & \text{if $u_b>0$}, \\
   u(x=0)- u_b=0,  \;  \theta (x=0)-\theta_b =0 & \text{if $u_b \leq 0$.} 
\end{eqnarray*}
We can now state our main result. We first provide the statement in the conservative case and hence we use the dependent variable $\mf w$. A remark about notation: since the map $\mf w \longleftrightarrow \mf u$ is invertible, by assigning a boundary condition on $\mf w$ through the function $\boldsymbol{\beta}$ we assign a boundary condition on $\mf u$, and viceversa. By a slight abuse of notation, to simplify the exposition in the following we write 
$\boldsymbol{\beta}( \mf w(x=0),\mf w_b {)}$ instead of $\boldsymbol{\beta}( \mf u(\mf w(x=0)), \mf u(\mf w_b))$. Also, we recall that shocks and contact discontinuities are termed \emph{Liu admissible} if they satisfy the admissibility criterion introduced in~\cite{Liu1}. 
\begin{theorem} \label{t:main}
Assume that the hypotheses of Theorem~\ref{t:ks} in~\S~\ref{s:h}  are satisfied. Assume furthermore that Hypotheses~\ref{h:ks},$\dots$,~\ref{h:jde} in~\S~\ref{s:h} hold true. Then there are constants $C>0$ and $\delta>0$ such that the following holds. For every $\mf w_i$, $\mf w_b \in \R^N$  such that $| \boldsymbol{\beta} (\mf w_i, \mf w_b )| <  \delta$, there is a self-similar function $\mf w$ such that 
\begin{itemize}
\item[a)] $\mathrm{TotVar} \, \mf w(t, \cdot) \leq C \delta$  for a.e. $t\in [0, + \infty[$. 
\item[b)] $\mf w$ is a distributional solution of~\eqref{e:cl} and satisfies the initial condition $\mf w(0, \cdot) = \mf w_i$.  
\item[c)] $\mf w$ contains at most countably many shocks and contact discontinuities, each of them admissible in the sense of Liu.
\item[d)] Let $\bar{\mf w}$ be the trace of $\mf w$ at $x=0$. Then there is $\underline {\mf w} \in \R^N$ such that 
\begin{itemize}
\item[d1)] $\mf f(\bar{\mf w})= \mf f (\underline {\mf w})$  and the shock or contact discontinuity between $\bar{\mf w}$ (on the right) and $\underline {\mf w}$ (on the left) is admissible in the sense of Liu; 
\item[d2)] there is a so-called ``boundary layer" $\boldsymbol{\varphi}: \R_+ \to \R$ such that 
\be 
\label{e:boundaryl}
\left\{
\begin{array}{ll}
  \mf f (\boldsymbol{\varphi})' = 
   [\mf D(\boldsymbol{\varphi}) \boldsymbol{\varphi}']' , \\
  \boldsymbol{\beta}( \boldsymbol{\varphi}(0), \mf w_b)= \mf 0_N,  \quad \lim_{x \to + \infty} \boldsymbol{\varphi}(x)= \underline{\mf w}  \\
   \end{array}
\right.
\eq 
\end{itemize}
\end{itemize}
\end{theorem}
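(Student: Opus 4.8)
\emph{Proof proposal.}\ The plan is to reduce Theorem~\ref{t:main} to the construction of a vanishing-viscosity Riemann solver adapted to the boundary, the genuinely new point being the boundary wave curve in the doubly characteristic regime. By Theorem~\ref{t:ks} we may work with the normal form~\eqref{e:symmetricee}, and after the parabolic rescaling $(x,t)\mapsto(\ee x,\ee t)$ with~\eqref{e:symmetric}; steady solutions of~\eqref{e:symmetric}, i.e.\ the boundary layers, and traveling waves $\mf u(x,t)=\mf U(x-\sigma t)$ of small speed $\sigma$ solve a system of ordinary differential equations which, as~\eqref{e:intro:ns2} makes explicit, is singular on the doubly characteristic set $\{\alpha(\mf u)=0\}$. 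The first step is to desingularize this ODE: following the invariant manifold technique of~\cite{BianchiniSpinolo:JDE}, and using Hypothesis~\ref{h:jde}, I would construct, near the reference state, a locally invariant finite-dimensional manifold (a center manifold for the traveling-wave ODE) carrying all small boundary layers and all small traveling waves with speed near the doubly characteristic speed, on which the reduced vector field is \emph{regular} across $\{\alpha=0\}$. The same hypothesis is what makes it possible to encode the boundary condition consistently through the map $\boldsymbol{\beta}$ of~\S~\ref{ss:bc}.

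Next I would assemble the interior forward wave curves. Strict hyperbolicity (Hypothesis~\ref{h:sh}) separates the $N$ characteristic families; for each of them the wave curve is obtained by concatenating rarefaction waves with Liu-admissible shocks and contact discontinuities, the admissible shock/contact branch being read off the invariant manifold of Step~1 exactly as in~\cite{Bianchini,BianchiniSpinolo:ARMA}, so that Liu admissibility holds by construction and the curves are Lipschitz with Lipschitz dependence on the base point. For the doubly characteristic family --- call it the $\ell$-th --- the manifold of Step~1 already contains the speed $\sigma=0$, so its wave curve automatically carries contact discontinuities of zero or slightly positive speed.

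The heart of the matter is the boundary wave curve. Here I would first describe the set of states $\underline{\mf w}$ that arise as limits at $+\infty$ of boundary layers $\boldsymbol{\varphi}$ with $\boldsymbol{\beta}(\boldsymbol{\varphi}(0),\mf w_b)=\mf 0$: such profiles lie on the stable (center-stable) manifold of the rest point $\underline{\mf w}$ for the boundary-layer ODE, cut by the boundary condition $\{\boldsymbol{\beta}(\cdot,\mf w_b)=\mf 0\}$, and counting dimensions as in~\S~\ref{s:h} this produces a Lipschitz manifold $\mathcal{M}_b(\mf w_b)$ of states reachable from the boundary. Because the boundary is characteristic also for~\eqref{e:hyperbolic}, this manifold and the $\ell$-th invariant manifold overlap in the direction $\sigma=0$, so one cannot treat boundary layers and $\ell$-th family traveling waves separately; instead I would glue them into a single boundary wave curve which, starting from $\mf w_b$, first selects $\underline{\mf w}\in\mathcal{M}_b(\mf w_b)$ and then appends a zero- or slightly-positive-speed $\ell$-th wave reaching the trace $\bar{\mf w}$, the Rankine--Hugoniot identity $\mf f(\bar{\mf w})=\mf f(\underline{\mf w})$ and the Liu admissibility of the $\bar{\mf w}$--$\underline{\mf w}$ discontinuity being built into the construction. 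I expect this gluing, together with the proof that the resulting boundary wave curve is Lipschitz and transversal to the strictly-positive-speed wave curves of families $\ell+1,\dots,N$, to be the main obstacle, precisely because of the unavoidable interaction between boundary layers and contact discontinuities of the doubly characteristic family.

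Finally I would compose the boundary wave curve through $\mf w_b$ with the forward wave curves of families $\ell+1,\dots,N$, obtaining a Lipschitz map of $N$ scalar parameters --- the wave strengths --- whose differential at the reference state is invertible by the transversality just established; a standard contraction/implicit-function argument then shows that, whenever $|\boldsymbol{\beta}(\mf w_i,\mf w_b)|<\delta$ with $\delta$ small, there is a unique choice of parameters reaching $\mf w_i$, with total strength bounded by $C\delta$, and the associated self-similar function $\mf w$ is the claimed solution. Property~(a) is then the bound on the total variation by the total wave strength; (b) holds because a self-similar juxtaposition of rarefactions and Rankine--Hugoniot discontinuities is a distributional solution of~\eqref{e:cl} that equals $\mf w_i$ for $x/t$ large; (c) holds since the fan is a finite juxtaposition of elementary waves, each composed of at most countably many shocks and contact discontinuities, all Liu-admissible by the construction of the wave curves; and (d1)--(d2) are exactly the output of the boundary-wave-curve step, the boundary layer in~\eqref{e:boundaryl} being the profile selected on $\mathcal{M}_b(\mf w_b)$ with $\lim_{x\to+\infty}\boldsymbol{\varphi}(x)=\underline{\mf w}$.
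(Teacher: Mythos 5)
Your overall strategy matches the paper's: pass to the Kawashima--Shizuta normal form, desingularize the traveling-wave/boundary-layer ODE via invariant manifolds, build wave-fan curves for families $k+1,\dots,N$ \`a la Lax/Liu/Bianchini, construct a boundary wave curve from the characteristic family and the boundary layers, and close by an implicit-function argument. But there are two genuine gaps in how you propose to handle the boundary-layer part, and both would bite if you tried to carry the argument out.

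First, your Step~1 claims that a single center manifold ``for the traveling-wave ODE'' carries \emph{all} small boundary layers. That is false. The linearization of the desingularized (``fast'') ODE $\dot{\mf v}=\mf h(\mf v)$ at a doubly characteristic state has a nontrivial stable space (of dimension $h$, by Lemma~\ref{l:signature}), and boundary layers with $\alpha(\boldsymbol\varphi(0))>0$ have a component along that fast stable direction that leaves the center manifold at an exponential rate. So the center manifold carries the traveling waves and the characteristic (slowly decaying) boundary layers, but not the noncharacteristic ones. The paper handles this by a three-level decomposition: $\mathcal M^0$ (center manifold for the fast ODE, on which the slow ODE becomes regular), then $\mathcal M^{00}$ plus slow stable directions \emph{inside} $\mathcal M^0$ (Theorem~\ref{t:slowmanifold}), and finally the fast stable direction \emph{transversal} to $\mathcal M^0$ (Lemma~\ref{l:fastvaria}), with the pieces combined via the slaving manifold Lemma~\ref{l:slaving}. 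Your proposal collapses this structure into a single manifold and would therefore produce a boundary wave curve of the wrong dimension.

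Second, and more subtly, nothing in your proposal addresses the return from the fast variable $y$ to the original variable $x$. Even after you build a candidate profile $\mf v(y)$ for the fast ODE, this is \emph{not} yet a boundary layer of the original system~\eqref{e:symmetric}: one must show that $x(y)=\int_0^y\alpha(\mf u(z))\,dz$ is a genuine change of variables from $[0,+\infty[$ onto $[0,+\infty[$, i.e.\ that $\alpha$ stays positive along the orbit and that the integral diverges. This fails or degenerates for orbits with $\alpha\le 0$ at $y=0$, and establishing it when $\alpha(\boldsymbol\varphi(0))>0$ is exactly where Hypothesis~\ref{h:jde} is used a second time, beyond encoding the boundary map $\boldsymbol\beta$ (Lemmas~\ref{l:diverge0}, \ref{l:diverge} and \ref{l:back} in~\S~\ref{ss:back}). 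Without this step you cannot even state condition d2) correctly, let alone prove it, because the profiles you construct solve the desingularized equation, not~\eqref{e:boundaryl}. Relatedly, the concrete construction of the characteristic wave-plus-boundary-layer curve is done in the paper via the fixed-point problem with a \emph{monotone} concave (resp.\ convex) envelope~\eqref{e:mappaT}; your ``gluing'' remark names the difficulty but gives no mechanism that would deliver the Lipschitz regularity and the tangent vectors needed for the invertibility in the final step.
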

Some remarks are in order:
\begin{itemize}
\item assume that the boundary is not characteristic at the hyperbolic level, i.e. all the eigenvalues of the Jacobian matrix $\mf D \mf f$ are bounded away from $0$. Then from condition d1) we get $\bar{\mf w} = \underline{\mf w}$. Conversely, in the characteristic case it may happen that $\bar{\mf w} \neq \underline{\mf w}$ and that there is a $0$ speed shock or contact discontinuity at the boundary. 
\item Condition~\eqref{e:boundaryl} means that $\boldsymbol{\varphi}$ is a steady solution of the mixed-hyperbolic system~\eqref{e:viscouscl}, and satisfies the boundary condition.  
\item In the proof of Theorem~\ref{t:main} we provide an explicit construction of $\mf w$, and this construction has a unique outcome once the Cauchy and boundary data are fixed. Loosely speaking this is basically the same situation as in the paper by Lax~\cite{Lax}, where one explicitly constructs a solution of the Riemann problem and the construction has a unique outcome once the left and right state are fixed.  
\item We are actually confident that one could apply the same techniques as in~\cite{ChristoforouSpinolo} and show that there is a unique function satisfying conditions a), $\dots$, d) above. Note that this would not contradict the fact that the limit of~\eqref{e:viscouscl} depends on $\mf D$ because condition d2) involves the viscosity matrix $\mf D$.  
\item The proof of Theorem~\ref{t:main} was established in~\cite{BianchiniSpinolo:ARMA} (see also~\cite[\S 6.3]{Spinolo:2bvp}) under more restrictive assumptions. In particular, in~\cite{BianchiniSpinolo:ARMA}  we introduced a condition of so-called \emph{block linear degeneracy} which rules out the possibility that the boundary layers and the traveling wave satisfy a singular ODE like~\eqref{e:intro:ns2}.  
The condition of block linear degeneracy is violated by the Navier-Stokes and MHD equations written in Eulerian coordinates when the fluid velocity is close to $0$.
\item  The only reason why we have to assume that the hypotheses of  Theorem~\ref{t:ks} are satisfied is because Theorem~\ref{t:ks} implies that the properties collected as Hypothesis~\ref{h:normal} in~\S~\ref{s:h} are satisfied. What we actually need in the analysis is Hypothesis~\ref{h:normal}. 
The reason why we assume the hypotheses of  Theorem~\ref{t:ks} is because we want to provide a statement in the conservative case and hence we need to pass from~\eqref{e:viscouscl} to~\eqref{e:symmetricee}. In Proposition~\ref{p:nc} we state our main results in the general non-conservative case and hence we replace the hypotheses of Theorem~\ref{t:ks} with Hypothesis~\ref{h:normal}.
\item Our analysis does not require that the vector fields are either linearly degenerate or genuinely nonlinear, which is a common assumption in the analysis of conservation laws, see~\cite{Bressan:book,Dafermos,Serre1}.
As a matter of fact, we do not impose any assumption on the number of inflection points of the $i$-th eigenvalue along the $i$-th characteristic vector field. 
\item As we mentioned before, the proof of Theorem~\ref{t:main} is based on a careful analysis of the boundary layer structure, which we feel is of independent interest and allows to establish several corollaries. As an example, {in~\S~\ref{ss:bc}} we state Corollary~\ref{c:traccia}, which concerns the sign of $\alpha$ evaluated at the hyperbolic trace $\bar{\mf w}$. We recall that the sign of $\alpha$ determines the number of boundary conditions 
we can impose on the mixed hyperbolic-parabolic system~\eqref{e:symmetric} and $\alpha =u$ in the case of the Navier-Stokes equations. The precise statement of Corollary~\ref{c:traccia} requires some technical preliminary consideration and this is why we postpone it to~\S~\ref{ss:bc}. 
\end{itemize}
In the proof of Theorem~\ref{t:main} we always use formulation~\eqref{e:symmetricee} and hence our analysis directly applies to the non conservative case. We now discuss how to modify the statement of Theorem~\ref{t:main} in the non conservative case. First, we point out that condition b) does not make sense, since in general we cannot provide a distributional formulation of the quasilinear system~\eqref{e:hyperbolic}. Loosely speaking, our analysis provides a characterization of the limit $\ee \to 0^+$ of~\eqref{e:symmetricee}, provided the limit exists. Note that convergence results for viscous approximation of non conservative systems have been established in some special case, see for instance~\cite{BianchiniBressan,Spinolo:2bvp}. The main issue in extending the statement of Theorem~\ref{t:main} to the non conservative case 
is that we cannot use the Rankine-Hugoniot conditions. However,  the analysis in~\cite{Bianchini} implies that one can still define a notion of \emph{shock curve} and extend the definition of Liu admissible discontinuity. 
The main difference with respect to the conservative case is that now in general the shock curve \emph{depends} on the approximation, i.e. on the viscosity matrix $\mf B$.
We can now provide the statement of Theorem~\ref{t:main} in the non-conservative case. 
\begin{proposition}
\label{p:nc}
Assume that Hypotheses~\ref{h:normal},$\dots$,~\ref{h:jde} in~\S~\ref{s:h} hold true. Then there are constants $C>0$ and $\delta>0$ such that the following holds. For every $\mf u_i$, $\mf u_b \in \R^N$  such that $| \boldsymbol{\beta} (\mf u_i, \mf u_b)| <  \delta$, there is a self-similar function $\mf u$ such that 
\begin{itemize}
\item[i)] $\mathrm{TotVar} \, \mf u(t, \cdot) \leq C \delta$  for a.e. $t \in [0, + \infty[$ and $\mf u(0, \cdot) \equiv \mf u_i$. 
\item[ii)] $\mf u$ contains at most countably many shocks and contact discontinuities, each of them admissible in the sense of Liu. 
\item[iii)] Let $\bar{\mf u}$ be the trace of $\mf u$ at $x=0$. Then there is $\underline {\mf u} \in \R^N$ such that 
\begin{itemize}
\item[iii)$_1$] either $\underline {\mf u}=\bar{\mf u}$ or there is a $0$-speed Liu admissible discontinuity joining $\underline {\mf u}$ (on the left) and 
$\bar{\mf u}$ (on the right) {;}
\item[iii)$_2$] there is a so-called ``boundary layer" $\boldsymbol{\varphi}: \R_+ \to \R$ such that 
\be 
\label{e:boundaryl2}
\left\{
\begin{array}{ll}
   \mf A(\boldsymbol{\varphi})  \boldsymbol{\varphi}'=  \mf B(\boldsymbol{\varphi}) \boldsymbol{\varphi}'' +
    \mf G (\boldsymbol{\varphi},  \boldsymbol{\varphi}' )  \boldsymbol{\varphi}' {,} \\
  \boldsymbol{\beta}( \boldsymbol{\varphi}(0), \mf u_b)= \mf 0_N,  \quad \lim_{x \to + \infty} \boldsymbol{\varphi}(x)= \underline{\mf u} {.} \\
   \end{array}
\right.
\eq 
\end{itemize}
\end{itemize}
\end{proposition}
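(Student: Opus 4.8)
The plan is to construct the self-similar solution $\mf u$ by a fixed-point/iteration scheme built on top of a careful analysis of the invariant manifolds containing boundary layers and traveling waves. First I would set up the building blocks at the level of the rescaled system~\eqref{e:symmetric}: using the invariant manifold techniques of~\cite{BianchiniSpinolo:JDE} and Hypothesis~\ref{h:jde}, I would show that near a given reference state $\mf u_0$ there is a finite-dimensional center manifold $\M$ (in the phase space of the travelling-wave/steady ODE associated with~\eqref{e:symmetric}) which contains all small travelling waves of~\eqref{e:symmetricee} with speed close to the relevant characteristic speeds, and all small boundary layers. The key point, which is what makes the Eulerian case work where~\cite{BianchiniSpinolo:ARMA} fails, is that even though the reduced steady ODE~\eqref{e:intro:ns2} is \emph{singular} at $\alpha(\mf u)=0$, the manifold $\M$ itself depends smoothly (or at least Lipschitz-continuously) on the data, because the singularity is an artifact of the chosen parametrization and disappears once one works intrinsically on $\M$. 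On $\M$ I would then define, for each characteristic family $i$, an $i$-th \emph{wave fan curve} (in the spirit of~\cite{Bianchini}) interpolating between shock/contact curves and rarefaction curves, together with a \emph{boundary layer curve} parametrizing the admissible traces reachable from a given boundary datum $\mf u_b$ through a solution of~\eqref{e:boundaryl2}; this last object is where Hypothesis~\ref{h:jde} and the construction of $\boldsymbol{\beta}$ enter, and it is what lets condition $\boldsymbol{\beta}(\boldsymbol{\varphi}(0),\mf u_b)=\mf 0_N$ be imposed consistently.

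Next I would assemble these curves into a single solvability statement for the boundary Riemann problem. Concretely, given $\mf u_i$ and $\mf u_b$ with $|\boldsymbol{\beta}(\mf u_i,\mf u_b)|<\delta$, I look for intermediate states $\mf u_i=\mf u^{(N)},\mf u^{(N-1)},\dots,\mf u^{(1)}=\underline{\mf u}$ and $\bar{\mf u}$ such that: $\mf u^{(k-1)}$ is obtained from $\mf u^{(k)}$ along the $k$-th wave fan curve with a parameter $s_k$; all the wave speeds produced are nonnegative (negative-speed waves are not seen in $x\ge 0$ at $t>0$) and they are correctly ordered; $\underline{\mf u}$ and $\bar{\mf u}$ are joined by a $0$-speed Liu-admissible discontinuity (the doubly characteristic contact, possibly trivial, so that $\underline{\mf u}=\bar{\mf u}$ when the boundary is non-characteristic at the hyperbolic level); and $\bar{\mf u}$ lies on the boundary layer curve emanating from $\mf u_b$. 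This is a system of $N$ (vector) equations in the unknowns $(s_1,\dots,s_N)$ plus the discrete choice of which families carry $0$-speed waves; I would solve it by the implicit function theorem / a contraction argument, exactly as in Lax's construction~\cite{Lax} and its boundary analogues, using that at $\boldsymbol{\beta}=\mf 0$ the trivial solution exists and the relevant Jacobian is invertible — the latter following from strict hyperbolicity (Hypothesis~\ref{h:sh}) together with the transversality of the boundary layer manifold guaranteed by Hypothesis~\ref{h:jde}. The self-similar function $\mf u$ is then read off: it is the juxtaposition, in the variable $\xi=x/t$, of constant states separated by the shocks/contacts and rarefaction fans prescribed by the $s_k$, with the boundary layer $\boldsymbol{\varphi}$ attached at $x=0$; conclusions i), ii), iii)$_1$, iii)$_2$ are then immediate from the construction, and the total variation bound i) follows from $|s_k|\le C|\boldsymbol{\beta}(\mf u_i,\mf u_b)|\le C\delta$.

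The delicate part, and the one I would spend most of the argument on, is the simultaneous treatment of boundary layers and slow travelling waves in the doubly characteristic regime. When $\alpha(\bar{\mf u})$ is near $0$, a $k$-th travelling wave with small positive speed and the boundary layer both live on the same center manifold $\M$ and can genuinely interact: one cannot first solve for the boundary layer and then glue waves, because the state $\underline{\mf u}$ at which the layer stabilizes may itself need to be the left endpoint of a $0$-speed contact. I would handle this by treating the $0$-speed discontinuity and the boundary layer as a single composite object — a ``boundary Riemann solver'' defined directly on $\M$ — and prove its Lipschitz dependence on $(\mf u_b, \underline{\mf u})$ by the uniform estimates on $\M$, rather than on the singular coordinates of~\eqref{e:intro:ns2}. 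A secondary technical obstacle is the bookkeeping of admissibility: I must check that the discontinuities produced by the wave fan curves are Liu admissible (which is built into the definition of the curves à la~\cite{Bianchini}, so this is really a citation plus a compatibility check), and that the $0$-speed discontinuity joining $\underline{\mf u}$ and $\bar{\mf u}$ is Liu admissible, which is where the structure of the boundary layer (monotonicity of the relevant generalized entropy/flux along $\boldsymbol{\varphi}$) is used. Once these are in place, the passage from Proposition~\ref{p:nc} back to Theorem~\ref{t:main} in the conservative case is routine: one invokes Theorem~\ref{t:ks} to move between~\eqref{e:viscouscl} and~\eqref{e:symmetricee}, observes that~\eqref{e:boundaryl2} is exactly the $\mf u$-form of~\eqref{e:boundaryl}, and uses the Rankine–Hugoniot relations (available in the conservative case) to upgrade the $0$-speed Liu-admissible discontinuity of iii)$_1$ to the statement $\mf f(\bar{\mf w})=\mf f(\underline{\mf w})$ of d1) and to conclude that $\mf w$ is a distributional solution of~\eqref{e:cl}.
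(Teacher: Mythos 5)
Your overall strategy --- invariant manifolds for the traveling-wave/boundary-layer ODE, wave-fan curves \`a la~\cite{Bianchini} and~\cite{Lax}, and local invertibility of the composite map --- coincides with what the paper does. However, the central claim you build on, that there is a single finite-dimensional center manifold $\M$ for the steady ODE of~\eqref{e:symmetric} containing \emph{all} small boundary layers, is not correct, and the construction cannot be completed along the lines you sketch. The steady ODE~\eqref{e:odeslow} is singular where $\alpha(\mf u)=\sigma$; desingularizing by passing to the fast variable $dx/dy=\alpha-\sigma$ yields~\eqref{e:odefast}, but in the fast variable the boundary layers split into two families: those decaying at the fast rate $e^{-cy}$, which lie on the \emph{stable} manifold of~\eqref{e:odefast}, and the slow (characteristic) ones, which lie on the \emph{center} manifold $\mathcal M^0$. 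A single center manifold cannot contain both. The paper's construction is therefore necessarily two-tiered (\S~\ref{ss:roadmap}): first $\mathcal M^0$ is built and restriction to it removes the singularity (Lemma~\ref{l:invariant}); within $\mathcal M^0$ a further center manifold $\mathcal M^{00}$ and a stable direction are built; finally the fast stable component is glued to the slow component by a slaving-manifold argument (\S~\ref{s:complete}), and Lemma~\ref{l:back} shows that the change of variables back to slow time is licit precisely when $\alpha>0$ at $x=0$. This last point is what explains why there are $N$ free parameters $(s_1,\dots,s_N)$ when $\alpha(\mf u_b)>0$ and only $N-1$ when $\alpha(\mf u_b)\le 0$; your sketch does not distinguish these regimes, so the parameter count in your implicit-function-theorem step (``a system of $N$ (vector) equations in $(s_1,\dots,s_N)$'') is wrong when $\alpha(\mf u_b)\le 0$, and in that regime the boundary-layer curve you invoke does not carry the fast component you implicitly assume.

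Two further points. Your ``single composite object'' for the interaction between the $0$-speed wave and the slowly-decaying boundary layer is the right intuition, but the precise mechanism is the replacement of the concave envelope by the \emph{monotone} concave envelope in the fixed-point problem~\eqref{e:mappaT} (see Lemma~\ref{l:concave} and Remark~\ref{r:mon}); without it one cannot decide, along the $k$-th curve, which portion is a Liu-admissible wave with nonnegative speed and which is a boundary layer, and an abstract ``Lipschitz dependence on the manifold'' argument does not supply this partition. Relatedly, Liu admissibility is built into that envelope construction and into the results of~\cite{Bianchini}; it is not obtained, as you suggest, from monotonicity of some entropy/flux along $\boldsymbol{\varphi}$, which is not available in the general non-conservative setting of Proposition~\ref{p:nc}.
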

The exposition is organized as follows.  {In~\S~\ref{ss:roadmap}} we provide an overview of the proof of Theorem~\ref{t:main} and Proposition~\ref{p:nc}. {In~\S~\ref{s:h}} we rigorously state Hypotheses~\ref{h:normal},$\dots$,~\ref{h:jde} and we show that they are satisfied by the Navier-Stokes and MHD equations. We also provide the rigorous formulation of the boundary condition. 
To simplify the exposition, in~\S~\ref{ss:emme0},$\dots$,\S~\ref{s:brie} we focus on the case where the dimension of the kernel of the matrix $\mf B$ in~\eqref{e:symmetric} is $1$ and we provide the the proof of Theorem~\ref{t:main} and Proposition~\ref{p:nc} in this case. 
More precisely, in ~\S~\ref{ss:emme0},$\dots$,~\S~\ref{s:complete} we discuss the analysis of the equations satisfied by the traveling waves and the boundary layers  of~\eqref{e:symmetric} and in \S~\ref{s:brie} we complete the proof. For the reader's convenience, at the end of each of~\S~\ref{ss:emme0},$\dots$,~\S~\ref{s:complete} we explicitly discuss how the analysis in the section applies to the Navier-Stokes equations and to the MHD equations with positive conductivity.
In~\S~\ref{s:bigk} we discuss the extension of the analysis at the previous sections to the case where the dimension of the kernel of the matrix $\mf B$ is bigger than $1$ and we discuss the applications to the MHD equations with null conductivity.
Finally, in~\S~\ref{s:fattorizzo},~\S~\ref{s:proof} and~\S~\ref{s:slaving} we provide the proof of some technical results we need in the previous sections. For the reader's convenience, we conclude the introduction by collecting the main notation used in the paper. 

\subsection{Proof outline} \label{ss:roadmap}
We can now discuss the basic ideas underpinning the proof of 
Theorem~\ref{t:main} and Proposition~\ref{p:nc}. The most important point is the analysis of the equations satisfied by the traveling waves and the boundary layers. Boundary layers are steady solutions of~\eqref{e:symmetric} satisfying~\eqref{e:boundaryl2}. A traveling wave with speed $\sigma$ is a solution of 
\be
\label{e:tw} 
    -\sigma \mf E(\mf u)  \boldsymbol{\varphi}' +  \mf A(\boldsymbol{\varphi})  \boldsymbol{\varphi}'=  \mf B(\boldsymbol{\varphi}) \boldsymbol{\varphi}'' +
    \mf G (\boldsymbol{\varphi},  \boldsymbol{\varphi}' )  \boldsymbol{\varphi}' 
\eq
that converges at both $- \infty$ and $+ \infty$.   As we mentioned before, the main novelty of the present paper is that we tackle the case where the boundary layers and traveling waves equations are singular, as in the case of the Navier-Stokes equations, see equation~\eqref{e:intro:ns2}. In~\S~\ref{ss:twbl} we derive the precise expression of the equations satisfied by the boundary layers and the traveling waves. For simplicity, in this introduction we only consider boundary layers, but the equation satisfied by the traveling waves is analogous. It turns out that the equation satisfied by the boundary layers has the form 
\be
\label{e:boilsd}
\mf v' = \mf h(\mf v) / \alpha (\mf v),
\eq
where $\mf v$ is the unknown and $\alpha$ is a function that can attain the value $0$. In the case of the Navier-Stokes equations, $\alpha$ is the fluid velocity. In the general case, $\alpha$ is defined in~\S~\ref{s:h} by~\eqref{e:sing1d} and the sign of $\alpha$ is what determines the number of boundary conditions we can impose on~\eqref{e:symmetric}.  

To tackle the challenges coming from the fact {that~\eqref{e:boilsd}} is a singular equation when $\alpha=0$, we rely on dynamical systems techniques. In particular, our analysis uses the idea of slaving manifold.  
We refer to the book~\cite{KatokHasselblatt} for an extended introduction to dynamical systems. 
We also refer to the lecture notes~\cite{Bressan:cm}  by Bressan on the center manifold and to the book by Perko~\cite{Perko} for a discussion about the stable manifold. 

The first remark we make {about~\eqref{e:boilsd}} is that we do not need to study \emph{all} solutions, but only those satisfying suitable conditions. Indeed, we are interested in either traveling waves or boundary layers. Traveling waves satisfy~\eqref{e:tw} and, since we focus on small total variation regimes, we can study solutions that are confined in a neighborhood of some fixed state and hence lie on a so-called \emph{center manifold}. Boundary layers satisfy~\eqref{e:boundaryl2} and hence we have to study solutions that decay to some limit as $x \to + \infty$. Note that this does not imply that the boundary layers lie on the stable manifold, because in the boundary characteristic case there might be boundary layers that are very slowly decaying. However, for the time being we neglect this technical problem and we come back to it in the following. 

To illustrate our argument to {handle~\eqref{e:boilsd}} we introduce a toy model, and we refer to~\cite[\S~1.1]{BianchiniSpinolo:JDE} for a discussion of the general linear case. Consider system~\eqref{e:boilsd} in the case where 
\be 
\label{e:toy}
\mf v : = (v_1, v_2, v_3, v_4, v_5)^t, \quad \alpha (\mf v) = v_5, \quad \mf h(\mf v)= (-v_1, - v_2 v_5, v_3 v_5, 0, 0)^t. 
\eq 
Note that $\alpha=v_5$ is actually constant in this case. We are interested in solutions with $v_5$ close, but different from, $0$. For simplicity, we also assume that $v_5>0$. By explicitly computing the solutions, we see that $v_1$ and $v_2$ are two exponentially decaying solutions and hence can be regarded as ``stable components". Note however, that there is an important difference between $v_1$ and $v_2$: $v_1$ decays to $0$ like  $\exp (-x/v_5)$, whereas $v_2$ decays to $0$ like $\exp (-x)$. For this reason, we term $v_1$ the ``fast stable component" and $v_2$ the ``slow stable component". Finally, $v_4$ and $v_5$ are globally bounded (they are constant) and hence can be regarded as ``center components". 
Note furthermore that to single out the ``fast stable component", the ``slow stable component" and the ``center component" we could proceed as follows. \\
{\sc Step 1:} we consider the equation 
\be
\label{e:odefast}
\dot{\mf v}= \mf h(\mf v),
\eq
which is obtained from~\eqref{e:boilsd} thought the change of variables $x =\alpha y$. We linearize at $(0, 0,0,0,0)$ and we consider the stable space $M^f:= \{ \mf v \in \R^5: \; v_2= v_3=v_4 =v_5 = 0 \}$ and the center space $ M^0:= \{ \mf v \in \R^5: \; v_1 =0  \}$. 
Note that the stable space provides the ``fast stable component" $v_1$. \\
{\sc Step 2:} we point out that the \emph{original} system~\eqref{e:boilsd} restricted on $M^0$ is not singular as it reads 
\be  
\label{e:boilres}
   v_2'= - v_2, \quad v_3' = v_3, \quad v_4' =0, \quad v_5'=0.  
\eq 
Note that this is due to the particular structure of the system. We study~\eqref{e:boilres}, i.e.~\eqref{e:boilsd} restricted on $M^0$. We linearize at $(0,0,0,0)$ and we consider the stable space 
$M^s: = \{ \mf v \in \R^4: \; v_3=v_4 =v_5 = 0 \}$ and the center space $M^{00}: = \{ \mf v \in \R^4: \; v_2= v_3=0 \}$. Note that $M^{00}$ provides the center component of the original system~\eqref{e:boilsd} and that $M^s$ provides the ``slow stable component".\\
{\sc Step 3:} to construct the stable component we ``sum" the fast stable component and the slow stable component, i.e. {in this case} we take 
$M^f \oplus M^s$. 
\vspace{0.2cm}

Very loosely speaking, the basic idea underpinning the analysis in~\S~\ref{ss:emme0},~\S~\ref{s:center},~\S~\ref{s:blemme0},
~\S~\ref{s:complete} is that we want to proceed according to {\sc Step 1}, $\dots$, {\sc Step 3} above in the case where~\eqref{e:boilsd} is the equation for the boundary layers of~\eqref{e:symmetric}. We now  briefly discuss the extension of each of the above steps. \\
{\sc Step 1:} we consider the equation~\eqref{e:odefast}, which is now \emph{formally} obtained
 from~\eqref{e:boilsd} thought the change of variables ${dx/dy =\alpha(\mf v)}$. The derivation at this stage is only formal because we are not yet able to show that ${dx/dy =\alpha}$ is actually a change of variables (for instance, it may in principle happen that $\alpha \equiv 0$ on some interval). Recall that $\alpha$ is not constant now: for instance, in the case of the Navier-Stokes equations $\alpha$ is the fluid velocity. 

Next, we linearize~\eqref{e:boilsd} at an equilibrium point where $\alpha (\mf v)=0$ and we construct the stable manifold and the center manifold (the center manifold is actually not unique, but we can arbitrarily fix one). The stable manifold will loosely speaking provide 
the ``fast stable component". We term the center manifold $\mathcal M^0$. Note that $\mathcal M^0$ is a center manifold for~\eqref{e:odefast}, not for~\eqref{e:boilsd}. \\
{\sc Step 2:} by using Hypothesis~\ref{h:jde}, in~\S~\ref{ss:emme0} we show that  the \emph{original} system~\eqref{e:boilsd} restricted on $\mathcal M^0$  is not singular. Next, we study system~\eqref{e:boilsd} restricted on $\mathcal M^0$ and we construct the center manifold $\mathcal M^{00}$ and the stable manifold $\mathcal M^{s}$. The stable manifold $\mathcal M^s$ will provide the ``slow stable component". From the technical viewpoint the 
analysis of system~\eqref{e:boilsd} restricted on $\mathcal M^0$ is actually quite demanding.  \\
{\sc Step 3:}  we construct the ``complete stable component" by combining the ``fast stable component" and the ``slow stable component". In the case of the toy model~\eqref{e:toy} we could simply add them. Owing to the nonlinearities, in the general case we have to take into account possible interactions. As a matter of fact, the argument is completed by using the notion of \emph{slaving manifold}, which is recalled in~\S~\ref{s:slaving}.  There is actually one issue that we are left to tackle: we have constructed  the ``fast stable component" by using equation~\eqref{e:odefast}, and the change of variables from~\eqref{e:boilsd} to~\eqref{e:odefast} has to be rigorously justified. We do so in~\S~\ref{ss:back} by relying on Hypothesis~\ref{h:jde}. In particular, we show that, if the solution lies on the manifolds we have constructed and $\alpha > 0$ at $x=0$, then $\alpha > 0$ for every $x>0$. 
\vspace{0.2cm}

To complete the overview of the proof of Theorem~\ref{t:main} and Proposition~\ref{p:nc} we have to tackle one last issue we have so far neglected: the boundary is characteristic for the hyperbolic system~\eqref{e:hyperbolic} and hence we have to take into account the possibility that there are boundary layers that do not lie on the stable manifold because they very slowly decay to their asymptotic state.  
Also, we have to take into account that there might be traveling waves~\eqref{e:tw} having speed $\sigma$ positive but very close to $0$, and that by slightly perturbing them we may obtain traveling waves with negative speed, which are not admissible since the domain is $x \in [0, + \infty[$. We address these challenges in~\S~\ref{s:center} by using the same approach as in~\cite{AnconaBianchini,BianchiniSpinolo:ARMA}. As a matter of fact, the analysis greatly simplifies if the characteristic field is \emph{linearly degenerate}, which is the case of the Navier-Stokes and MHD equations. For this reason we first provide the analysis in the linearly degenerate case and next we consider the general case.

\subsection{Notation}
We use standard characters to denote real numbers, bold characters  to denote vectors and 
capital bold letters to denote matrices. In other words $c \in \R, \mf c \in \R^d, \mf C \in \mathbb{M}^{d\times d}$. 
In general, we regard vectors as \emph{column} vectors: $\mf c$ is a column vector, $\mf c^t$ is a row vector.
\subsubsection{General mathematical symbols}
\begin{itemize}
\item $\mf c^t, \mf C^t$: the transpose of the vector $\mf c$, of the matrix $\mf C$. 
\item $\mf 0_d$: the zero vector in $\R^d$. 
\item $\mathbb{M}^{a \times b}$: the space of $a \times b$ matrices
\item $\mf 0_{a \times b}:$ the matrix in $\mathbb{M}^{a \times b}$ having all entries equal to $0$.  
\item $\mf I_d$: the identity matrix in $\mathbb{M}^{d \times d}$.
\item $\mathrm{B}_r^d(\mf x_0)$: the ball of radius $r$ and center $\mf x_0$ in $\R^d$. 
\item $\monconc_{[0, s_k ]} f$: the monotone concave envelope of the function $f$ on the interval $[0, s_k]$, which is defined as in~\eqref{e:cosaemonconc}.
\end{itemize}

\subsubsection{Symbols introduced in the present paper}
\begin{itemize}
\item $\mf f$: the flux function in~\eqref{e:cl}.
\item $N$: the number of unknowns in~\eqref{e:cl} and~\eqref{e:symmetric}.  
\item $\mf E, \mf A, \mf B, \mf G$: the matrices in~\eqref{e:symmetric}. 
\item $h$: the dimension of the kernel of $\mf B$, see~\eqref{e:B}.   
\item $\mf B_{22}, \mf E_{11}, \mf E_{22}$: see~\eqref{e:B} and~\eqref{e:e}. 
\item $\mf g_1, \mf G_2$: see~\eqref{e:G}.
\item $\lambda_1(\mf u), \dots, \lambda_N(\mf u)$: the eigenvalues of the matrix $\mf E^{-1} \mf A(\mf u)$. 
\item $k$: $\lambda_k$ is the characteristic eigenvalue, i.e. $\lambda_k (\mf u) \sim 0$. 
\item $\mf u^\ast$: a fixed state satisfying~\eqref{e:uast}. 
\item $\boldsymbol{\beta}$: see~\eqref{e:esse}. 
\item $\mf v'$: the derivative of $\mf v$ with respect to the slow variable $x$, see~\eqref{e:odeslow}. 
\item $\dot{\mf v}$: the derivative of $\mf v$ with respect to the fast variable $y$, see~\eqref{e:odefast}. 
\item $\mf R_0$: see Lemma~\ref{l:slow}.
\item $\mf d$: see~\eqref{e:definiamod}
\item $\mf \Theta_0$: see Lemma~\ref{l:suemme0} and equation~\eqref{e:cosaeLambda0}. 
\item $\mf r_{00}:$ see Lemma~\ref{l:center}. 
\item $\theta_{00}$: see Lemma~\ref{l:suemme00} and equation~\eqref{e:cosaelambda00}. 
\item $f$: the function in~\eqref{e:cosaeffe}. 
\item $\tilde c$: the same constant as in~\eqref{e:ci}. 
\item $\boldsymbol{\psi}_{sl}$: see Theorem~\ref{t:slowmanifold}.
\item $\boldsymbol{\psi}_{b}$: the projection onto $\R^N$ of the map $\tilde{\boldsymbol{\psi}_{b}}$, which attains values in $\R^N \times \R^{N-1} \times \R$. The map  $\tilde{\boldsymbol{\psi}_{b}}$ is defined in the statement of Lemma~\ref{l:fastvaria}.
\end{itemize}
\section{Hypotheses and formulation of the boundary condition}
In this section we state our {assumptions} and we provide the rigorous formulation of the boundary condition we impose on~\eqref{e:symmetric}. We also state Corollary~\ref{c:traccia}. To conclude, in~\S~\ref{ss:h:ns} and~\S~\ref{ss:h:mhd} we show that our assumptions are satisfied by the Navier-Stokes and the MHD equations, respectively.  
\subsection{Hypotheses}\label{s:h}
In this paragraph we state the assumptions we impose on system~\eqref{e:symmetric}, and we comment on them. We first make a preliminary remark:  to simplify the notation in the statement of Hypothesis~\ref{h:normal},$\dots$,~\ref{h:jde} we write that the various assumptions on the coefficients $\mf E$, $\mf A$, $\mf B$ and $\mf G$ must hold ``for every $\mf u$". However, in the whole paper we actually perform a \emph{local} analysis, and hence as a matter of fact it suffices that our assumptions hold in some ball of $\R^N$. 
\begin{hyp}
\label{h:normal} 
System~\eqref{e:symmetric} is of the normal form, in the Kawashima Shizuta sense, see~\cite{KawashimaShizuta1}.  More precisely, the coefficients in~\eqref{e:symmetric} are smooth and satisfy the following assumptions:
\begin{itemize}
\item[i)] The matrix $\mf B$ satisfies the block decomposition 
\begin{equation}
\label{e:B}
    \mf B(\mf u) = 
    \left(
    \begin{array}{cc}
    \mf 0_{h \times h} & \mf 0_{N-h}^t \\
    \mf 0_{N-h} & \mf B_{22} (\mf u) \\
    \end{array}
    \right)
\end{equation}
for some symmetric and positive definite matrix $\mf B_{22} \in \mathbb M^{(N-h) \times (N-h)}$.
\item[ii)] For every $\mf u$, the matrix $\mf A (\mf u)$ is symmetric.
\item[iii)] For every $\mf u$, the matrix $\mf E (\mf u)$ is symmetric, positive definite and block diagonal, namely
\be 
\label{e:e}
    \mf E(\mf u) = 
    \left(
    \begin{array}{cc}
    \mf E_{11} (\mf u) & \mf 0_{N-h}^t \\
    \mf 0_{N-h} & \mf E_{22} (\mf u) \\
    \end{array}
    \right),
\end{equation}
where $\mf E_{11} \in \mathbb{M}^{h \times h}$ and $\mf E_{22} \in \mathbb{M}^{(N-h) \times (N- h)}$. 
\item[iv)] The second order term can be written in the form $\mf G(\mf u, \mf u_x) \mf u_x$, with  
$\mf G$ satisfying
\be
\label{e:G}
        \mf G(\mf u,  \mf u_x) = 
    \left(
    \begin{array}{cc}
    \mf 0_{h \times h} & \mf 0_{N-h}^t \\
    \mf G_1 & \mf G_2 \\
    \end{array}
    \right), \qquad 
\eq
for suitable functions $\mf G_1 \in \mathbb M^{h \times (N-h)}$, $\mf G_2
\in \mathbb  M^{(N-h) \times (N-h)}$
such that for every $i, j$ the component $(\mf G_1)_{ij}$
satisfies 
\be
\label{e:gduedue}
    (\mf G_1)_{ij}(\mf u,  \mf u_x)  = \mf g^t_{ij}(\mf u)  \mf u_x
\eq 
for some function $\mf g^{ij}: \R^N \to \R^{N}$ (note that $\mf g^{ij}$ only depends on $\mf u$). 
An analogous property holds for the coefficients of $\mf G_2$. 
 \end{itemize}
\end{hyp}
Two remarks are in order. First, Hypothesis~\ref{h:normal} comes from the works by Kawashima and Shizuta, see in particular~\cite[\S~3]{KawashimaShizuta1}. The condition on the second order term $\mf G$ is written in a slightly more precise form than in~\cite[\S~3]{KawashimaShizuta1}, but 
property iv) straightforwardly follows from the conditions in~\cite[\S~3]{KawashimaShizuta1}. 
Second, Hypothesis~\ref{h:normal} is satisfied under fairly reasonable assumptions, as the next theorem states.    
\begin{theorem}[Kawashima and Shizuta~\cite{KawashimaShizuta1}]
\label{t:ks}
Assume that system~\eqref{e:viscouscl} has an entropy function, in the sense of~\cite[Definition 2.1]{KawashimaShizuta1}. Assume furthermore that the kernel of the matrix $\mf D(\mf w)$ in~\eqref{e:viscouscl} does not depend on $\mf w$. Then there is a diffeomorphism 
$\mf w \longleftrightarrow \mf u$ such that by rewriting~\eqref{e:viscouscl} with $\mf u$ as a dependent variable we arrive at~\eqref{e:symmetric} for some matrices $\mf E, \mf A, \mf B, \mf G$ satisfying~Hypothesis~\ref{h:normal}.
\end{theorem}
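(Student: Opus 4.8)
The statement is the normal-form theorem of Kawashima and Shizuta, so in the paper I would prove it by quoting \cite[\S\S 2--3]{KawashimaShizuta1}; here is the architecture of the argument and where the real difficulty lies. The first step is symmetrization via the entropy variable. Let $\eta$ be the (strictly convex) entropy of \eqref{e:viscouscl}, with Hessian $D^2\eta := D^2_{\mf w}\eta$, and introduce the entropy variable $\mf u := \big(\nabla_{\mf w}\eta(\mf w)\big)^t$. Strict convexity makes $\mf w\mapsto\mf u$ a diffeomorphism and, by the inverse function theorem, $\partial\mf w/\partial\mf u = (D^2\eta)^{-1}$. Substituting into \eqref{e:viscouscl} and using that the viscous term is already in divergence form, we get $\mf w_t=(D^2\eta)^{-1}\mf u_t$, $\mf f(\mf w)_x = D_{\mf w}\mf f\,(D^2\eta)^{-1}\mf u_x$ and $\mf D(\mf w)\mf w_x = \big[\mf D(D^2\eta)^{-1}\big]\mf u_x$, so \eqref{e:viscouscl} takes the form \eqref{e:symmetricee} with $\mf E := (D^2\eta)^{-1}$, $\mf A := D_{\mf w}\mf f\,(D^2\eta)^{-1}$, $\mf B := \mf D\,(D^2\eta)^{-1}$ and $\mf G(\mf u,\ee\mf u_x)\mf u_x := \big(\partial_{\mf u}\mf B\cdot(\ee\mf u_x)\big)\mf u_x$. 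Here $\mf E$ is symmetric and positive definite (inverse Hessian of a strictly convex function); $\mf A$ is symmetric because of the entropy--flux compatibility $D^2\eta\,D_{\mf w}\mf f = (D_{\mf w}\mf f)^t D^2\eta$; and $\mf B$ is symmetric and positive semidefinite because the dissipation requirement in \cite[Def.~2.1]{KawashimaShizuta1} says exactly that $D^2\eta\,\mf D$ is symmetric and $\ge 0$. Finally $\mf G$ is, componentwise, linear in its second slot, which is \eqref{e:gduedue}. Thus Hypothesis~\ref{h:normal}(ii), half of (iii) and the linearity part of (iv) already hold.

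What is left is the genuine \emph{normalization}: a further change of dependent variable after which $\mf B$ has the block structure \eqref{e:B} with $\mf B_{22}$ symmetric positive definite and, simultaneously, $\mf E$ is block-diagonal as in \eqref{e:e}. This cannot be done by the linear change of variables that diagonalizes $\mf B$ at a single point: in the entropy variables $\ker\mf B=\ker\big(\mf D(D^2\eta)^{-1}\big)=D^2\eta\cdot\ker\mf D$, which genuinely moves with $\mf u$ even though, by hypothesis, $\ker\mf D$ is constant. It is precisely the assumption that $\ker\mf D$ be independent of $\mf w$ that renders the two block requirements mutually compatible and realizable by a single (generally nonlinear, entropy-adapted) change of variables; producing this change is the content of \cite[\S 3]{KawashimaShizuta1}, and roughly speaking it amounts to retaining conserved-type coordinates on the subspace associated with $\ker\mf B$ and entropy-type coordinates on its complement. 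Once the change is in place, the first $h$ block-rows of $\mf B$ are identically zero, hence so are those of the associated $\mf G$, which yields (i), the block-diagonal part of (iii) and the zero-block part of (iv). A rescaling $(x,t)\mapsto(\ee x,\ee t)$ then turns \eqref{e:symmetricee} into \eqref{e:symmetric}, completing the statement.

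The first step is routine linear algebra once the defining properties of an entropy function are written down, and in the paper I would carry it out in full since it also fixes the notation $\mf E,\mf A,\mf B,\mf G$ used throughout. The only delicate point — and the one I would not reprove — is the normalization of the second step: exhibiting, for \emph{all} $\mf u$ at once, the diffeomorphism that simultaneously block-structures $\mf B$ and $\mf E$ starting from the sole assumption that $\ker\mf D$ is constant. Since this is exactly the Kawashima--Shizuta normal-form theorem, I would invoke \cite{KawashimaShizuta1} for it.
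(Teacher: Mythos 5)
Your sketch is correct, and it matches what the paper does: the paper states Theorem~\ref{t:ks} purely as an imported result and cites~\cite{KawashimaShizuta1} without reproducing any argument, so deferring to that reference for the genuine normalization step is exactly the paper's approach. Your preliminary symmetrization via the entropy variable (with $\mf E=(D^2\eta)^{-1}$, $\mf A=D_{\mf w}\mf f\,(D^2\eta)^{-1}$, $\mf B=\mf D\,(D^2\eta)^{-1}$, and $\mf G$ as the lower-order remainder), and the observation that $\ker\mf B=D^2\eta\cdot\ker\mf D$ is non-constant in entropy variables even when $\ker\mf D$ is constant, are both correct and add useful context that the paper leaves implicit.
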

We now introduce the celebrated Kawashima-Shizuta condition, which in our case reads as follows 
\begin{hyp}[Kawashima-Shizuta condition]
\label{h:ks}
For every $\mf u$ the matrices $\mf E$, $\mf A$ and $\mf B$ satisfy
\be 
\label{e:ks}
   \Big\{ \text{eigenvectors of} \;  \mf E^{-1}(\mf u) \mf A (\mf u) \Big\} \cap  \text{kernel} \; \mf B (\mf u) = \emptyset. 
   \eq
\end{hyp}
Since the matrix $\mf A$ is symmetric by Hypothesis~\ref{h:normal}, owing to Lemma~\ref{l:counting} in~\S~\ref{s:proof} we can conclude that the matrix $\mf E^{-1} (\mf u) \mf A(\mf u)$
has  $N$ real eigenvalues, provided each eigenvalue is counted according to its multiplicity. We now introduce the standard hypothesis that the system is \emph{strictly hyperbolic}. 
\begin{hyp}
\label{h:sh}
    For every $\mf u$ the matrix $\mf E^{-1} (\mf u) \mf A(\mf u)$ has $N$ distinct eigenvalues. 
\end{hyp}
We term $\lambda_1 (\mf u), \dots, \lambda_N (\mf u)$ the eigenvalues of $\mf E^{-1} (\mf u) \mf A(\mf u)$. In the following, we mostly focus on the \emph{boundary characteristic case}. More precisely, we assume that 
\begin{equation}
\label{e:sh}
    \lambda_1 (\mf u) < \dots < \lambda_{k-1} (\mf u)
    < 0  < \lambda_{k+1} (\mf u) < \dots 
    < \lambda_N(\mf u), \quad \text{for every $\mf u$}
\end{equation}
for some $k=1, \dots, N$, and that eigenvalue $\lambda_k(\mf u)$ can attain the value $0$. We now introduce the block decomposition of $\mf A$ corresponding to the block decomposition~\eqref{e:B}, i.e.
\be
\label{e:blockae}
        \mf A (\mf u) = 
    \left(
    \begin{array}{cc}
    \mf A_{11} (\mf u)  & \mf A_{21}^t (\mf u) \\
    \mf  A_{21} (\mf u) & \mf A_{22} (\mf u) \\
    \end{array}
    \right) \! \!. 
    \eq
In the following we focus on the case where there is $\mf u^\ast \in \R^N$ such that 
\be
\label{e:char} 
\lambda_k (\mf u^\ast) =0, \quad \mf A_{11} (\mf u^\ast) = \mf 0_{h \times h} \in \mathbb M^{h \times h}.   
\eq
We term this case \emph{doubly characteristic} because the first condition in~\eqref{e:char} means that the boundary is characteristic at the \emph{hyperbolic} level, while the second condition implies (as we will see in~\S~\ref{ss:bc}) that the  boundary is characteristic at the \emph{viscous} level. In the following we mostly focus on the doubly characteristic case because it is the most challenging from the technical viewpoint and the fact that we handle it is the main contribution of the present paper. If~\eqref{e:char} does not hold,  loosely speaking we can either apply a simplified version of the analysis of the present paper, or the analysis in~\cite{BianchiniSpinolo:ARMA}, see~\S~\ref{ss:nonchar} for a related discussion. We now introduce two new hypotheses: we need them to tackle the doubly characteristic case~\eqref{e:char}.
\begin{hyp}
\label{h:sing1d}
We have 
\be
\label{e:sing1d}
   \mf A_{11} (\mf u) = \alpha (\mf u) \mf E_{11}(\mf u)
\eq
for some \emph{scalar} function $\alpha$.  
\end{hyp}
Note that Hypothesis~\ref{h:sing1d} is only needed in the case $h>1$. Indeed, if $h=1$
the block decompositions~\eqref{e:e} and~\eqref{e:blockae} boil down to  
\be
\label{e:blockae2}
 \mf A (\mf u) = 
    \left(
    \begin{array}{cc}
     a_{11} (\mf u)  & \mf a_{21}^t (\mf u) \\
    \mf  a_{21} (\mf u) & \mf A_{22} (\mf u) \\
    \end{array}
    \right), \qquad 
    \mf E(\mf u) = 
    \left(
    \begin{array}{cc}
     e_{11} (\mf u)  & \mf 0\\
    \mf  0 & \mf E_{22} (\mf u) \\
    \end{array}
    \right),
\eq
where $a_{11}$ and $e_{11}$ are scalar functions and $e_{11}$ is strictly positive because $\mf E$ is positive definite by Hypothesis~\ref{h:normal}. Equation~\eqref{e:sing1d} is then satisfied provided $\alpha= a_{11}/ e_{11}$.  Also, note that in general $\mf E_{11}$ is positive definite by Hypothesis~\ref{h:normal} and hence by using Hypothesis~\ref{h:sing1d} we get that~\eqref{e:char} is satisfied if and only if 
\be
\label{e:uast}
    \lambda_{k} (\mf u^\ast)=0, \quad \alpha(\mf u^\ast) =0. 
\eq  
We now state two useful consequences of Hypothesis~\ref{h:sing1d}.
\begin{lemma}
\label{l:kaw}
       Under Hypotheses~\ref{h:normal},~\ref{h:ks} and~\ref{h:sing1d}, 
       the columns of $\mf A_{21} (\mf u)$ are linearly independent vectors. 
       \end{lemma}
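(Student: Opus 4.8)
The plan is to deduce the linear independence of the columns of $\mf A_{21}(\mf u)$ from the Kawashima--Shizuta condition (Hypothesis~\ref{h:ks}), combined with the structural information provided by Hypotheses~\ref{h:normal} and~\ref{h:sing1d}. First I would argue by contradiction: suppose that at some point $\mf u$ the columns of $\mf A_{21}(\mf u)$ are linearly dependent, so there is a nonzero vector $\mf c \in \R^{N-h}$ with $\mf A_{21}(\mf u)\mf c = \mf 0_h$. The natural candidate for a vector violating~\eqref{e:ks} is then the vector $\mf r = (\mf 0_h^t, \mf c^t)^t \in \R^N$, which by construction lies in the kernel of $\mf B(\mf u)$ thanks to the block decomposition~\eqref{e:B} (its first $h$ components are zero and $\mf B$ acts only on the last $N-h$ components, but in fact $\mf r$ need not be in $\ker\mf B$ directly since $\mf B_{22}$ is positive definite — so I must be more careful and instead engineer an eigenvector of $\mf E^{-1}\mf A$ that happens to lie in $\ker\mf B$, i.e.\ that has vanishing last $N-h$ components).

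So the correct route is the following. The kernel of $\mf B(\mf u)$ is exactly $\{(\mf x_1^t, \mf 0_{N-h}^t)^t : \mf x_1 \in \R^h\}$ by~\eqref{e:B} and the positive definiteness of $\mf B_{22}$. Hence Hypothesis~\ref{h:ks} says precisely that no eigenvector of $\mf E^{-1}\mf A$ has the form $(\mf x_1^t, \mf 0_{N-h}^t)^t$ with $\mf x_1 \neq \mf 0_h$. Equivalently, using the block forms~\eqref{e:blockae} and~\eqref{e:e}, the eigenvalue problem $\mf A \mf r = \mu\, \mf E\,\mf r$ with $\mf r = (\mf x_1^t, \mf 0^t)^t$ reduces to the two conditions $\mf A_{11}\mf x_1 = \mu\,\mf E_{11}\mf x_1$ and $\mf A_{21}\mf x_1 = \mf 0$; Hypothesis~\ref{h:ks} therefore forbids the simultaneous solvability of these with $\mf x_1 \neq \mf 0_h$. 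Now invoke Hypothesis~\ref{h:sing1d}: $\mf A_{11} = \alpha\,\mf E_{11}$, so the first condition $\mf A_{11}\mf x_1 = \mu\,\mf E_{11}\mf x_1$ becomes $(\alpha - \mu)\,\mf E_{11}\mf x_1 = \mf 0$, which (since $\mf E_{11}$ is invertible) holds for $\mu = \alpha(\mf u)$ and \emph{any} $\mf x_1$. Consequently the only remaining obstruction to producing a forbidden eigenvector is the second condition $\mf A_{21}\mf x_1 = \mf 0_h$: if this had a nonzero solution $\mf x_1$, then $\mf r = (\mf x_1^t,\mf 0^t)^t$ would be an eigenvector of $\mf E^{-1}\mf A$ (with eigenvalue $\alpha(\mf u)$) lying in $\ker\mf B(\mf u)$, contradicting~\eqref{e:ks}.

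Therefore $\mf A_{21}(\mf u)\mf x_1 = \mf 0_h$ forces $\mf x_1 = \mf 0_h$; since $\mf A_{21}(\mf u)$ maps $\R^h$ to $\R^{N-h}$, wait — I need to double-check the shape: $\mf A_{21}\in\mathbb M^{(N-h)\times h}$ by~\eqref{e:blockae}, so its \emph{columns} are $h$ vectors in $\R^{N-h}$, and $\mf A_{21}\mf x_1 = \mf 0$ having only the trivial solution is exactly the statement that these $h$ columns are linearly independent. This is what the lemma claims, so the argument closes. The whole proof is short and essentially a diagram-chase through the block structure; the only subtle point — and the step I would flag as the one most likely to trip a reader — is correctly identifying $\ker\mf B(\mf u)$ and realizing that Hypothesis~\ref{h:sing1d} is exactly what collapses the $\mf A_{11}$-block condition so that the Kawashima--Shizuta condition bites precisely on $\mf A_{21}$. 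I would present the proof in the forward (non-contradiction) form: take $\mf x_1$ with $\mf A_{21}\mf x_1 = \mf 0_h$, observe that $(\mf x_1^t,\mf 0^t)^t$ is then an eigenvector of $\mf E^{-1}\mf A$ lying in $\ker\mf B$, and conclude $\mf x_1 = \mf 0_h$ from Hypothesis~\ref{h:ks}.
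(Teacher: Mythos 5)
Your proof is correct and takes essentially the same route as the paper's: both observe that a nonzero $\mf z\in\R^h$ with $\mf A_{21}\mf z=\mf 0_{N-h}$ makes $(\mf z^t,\mf 0_{N-h}^t)^t$ a generalized eigenvector of $(\mf A,\mf E)$ with eigenvalue $\alpha(\mf u)$ (by Hypothesis~\ref{h:sing1d}) that lies in $\ker\mf B$, contradicting the Kawashima--Shizuta condition~\eqref{e:ks}. The only slip is notational: early on you write $\mf c\in\R^{N-h}$ and $\mf A_{21}\mf c=\mf 0_h$, whereas since $\mf A_{21}\in\mathbb M^{(N-h)\times h}$ the test vector lives in $\R^h$ and $\mf A_{21}\mf c\in\R^{N-h}$ — you catch this yourself later, but it should be fixed in a final write-up.
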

\begin{proof}
We argue by contradiction: assume that there is $\mf u\in \R^N$ such that the columns of $\mf A_{21}(\mf u)$ are not linearly independent, so that there is $\mf z \in \R^h$ such that $\mf z \neq \mf 0_h$, $\mf A_{21} \mf z = \mf 0_{N-h}$. We now introduce the vector $\boldsymbol{\zeta}: = (\mf z^t, \mf 0_{N-h}^t)^t$ and we point out that $[\mf A  -\alpha  \mf E] \boldsymbol{\zeta} = \mf 0_N= \mf B \boldsymbol{\zeta}$. Since $\boldsymbol{\zeta} \neq \mf 0_N$, this contradicts the Kawashima-Shizuta condition~\eqref{e:ks}. 
\end{proof}
By recalling that $\mf A_{21} \in \mathbb M^{(N-h)\times h}$ we get 
\begin{lemma}
\label{l:dimensioni}
 Under Hypotheses~\ref{h:normal},~\ref{h:ks} and~\ref{h:sing1d} we have $h \leq N/2$.        
\end{lemma}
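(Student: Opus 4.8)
The plan is to deduce the bound directly from Lemma~\ref{l:kaw}, which is already available under exactly the same set of hypotheses. First I would recall that the off-diagonal block in the decomposition~\eqref{e:blockae} has the shape $\mf A_{21}(\mf u) \in \mathbb M^{(N-h) \times h}$, so its $h$ columns are vectors living in the ambient space $\R^{N-h}$. By Lemma~\ref{l:kaw} these $h$ columns are linearly independent for every $\mf u$ (in the ball where we work). A set of $h$ linearly independent vectors in $\R^{N-h}$ can exist only if $h \leq N - h$, since the number of linearly independent vectors cannot exceed the dimension of the space containing them. Rearranging $h \leq N-h$ gives $2h \leq N$, i.e. $h \leq N/2$, which is the claim.

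Concretely, the only step worth spelling out is the reduction: the statement of Lemma~\ref{l:dimensioni} is nothing but the assertion that the matrix $\mf A_{21}$ has full column rank $h$, and a matrix with more columns than rows cannot have full column rank. So the proof is essentially the single implication ``$\operatorname{rank} \mf A_{21} = h$ and $\mf A_{21}$ has $N-h$ rows'' $\Longrightarrow$ $h \leq N-h$.

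There is no real obstacle here: the entire content has been front-loaded into Lemma~\ref{l:kaw}, whose proof uses the Kawashima--Shizuta condition~\eqref{e:ks} together with Hypothesis~\ref{h:sing1d} (via the auxiliary vector $\boldsymbol\zeta = (\mf z^t, \mf 0_{N-h}^t)^t$ that would violate~\eqref{e:ks} if the columns of $\mf A_{21}$ were dependent). Once that is in hand, Lemma~\ref{l:dimensioni} is a one-line linear-algebra corollary, and the only thing to be careful about is bookkeeping the correct dimensions of the blocks in~\eqref{e:B}, \eqref{e:e} and~\eqref{e:blockae}, so that one indeed reads off $\mf A_{21} \in \mathbb M^{(N-h)\times h}$ and not the transpose shape.
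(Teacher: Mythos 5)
Your proof is correct and is precisely the argument the paper intends: the paper states Lemma~\ref{l:dimensioni} immediately after Lemma~\ref{l:kaw} with only the remark ``By recalling that $\mf A_{21} \in \mathbb M^{(N-h)\times h}$ we get,'' which is exactly your one-line deduction that $h$ linearly independent columns in $\R^{N-h}$ force $h \leq N-h$. No discrepancy.
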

We now introduce our last hypothesis. 
\begin{hyp}
\label{h:jde} 
For every $\mf u$ such that $\alpha (\mf u) =0$, we have  $\nabla \alpha (\mf u)  \neq \mf 0_N$ and  furthermore
\begin{equation}
\label{e:jde}
       \nabla \alpha (\mf u)= (\mf 0_h^t, \boldsymbol{\xi}^t)    
       \end{equation}
 for some $ \boldsymbol{\xi} \in \R^{N-h}$ which can be written as a linear combination of the columns of $\mf A_{21}(\mf u)$.       
\end{hyp}
Note that we impose~\eqref{e:jde} only at points where $\alpha (\mf u)=0$.
Owing to Hypothesis~\ref{h:jde} we are able to assign in a very natural way the boundary conditions for the solution of the hyperbolic-parabolic system~\eqref{e:symmetric}, see~\S~\ref{ss:bc}. Note furthermore that Hypothesis~\ref{h:jde} is violated by the Navier-Stokes equations written in Lagrangian coordinates. However, in that case we can apply the analysis in~\cite{BianchiniSpinolo:ARMA}. 
\begin{remark}
\label{r:acca1}
In the case when $h=1$, we have $\mf a_{21} \in \R^{N-1}$ and $\mf a_{21} \neq \mf 0_{N-1}$ owing to Lemma~\ref{l:kaw}. The block decomposition~\eqref{e:G} boils down to 
\be 
\label{e:G2}
 \mf G (\mf u,  \mf u_x) = 
    \left(
    \begin{array}{cc}
     0 & \mf \mf 0 \\
    \mf  g_{1}  & \mf G_2 \\
    \end{array}
    \right)
\eq
where $\mf g_1 \in \R^{N-1}$ and $\mf G_2 \in \mathbb{M}^{(N-1) \times (N-1)}$. 
Hypothesis~\ref{h:sing1d} is trivially satisfied and Hypothesis~\ref{h:jde} means that $\nabla \alpha$ is as in~\eqref{e:jde} for some nonzero vector $\boldsymbol{\xi} $ parallel to $\mf a_{21} (\mf u)$.  
\end{remark}
Since we need it in the following sections, we state here a result concerning the signature of the matrix 
$\mf B_{22}^{-1} \mf A_{21} \mf E_{11}^{-1} \mf A_{21}^t$. 
\begin{lemma}
\label{l:signature} The the signature of the square matrix $-\mf B_{22}^{-1} \mf A_{21} \mf E_{11}^{-1} \mf A_{21}^t (\mf u)$ is as follows:
\begin{itemize}
\item the eigenvalue $0$ has multiplicity $N-2h$;
\item there are $h$ strictly negative eigenvalues (each of them is counted according to its multiplicity). 
\end{itemize}
\end{lemma}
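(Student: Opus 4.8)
\emph{Proof proposal.} The plan is to reduce the statement to Sylvester's law of inertia, by isolating an auxiliary symmetric matrix and then handling the (non-symmetric) product with $\mf B_{22}^{-1}$ via a conjugation. First I would collect the structural facts at our disposal: by Hypothesis~\ref{h:normal} the blocks $\mf E_{11}(\mf u)$ and $\mf B_{22}(\mf u)$ are symmetric and positive definite, hence they admit symmetric positive definite square roots and inverses thereof; by Lemma~\ref{l:kaw} the matrix $\mf A_{21}(\mf u) \in \mathbb M^{(N-h)\times h}$ has $h$ linearly independent columns, hence $\mathrm{rank}\,\mf A_{21}(\mf u) = h$; and by Lemma~\ref{l:dimensioni} we have $N-h \geq h$, so $N-2h \geq 0$ and the count in the statement is meaningful.

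Next I would analyze the auxiliary symmetric matrix $\mf S(\mf u) := \mf A_{21}(\mf u)\,\mf E_{11}^{-1}(\mf u)\,\mf A_{21}^t(\mf u) \in \mathbb M^{(N-h)\times(N-h)}$. Since $\mf E_{11}^{-1}$ is positive definite, for every $\boldsymbol{\eta} \in \R^{N-h}$ one has $\boldsymbol{\eta}^t \mf S \boldsymbol{\eta} = (\mf A_{21}^t \boldsymbol{\eta})^t \mf E_{11}^{-1} (\mf A_{21}^t \boldsymbol{\eta}) \geq 0$, with equality if and only if $\mf A_{21}^t \boldsymbol{\eta} = \mf 0_h$. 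Hence $\mf S$ is positive semi-definite and $\ker \mf S = \ker \mf A_{21}^t$, which has dimension $(N-h) - \mathrm{rank}\,\mf A_{21} = N-2h$. Therefore $\mf S(\mf u)$ has exactly $N-2h$ null eigenvalues and $h$ strictly positive eigenvalues, each counted with multiplicity.

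Finally I would put things together. Set $\mf M(\mf u) := -\mf B_{22}^{-1} \mf A_{21} \mf E_{11}^{-1} \mf A_{21}^t = \mf B_{22}^{-1}(-\mf S)$. Although this is a product of two symmetric matrices, and so in general not symmetric, conjugating by $\mf B_{22}^{1/2}$ yields $\mf B_{22}^{1/2}\, \mf M\, \mf B_{22}^{-1/2} = \mf B_{22}^{-1/2}(-\mf S)\mf B_{22}^{-1/2}$, which \emph{is} symmetric. Thus $\mf M$ is similar to a symmetric matrix; in particular it is diagonalizable with real eigenvalues, and its signature coincides with that of $\mf B_{22}^{-1/2}(-\mf S)\mf B_{22}^{-1/2}$. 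The latter is congruent to $-\mf S$ through the invertible matrix $\mf B_{22}^{-1/2}$, so Sylvester's law of inertia gives that it has the same inertia as $-\mf S$, namely $N-2h$ null eigenvalues and $h$ strictly negative eigenvalues by the previous step. This is precisely the assertion of the lemma. I do not expect a genuine obstacle here; the only point requiring care is that the product of symmetric matrices need not be symmetric, which is exactly what the conjugation/congruence step is designed to handle.
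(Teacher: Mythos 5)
Your proof is correct and follows essentially the same route as the paper's: reduce to computing the inertia of the symmetric positive-semidefinite matrix $\mf A_{21}\mf E_{11}^{-1}\mf A_{21}^t$ (whose kernel has dimension $N-2h$ because $\mf A_{21}$ has full column rank $h$ by Lemma~\ref{l:kaw}), and then transfer the signature across the $\mf B_{22}^{-1}$ factor. The only cosmetic differences are that the paper factors $\mf E_{11}^{-1}=\mf P^2$ and writes the middle matrix as $\mf M\mf M^t$ before the kernel analysis, and invokes its auxiliary linear-algebra lemmas in~\S~\ref{s:proof} in place of your explicit $\mf B_{22}^{1/2}$-conjugation combined with Sylvester's law of inertia; these are equivalent manipulations.
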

\begin{proof}
In the proof we assume that all matrices are evaluated at the same point $\mf u$.
First, we point out that the matrices $\mf A_{21} \mf E_{11}^{-1} \mf A_{21}^t$ 
and $\mf B_{22}$ are both symmetric owing to Hypothesis~\ref{h:normal}. We apply Lemma~\ref{l:sym} in~\S~\ref{s:proof} and we conclude that to establish Lemma~\ref{l:signature} it suffices to show that $\mf A_{21} \mf E_{11}^{-1} \mf A_{21}^t$  has $h$ strictly positive eigenvalues (each of them counted according to its multiplicity) and the eigenvalue $0$ with multiplicity $N-2h$. Next, we recall that $\mf E_{11}^{-1}$ is symmetric and positive definite. This implies that we can decompose it as $\mf E_{11}^{-1} = \mf P^2$ for some 
symmetric and positive definite matrix $\mf P \in \mathbb M^{h \times h}$. This implies that $\mf A_{21} \mf E_{11}^{-1} \mf A_{21}^t= \mf M \mf M^t$ provided $\mf M: = \mf A_{21} \mf P \in \mathbb M^{(N-h) \times h}$.  

We now establish the following implication: given $\mf b \in \R^{N-h}$, 
\be 
\label{e:iffm}
        \mf M \mf M^t \mf b = \mf 0_{N-h} \Longleftrightarrow \mf M^t \mf b = \mf 0_{h}
\eq
The implication $\mf M^t \mf b = \mf 0_{h} \implies \mf M \mf M^t \mf b = \mf 0_{N-h}$ is trivial. To establish the opposite implication, we point out that the equality $\mf M \mf M^t \mf b = \mf 0_{N-h}$ implies that 
$|\mf M^t \mf b|^2=\mf b^t \mf M \mf M^t \mf b =0$, i.e. that $\mf M^t \mf b= \mf 0_{h}.$

Next, we point out that the columns of $\mf M$ are linearly independent. Indeed, assume by contradiction there is $\mf a \in \R^{h}$, $\mf a \neq \mf 0_h$ such that $\mf M \mf a = \mf A_{21} \mf P \mf a = \mf 0_{N-h}$. Owing to Lemma~\ref{l:kaw}, this implies that $\mf P \mf a = \mf 0_h$ and since $\mf P$ is positive definite this implies $\mf a = \mf 0_h$, which contradicts our assumption. 

We term $\mf m_1, \dots, \mf m_h$ the columns of $\mf M$. Since they are linearly independent, then the subspace 
$
  V:= \big\{ \mf b \in \mathbb \R^{N-h}: \; \text{$\mf b^t \mf m_i=0$ for every $i=1, \dots, h$} \big\}
$
has dimension $N-2h$. Owing to~\eqref{e:iffm}, ${\mf M \mf M^t \mf b = \mf 0_{N-h}}$ if and only if $ \mf b^t \mf M = \mf 0^t_{h}$, which in turn is equivalent to $\mf b \in  V$. This implies that $V$ is the kernel of $\mf M \mf M^t$, and hence that the multiplicity of $0$ as an eigenvalue of $\mf M \mf M^t$ is exactly $N-2h.$

To conclude, we are left to show that, if $\lambda$ is a nonzero eigenvalue of $\mf M \mf M^t$, then $\lambda >0$. We point out that
$\mf M \mf M^t$ is symmetric and hence has $N-h$ linearly independent eigenvectors. We fix $\lambda \neq 0$ such that $\mf M \mf M^t \mf c = \lambda \mf c$ for some $\mf c \in \R^{N-h}$, $\mf c \neq \mf 0_{N-h}$. By left multiplying the previous inequality times $\mf c^t$ we get $|\mf M^t \mf c|^2 = 
\lambda |\mf c|^2$, which implies $\lambda \ge 0$ and hence concludes the proof. 
\end{proof}
\subsection{Boundary conditions for the mixed hyperbolic-parabolic system}
\label{ss:bc}
In this paragraph we precisely define the boundary conditions for~\eqref{e:symmetric}. 
We first decompose $\mf u$ as 
\begin{equation}
\label{e:blocku}
      \mf u : =
      \left( 
      \begin{array}{cc} 
      \mf u_1 \\
      \mf u_2 \\
      \end{array}
      \right), \qquad \mf u_1 \in \R^h, \mf u_2 \in \R^{N-h}{,}
\end{equation}
and we introduce a preliminary result. We recall that $\alpha$ is the same function as in Hypothesis~\ref{h:sing1d}. 
\begin{lemma}
\label{l:sign}
The sign of $\alpha(\mf u_1, \mf u_2)$ does not depend on $\mf u_1$ and only depends on $\mf u_2$. 
\end{lemma}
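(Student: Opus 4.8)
The plan is to extract the dependence of $\alpha$ on $\mf u_1$ directly from Hypothesis~\ref{h:jde}. Recall that the claim is about the \emph{sign} of $\alpha$, not about $\alpha$ itself, so the statement is genuinely weaker than ``$\alpha$ is independent of $\mf u_1$'': the point is that the zero set of $\alpha$ and the open regions $\{\alpha>0\}$, $\{\alpha<0\}$ are cylinders over subsets of the $\mf u_2$-space. I would first argue that $\{\alpha=0\}$ has this cylindrical structure, and then use connectedness to propagate the sign on the complement.

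First I would show that whenever $\alpha(\mf u_1^0,\mf u_2^0)=0$ for some $(\mf u_1^0,\mf u_2^0)$, then $\alpha(\mf u_1,\mf u_2^0)=0$ for \emph{every} $\mf u_1\in\R^h$ (in the relevant ball). Indeed, fix $\mf u_2^0$ and consider the function $t\mapsto\alpha(\mf u_1^0+t(\mf u_1-\mf u_1^0),\mf u_2^0)$ of one real variable. Its derivative is $\nabla_{\mf u_1}\alpha$ contracted with $\mf u_1-\mf u_1^0$. At any point $(\mf u_1',\mf u_2^0)$ where $\alpha$ vanishes, Hypothesis~\ref{h:jde} gives $\nabla\alpha=(\mf 0_h^t,\boldsymbol\xi^t)$, so in particular $\nabla_{\mf u_1}\alpha(\mf u_1',\mf u_2^0)=\mf 0_h$. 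Hence the one-variable function has vanishing derivative at every zero, which is a classical uniqueness situation: a $C^1$ function of one real variable whose derivative vanishes at each of its zeros is identically zero on any interval containing a zero (one can run a standard connectedness/continuation argument on the closed set of zeros, which is then also open in the interval). This shows the zero set of $\alpha$ is a union of slices $\{\mf u_1\ \text{arbitrary}\}\times\{\mf u_2\in Z\}$ for some $Z\subset\R^{N-h}$.

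It then remains to handle the sign on the complement of the zero set. Since the analysis is local, work in a ball $\mathrm B$ of $\R^N$ which we may take to be a product $\mathrm B_1\times\mathrm B_2$ with $\mathrm B_1\subset\R^h$, $\mathrm B_2\subset\R^{N-h}$ convex. Fix $\mf u_2^0\in\mathrm B_2$. If the slice $\mathrm B_1\times\{\mf u_2^0\}$ meets the zero set, then by the previous step $\alpha\equiv 0$ on that whole slice and there is nothing to prove. Otherwise $\alpha$ is nowhere zero on the connected set $\mathrm B_1\times\{\mf u_2^0\}$, hence by continuity and the intermediate value theorem it has constant sign there. In both cases the sign of $\alpha(\mf u_1,\mf u_2^0)$ is independent of $\mf u_1\in\mathrm B_1$, which is exactly the assertion of the lemma.

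The main obstacle, and the only point requiring care, is the one-variable continuation argument in the second paragraph: one must be sure that ``derivative vanishes at each zero'' really forces the function to be locally constant, which uses that the set of zeros is closed (by continuity) and that near any zero the mean value theorem together with the derivative bound prevents escape — concretely, if $g\in C^1([0,1])$ with $g(0)=0$ and $g'(s)=0$ whenever $g(s)=0$, one checks that $\{s:g(s)=0\}$ is open in $[0,1]$ by writing $g(s)=\int_0^s g'$ and controlling $g'$ near the zero set using uniform continuity of $g'$, so the zero set is clopen and equals $[0,1]$. Everything else is a routine connectedness argument, and no hypothesis beyond~\ref{h:jde} (plus smoothness of the coefficients from Hypothesis~\ref{h:normal}) is needed.
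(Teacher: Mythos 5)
There is a genuine gap, and it is exactly at the point you flag as ``requiring care.'' The one-variable lemma you rely on in the last paragraph is \emph{false} as stated: take $g(s)=s^2$ on $[0,1]$. Then $g\in C^\infty$, $g(0)=0$, the only zero is $s=0$, and $g'(0)=0$, so the derivative vanishes at every zero of $g$; yet $g$ is strictly positive on $(0,1]$, so the zero set $\{0\}$ is certainly not open and $g$ is not identically zero on any interval containing $0$. Your proposed proof of that lemma (write $g(s)=\int_0^s g'$ and control $g'$ by uniform continuity near the zero set) does not rule this out: near $0$ one only gets $|g(s)|\le s\sup_{[0,s]}|g'|$, which for $g(s)=s^2$ gives $|g(s)|\le 2s^2$ — a bound that is compatible with $g>0$. ``Derivative vanishes at each zero'' is a statement about the RHS of an ODE that is merely \emph{continuous} in $g$, not Lipschitz, and for such equations zero need not be an isolated solution.

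What rescues the argument in the paper is the other half of Hypothesis~\ref{h:jde}, which you do not invoke: $\nabla\alpha(\mf u)\neq\mf 0_N$ wherever $\alpha(\mf u)=0$. Combined with the vanishing of the $\mf u_1$-components of $\nabla\alpha$ on $\{\alpha=0\}$, this lets one apply the factorization result Lemma~\ref{l:dividiamo} to each component $\partial_{u_{1,i}}\alpha$ (with $a:=\alpha$, $f:=\partial_{u_{1,i}}\alpha$) and obtain $\partial_{u_{1,i}}\alpha=\alpha\,g_i$ for smooth $g_i$. Along a segment $(1-t)\underline{\mf u}_1+t\mf u_1$ in a fixed $\mf u_2$-slice, the restriction $a(t)$ then satisfies the \emph{linear} ODE $a'(t)=G(t)\,a(t)$ with $G$ bounded on the compact interval, and Gronwall (equivalently, uniqueness for Lipschitz ODEs) gives that $a$ either vanishes identically or never vanishes. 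This is what the paper means by ``the ODE satisfied by $a$'' and ``the uniqueness part of the Cauchy--Lipschitz--Picard--Lindel\"of Theorem''; it is genuinely stronger than the pointwise vanishing of $a'$ on the zero set, and it cannot be reproduced from that pointwise condition alone, as the counterexample $s^2$ shows. The rest of your write-up (cylindrical zero set, intermediate value theorem on the complement) is fine once this is repaired, but as written the key step is incorrect.
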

\begin{proof}
We fix $\mf u_1, \underline{\mf u_1} \in \R^h$, $\mf u_2 \in \R^{N-h}$. We want to show that 
$\alpha (\mf u_1, \mf u_2)$ has the same sign as $\alpha (\underline{\mf u_1}, \mf u_2)$.
We consider the function $a: \R \to \R$ defined by setting 
$
    a(t) : = \alpha \big( (1-t) \underline{\mf u_1} + t \mf u_1,{\mf u}_2 \big)
$
and note that $a$ satisfies  
\begin{equation}
\label{e:auode}
   \frac{d}{dt} a (t)
   \stackrel{\eqref{e:jde}}{=}0 \quad \text{if $a(t) =0$}.
   \end{equation}
In other words, $a=0$ is an equilibrium for the ODE satisfied by $a$. By the uniqueness part of the Cauchy Lipschitz Picard Lindel\"of Theorem on the Cauchy problem, either $a$ is identically $0$, or it is always different from $0$. This concludes the proof.  
\end{proof}
Owing to the above lemma, the following function is well defined. 
$$
   \zeta(\mf u_2 )= \left\{
   \begin{array}{lll}
   1 & \text{if} \; \alpha (\mf u_1, \mf u_2)  > 0 \; \text{for some $\mf u_1${,}} \\
   0 &  \text{if} \; \alpha (\mf u_1, \mf u_2) \leq 0 \; \text{for some $\mf u_1${.}} \\
   \end{array}
   \right.
$$
We can eventually define the function $ \boldsymbol{\beta}: \R^N \times \R^N \to \R^N$ by setting 
\be
\label{e:esse}
   \boldsymbol{\beta} ( \mf u, \mf u_b) = 
   \left(
   \begin{array}{ccc}
   (\mf u_1 - \mf u_{1b}) \zeta (\mf u_{2b}) \\
   \mf u_2 - \mf u_{2b} \\
   \end{array} 
   \right).
\eq
We then assign the boundary condition on~\eqref{e:symmetric} by imposing $ \boldsymbol{\beta} (\mf u(x=0), \mf u_b)= \mf 0_N$. The rationale underpinning the above formula is the following. By using Hypotheses~\ref{h:normal} and~\ref{h:sing1d}, we write~\eqref{e:symmetric} as
\be
\label{e:mixed}
\left\{
\begin{array}{ll}
        \mf E_{11} \mf u_{1t} +     \alpha \mf E_{11}  \mf u_{1x} = 
        -  \mf A^t_{21}
         \mf u_{2x}{,} \\
        \mf E_{22}  \mf u_{2t} +      \mf A_{22} \mf u_{2x} = 
        \mf B_{22}  \mf u_{2xx} -  \mf A_{21}
        \mf u_{1x} + \mf G_1  \mf u_{1x} + \mf G_2  \mf u_{2x}. \\ 
\end{array}
\right.
\eq 
Loosely speaking, the components $\mf u_1$ represent the \emph{hyperbolic} part, and $\mf u_2$
represent the \emph{parabolic part}. 
As a matter of fact, by imposing $ \boldsymbol{\beta} (\mf u(x=0), \mf u_b)= \mf 0_N$ we always assign a boundary condition on the parabolic components $\mf u_2 \in \R^{N-h}$. Next, we consider the sign of $\alpha(\mf u_b)$, which only depends on $\mf u_{2b}$ owing to Lemma~\ref{l:sign}. We recall that $\mf E_{11}$ is a positive definite matrix. If $\alpha>0$ at the boundary, then the characteristic lines of the hyperbolic part are entering the domain $x>0$ and hence we also assign the boundary condition on the hyperbolic components $\mf u_1$. If $\alpha\leq 0$ then we do not assign any boundary condition on $\mf u_1$. 

We can now state the following corollary of the analysis done to establish Theorem~\ref{t:main} and Proposition~\ref{p:nc}, and we refer to~\S~\ref{s:brie} for the proof. 
\begin{corol}
\label{c:traccia} The function we exhibit in the proof of Proposition~\ref{p:nc} satisfies, besides properties properties i), ii) and iii), also the following property: let $\bar{\mf u}$ denotes its trace at $x=0$. If $\alpha (\mf u_b)> 0$, then $\alpha (\bar{\mf u})\ge 0$. If $\alpha(\mf u_b)< 0$, then $\alpha (\bar{\mf u})\leq 0$. 
\end{corol}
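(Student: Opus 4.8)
The plan is to read off Corollary~\ref{c:traccia} from the explicit construction of the self-similar profile $\mf u$ carried out in the proof of Proposition~\ref{p:nc}. The key point is that by construction the trace $\bar{\mf u}$ is connected to the boundary datum $\mf u_b$ through a boundary layer $\boldsymbol{\varphi}$ satisfying~\eqref{e:boundaryl2}, possibly preceded by a $0$-speed Liu admissible discontinuity. So first I would recall the decomposition given in property iii): there is $\underline{\mf u}$ with either $\underline{\mf u} = \bar{\mf u}$ or a $0$-speed discontinuity between $\underline{\mf u}$ (left) and $\bar{\mf u}$ (right), and a boundary layer $\boldsymbol{\varphi}$ with $\boldsymbol{\varphi}(0)$ satisfying $\boldsymbol{\beta}(\boldsymbol{\varphi}(0), \mf u_b) = \mf 0_N$ and $\boldsymbol{\varphi}(+\infty) = \underline{\mf u}$. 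I would then track the sign of $\alpha$ along this chain: along $\boldsymbol{\varphi}$, along the $0$-speed discontinuity, and at the endpoints.

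The first step is the boundary condition itself. Since $\boldsymbol{\beta}(\boldsymbol{\varphi}(0), \mf u_b) = \mf 0_N$, the parabolic components satisfy $\boldsymbol{\varphi}_2(0) = \mf u_{2b}$, so by Lemma~\ref{l:sign} we get $\alpha(\boldsymbol{\varphi}(0))$ has the same sign as $\alpha(\mf u_b)$ — indeed it depends only on $\mf u_{2b}$. The second and main step is to propagate the sign of $\alpha$ from $x=0$ to $x = +\infty$ along the boundary layer. This is where I would invoke the analysis underpinning {\sc Step 3} in~\S~\ref{ss:back}: as stated in the proof outline, if the boundary layer lies on the manifolds constructed in the paper and $\alpha > 0$ at $x=0$, then $\alpha > 0$ for all $x > 0$, hence in the limit $\alpha(\underline{\mf u}) = \alpha(\boldsymbol{\varphi}(+\infty)) \ge 0$. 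The case $\alpha(\mf u_b) < 0$ is symmetric: one shows $\alpha < 0$ persists along the layer, so $\alpha(\underline{\mf u}) \le 0$. The mechanism is the same slaving/invariant-manifold structure used to justify the change of variables $dx/dy = \alpha(\mf v)$ from~\eqref{e:boilsd} to~\eqref{e:odefast}; in particular $\alpha$ cannot change sign along a trajectory confined to those manifolds without passing through an equilibrium, and Hypothesis~\ref{h:jde} guarantees that the zero set of $\alpha$ behaves well.

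The third step is to handle the possible $0$-speed discontinuity between $\underline{\mf u}$ and $\bar{\mf u}$. Here I would argue that, since the characteristic field $\lambda_k$ is the only one that can change sign (by~\eqref{e:sh}), a $0$-speed Liu admissible discontinuity at the boundary must be a $k$-wave (shock or contact discontinuity), and along such a wave the relevant monotonicity of $\lambda_k$ — equivalently, via the relation between $\lambda_k$ and $\alpha$ established in~\S~\ref{ss:bc} near $\mf u^\ast$ — forces $\alpha(\bar{\mf u})$ to have the same sign as $\alpha(\underline{\mf u})$, or to be closer to zero from the same side. Concretely: a $0$-speed $k$-shock has $\lambda_k(\underline{\mf u}) \ge 0 \ge \lambda_k(\bar{\mf u})$ by Lax/Liu, and since $\alpha$ and $\lambda_k$ vanish simultaneously at $\mf u^\ast$ and share their sign structure in the relevant small neighborhood, this gives $\alpha(\bar{\mf u}) \le 0 \le \alpha(\underline{\mf u})$, which combined with Step 2 closes the chain in both sign cases. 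If instead $\underline{\mf u} = \bar{\mf u}$ there is nothing to prove at this step.

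I expect the main obstacle to be Step 2 — rigorously showing that the sign of $\alpha$ is preserved along the boundary layer. The difficulty is precisely the one flagged in~\S~\ref{ss:back}: the change of variables $dx/dy = \alpha(\mf v)$ is only formal a priori, and one must exclude the degenerate scenario in which $\alpha$ reaches $0$ at some finite $x$ (or oscillates) while the trajectory stays on the constructed manifolds. Since the proof of Proposition~\ref{p:nc} already establishes that the boundary layers we exhibit lie on these manifolds and that $\alpha > 0$ at $x=0$ implies $\alpha > 0$ throughout, the corollary follows once we unwind that statement together with Lemma~\ref{l:sign} and the wave-structure argument of Step 3; so the real content has in effect already been proved, and here it only remains to assemble it. I would therefore keep the proof short, citing~\S~\ref{ss:back} for the persistence of the sign of $\alpha$ along $\boldsymbol{\varphi}$ and the Liu admissibility analysis of~\S~\ref{ss:bc} for the $0$-speed wave, and conclude.
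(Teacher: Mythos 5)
Your Steps 1 and 2 are sound and match the paper's approach: the paper's proof indeed starts by invoking Lemma~\ref{l:back} (which encapsulates the sign-persistence of $\alpha$ along the boundary layer established in~\S~\ref{ss:back}) together with~\eqref{e:2bdproblem4} and continuity to get $\alpha(\underline{\mf u}) \ge 0$. Step 1 is essentially harmless but slightly unnecessary in the case $\alpha(\mf u_b)>0$, since there the boundary condition is the full condition $\boldsymbol{\varphi}(0)=\mf u_b$.

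The genuine gap is in Step 3, and it has two components. First, you assert that $\alpha$ and $\lambda_k$ ``share their sign structure in the relevant small neighborhood'' because they both vanish at $\mf u^\ast$. The paper never establishes this, and it is generically false: both functions vanish at the single point $\mf u^\ast$, but unless $\nabla\alpha(\mf u^\ast)$ and $\nabla\lambda_k(\mf u^\ast)$ are parallel, their zero sets are transversal hypersurfaces and the two functions have different sign regions nearby. (For Navier--Stokes one happens to have $\alpha=\lambda_k=u$, but the paper is explicitly trying to treat the general case and imposes no such coincidence.) Second, even granting the sign coincidence, the Liu/Lax inequality for a $0$-speed $k$-wave with $\underline{\mf u}$ on the left and $\bar{\mf u}$ on the right gives $\lambda_k(\underline{\mf u})\ge 0 \ge \lambda_k(\bar{\mf u})$, which you then translate to $\alpha(\bar{\mf u})\le 0$; but the statement you are trying to prove in the case $\alpha(\mf u_b)>0$ is $\alpha(\bar{\mf u})\ge 0$, so your inference points in the wrong direction and does not ``close the chain.''

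What the paper actually does at this stage is different and self-contained: it observes that $\underline{\mf u}=\mf u_k(\underline\tau)$ and $\bar{\mf u}=\mf u_k(\bar\tau)$ both lie on the single curve $\tau\mapsto \mf u_k(\tau)$ produced by the fixed point problem~\eqref{e:mappaT} (resp.\ on $\boldsymbol{\zeta}_k(\tilde{\mf u},\cdot)$ in the linearly degenerate case, where one uses property~C) of Lemma~\ref{l:curvekld}), and then shows that $\alpha\circ\mf u_k$ cannot change sign on $[0,s_k]$. The mechanism is the same ODE argument already used in Lemma~\ref{l:sign}, Lemma~\ref{l:diverge0}, and property~C): by Hypothesis~\ref{h:jde} together with~\eqref{e:definiamod} and the structure of $\partial\mf u_k/\partial\tau$ coming from~\eqref{e:mappaT}, one has $\partial(\alpha\circ\mf u_k)/\partial\tau = 0$ whenever $\alpha\circ\mf u_k = 0$, so $\alpha\circ\mf u_k=0$ is an equilibrium and by uniqueness $\alpha\circ\mf u_k$ has constant sign. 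Combined with $\alpha(\underline{\mf u})\ge 0$ from your Step 2, this yields $\alpha(\bar{\mf u})\ge 0$. No information about $\lambda_k$ is needed in this step; the wave admissibility conditions play no role. You should replace your Lax/Liu argument with this ODE sign-preservation argument along the $k$-wave fan curve.
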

\subsection{Application to the Navier-Stokes equations}
\label{ss:h:ns}
We consider the compressible Navier-Stokes equations in Eulerian coordinates, i.e. 
\be
\label{e:ns}
\left\{
\begin{array}{lll}
\rho_t + (\rho u)_x =0 {,} \phantom{\displaystyle{\int}} \\
  (\rho u)_t +  (\rho u^2 + p )_x =  (\nu  u_x )_x {,} \phantom{\displaystyle{\int}} \\
\displaystyle{ \left( \rho \left( e + \frac{u^2}{2} \right)\right)_t + \left( \rho u \left( e + \frac{u^2}{2} \right) + p u \right)_x
=  (\kappa \theta_x + \nu u  u_x )_x } {.} 
\end{array}
\right.
\eq
Here $\rho$ represents the fluid density, $u$ the fluid velocity and $\theta$ the absolute temperature. The internal energy $e$ depends on $\theta$ and $e_\theta>0$.
Also, $\nu (\rho)>0$ and $\kappa(\rho)>0$ are the viscosity and heat conductivity coefficients, respectively.   
We consider the case of polytropic gases, where the pressure $p$ satisfies $p(\rho, \theta) = R \rho \theta$. 
We want to write~\eqref{e:ns} in the form~\eqref{e:symmetric}. First, we set $\mf u: =(\rho, u, \theta)^t$ and  we write~\eqref{e:ns} in the quasilinear form 
\be
\label{e:ql}
     \tilde{\mf E} (\mf u)  \mathbf{u}_t  +
   \tilde{\mf A}(\mf u)  \mf u_x=  \tilde{\mf B}(\mf u)  \mf u_{xx}+
    \tilde{\mf G} (\mf u,  \mf u_x )  \mf u_x.
\eq 
In the previous expression, 
$$
    \tilde{\mf E} (\mf u)\! = \! \!
    \left( 
    \begin{array}{ccc}
    1 & 0 & 0 \\
    u & \rho & 0 \\
    \psi(\theta, u) & \rho u & \rho e_\theta \\ 
    \end{array}
    \right)\! \!, \quad 
     \tilde{\mf A} (\mf u)=
    \left( 
    \begin{array}{ccc}
    u & \rho & 0 \\
    u^2+  p_\rho  & 2 \rho u &  p_\theta  \\
    u\psi+ p_\rho u & \rho\psi + \rho u^2+ p & \rho u e_\theta+ u p_\theta  \\ 
    \end{array}
    \right)  {,}
$$
where we have used the shorthand notation $\psi(\theta, u)= e+ u^2/2$.  
The expressions of $\tilde{\mf B}$ and $\tilde{\mf G}$ can be also explicitly computed. By left multiplying~\eqref{e:ql} times 
$$
    \mf S (\mf u)\! = 
    \left( 
    \begin{array}{ccc}
    p_\rho/\rho^2 & 0 & 0 \\
    -u/\rho & 1/\rho & 0 \\
    (u^2-\psi)/\rho \theta & -  u / \rho \theta & 1 /\rho \theta  \\ 
    \end{array}
    \right)
$$
and recalling that $p(\rho, \theta)= R \rho \theta${,} we arrive at~\eqref{e:symmetric} provided 
$$
   \mf E (\mf u)\! = \! \!
    \left( 
    \begin{array}{ccc}
    R \theta/\rho^2 & 0 & 0 \\
   0 & 1 & 0 \\
   0 & 0 & e_\theta/\theta  \\ 
    \end{array}
    \right) \!\!, \quad 
    \mf A (\mf u)\! = \! \!
    \left( 
    \begin{array}{ccc}
    R \theta u / \rho^2 & R \theta /\rho & 0 \\
    R \theta /\rho & u &  R \\
   0 & R  & e_\theta u / \theta   \\ 
    \end{array}
    \right) \!\!, \quad 
    \mf B (\mf u)\! = \! \!
    \left( 
    \begin{array}{ccc}
    0 & 0 & 0 \\
    0 & \nu /\rho & 0 \\
    0 & 0 & \kappa/\rho \theta    \\ 
    \end{array}
    \right)
$$
and
$$
   \mf G (\mf u, \mf u_x)\! = \! \!
    \left( 
    \begin{array}{ccc}
    0 & 0 & 0 \\
    0 & \nu'\rho_x /\rho & 0 \\
    0 & \nu u_x/\rho \theta & \kappa'\rho_x/\rho \theta    \\ 
    \end{array}
    \right).
$$
The eigenvalues of $\mf E^{-1} \mf A$ are $\lambda_1 = u -c$, $\lambda_2 =u$ and $\lambda_3 = u+c$, where $c = \sqrt{\theta R + \theta R^2 /e_\theta}$. 
By direct check one can verify that Hypotheses~\ref{h:normal},$\dots$,\ref{h:jde} are all satisfied provided that $N=3$, $h=1$, $u_1 = \rho$, $\mf u_2= (u, \theta)^t$, $k=2$, $\alpha (u) = u$ and $\mf u^\ast= (\rho^\ast, 0, \theta^\ast)$ for some $\rho^\ast, \theta^\ast >0$. 
The boundary condition $\boldsymbol{\beta} (\mf u(x=0), \mf u_b)= \mf 0_N$ translates as follows: given $\mf u_b=(\rho_b, u_b, \theta_b)$ with $\rho_b, \theta_b>0$ we first assign the boundary conditions $\mf u_2 = (u_b, \theta_b)^t$ at $x=0$. If $u_b > 0$, then we also assign the boundary condition $u_1=\rho_b$ at $x=0$, if $u_b \leq 0$ we do not.  
\subsection{Application to the MHD equations} \label{ss:h:mhd} We consider the equations describing the propagation of plane waves in magnetohydrodynamics, i.e. 
\be
\label{e:mhd}
\left\{
\begin{array}{lll}
\rho_t + (\rho u)_x =0 {,} \phantom{\displaystyle{\int}} \\
\mf b_t + (u \mf b - \beta \mf w)_x = ( \eta \mf b_x )_x {,} \\
  (\rho u)_t +  \displaystyle{\Big(\rho u^2 + p + \frac{1}{2} |\mf b|^2 \Big)_x} =  (\nu  u_x )_x {,} \\
(\rho \mf w)_t + (\rho u \mf w- \beta \mf b)_x =(\nu \mf w_x)_x {,} \phantom{\displaystyle{\int}}\\
\displaystyle{\Big( \rho \Big( \frac{1}{2} u^2 + \frac{1}{2} |\mf w|^2 + e  \Big) + \frac{1}{2} |\mf b|^2 \Big)_t
+ \Big( \rho u \Big( \frac{1}{2} u^2 + \frac{1}{2} |\mf w|^2 + e  \Big) + u \Big( p + |\mf b|^2 \Big) - \beta \mf b^t \mf w  
\Big)_x} \\
\displaystyle{\qquad \qquad = \big( \nu (u u_x + \mf w^t \mf w_x ) + \kappa \theta_x + \eta \mf b^t \mf b_x  \big)_x} {.} \\
\end{array}
\right.
\eq
The quantities $\rho, \theta, e, p, \nu, k$ have the same physical meaning as at the previous paragraph. The fluid velocity is $(u, \mf w)^t$, with $u \in \R$ and $\mf w \in \R^2$ (we assume that waves are propagating in the direction $(1, 0, 0)$) and the magnetic field is $(\beta, \mf b)$
 with $\beta \in \R$, $\beta \neq 0$ constant, and $\mf b \in \R^2$. Finally, $\eta \ge 0$ is the conductivity and in the following we separately consider the cases $\eta>0$ and $\eta=0$.
 
We now want to to write~\eqref{e:mhd} in the form~\eqref{e:symmetric}. First, we set $\mf u=(\rho, \mf b, u, \mf w, \theta)$ and we write~\eqref{e:mhd} in the form~\eqref{e:ql} for suitable matrices $\tilde{\mf E}$, $\tilde{\mf A}$, $\tilde{\mf B}$ and $\tilde{\mf G}$. 
Next, we left multiply~\eqref{e:ql} times    
$$
   \mf S(\mf u) = 
   \left( 
    \begin{array}{ccccc}
    R\theta/\rho^2     & \mf 0_2^t  & 0          & \mf 0_2^t & 0   \\
     \mf 0_2     & \mf I_2/\rho    & \mf 0_2          & \mf 0_{2 \times 2} & \mf 0_2  \\
   -u/\rho    & \mf 0_2^t    & 1/ \rho      & \mf 0_2^t & 0   \\
  -\mf w/\rho & \mf 0_{2 \times 2} & \mf 0_2 &  \mf I_2/\rho & \mf 0_{2}   \\
   (u^2+|\mf w|^2 - \psi)/ \rho \theta  & - \mf b^t/\rho \theta 
    & -u/\rho \theta & - \mf w^t / \rho \theta & 1/ \rho \theta    \\
    \end{array}
    \right) \! \!, 
$$ 
and we arrive at~\eqref{e:symmetric} provided 
$$
      \mf E(\mf u) 
   = \! \!
    \left( 
    \begin{array}{ccccc}
    R \theta/\rho^2    & \mf 0_2^t   & 0          & \mf 0_2^t & 0   \\
     \mf 0_2   & \mf I_2/\rho     & \mf 0_2          & \mf 0_{2 \times 2} & \mf 0_2   \\
   0        & \mf 0_2^t & 1       & \mf 0_2^t & 0  \\
   \mf 0_2  & \mf 0_{2 \times 2}  & \mf 0_2  & \mf I_{2}  & \mf 0_2\\
    0  &\mf 0_2^t & 0 &\mf 0_2^t & e_\theta/\theta   \\
    \end{array}
    \right)\! \!, \quad
    \mf A(\mf u) 
   = \! \!
    \left( 
    \begin{array}{ccccc}
    R \theta u /\rho^2   & \mf 0_2^t     & R \theta/\rho          & \mf 0_2^t & 0  \\
        \mf 0_2     & u \mf I_2/\rho   & \mf b /\rho         & - \beta \mf I_{2} & \mf 0_2   \\
   R \theta /\rho    & \mf b^t/\rho   & u        & \mf 0_2^t & R   \\
   \mf 0_2 & - \beta \mf I_{2} & \mf 0_2  & u \mf I_{2}  & \mf 0_2  \\
    0  &\mf 0_2^t  & R &\mf 0_2^t & u e_\theta /\theta \\
    \end{array}
    \right)  
$$
and
$$
   \mf B(\mf u) 
   = \! \!
    \left( 
    \begin{array}{ccccc}
   0      & \mf 0_2^t & 0          & \mf 0_2^t & 0   \\
   \mf 0_2  & \eta \mf I_2/\rho      & \mf 0_2          & \mf 0_{2 \times 2} & \mf 0_2   \\
   0       & \mf 0_2^t  & \nu/\rho       & \mf 0_2^t & 0  \\
   \mf 0_2 & \mf 0_{2 \times 2} & \mf 0_2  & \nu \mf I_{2}/\rho  & \mf 0_2  \\
    0 &\mf 0_2^t  & 0 &\mf 0_2^t & \kappa/\rho \theta   \\
    \end{array}
    \right)\! \!, \quad 
    \mf G(\mf u, \mf u_x) 
   = \! \!
    \left( 
    \begin{array}{ccccc}
   0      & \mf 0_2^t & 0          & \mf 0_2^t & 0   \\
   \mf 0_2  &\mf 0_{2 \times 2}      & \mf 0_2          & \mf 0_{2 \times 2} & \mf 0_2   \\
   0       & \mf 0_2^t  & \nu' \rho_x/ \rho       & \mf 0_2^t & 0  \\
   \mf 0_2 & \mf 0_{2 \times 2} & \mf 0_2  & \nu' \rho_x \mf I_{2}/\rho  & \mf 0_2  \\
    0 & \eta \mf b^t_x/\rho \theta  & \nu u_x /\rho \theta & \nu \mf w^t_x/\rho \theta & \kappa' \rho_x/\rho \theta   \\
    \end{array}
    \right)\! \!. \quad 
$$
If $\mf b \neq \mf 0_2$, then the matrix $\mf E^{-1} \mf A$ has 7 real distinct eigenvalues that in a neighborhood  of $u=0$ satisfy $\lambda_1(\mf u) < \lambda_2 (\mf u), \lambda_3 (\mf u)<0$, $\lambda_4(\mf u) =u$, 
  $0< \lambda_5(\mf u) < \lambda_6 (\mf u)< \lambda_7 (\mf u)$, see~\cite[Volume 1, p.16]{Serre1} for a related discussion. This implies that Hypothesis~\ref{h:sh} is satisfied and $N=7$, $k=4$ in~\eqref{e:sh}. To verify the other hypotheses, we separately consider two distinct cases. 
\subsubsection{Case $\eta>0$}  
In this case $h=1$ and Hypotheses~\ref{h:normal} and~\ref{h:ks} are satisfied. Note that 
\be
\label{e:a21mhd}
   \mf a_{21}= \big(  \mf 0_2^t     , R \theta/\rho          , \mf 0_2^t , 0 \big)^t {.}
\eq
We set $\alpha (\mf u)=u$ and we obtain~\eqref{e:sing1d} and that Hypothesis~\ref{h:jde} is satisfied. 
Note that any point $\mf u^\ast=(\rho^\ast, \mf b^\ast, 0, \mf w^\ast, \theta^\ast)$, $\rho^\ast, \theta^\ast>0$, $\mf b^\ast \neq \mf 0_2$ satisfies~\eqref{e:uast}. Note that the boundary condition $\boldsymbol{\beta} (\mf u(x=0), \mf u_b)= \mf 0_N$ translates as follows: given $\mf u_b=(\rho_b, \mf b_b, u_b, \mf w_b, \theta_b)^t$ with $\rho_b, \theta_b>0$, $\mf b_b \neq 0$ we first assign the boundary conditions $\mf u_2 = (\mf b_b, u_b, \mf w_b, \theta_b)^t$ at $x=0$. If $u_b > 0$, then we also assign the boundary condition $u_1=\rho_b$ at $x=0$, if $u_b \leq 0$ we do not.  
\subsubsection{Case $\eta=0$}  \label{sss:h:mhdeta0}
In this case $h=3$ and Hypotheses~\ref{h:normal} and~\ref{h:ks} are satisfied. Note that 
\be
\label{e:blocchimhd}
    \mf E_{11}= 
    \left( 
    \begin{array}{ccccc}
    R \theta/\rho^2    & \mf 0_2^t      \\
     \mf 0_2   & \mf I_2/\rho  \\
     \end{array}\right) \! \!, \quad 
   \mf A_{21}= \left( \begin{array}{ccc}
     R \theta /\rho    & \mf b^t/\rho   \\
   \mf 0_2 & - \beta \mf I_{2}  \\
    0  &\mf 0_2^t   \\
    \end{array}
    \right)\! \!, \quad 
   \mf A_{22}= \left( \begin{array}{ccc}
      u        & \mf 0_2^t & R   \\
    \mf 0_2  & u \mf I_{2}  & \mf 0_2  \\
   R &\mf 0_2^t & u e_\theta /\theta \\
   \end{array}    \right)
\eq 
and that Hypothesis~\ref{h:sing1d} is satisfied with $\alpha (\mf u) =u$. This implies that Hypothesis~\ref{h:jde} holds and that any point $\mf u^\ast=(\rho^\ast, \mf b^\ast, 0, \mf w^\ast, \theta^\ast)$, with $\rho^\ast, \theta^\ast>0$, $\mf b^\ast \neq \mf 0_2$ satisfies~\eqref{e:uast}. 
Finally, the boundary condition $\boldsymbol{\beta} (\mf u(x=0), \mf u_b)= \mf 0_N$ translates as follows: given $\mf u_b=(\rho_b, \mf b_b, u_b, \mf w_b, \theta_b)^t$ with $\rho_b, \theta_b>0$, $\mf b_b \neq 0$ we first assign the boundary conditions $\mf u_2 = (u_b, \mf w_b, \theta_b)^t$ at $x=0$. If $u_b > 0$, then we also assign the boundary condition $\mf u_1=(\rho_b, \mf b_b)$ at $x=0$, if $u_b \leq 0$ we do not.  
\section{The nonsingular manifold $\mathcal M^0$}
\label{ss:emme0}
In this section we first derive the ODE satisfied by the boundary layers and the traveling waves of system~\eqref{e:symmetric}, which in general is singular. 
This is done in~\S~\ref{ss:twbl}. Next, in~\S~\ref{ss:m0} we construct the manifold $\mathcal M^0$ and we show that by restricting the traveling waves and boundary layers equations on this manifold we obtain a nonsingular system. Finally, in~\S~\ref{ss:3:nsmhd} we explicitly discuss how the analysis in this section applies to the Navier-Stokes and MHD equations.  
Note that in this section we focus on the case where $h=1$, i.e. we assume that the kernel of $\mf B$ is 
one-dimensional, and we refer to~\S~\ref{s:bigk} for the extension to the case $h>1$. 
\subsection{Traveling waves and boundary layers of~\eqref{e:symmetric}}
\label{ss:twbl}
Given $\sigma \in \R$, we consider a \emph{traveling wave} solution of~\eqref{e:symmetric}, which is a function of one variable only and satisfies $\mf u(t, x) = \mf u(x - \sigma t)$. If $\sigma =0$, $\mf u$ is actually a steady solution of~\eqref{e:symmetric} and we term it \emph{boundary layer}. 
Note that traveling waves and boundary layers satisfy the ODE
\be
\label{e:bltw}
    - \sigma \mf E(\mf u) \mf u' + \mf A(\mf u) \mf u' =  \mf B(\mf u) \mf u'' +
    \mf G (\mf u, \mf u' )  \mf u'. 
\eq 
We now recall Remark~\ref{r:acca1} and we point out that if $h=1$ the hyperbolic component $\mf u_1$ in~\eqref{e:blocku} is actually scalar. In the following we denote it by $u_1$, in such a way that that the ODE~\eqref{e:bltw} can be rewritten as  
$$
\left\{
\begin{array}{ll}
\big[ \alpha  - \sigma  \big]e_{11} u_1'+ \mf a_{21}^t \mf u_2' =0 {,} \\
\mf a_{21} u_1'+ \big[ \mf A_{22} - \sigma \mf E_{22} \big]
 \mf u_2' - 
\mf g_1 u_1' - \mf G_{2} \mf u_2'  = \mf B_{22} \mf u_2''.   
\end{array}
\right.
$$
We couple the above equation with the condition $\sigma'=0$, we recall that $e_{11} > 0$, we assume $\alpha  - \sigma \neq 0$  
and by solving the first equation for $u'_1$ and setting $\mf u_2':= \mf z_2$ we arrive at 
\begin{equation}
\label{e:odeslow}
    \mf v' = \frac{1}{\alpha (\mf u)- \sigma} \mf h (\mf v),  
    \end{equation}
provided that
\be
\mf v : =
\left(
\begin{array}{cc}
u_1 \\
 \mf u_2 \\
\mf z_2 \\
\sigma \\
\end{array}
\right)
\eq
and
\be
\label{e:cosasono1}
\mf h( \mf v) : =
\left(
\begin{array}{ccc}
- e_{11}^{-1} \mf a^t_{21} \mf z_2 \\
\big[  \alpha - \sigma    \big] \mf z_2 \\
\mf B_{22}^{-1} \Big( [\alpha-\sigma ]
\big[ \mf A_{22} - \sigma \mf E_{22} - \mf G_{2} \big] - \, e^{-1}_{11} \big[ \mf a_{21} \mf a_{21} 
^t - \mf g_1  \mf a_{21}^t \big]
\Big) \mf z_2  \\
0 \\
\end{array}
\right)
\eq
Note that the above equation is singular when $\alpha (\mf u) = \sigma$, in particular the equation of the boundary layers is singular when $\alpha (\mf u)=0$. This happens at $\mf u^\ast$ owing to~\eqref{e:uast}. 
\subsection{The manifold $\mathcal M^0$} \label{ss:m0}
We fix $\mf u^\ast$ satisfying~\eqref{e:uast}. We consider the ODE~\eqref{e:odefast}, which is \emph{formally} obtained from~\eqref{e:odeslow} through the change of variable $x = (\alpha(\mf u) - \sigma) y$, and we linearize it at the {equilibrium} point $\mf v^\ast= (\mf u^\ast, \mf 0_{N-1}, 0)^t$. We apply Lemma~\ref{l:signature} in the case $h=1$ and we conclude that the center space (i.e., the eigenspace associated to the eigenvalues with $0$ real part) of the Jacobian matrix $\mf D \mf h (\mf v^\ast)$ is given by 
$$
    M^0 : = \big\{ (u_1, \mf u_2, \mf z_2, \sigma) \in \R^{2N}: \ \mf a^t_{21}(\mf u^\ast) \mf z_2 =0 \big\}. 
$$
Owing to~\eqref{l:kaw}, $\mf a_{21}(\mf u^\ast) \neq \mf 0_{N-1}$ and hence the dimension of $M^0$ is $2N-1$. We now apply the Center Manifold Theorem and we refer to~\cite{Bressan:cm} for the statement and the proof. We recall that the center manifold is not unique in general: we arbitrarily fix one and we term it $\mathcal M^0$. We recall that $\mathcal M^0$ is defined in a neighbourhood of $\mf v^\ast$. The following result describes the structure of $\mathcal M^0$. 
\begin{lemma}
\label{l:slow} There are a sufficiently small constant $\delta>0$ and 
a smooth function
\begin{equation*}
\mf R_0: \R^N \times \R^{N-2} \times \R \to \mathbb{M}^{(N-1) \times (N-2)}
\end{equation*}
such that   
\begin{equation}
\label{e:errezero}
     (u_1, \mf u_2, \mf z_2, \sigma ) \in \mathcal M^0 \cap
     \mathrm{B}_\delta^{2N} (\mf v^\ast)  \iff  \mf z_2 = \mf R_0 (\mf u, \mf z_0, \sigma)\mf z_0 \quad \text{for some $\mf z_0  \in \R^{N-2}$},
\end{equation}
the columns of $\mf R_0$ are linearly independent vectors and  
\begin{equation}
\label{e:identity}
  \mf R_0^t \mf B_{22} \mf R_0 (\mf u^\ast, \mf 0_{N-2}, 0)= \mf I_{N-2}. 
\end{equation}
\end{lemma}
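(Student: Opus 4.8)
I would build $\mathcal M^0$ from the Center Manifold Theorem, present it near $\mf v^\ast$ as a graph over $M^0$ along a convenient one-dimensional complement, and then use the fact that $\{\mf z_2 = \mf 0_{N-1}\}$ is a manifold of equilibria of $\dot{\mf v} = \mf h(\mf v)$ to pull one factor $\mf z_0$ out of that graph; the normalisation~\eqref{e:identity} will then be arranged by a suitable choice of coordinates on $M^0$. As recalled above, $M^0$ is the center subspace of $\mf D\mf h(\mf v^\ast)$ and $\dim M^0 = 2N-1$; the same computation -- at $\mf v^\ast$ every block of $\mf h$ in~\eqref{e:cosasono1} carries a factor $\mf z_2$, so the only non-trivial block of $\mf D\mf h(\mf v^\ast)$ is the $\mf z_2$-block $-e_{11}^{-1}\mf B_{22}^{-1}\mf a_{21}\mf a_{21}^t(\mf u^\ast)$ -- combined with Lemma~\ref{l:signature} (case $h=1$) shows that $\mf D\mf h(\mf v^\ast)$ has no eigenvalue with positive real part and exactly one strictly negative one. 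Hence $\R^{2N} = M^0 \oplus M^s$ with $\dim M^s = 1$. In the construction I will not use $M^s$ itself but the more convenient complement $\tilde M^s := \mathrm{span}\{(0, \mf 0_{N-1}, \mf a_{21}(\mf u^\ast), 0)^t\}$, which is transverse to $M^0$ because $\mf a_{21}^t(\mf u^\ast)\mf a_{21}(\mf u^\ast) > 0$ (recall $\mf a_{21}(\mf u^\ast) \neq \mf 0_{N-1}$ by Lemma~\ref{l:kaw}).

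The Center Manifold Theorem (see~\cite{Bressan:cm}) yields a smooth, locally invariant manifold $\mathcal M^0$ through $\mf v^\ast$ with $T_{\mf v^\ast}\mathcal M^0 = M^0$. I would first fix, once and for all, a matrix $\mf R_0^\ast \in \mathbb M^{(N-1)\times(N-2)}$ whose columns form a $\mf B_{22}(\mf u^\ast)$-orthonormal basis of $\ker\mf a_{21}^t(\mf u^\ast)$; this is possible since $\mf B_{22}(\mf u^\ast)$ is symmetric positive definite (Hypothesis~\ref{h:normal}) and $\ker\mf a_{21}^t(\mf u^\ast)$ has dimension $N-2$, and it is exactly what will yield~\eqref{e:identity}. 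Parametrising $M^0$ by $(u_1, \mf u_2, \mf z_0, \sigma) \mapsto \mf v^\ast + (u_1-u_1^\ast, \mf u_2-\mf u_2^\ast, \mf R_0^\ast\mf z_0, \sigma)$ and taking $\tilde M^s$ as graph direction, $\mathcal M^0$ is locally the graph
\[
\mathcal M^0 \cap \mathrm{B}_\delta^{2N}(\mf v^\ast) = \big\{ (u_1, \mf u_2,\ \mf R_0^\ast\mf z_0 + \phi(u_1,\mf u_2,\mf z_0,\sigma)\,\mf a_{21}(\mf u^\ast),\ \sigma) \big\}
\]
for a smooth scalar $\phi$, with $\phi$ and $D\phi$ vanishing at $\mf v^\ast$ (the differential vanishes because $T_{\mf v^\ast}\mathcal M^0 = M^0$ is transverse to $\tilde M^s$). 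Thus the $\mf z_2$-component along $\mathcal M^0$ is the smooth function $\mathcal Z(u_1,\mf u_2,\mf z_0,\sigma) := \mf R_0^\ast\mf z_0 + \phi\,\mf a_{21}(\mf u^\ast)$, with $\partial_{\mf z_0}\mathcal Z(\mf u^\ast,\mf 0_{N-2},0) = \mf R_0^\ast$.

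The key point is that $\{\mf z_2 = \mf 0_{N-1}\}$ consists entirely of equilibria of $\dot{\mf v} = \mf h(\mf v)$ (immediate from~\eqref{e:cosasono1}) and is a submanifold through $\mf v^\ast$ tangent to $M^0$; since $\mf D\mf h(\mf v^\ast)$ has no unstable eigenvalue, every equilibrium close to $\mf v^\ast$ lies on $\mathcal M^0$, hence $\{\mf z_2 = \mf 0_{N-1}\} \subset \mathcal M^0$ near $\mf v^\ast$. Matching a point $(u_1,\mf u_2,\mf 0_{N-1},\sigma)$ of this family with the graph and left-multiplying by $\mf a_{21}^t(\mf u^\ast)$ forces $\phi = 0$ (because $\mf a_{21}^t(\mf u^\ast)\mf R_0^\ast = \mf 0$ while $\mf a_{21}^t(\mf u^\ast)\mf a_{21}(\mf u^\ast) \neq 0$) and then $\mf z_0 = \mf 0_{N-2}$; letting $(u_1,\mf u_2,\sigma)$ vary near $(\mf u^\ast,0)$ we obtain $\phi(u_1,\mf u_2,\mf 0_{N-2},\sigma)\equiv 0$, hence $\mathcal Z(u_1,\mf u_2,\mf 0_{N-2},\sigma) \equiv \mf 0_{N-1}$. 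Hadamard's lemma then supplies the smooth matrix $\mf R_0(\mf u,\mf z_0,\sigma) := \int_0^1 \partial_{\mf z_0}\mathcal Z(\mf u,t\mf z_0,\sigma)\,dt$ with $\mathcal Z = \mf R_0\,\mf z_0$: this is~\eqref{e:errezero} (the converse inclusion being immediate, since $(\mf u,\mathcal Z(\mf u,\mf z_0,\sigma),\sigma)$ is by construction a point of the graph), and $\mf R_0(\mf u^\ast,\mf 0_{N-2},0) = \mf R_0^\ast$, so the columns of $\mf R_0$ stay linearly independent on $\mathrm{B}_\delta$ after shrinking $\delta$, while~\eqref{e:identity} is exactly the $\mf B_{22}(\mf u^\ast)$-orthonormality built into $\mf R_0^\ast$.

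The one genuinely delicate step is turning the tangency $T_{\mf v^\ast}\mathcal M^0 = M^0$ into the exact identity $\mathcal Z(\cdot,\mf 0_{N-2},\cdot)\equiv\mf 0_{N-1}$: it relies on the general fact that, in the absence of unstable directions, equilibria near a fixed point belong to every center manifold, and this is precisely where the sign information of Lemma~\ref{l:signature} is used (a center manifold being non-unique, mere tangency would otherwise not suffice). A minor, purely bookkeeping matter is to keep the domain of the center-manifold graph, the domain of $\mathcal Z$, and the region where $\mf R_0$ has full column rank all controlled by a single radius $\delta$.
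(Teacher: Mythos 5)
Your proposal is correct and follows essentially the same route as the paper: represent $\mathcal M^0$ as a graph over $M^0$ in the $\mf a_{21}(\mf u^\ast)$ direction, choose a $\mf B_{22}(\mf u^\ast)$-orthonormal basis of $\ker\mf a_{21}^t(\mf u^\ast)$ so that~\eqref{e:identity} comes out automatically, use the fact that the equilibrium manifold $\{\mf z_2 = \mf 0_{N-1}\}$ lies on $\mathcal M^0$ near $\mf v^\ast$ to kill the graph function at $\mf z_0 = \mf 0_{N-2}$, and then pull one factor of $\mf z_0$ out via a Hadamard-type integral formula, which is exactly the role of the paper's Corollary~\ref{c:dividiamo}. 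One small inaccuracy in your final paragraph: equilibria sufficiently close to $\mf v^\ast$ lie on \emph{every} center manifold irrespective of whether there are unstable directions --- any solution that remains in a small neighbourhood for all $t\in\R$ does --- so the sign information of Lemma~\ref{l:signature} is not what makes that inclusion go through; its actual job in this lemma is to identify $M^0=\{\mf a_{21}^t(\mf u^\ast)\,\mf z_2=\mf 0_h\}$ as the center subspace of $\mf D\mf h(\mf v^\ast)$ (in particular, to rule out complex eigenvalues).
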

\begin{proof}
The proof extends the argument in~\cite[pp. 240-241]{BianchiniBressan} and is based on the statement of the Center Manifold Theorem given in~\cite{Bressan:cm}. First, we term $\tilde M_0$ the subspace of $\R^{N-1}$ containing all the vectors orthogonal to $\mf a_{21}(\mf u^\ast)$: this means that
$M_0 = \R^N \times \tilde M_0 \times \R$. We recall that the center manifold $\mathcal M^0$ is parametrized by a suitable map $\mf m_0: M^0 \to \mathcal M^0$. Also, we can construct the map $\mf m_0$ in such a way that the composition of $\mf m_0$ with the orthogonal projection onto $M^0$ is the identity. This implies that  
$$
\mf m_0 (\mf u, \tilde{\mf z}_0, \sigma) = (\mf u, \tilde{\mf m}_0 (\mf u, \tilde{\mf z}_0, \sigma), \sigma) 
$$ 
for some suitable function $\tilde{\mf m}_0:  \R^N \times \tilde M_0 \times \R \to \R^{N-1}$. 
Next, by applying the Gram-Schmidt orthonormalization we choose a basis of $\tilde M_0$ that is orthonormal with respect to the scalar product defined by the symmetric and positive definite matrix $\mf B_{22}$. By a slight abuse of notation, in the following we identify a vector 
$\tilde{\mf z}_0 \in \tilde M_0 \subseteq \R^{N-1}$ with the vector $\mf z_0 \in \R^{N-2}$ of its coordinates with respect to the orthonormal basis and hence we regard $\tilde{\mf m}_0$ as a map depending on ${(\mf u, {\mf z}_0, \sigma) \in \R^N \times \R^{N-2} \times \R}$. 

We now recall the explicit expression~\eqref{e:cosasono1} of $\mf h$ and we conclude that every point $(\mf u, \mf 0_{N-1}, \sigma)$ is an equilibrium for~\eqref{e:odefast}. By definition of center manifold, this implies that $(\mf u, \mf 0_{N-1}, \sigma) \in \mathcal M^0$ provided that it is sufficiently close to $\mf v^\ast$. This implies that $\tilde{\mf m}_0 (\mf u, \mf 0_{N-2}, \sigma)= \mf 0_{N-1}$ for every $(\mf u, \sigma)$ sufficiently close to $(\mf u^\ast, 0)$. 
By applying Corollary~\ref{c:dividiamo} with $\mf f: = \tilde{\mf m}_0$ and $\mf  y: = {\mf z}_0$  we conclude that $ \tilde{\mf m}_0 = \mf \mf \mf R_0 (\mf u, {\mf z}_0, \sigma){\mf z}_0$ for a suitable function $\mf R_0$ attaining values in the space $\mathbb M^{(N-1) \times (N-2)}$. We recall that the manifold $\mathcal M^0$ is tangent to $M^0$ at $\mathcal M^0$ and we conclude that the columns of $\mf R_0 (\mf u^\ast, \mf 0_{N-2}, 0)$ are $N-2$ linearly independent vectors. To conclude the proof, we are left to show that the columns of $\mf R_0 (\mf u, \mf z_0, \sigma)$ are linearly independent vectors: since this it true at $(\mf u^\ast, \mf 0_{N-2}, 0)$, then it is also true in a sufficiently small neighborhood.
\end{proof}
We now investigate the solution of~\eqref{e:odefast} lying on $\mathcal M^0$. We first establish two preliminary lemmas. 
\begin{lemma}
\label{l:fazero}
If $\alpha (\mf u)= \sigma$, then 
\begin{equation}
\label{e:a21r0}
\mf a_{12}^t(\mf u) \mf R_0 (\mf u, \mf z_0, \sigma) = \mf 0_{N-2} \quad \text{for every $\mf z_0$}.
\end{equation}
\end{lemma}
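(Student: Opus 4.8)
The plan is to exploit the invariance of $\mathcal M^0$ under the flow of~\eqref{e:odefast}, combined with the explicit expression~\eqref{e:cosasono1} of $\mf h$. Throughout I set $\psi:=\alpha(\mf u)-\sigma$ and, for $(u_1,\mf u_2,\mf z_2,\sigma)\in\mathcal M^0$, $\phi:=\mf a_{21}^t(\mf u)\mf z_2=\mf a_{21}^t(\mf u)\mf R_0(\mf u,\mf z_0,\sigma)\mf z_0$. Since $\mf a_{21}^t(\mf u)\mf R_0(\mf u,\mf z_0,\sigma)\mf z_0=0$ for \emph{every} $\mf z_0$ is equivalent to $\mf a_{21}^t(\mf u)\mf R_0(\mf u,\mf z_0,\sigma)=\mf 0_{N-2}^t$, the identity~\eqref{e:a21r0} is equivalent to the claim that $\phi=0$ at every point of $\mathcal M^0$ (in the relevant neighbourhood of $\mf v^\ast$) at which $\psi=0$.

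First I would write down the center manifold invariance identity. Recall from the proof of Lemma~\ref{l:slow} that on $\mathcal M^0$ one has $\mf z_2=\tilde{\mf m}_0(\mf u,\mf z_0,\sigma)=\mf R_0(\mf u,\mf z_0,\sigma)\mf z_0$, that $\tilde{\mf m}_0(\mf u,\mf 0_{N-2},\sigma)\equiv\mf 0_{N-1}$, and that the parametrization is chosen so that $P\tilde{\mf m}_0(\mf u,\mf z_0,\sigma)$ equals the element of $\tilde M_0$ with coordinates $\mf z_0$, where $P$ is the projection of $\R^{N-1}$ onto $\tilde M_0$ entering the parametrization (so $P$ restricted to $\tilde M_0$ is the identity). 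Since $\mathcal M^0$ is invariant for~\eqref{e:odefast}, differentiating $\mf z_2=\tilde{\mf m}_0$ along the flow — with induced $\mf z_0$-velocity the coordinate vector of $P\mf h_{\mf z_2}$, and $\dot\sigma=0$ — and substituting the components of $\mf h$ read off from~\eqref{e:cosasono1}, namely $\mf h_{u_1}=-e_{11}^{-1}\phi$, $\mf h_{\mf u_2}=\psi\,\mf z_2$ and $\mf h_{\mf z_2}=\psi\,\mf B_{22}^{-1}[\mf A_{22}-\sigma\mf E_{22}-\mf G_2]\mf z_2-e_{11}^{-1}\phi\,\mf B_{22}^{-1}[\mf a_{21}-\mf g_1]$, and then collecting the terms proportional to $\psi$ and those proportional to $\phi$, I expect to arrive at an identity of the form
\be
\label{e:plan}
\psi\,\mf Q=e_{11}^{-1}\,\phi\,\mf N,\qquad \mf N:=\mf B_{22}^{-1}[\mf a_{21}-\mf g_1]-\partial_{u_1}\tilde{\mf m}_0-\partial_{\mf z_0}\tilde{\mf m}_0\cdot P\big(\mf B_{22}^{-1}[\mf a_{21}-\mf g_1]\big),
\eq
with $\mf Q,\mf N$ smooth on $\mathcal M^0$ near $\mf v^\ast$ (the exact form of $\mf Q$ is immaterial), and where $P(\cdot)\in\tilde M_0$ is identified with its coordinate vector in $\R^{N-2}$.

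Next I would show that $\mf N(\mf v^\ast)\neq\mf 0_{N-1}$. At $\mf v^\ast$ we have $\mf z_0=\mf 0$, so the higher order term $\mf g_1$ vanishes, $\partial_{u_1}\tilde{\mf m}_0(\mf u^\ast,\mf 0,0)=\mf 0$ because $\tilde{\mf m}_0(\cdot,\mf 0,\cdot)\equiv\mf 0$, and $\partial_{\mf z_0}\tilde{\mf m}_0(\mf u^\ast,\mf 0,0)=\mf R_0(\mf u^\ast,\mf 0,0)$; moreover, since $\mathcal M^0$ is tangent to $M^0$ at $\mf v^\ast$ and $P$ is the identity on $\tilde M_0$, $\mf R_0(\mf u^\ast,\mf 0,0)$ sends the coordinate vector of each $\mf w\in\tilde M_0$ to $\mf w$, whence $\partial_{\mf z_0}\tilde{\mf m}_0(\mf u^\ast,\mf 0,0)\cdot P\mf x=P\mf x$ for all $\mf x$. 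Therefore $\mf N(\mf v^\ast)=(\mf I_{N-1}-P)\mf B_{22}^{-1}\mf a_{21}(\mf u^\ast)$. By Lemma~\ref{l:kaw} we have $\mf a_{21}(\mf u^\ast)\neq\mf 0_{N-1}$, and since $\mf B_{22}$ is symmetric positive definite $\mf a_{21}^t(\mf u^\ast)\mf B_{22}^{-1}\mf a_{21}(\mf u^\ast)>0$, so $\mf B_{22}^{-1}\mf a_{21}(\mf u^\ast)\notin\tilde M_0=\{\mf x:\mf a_{21}^t(\mf u^\ast)\mf x=0\}$; consequently $\mf N(\mf v^\ast)=(\mf I_{N-1}-P)\mf B_{22}^{-1}\mf a_{21}(\mf u^\ast)\neq\mf 0_{N-1}$. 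By continuity, after decreasing $\delta$ we may assume $\mf N\neq\mf 0_{N-1}$ on $\mathcal M^0\cap\mathrm{B}^{2N}_\delta(\mf v^\ast)$.

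Finally, at any point of $\mathcal M^0\cap\mathrm{B}^{2N}_\delta(\mf v^\ast)$ with $\alpha(\mf u)=\sigma$, i.e. $\psi=0$, identity~\eqref{e:plan} yields $e_{11}^{-1}\phi\,\mf N=\mf 0_{N-1}$; since $e_{11}>0$ and $\mf N\neq\mf 0_{N-1}$ this forces $\phi=0$, which — holding for every $\mf z_0$ — is precisely~\eqref{e:a21r0}. The one delicate step is the derivation of~\eqref{e:plan}: carrying out the chain rule for $\frac{d}{dt}\tilde{\mf m}_0$ along~\eqref{e:odefast}, substituting the explicit $\mf h$ of~\eqref{e:cosasono1}, and checking that all the resulting terms organize into a multiple of $\psi$ on one side and a multiple of $\phi$ on the other, with the higher order contributions $\mf g_1,\mf G_2$ (which vanish at $\mf v^\ast$) tracked correctly; the rest reduces to the elementary positivity $\mf a_{21}^t\mf B_{22}^{-1}\mf a_{21}>0$ and a continuity argument.
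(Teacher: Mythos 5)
Your argument is correct but takes a genuinely different route from the paper. The paper's proof observes that, when $\alpha(\mf u)=\sigma$ and $\mf a_{21}^t\mf z_2=0$, every nontrivial block of $\mf h$ in~\eqref{e:cosasono1} vanishes, so $(\mf u,\mf z_2,\sigma)$ is an equilibrium of~\eqref{e:odefast}; equilibria sufficiently close to $\mf v^\ast$ must lie on $\mathcal M^0$, whence $Z=\{\mf z_2:\ \mf a_{21}^t\mf z_2=0\}\subseteq\tilde Z=\{\mf R_0(\mf u,\mf z_0,\sigma)\mf z_0:\ \mf z_0\}$, and a dimension count (both sets are $(N-2)$-dimensional and contain $\mf 0$) forces $Z=\tilde Z$. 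Your proof instead differentiates the center-manifold parametrization $\mf z_2=\tilde{\mf m}_0(\mf u,\mf z_0,\sigma)$ along the flow of~\eqref{e:odefast}, organizes the resulting identity as $\psi\,\mf Q=e_{11}^{-1}\phi\,\mf N$, and then verifies $\mf N(\mf v^\ast)=(\mf I_{N-1}-P)\mf B_{22}^{-1}\mf a_{21}(\mf u^\ast)\neq\mf 0_{N-1}$, which by continuity kills $\phi$ wherever $\psi=0$. I traced the chain rule and the evaluation at $\mf v^\ast$ (including the identification $\partial_{\mf z_0}\tilde{\mf m}_0(\mf u^\ast,\mf 0,0)\cdot\mathrm{coord}(P\mf x)=P\mf x$) and they are consistent with~\eqref{e:cosasono1}. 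The paper's route is shorter and uses only the qualitative fact that nearby equilibria lie on the center manifold; yours is more explicit and produces a quantitative nondegeneracy certificate $\mf N\neq\mf 0$, at the cost of carrying out the chain rule in the parametrization coordinates — a step you rightly flag as the delicate one, and which does go through.

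One point you should make explicit: you assert that $\mf a_{21}^t\mf R_0\mf z_0=0$ for every $\mf z_0$ is equivalent to $\mf a_{21}^t\mf R_0(\mf u,\mf z_0,\sigma)=\mf 0_{N-2}^t$ for every $\mf z_0$. For an arbitrary $\mf z_0$-dependent matrix this implication fails (e.g.\ the row vector $((\mf z_0)_2,-(\mf z_0)_1,0,\dots)$ annihilates $\mf z_0$ without vanishing). It does hold for the particular $\mf R_0$ produced by Corollary~\ref{c:dividiamo}, because each of its columns is a difference quotient of $\tilde{\mf m}_0$ in a single $\mf z_0$-direction, and applying $\mf a_{21}^t$ to such a quotient gives zero as soon as $\mf a_{21}^t\tilde{\mf m}_0\equiv 0$ on the relevant slice. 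Since your argument (like the paper's) really establishes the factored statement $\mf a_{21}^t\mf R_0\mf z_0=0$, you should either invoke this structural property of $\mf R_0$ to upgrade to~\eqref{e:a21r0} as written, or note that only the factored form is needed in Corollary~\ref{c:definiamod}.
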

\begin{proof}
Assume that {$\alpha (\mf u) =\sigma$} and consider the subsets of $\R^{N-1}$ defined by setting  
$$
   Z : = \big\{ \mf z_2: \mf a_{12}^t(\mf u) \mf z_2 =
   0 \big\} 
   \quad \text{and} \quad 
   \tilde Z : = \big\{ \mf z_2:  \mf z_2 = \mf R_0(\mf u, \mf z_0, \sigma) \mf z_0 \; \text{for some $\mf z_0$} \big\}. 
$$
We first show that $Z \subseteq \tilde Z.$ Fix $\mf z_2 \in Z$, then $\mf v: =(\mf u, \mf z_2, \sigma)$ satisfies $\mf h(\mf v)= \mf 0_{2N}$ provided $\mf h$ is the same as in~\eqref{e:cosasono1}. In other words, $\mf v$ is an equilibrium for the ODE~\eqref{e:odefast}, which implies that $\mf v \in \mathcal M^0$, provided that $\mf v$ is sufficiently close to $(\mf u^\ast, \mf 0_{N-2}, 0)$. This implies that 
$\mf z_2 =   \mf R_0(\mf u, \mf z_0, \sigma) \mf z_0$ for some $\mf z_0 \in \R^{N-2}$, namely that $\mf z_2 \in \tilde Z$. 

Owing to Lemma~\ref{l:kaw}, $\mf a_{12} (\mf u)  \neq \mf 0_{N-1}$ and hence the subspace $Z$ has dimension $N-2$. On the other hand, the columns of $\mf R_0(\mf u, \mf z_0, \sigma)$ are also linearly independent vectors by Lemma~\ref{l:slow} and hence the dimension of $\tilde Z$ is also $(N-2)$ . Since $Z \subseteq \tilde Z$, we conclude that $Z = \tilde Z$.   This yields~\eqref{e:a21r0}. 
\end{proof}
\begin{corol}
\label{c:definiamod}
There is a smooth function $\mf d$ attaining values in $\R^{N-2}$ such that
\be
\label{e:definiamod}
   \mf a_{12}^t  \mf R_0 (\mf u, \mf z_0, \sigma) 
   \mf z_0 = \big[
   \alpha  (\mf u)  - \sigma   \big] 
    \mf d^t( \mf u, \mf z_0, \sigma) \ \mf z_0, \quad \text{for every $\mf u$, $\mf z_0$, $\sigma$.}
\eq 
\end{corol}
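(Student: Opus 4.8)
The plan is to apply Corollary~\ref{c:dividiamo} (the same division-with-remainder result already invoked in the proof of Lemma~\ref{l:slow}) to the scalar-valued function $(\mf u, \mf z_0, \sigma) \mapsto \mf a_{12}^t(\mf u)\,\mf R_0(\mf u, \mf z_0, \sigma)\,\mf z_0$, now dividing by the scalar quantity $\alpha(\mf u) - \sigma$ rather than by a component of $\mf z_0$. The hypothesis needed to run that corollary is precisely that the function vanishes on the zero set $\{\alpha(\mf u) = \sigma\}$, and this is exactly the content of Lemma~\ref{l:fazero}: whenever $\alpha(\mf u) = \sigma$ we have $\mf a_{12}^t(\mf u)\,\mf R_0(\mf u, \mf z_0, \sigma) = \mf 0_{N-2}$, hence $\mf a_{12}^t(\mf u)\,\mf R_0(\mf u, \mf z_0, \sigma)\,\mf z_0 = 0$ for every $\mf z_0$.

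First I would record that $\alpha(\mf u) - \sigma$ is a smooth scalar function on $\R^N \times \R^{N-2} \times \R$ whose gradient in the $(\mf u, \sigma)$ variables is nonzero (it is nonzero already in the $\sigma$ direction), so its zero set is a smooth hypersurface and the division lemma applies cleanly: any smooth function vanishing on $\{\alpha(\mf u) - \sigma = 0\}$ can be written as $(\alpha(\mf u) - \sigma)$ times a smooth function. Applying this to the scalar function $g(\mf u, \mf z_0, \sigma) := \mf a_{12}^t(\mf u)\,\mf R_0(\mf u, \mf z_0, \sigma)\,\mf z_0$ yields a smooth scalar function $q$ with $g = (\alpha - \sigma)\,q$. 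To get the stated form with $\mf d^t \mf z_0$ rather than a bare scalar $q$, I would instead apply the vector version: write $g(\mf u, \mf z_0, \sigma) = \boldsymbol{\ell}^t(\mf u, \mf z_0, \sigma)\,\mf z_0$ for a smooth $\R^{N-2}$-valued $\boldsymbol{\ell}$ (this is automatic since $g$ is linear in $\mf z_0$ — indeed $\boldsymbol{\ell}^t = \mf a_{12}^t \mf R_0$), then apply the division lemma componentwise to $\boldsymbol{\ell} = \mf a_{12}^t \mf R_0$, which vanishes identically (as a row vector) on $\{\alpha = \sigma\}$ by Lemma~\ref{l:fazero}, to obtain $\mf a_{12}^t \mf R_0 = (\alpha - \sigma)\,\mf d^t$ for a smooth $\R^{N-2}$-valued function $\mf d$. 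Multiplying on the right by $\mf z_0$ gives~\eqref{e:definiamod}.

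The only delicate point is making sure the division is carried out on the object $\mf a_{12}^t \mf R_0$ (which vanishes as a whole row vector when $\alpha = \sigma$, by Lemma~\ref{l:fazero}) rather than only on $\mf a_{12}^t \mf R_0 \mf z_0$; dividing the former is what produces a function $\mf d$ independent of the choice of how one factors out $\mf z_0$, and it is legitimate precisely because Lemma~\ref{l:fazero} gives vanishing of each entry of $\mf a_{12}^t \mf R_0$ on the hypersurface. Beyond that, everything is a routine application of the smooth division lemma already used in~\S\ref{ss:m0}; no genuine obstacle remains, and the smoothness of $\mf d$ is inherited directly from the smoothness output of that lemma together with the smoothness of $\mf R_0$ from Lemma~\ref{l:slow}.
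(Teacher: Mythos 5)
Your proof is correct and uses the same key ingredients as the paper: Lemma~\ref{l:fazero} supplies the vanishing of $\mf a_{12}^t \mf R_0$ on the set $\{\alpha(\mf u) = \sigma\}$, and the smooth division machinery of Lemma~\ref{l:dividiamo} does the rest. The paper performs two successive factorizations (first divide the scalar $\mf a_{12}^t\mf R_0\,\mf z_0$ by $\alpha-\sigma$ to get a scalar $h$, then factor $h = \mf d^t \mf z_0$ via Corollary~\ref{c:dividiamo}), whereas you streamline this into a single componentwise division of the row vector $\mf a_{12}^t \mf R_0$ by $\alpha - \sigma$, then multiply by $\mf z_0$; that is a mild simplification, not a different argument. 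One small slip: your first sentence invokes Corollary~\ref{c:dividiamo} for the division by $\alpha-\sigma$, but the relevant tool there is Lemma~\ref{l:dividiamo} (the corollary only handles factoring out $\mf z_0$); you correct this implicitly in the body of the argument, so the content is unaffected.
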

\begin{proof}
Owing to Hypothesis~\ref{h:jde}, Lemma~\ref{l:kaw} and Lemma~\ref{l:fazero}, the functions $f:= \mf a_{12}^t  \mf R_0 \mf z_0$ and $a: = \alpha - \sigma $
satisfy the hypotheses of Lemma~\ref{l:dividiamo}. This implies that 
$$
    \mf a_{12}^t (\mf u) \mf R_0 \mf z_0 = 
    \big[ \alpha (\mf u) - \sigma \big] h ( \mf u, \mf z_0, \sigma)     
$$
for a suitable scalar function $h$. Since $h ( \mf u, \mf 0, \sigma)=0$, then owing again to Lemma~\ref{l:dividiamo} we get $h ( \mf u, \mf z_0, \sigma)= \mf d^t  ( \mf u, \mf z_0, \sigma) \mf z_0$ for a suitable  function $\mf d$.
\end{proof}
We are eventually able to provide the equation satisfied by the solutions of~\eqref{e:odefast} lying on $\mathcal M^0$. We recall that~\eqref{e:odefast} is \emph{formally} obtained from~\eqref{e:odeslow} through the change of variable $x = (\alpha(\mf u) - \sigma) y$, that $\mf v'$ denotes the derivative with respect to $x$ and that $\dot{\mf v}$ denotes the derivative with respect to $y$.
\begin{lemma}
\label{l:suemme0}
There is a sufficiently small constant $\delta>0$ such that by restricting system~\eqref{e:odefast} to 
$\mathcal M^0 \cap \mathrm{B}^{2N}_\delta (\mf v^\ast)$ we get 
\be
\label{e:suemme0}
\left\{
\begin{array}{lll}
\dot{u_1} =   - e^{-1}_{11} (\mf u)[\alpha (\mf u)- \sigma ]  \mf d^t (\mf u, \mf z_0, \sigma) \mf z_0 { ,} \\
\dot{\mf u}_2 =[\alpha (\mf u)- \sigma] \mf R_0  (\mf u, \mf z_0, \sigma) \mf z_0 {,} \\
\dot{\mf z}_0 = [\alpha (\mf u)- \sigma ]  \mf \Theta_0  (\mf u, \mf z_0, \sigma) \mf z_0, \\ 
\dot{\sigma} = 0 {,}\\
\end{array}
\right.
\eq
for a suitable smooth function $\mf \Theta_0$ that attains values in $\mathbb{M}^{(N-2)\times (N-2)}$ 
and satisfies 
\begin{equation}
\label{e:Lambda0}
      \mf  \Theta_0  (\mf u^\ast, \mf 0_{N-2}, 0) = 
       \mf R_0^t 
       \mf A_{22} \mf R_0 (\mf u^\ast, \mf 0_{N-2},0).
\end{equation}
\end{lemma}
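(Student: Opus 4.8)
The plan is to express the restriction of the field $\mf h$ from \eqref{e:cosasono1} to the center manifold $\mathcal M^0$ in the coordinates $(u_1, \mf u_2, \mf z_0, \sigma)$ furnished by the parametrization $\mf z_2 = \mf R_0(\mf u,\mf z_0,\sigma)\mf z_0$ of Lemma~\ref{l:slow}. Since $\mathcal M^0$ is invariant under \eqref{e:odefast}, three of the four lines of \eqref{e:suemme0} are read off directly: the fourth component of $\mf h$ gives $\dot\sigma = 0$; the second gives $\dot{\mf u}_2 = [\alpha-\sigma]\mf z_2 = [\alpha-\sigma]\mf R_0\mf z_0$; and the first gives $\dot{u_1} = -e_{11}^{-1}\mf a_{21}^t\mf z_2 = -e_{11}^{-1}\mf a_{21}^t\mf R_0\mf z_0$, which by Corollary~\ref{c:definiamod} equals $-e_{11}^{-1}[\alpha-\sigma]\mf d^t\mf z_0$. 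The role of Corollary~\ref{c:definiamod} here is crucial: it shows that the a priori singular ratio $\mf a_{21}^t\mf z_2/(\alpha-\sigma)$ is in fact a smooth function on $\mathcal M^0$, which is exactly why the restricted system turns out to be nonsingular.

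For the last equation I would differentiate the identity $\mf z_2 = \mf R_0(\mf u,\mf z_0,\sigma)\mf z_0$ along the flow of \eqref{e:odefast}. Setting $G(\mf u,\mf z_0,\sigma) := \mf R_0(\mf u,\mf z_0,\sigma)\mf z_0$, the chain rule gives $\dot{\mf z}_2 = (\mf R_0 + \mf K)\dot{\mf z}_0 + (\partial_{\mf u}G)\dot{\mf u} + (\partial_\sigma G)\dot\sigma$, where $\mf K := \partial_{\mf z_0}G - \mf R_0$, and $\mf K$, $\partial_{\mf u}G$, $\partial_\sigma G$ all vanish at $\mf z_0 = \mf 0$. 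Then I plug in $\dot\sigma = 0$, the already-computed $\dot{u_1}$ and $\dot{\mf u}_2$ (both of the form $[\alpha-\sigma]$ times a smooth matrix times $\mf z_0$), and the third component of $\mf h$ with $\mf z_2 = \mf R_0\mf z_0$ substituted; using $\mf a_{21}^t\mf z_2 = [\alpha-\sigma]\mf d^t\mf z_0$ once more, both $\dot{\mf z}_2$ and $(\partial_{\mf u}G)\dot{\mf u}$ carry the explicit factor $\alpha-\sigma$, so that the identity becomes
\[
(\mf R_0 + \mf K)\,\dot{\mf z}_0 = [\alpha-\sigma]\,(\mf W - \mf N)\,\mf z_0,
\]
where $\mf W := \mf B_{22}^{-1}\big([\mf A_{22} - \sigma\mf E_{22} - \mf G_2]\mf R_0 - e_{11}^{-1}[\mf a_{21} - \mf g_1]\mf d^t\big)$ and $\mf N$ is a smooth matrix vanishing at $\mf z_0 = \mf 0$. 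The columns of $\mf R_0$ are linearly independent (Lemma~\ref{l:slow}) and $\mf B_{22}$ is positive definite, and by \eqref{e:identity} the matrix $(\mf R_0 + \mf K)^t\mf B_{22}(\mf R_0 + \mf K)$ equals $\mf I_{N-2}$ at $\mf v^\ast$; shrinking $\delta$ so that it remains invertible on $\mathrm{B}^{2N}_\delta(\mf v^\ast)$, I left-multiply the displayed identity by $[(\mf R_0 + \mf K)^t\mf B_{22}(\mf R_0 + \mf K)]^{-1}(\mf R_0 + \mf K)^t\mf B_{22}$ (a left inverse of $\mf R_0 + \mf K$) and obtain $\dot{\mf z}_0 = [\alpha-\sigma]\mf\Theta_0\mf z_0$ with $\mf\Theta_0 := [(\mf R_0 + \mf K)^t\mf B_{22}(\mf R_0 + \mf K)]^{-1}(\mf R_0 + \mf K)^t\mf B_{22}(\mf W - \mf N)$, which is smooth near $\mf v^\ast$.

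It remains to evaluate $\mf\Theta_0$ at $\mf v^\ast = (\mf u^\ast, \mf 0_{N-2}, 0)$. There $\mf K = \mf 0$ and $\mf N = \mf 0$, the Gram matrix is $\mf I_{N-2}$ by \eqref{e:identity}, $\sigma = 0$, and $\mf G_2$ and $\mf g_1$ vanish because they are the higher-order terms of $\mf G$ and $\mf u_x = \mf 0$ at $\mf v^\ast$; hence $\mf W(\mf u^\ast,\mf 0,0) = \mf B_{22}^{-1}\mf A_{22}\mf R_0(\mf u^\ast,\mf 0,0)$, and therefore $\mf\Theta_0(\mf u^\ast,\mf 0,0) = \mf R_0^t\mf B_{22}\mf B_{22}^{-1}\mf A_{22}\mf R_0 = \mf R_0^t\mf A_{22}\mf R_0(\mf u^\ast,\mf 0,0)$, which is \eqref{e:Lambda0}. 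I expect the main obstacle to be the bookkeeping in the middle step: one has to verify that, after restriction to $\mathcal M^0$, every term produced by differentiating the constraint genuinely carries the factor $\alpha-\sigma$ and can be written in the form (smooth matrix)$\cdot\mf z_0$ — which hinges on Corollary~\ref{c:definiamod} and the smoothness of $\mf R_0$ — and that the left-inverse step is legitimate, i.e. that the right-hand side of the displayed identity really lies in the column space of $\mf R_0 + \mf K$; this last point is automatic since $\mathcal M^0$ is invariant under \eqref{e:odefast}, so $\dot{\mf z}_2$ is the true derivative of $\mf R_0(\mf u,\mf z_0,\sigma)\mf z_0$ along a trajectory that stays on $\mathcal M^0$.
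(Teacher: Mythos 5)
Your argument runs along essentially the same lines as the paper's: plug the center-manifold parametrization $\mf z_2=\mf R_0\mf z_0$ into~\eqref{e:odefast}, use Corollary~\ref{c:definiamod} to extract the factor $\alpha-\sigma$ from $\mf a_{21}^t\mf z_2$ in the first component, differentiate the constraint to get an implicit equation for $\dot{\mf z}_0$, and solve it by a left-multiplication that is legitimate because $\mf R_0^t\mf B_{22}\mf R_0(\mf v^\ast)=\mf I_{N-2}$. Your decomposition via $\mf K=\partial_{\mf z_0}G-\mf R_0$ (vanishing at $\mf z_0=\mf 0$) and your left inverse $[(\mf R_0+\mf K)^t\mf B_{22}(\mf R_0+\mf K)]^{-1}(\mf R_0+\mf K)^t\mf B_{22}$ are an equivalent packaging of the paper's $\mf H_1,\mf H_2$ and its multiplication of~\eqref{e:biduedue} by $\mf R_0^t$.

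The one real slip is in the final evaluation. With $\sigma=0$ and $\mf g_1=\mf G_2=\mf 0$ you get $\mf W(\mf u^\ast,\mf 0,0)=\mf B_{22}^{-1}\big(\mf A_{22}\mf R_0 - e_{11}^{-1}\mf a_{21}\mf d^t\big)(\mf u^\ast,\mf 0,0)$, \emph{not} $\mf B_{22}^{-1}\mf A_{22}\mf R_0(\mf u^\ast,\mf 0,0)$: the rank-one piece $e_{11}^{-1}\mf a_{21}\mf d^t$ does not vanish at $\mf v^\ast$, since $\mf a_{21}(\mf u^\ast)\neq\mf 0_{N-1}$ by Lemma~\ref{l:kaw} and there is no reason for $\mf d(\mf u^\ast,\mf 0,0)$ to be zero. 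Your final identity~\eqref{e:Lambda0} is nonetheless correct, because after multiplying by $\mf R_0^t\mf B_{22}$ the offending term becomes $e_{11}^{-1}\mf R_0^t\mf a_{21}\mf d^t=e_{11}^{-1}[\alpha-\sigma]\mf d\mf d^t$ by~\eqref{e:definiamod}, which vanishes at $\mf v^\ast$ since $\alpha(\mf u^\ast)=\sigma=0$. This is exactly why the paper carries the explicit $[\alpha-\sigma]\mf d\mf d^t$ term through to~\eqref{e:cosaeLambda0}; you should apply Corollary~\ref{c:definiamod} once more at this stage rather than dropping the term from $\mf W$ itself.
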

\begin{proof}
By plugging the relation $\mf z_2 = \mf R_0 \mf z_0$ into the first two lines of~\eqref{e:odefast} and using~\eqref{e:cosasono1} and~\eqref{e:definiamod} we arrive at the first two lines of~\eqref{e:suemme0}. To get the {third} line, we plug the relation $\mf z_2 = \mf R_0 \mf z_0$ into the last line of~\eqref{e:odefast}, we use~\eqref{e:definiamod} and we arrive at  
\be 
\label{e:biduedue}
\begin{split}
\mf B_{22} \dot{\mf z}_2 & = 
\mf B_{22}\Big(  \dot{\mf R}_0 \mf z_0 + \mf R_0 \dot{\mf z}_0 \Big)  =
[\alpha-\sigma ] \Big(  
[ \mf A_{22} - \sigma \mf E_{22} - \mf G_{2}] \mf R_0  - e_{11}^{-1}\mf a_{21}
 \mf d^t  + e_{11}^{-1} 
\mf g_1  \mf d^t \Big) \mf z_0
\end{split}
\eq 
Next, we recall that $\mf R_0$ depends on $\mf u$, $\mf z_0$ and $\sigma$, that $\dot{\mf u}$ is given by the first two lines of~\eqref{e:suemme0} (i.e. it is proportional to both $\alpha - \sigma$ and $\mf z_0$) and that $\dot{\sigma}=0$. We conclude that 
\be
\label{e:errezerodot}
   \mf R_0^t \mf B_{22} \dot{\mf R}_0 \mf z_0 = 
   [\alpha - \sigma  ] \mf H_1 (\mf u, \mf z_0, \sigma) \mf z_0 + \mf H_2 
   (\mf u, \mf z_0, \sigma) \dot{\mf z}_0
\eq  
for some functions $\mf H_1$, $\mf H_2$ attaining values in $\mathbb M^{(N-2) \times (N-2)}$ and satisfying 
\be
\label{e:accaunodue}
     \mf H_1 (\mf u,  \mf 0_{N-2}, \sigma)= \mf 0_{(N-2) \times (N-2)}, \quad  
    \mf H_2  
    (\mf u, \mf 0_{N-2}, \sigma) = \mf 0_{(N-2) \times (N-2)} \quad 
    \text{for every $ \mf u$ and $\sigma$.}
\eq
By left multiplying~\eqref{e:biduedue} times $\mf R_0^t$ and using~\eqref{e:definiamod} and~\eqref{e:errezerodot} we arrive at 
\be \label{e:raccogliamo}
 \begin{split}
 \big( \mf R_0^t & \mf B_{22} \mf R_0 + \mf H_2 \big) \dot{\mf z}_0
 \\ & = 
 [\alpha-\sigma] \big(  \mf R_0^t 
[ \mf A_{22} - \sigma \mf E_{22} - \mf G_{2}] \mf R_0  - e_{11}^{-1} [\alpha-\sigma]
\mf d \mf d^t  + e_{11}^{-1}
\mf R_0^t \mf g_1 \mf d^t - \mf H_1\big) \mf z_0
 \end{split}
 \eq 
Next, we point out that, owing to~\eqref{e:identity} and~\eqref{e:accaunodue}, we have 
$
   \big( \mf R_0^t \mf B_{22} \mf R_0 + \mf H_2 \big) (\mf u^\ast, \mf 0_{N-2}, 0) = \mf I_{N-2}. 
$
By continuity, the matrix-valued function $ \mf R_0^t \mf B_{22} \mf R_0 + \mf H_2$ is nonsingular in a sufficiently small neighbourhood  of $(\mf u^\ast, \mf 0_{N-2}, 0)$. This implies that we can solve~\eqref{e:raccogliamo} for $\dot{\mf z}_0$ and arrive at the third line of~\eqref{e:suemme0} provided that 
\be \label{e:cosaeLambda0}
   \mf \Theta_0 (\mf u, \mf z_0, \sigma) : =
    \big( \mf R_0^t \mf B_{22} \mf R_0 + \mf H_2 \big)^{-1}
   \big(  \mf R_0^t 
[ \mf A_{22} - \sigma \mf E_{22} - \mf G_{2}] \mf R_0  -  [\alpha-\sigma] e_{11}^{-1}
\mf d \mf d^t  + e_{11}^{-1}
\mf R_0^t \mf g_1 \mf d^t - \mf H_1\big)
\eq 
To establish~\eqref{e:Lambda0} we recall the expression of $\mf g_1$ and $\mf G_2$ (see property iv) in Hypothesis~\ref{h:normal}) and we use~\eqref{e:uast},~\eqref{e:identity} and~\eqref{e:accaunodue}.
\end{proof}
Lemma~\ref{l:suemme0} implies that if we restrict the singular system to $\mathcal M^0$ we obtain a nonsingular system. 
\begin{lemma}
\label{l:invariant}
The manifold $\mathcal M^0$ is locally invariant for~\eqref{e:odeslow} and by restricting~\eqref{e:odeslow} to $\mathcal M^0$ we get 
\be
\label{e:emme0slow}
\left\{
\begin{array}{lll}
{u_1}' =  - e_{11}^{-1} \mf d^t (\mf u, \mf z_0, \sigma) \mf z_0 \\
\mf u'_2 =  \mf R_0  (\mf u, \mf z_0, \sigma) \mf z_0 \\
\mf z'_0=   \mf \Theta_0  (\mf u, \mf z_0, \sigma) \mf z_0 \\ 
\sigma'=0. \\
\end{array}
\right.
\eq
Also, if $(\mf u, \mf z_0, \sigma)$ is a solution of~\eqref{e:emme0slow} and $\alpha (\mf u) \neq \sigma$ at $x=0$, then $\alpha (\mf u) \neq \sigma$ for every $x$. 
\end{lemma}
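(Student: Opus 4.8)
The plan is to read off all three assertions from the description of $\mathcal M^0$ obtained in Lemmas~\ref{l:slow}--\ref{l:suemme0}, together with the fact that $\mathcal M^0$, being a center manifold of~\eqref{e:odefast}, is \emph{locally invariant} for~\eqref{e:odefast}.

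For the local invariance of $\mathcal M^0$ for~\eqref{e:odeslow} and the explicit form~\eqref{e:emme0slow}, the point is that on the open set $\{\alpha(\mf u)\neq\sigma\}$ the system~\eqref{e:odeslow} is obtained from~\eqref{e:odefast} by the time change $dx/dy=\alpha(\mf u)-\sigma$, which is smooth and locally non-degenerate there; since a non-degenerate reparametrization of the independent variable carries integral curves to integral curves without altering the set they trace, local invariance of $\mathcal M^0$ for~\eqref{e:odefast} transfers to~\eqref{e:odeslow} on $\{\alpha\neq\sigma\}$. To obtain~\eqref{e:emme0slow} I would start from the restricted \emph{fast} system~\eqref{e:suemme0}, observe that every right-hand side there carries the scalar factor $\alpha(\mf u)-\sigma$, and divide through by it; using $dy/dx=1/(\alpha(\mf u)-\sigma)$ this produces exactly the right-hand sides in~\eqref{e:emme0slow}, which are smooth on all of $\mathcal M^0\cap\mathrm{B}^{2N}_\delta(\mf v^\ast)$ --- the apparent singularity at $\{\alpha=\sigma\}$ having cancelled. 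In particular~\eqref{e:emme0slow} is a genuine, non-singular ODE on $\mathcal M^0$, agreeing with the restriction of~\eqref{e:odeslow} wherever the latter is defined.

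For the last assertion, along a solution $(\mf u,\mf z_0,\sigma)$ of~\eqref{e:emme0slow} set $g(x):=\alpha(\mf u(x))-\sigma$; since $\sigma'=0$ we have $g'=\nabla\alpha(\mf u)\cdot\mf u'$, with $\mf u'=\bigl(-e_{11}^{-1}\mf d^{\,t}\mf z_0,\ \mf R_0\mf z_0\bigr)$ by~\eqref{e:emme0slow}. The goal is to show that $g$ solves a \emph{linear homogeneous} scalar ODE $g'=c(x)g$ with $c$ locally bounded; once this is known, the conclusion is immediate from the uniqueness part of the Cauchy--Lipschitz theorem, exactly as in the proof of Lemma~\ref{l:sign}: the constant function $0$ is a solution, so a solution with $g(0)\neq 0$ can never vanish. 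To exhibit the factor $g$ in $g'$ one combines the gradient structure of $\alpha$ supplied by Hypothesis~\ref{h:jde} (in the form of Remark~\ref{r:acca1}: on $\{\alpha=0\}$, $\nabla\alpha=(\mf 0_h^t,\boldsymbol{\xi}^t)$ with $\boldsymbol{\xi}$ parallel to $\mf a_{21}$) with Corollary~\ref{c:definiamod}, i.e.\ with~\eqref{e:definiamod}, which says precisely that $\mf a_{21}^t\mf R_0\mf z_0$ is divisible by $\alpha-\sigma$; a Hadamard-type division (Lemma~\ref{l:dividiamo}) then lets one factor $g$ out of $\nabla\alpha(\mf u)\cdot\mf u'$.

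I expect this last step --- the verification that $g'$ is divisible by $g$ --- to be the crux. One has to make sure that the gradient information on $\alpha$, which Hypothesis~\ref{h:jde} provides only on the level set $\{\alpha=0\}$, suffices here, and that after the cancellations the singular factor $1/(\alpha-\sigma)$ implicitly carried along in passing from~\eqref{e:odefast} to~\eqref{e:odeslow} never actually resurfaces. By contrast, the first two assertions are essentially bookkeeping, provided one is careful to invoke the center manifold's local invariance for the \emph{regular} system~\eqref{e:odefast}, and to perform the time change only on the open set $\{\alpha\neq\sigma\}$ before passing to the limit $\alpha\to\sigma$ via the smoothness of~\eqref{e:emme0slow}.
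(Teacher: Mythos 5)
Your plan reproduces the paper's: for the first two assertions, the paper reads directly off the computations in the proof of Lemma~\ref{l:suemme0} that if $(\mf u,\mf z_0,\sigma)$ solves~\eqref{e:emme0slow} then $(\mf u,\mf R_0\mf z_0,\sigma)$ solves~\eqref{e:odeslow}; since every point of $\mathcal M^0\cap\mathrm B^{2N}_\delta(\mf v^\ast)$ has the form $(\tilde{\mf u},\mf R_0\tilde{\mf z}_0,\tilde\sigma)$ by~\eqref{e:errezero}, this gives local invariance and the formula~\eqref{e:emme0slow} simultaneously. Because the cancellation of $\alpha-\sigma$ is an algebraic identity in the coefficients, one can verify the ODE directly; the auxiliary step in your sketch of first restricting to $\{\alpha\neq\sigma\}$ and then letting $\alpha\to\sigma$ is not needed. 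For the last assertion the paper likewise sets $a(x):=\alpha(\mf u(x))$, claims (citing Hypothesis~\ref{h:jde} and~\eqref{e:definiamod}) that $a'(x)=0$ whenever $a(x)=\sigma$, and concludes by Cauchy--Lipschitz uniqueness, exactly as you outline.

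The hesitation you express about the last step is well placed and is the real crux; you should not wave it away. Along~\eqref{e:emme0slow} one has $a' = -e_{11}^{-1}\partial_{u_1}\alpha\,\mf d^t\mf z_0 + \nabla_{\mf u_2}\alpha\cdot\mf R_0\mf z_0$. Writing $\nabla_{\mf u_2}\alpha = c\,\mf a_{21}+\mf w$ with $\mf w\perp\mf a_{21}$, Hypothesis~\ref{h:jde} plus Lemma~\ref{l:dividiamo} give $\partial_{u_1}\alpha=\alpha g_1$ and $\mf w=\alpha\,\mf g_2$ for smooth $g_1,\mf g_2$, and~\eqref{e:definiamod} gives $\mf a_{21}^t\mf R_0\mf z_0=(\alpha-\sigma)\mf d^t\mf z_0$; substituting, $a' = \alpha\,A + (\alpha-\sigma)\,B$ with $A:=-e_{11}^{-1}g_1\,\mf d^t\mf z_0+\mf g_2^t\mf R_0\mf z_0$ and $B:=c\,\mf d^t\mf z_0$. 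For $\sigma=0$ this is the linear homogeneous ODE $a'=(A+B)\,a$ and your uniqueness argument closes. For $\sigma\neq0$, however, $\alpha A$ need not vanish on $\{\alpha=\sigma\}$, because you only control $\nabla\alpha$ on the level set $\{\alpha=0\}$; the paper's one-sentence justification shares this lacuna. What rescues the intended applications is that for Navier--Stokes and MHD one has $\alpha=u$, so $\nabla\alpha$ has the form $(\mf 0_h^t,\,c\,\mf a_{21}^t)$ on the whole neighbourhood and not merely on $\{\alpha=0\}$, which forces $g_1\equiv0$ and $\mf g_2\equiv\mf 0$, hence $A\equiv0$. To make the last assertion of the lemma airtight for general $\alpha$ and $\sigma\neq0$ one would either need to upgrade Hypothesis~\ref{h:jde} to hold on every level set $\{\alpha=\sigma\}$ with $\sigma$ small, or else confine the claim to $\sigma=0$.
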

\begin{proof}
Consider a solution $(\mf u, \mf z_0, \sigma)$ of system~\eqref{e:emme0slow}. We introduce the function $a$ by setting $a(x): = \alpha (\mf u(x))$. By combing Hypothesis~\ref{h:jde} with~\eqref{e:definiamod} we conclude that $a'(x)=0$ if $a(x)=\sigma$. This implies that $a(x)=\sigma$ is an equilibrium for the ODE satisfied by $a$.  By the uniqueness part of the Cauchy Lipschitz Picard Lindel\"of Theorem, either $a$ is identically $\sigma$, or it is always different from $\sigma$. This establishes the last statement of the lemma. 

Next, we fix an initial datum for~\eqref{e:odeslow} lying on $\mathcal M^0$, we recall~\eqref{e:errezero} and conclude that the initial datum must be of the form $(\tilde{\mf u}, \mf R_0 \tilde{\mf z}_0, \tilde \sigma)$.  Consider the solution of $(\mf u, \mf z_0, \sigma)$ of system~\eqref{e:emme0slow} with initial datum $(\tilde{\mf u}, \tilde{\mf z}_0, \tilde \sigma)$. 	By the proof of Lemma~\ref{l:suemme0}, the function $(\mf u, \mf R_0 \mf z_0, \sigma)$ is a solution of~\eqref{e:odeslow}. This establishes the first part of the lemma.  
\end{proof}
To conclude this section, we state a result concerning the signature of  $\mf R_0^t \mf A_{22} \mf R_0$. Note that $\mf R_0^t \mf A_{22} \mf R_0$ is a symmetric matrix and therefore has $N-2$ real (non necessarily distinct) eigenvalues. 
\begin{lemma}
\label{l:key}
Assume that $\mf u^\ast$ satisfies~\eqref{e:uast}.  Then $2 \leq k \leq N-1$ and the signature of the matrix $\mf R_0^t \mf A_{22} \mf R_0 (\mf u^\ast, \mf 0_{N-2}, 0) $ is as follows: 
\begin{itemize}
\item $1$ eigenvalue is $0$;
\item $k-2$ eigenvalues are strictly negative;
\item $N-k-1$ eigenvalues are strictly positive, 
\end{itemize}
provided each eigenvalue is counted according to its multiplicity. 
\end{lemma}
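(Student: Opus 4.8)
The plan is to pass from the matrix $\mf R_0^t \mf A_{22} \mf R_0(\mf u^\ast, \mf 0_{N-2}, 0)$ to the symmetric bilinear form $\mf A_{22}(\mf u^\ast)$ restricted to a hyperplane, and then to compute the inertia of that restriction by comparing it with the inertia of the full matrix $\mf A(\mf u^\ast)$ through a bordered-matrix (Schur complement) argument. Throughout I write the inertia triple of a symmetric matrix in the order (number of positive, of negative, of zero eigenvalues). Set $\tilde M_0 := \{\mf z_2 \in \R^{N-1} : \mf a_{21}^t(\mf u^\ast) \mf z_2 = 0\}$. First I would check that the columns of $\mf R_0(\mf u^\ast, \mf 0_{N-2}, 0)$ form a basis of $\tilde M_0$: they are $N-2$ linearly independent vectors by Lemma~\ref{l:slow}; they lie in $\tilde M_0$ by Lemma~\ref{l:fazero} applied with $\sigma = 0$, which is legitimate since $\alpha(\mf u^\ast) = 0 = \sigma$ by~\eqref{e:uast}; and $\dim \tilde M_0 = N-2$ because $\mf a_{21}(\mf u^\ast) \neq \mf 0_{N-1}$ by Lemma~\ref{l:kaw}. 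Since the entries of $\mf R_0^t \mf A_{22} \mf R_0(\mf u^\ast, \mf 0_{N-2}, 0)$ are precisely the values $\mf z^t \mf A_{22}(\mf u^\ast) \mf z'$ of this bilinear form on the chosen basis, Sylvester's law of inertia gives that its signature coincides with the signature of $\mf A_{22}(\mf u^\ast)$ restricted to $\tilde M_0$; this is the object I will compute.

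Next I would record two facts about $\mf A(\mf u^\ast)$. Since $\mf E(\mf u^\ast)$ is symmetric positive definite by Hypothesis~\ref{h:normal}, $\mf A(\mf u^\ast)$ is congruent, via $\mf E(\mf u^\ast)^{1/2}$, to the symmetric matrix $\mf E(\mf u^\ast)^{-1/2} \mf A(\mf u^\ast) \mf E(\mf u^\ast)^{-1/2}$, which is similar to $\mf E^{-1} \mf A(\mf u^\ast)$; by Hypothesis~\ref{h:sh}, by~\eqref{e:sh} and by $\lambda_k(\mf u^\ast) = 0$, the matrix $\mf E^{-1} \mf A(\mf u^\ast)$ has $N-k$ positive, $k-1$ negative and exactly one zero eigenvalue, so $\mf A(\mf u^\ast)$ has inertia $(N-k, k-1, 1)$. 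Moreover, by Hypothesis~\ref{h:sing1d} with $h=1$, the $(1,1)$ entry of $\mf A(\mf u^\ast)$ is $a_{11}(\mf u^\ast) = \alpha(\mf u^\ast) e_{11}(\mf u^\ast) = 0$ --- the ``viscous-level'' part of~\eqref{e:char}.

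The heart of the argument, and the step I expect to be the main obstacle, is the bordered-matrix identity
\[
   \mathrm{inertia}\, \mf A(\mf u^\ast) \;=\; (1,1,0) \,+\, \mathrm{inertia}\big(\mf A_{22}(\mf u^\ast)\text{ restricted to }\tilde M_0\big).
\]
I would prove it by choosing an orthonormal basis of $\R^{N-1}$ whose first vector is $\mf a_{21}(\mf u^\ast)/|\mf a_{21}(\mf u^\ast)|$; then $\mf a_{21}(\mf u^\ast) = (|\mf a_{21}(\mf u^\ast)|, \mf 0_{N-2})^t$ and $\mf A_{22}(\mf u^\ast)$ has scalar $(1,1)$ entry $\gamma$, off-diagonal block $\mf b \in \R^{N-2}$ and lower-right block $\mf C$, where $\mf C$ represents $\mf A_{22}(\mf u^\ast)$ restricted to $\tilde M_0$. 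After the induced orthogonal congruence and a reordering of coordinates, $\mf A(\mf u^\ast)$ becomes a symmetric matrix with leading block $\mf C$, coupling block $\mf F$ whose first column is $\mf 0_{N-2}$ and whose second column is $\mf b$, and trailing $2 \times 2$ block $\mf K$ with diagonal entries $a_{11}(\mf u^\ast) = 0$ and $\gamma$ and off-diagonal entry $|\mf a_{21}(\mf u^\ast)|$. Since $\det \mf K = -|\mf a_{21}(\mf u^\ast)|^2 < 0$, the block $\mf K$ is invertible of inertia $(1,1,0)$, and a short computation shows the Schur complement $\mf C - \mf F \mf K^{-1} \mf F^t$ equals $\mf C$ --- the correction term vanishes because the first column of $\mf F$ is $\mf 0_{N-2}$ and the $(2,2)$ entry of $\mf K^{-1}$ equals $a_{11}(\mf u^\ast)/\det \mf K = 0$. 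Haynsworth's inertia additivity formula then yields the displayed identity. This point is delicate precisely because $a_{11}(\mf u^\ast) = 0$ enters here and nowhere else: it is that vanishing which makes the Schur complement collapse exactly onto $\mf C$ and so produces the clean shift by $(1,1,0)$, whereas a softer argument based on Cauchy interlacing would only bracket the inertia of the restriction without pinning it down.

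Combining the three steps, $\mf A_{22}(\mf u^\ast)$ restricted to $\tilde M_0$ --- hence $\mf R_0^t \mf A_{22} \mf R_0(\mf u^\ast, \mf 0_{N-2}, 0)$ --- has inertia $(N-k-1, k-2, 1)$, which is exactly the asserted signature; nonnegativity of the entries $k-2$ and $N-k-1$ then forces $2 \leq k \leq N-1$, completing the proof.
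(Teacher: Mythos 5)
Your proof is correct, but it follows a genuinely different route from the paper's. The paper's proof (\S~\ref{s:proof}) passes through the viscosity pencil: Lemma~\ref{l:pieta} determines the sign distribution of the roots of $\mathcal P(s)=\det(\mf A-s\mf B)$ at $\mf u^\ast$ by a homotopy $\mf B+w\mf I_N$ combined with algebraic-function expansions and a Taylor estimate for the slowest root; Lemma~\ref{l:iff} identifies the eigenvalues of $\mf R_0^t\mf A_{22}\mf R_0$ with the roots of $\mathcal P$; and a final perturbation step is needed to pin down multiplicities, since Lemma~\ref{l:iff} matches spectra but not a priori multiplicities. You instead compute the inertia of $\mf A(\mf u^\ast)$ from the sign pattern of the eigenvalues of $\mf E^{-1}\mf A$, identify $\mf R_0^t\mf A_{22}\mf R_0$, up to congruence, with the restriction of $\mf A_{22}$ to the hyperplane $\{\mf a_{21}^t\mf z_2=0\}$, and split off an invertible $2\times 2$ border $\mf K$ of inertia $(1,1,0)$ whose Schur complement collapses exactly onto that restriction because $a_{11}(\mf u^\ast)=0$; Haynsworth additivity then finishes the computation. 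Both proofs rest on the same three inputs --- $a_{11}(\mf u^\ast)=0$, $\mf a_{21}(\mf u^\ast)\neq\mf 0_{N-1}$, and the fact that the columns of $\mf R_0$ span the orthogonal complement of $\mf a_{21}$ (Lemma~\ref{l:fazero}) --- but use them in quite different algebra. Your argument is shorter and more elementary: no algebraic-function theory, no homotopy in $w$, and no separate multiplicity step, since Sylvester's law counts with multiplicity automatically; it also makes transparent that the viscosity $\mf B$ plays no role in the signature. The paper's pencil route has the compensating advantage that the polynomial $\mathcal P$ is the one that already governs the linearized boundary-layer dynamics, and the same machinery, quoted from~\cite{BianchiniSpinolo:ARMA}, carries over directly to $h>1$ (Lemma~\ref{l:key2}); your border $\mf K$ would there become $2h\times 2h$, and the inertia-$(h,h,0)$ claim, while still true, would require its own short homotopy argument.
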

The proof of Lemma~\ref{l:key} is quite long and technical and is given in~\S~\ref{s:proof}. 
\subsection{Application to Navier-Stokes and MHD equations} \label{ss:3:nsmhd}
We now discuss the application of the analysis in~\S~\ref{ss:emme0} to the Navier-Stokes and MHD equations with $\eta>0$.  For the applications to the MHD equations with $\eta=0$ we refer to~\S~\ref{s:bigk}.
\subsubsection{Navier-Stokes equations}\label{sss:3:ns}
We recall the discussion in~\S~\ref{ss:h:ns} and we point out that  $\mf u=(\rho, u, \theta)$, $u_1= \rho$ and $\mf u_2 =(u, \theta)^t$. Since $N=3$, 
the dimension of the manifold $\mathcal M^0$ is $5$, $\mf z_0$ is a real valued function, $\mf R_0$ attains value in $\R^2$ and it is perpendicular to $\mf a_{21}$ at $(\mf u^\ast, 0, 0)$ owing to~\eqref{e:definiamod}. By recalling~\eqref{e:identity} we conclude that 
$\mf R_0(\mf u^\ast, 0, 0)~=~(0, \sqrt{\rho^\ast \theta^\ast/k(\rho^\ast)})^t$. The function $\mf \Theta_0$ is real valued and attains the value $0$ at $(\mf u^\ast, 0, 0)$, which is consistent with Lemma~\ref{l:key} since in this case $k-2=0$ and $N-k-1=0$.  
\subsubsection{MHD equations with $\eta>0$}\label{sss:3:mhd}
We recall the discussion in~\S~\ref{ss:h:mhd}. Note that  $\mf u=(\rho, \mf b,  u, \mf w, \theta)$, $u_1= \rho$ and $\mf u_2 =(\mf b,  u, \mf w, \theta)^t$.
Since $N=7$, 
the dimension of the manifold $\mathcal M^0$ is $13$, $\mf z_0$ attains values in $\R^5$ and $\mf R_0$ attains value in $\mathbb{M}^{6\times5}$ and its columns are all perpendicular to $\mf a_{21}$ at $(\mf u^\ast, 0, 0)$ owing to~\eqref{e:definiamod}. By recalling~\eqref{e:identity} we conclude that 
$$
    \mf R_0 (\mf u^\ast, 0, 0) = \left(
    \begin{array}{cccccc}
     \sqrt{\rho^\ast/\eta} \mf I_{2}  & \mf 0_{2 \times 2} & \mf 0_2 \\
     \mf 0^t_{2} & \mf 0_2^t &  0 \\
       \mf 0_{2} & \sqrt{\rho^\ast / \nu} \mf I_2 & \mf 0_2 \\  
     \mf 0_2^t & \mf 0_2^t & \sqrt{\rho^\ast \theta^\ast/\kappa)} \\ 
     \end{array}
    \right)
$$
The function $\mf \Theta_0$ attains the value in $\mathbb{M}^{5 \times 5}$ and by using~\eqref{e:Lambda0} we get 
\be 
\label{e:mhd:Theta}
     \mf \Theta_0 (\mf u^\ast, 0, 0)= 
     \left(
     \begin{array}{ccc}
      \mf 0_{2 \times 2}           & - \beta \rho^\ast \mf I_{2}/ \sqrt{\eta \nu}  & \mf 0_2   \\
   - \beta \rho^\ast \mf I_{2}/\sqrt{\eta \nu}    &\mf 0_{2 \times 2}  & \mf 0_2  \\
  \mf 0_2^t  &\mf 0_2^t & 0
     \end{array}
     \right) 
 \eq
Note that the eigenvalues of $ \mf \Theta_0 (\mf u^\ast, 0, 0)$ are: $0$ (with multiplicity $1$), $- \beta \rho^\ast/ \sqrt{\eta \nu}$ (with multiplicity $2$) and $\beta \rho^\ast/ \sqrt{\eta \nu}$ (with multiplicity $2$). This is consistent with Lemma~\ref{l:key} since in this case  $k=4$. 
\section{Characteristic boundary layers}
\label{s:center}
In this section, we study the characteristic boundary layers, i.e. the boundary layers that decay very slowly to their limit. Note that we have to take them into account because the boundary is characteristic for the hyperbolic system~\eqref{e:hyperbolic}, i.e. one eigenvalue of $\mf E^{-1}\mf A$ can attain the value $0$, see~\eqref{e:uast}. Note that when we handle characteristic boundary layers we have to simultaneously handle travelling waves. We proceed as follows: in~\S~\ref{ss:cma} we construct the center manifold of system~\eqref{e:odeslow} restricted on $\mathcal M^0$. As we point out in~\S~\ref{sss:4:ns}, this construction is trivial in the case of the Navier-Stokes equations, but it is in general non trivial. For instance, it is not trivial in the case of the MHD equations, see~\S~\ref{sss:4:mhd}. In~\S~\ref{ss:ld} we discuss the characteristic boundary layers analysis by assuming that the characteristic vector field is linearly degenerate. This assumption considerably simplifies the analysis and it is satisfied by the Navier-Stokes and MHD equations. In~\S~\ref{ss:general} we discuss the general case and in~\S~\ref{ss:4:nsmhd} we describe the applications of the analysis to the Navier-Stokes and MHD equations.  As in the previous section, here we focus on the case $h=1$ and we refer to~\S~\ref{s:bigk} for the case $h>1$. 
\subsection{Center manifold analysis}
\label{ss:cma}
In this paragraph we construct a manifold containing the boundary layers with characteristic speed. We linearize system~\eqref{e:emme0slow} at the point $(\mf u^\ast, \mf 0_{N-2}, 0)$. Owing to Lemma~\ref{l:key}, the center space has dimension $N+2$. We arbitrarily select a center manifold and we term it $\mathcal M^{00}$. Note the difference between the manifold $\mathcal M^0$ and the manifold $\mathcal M^{00}$: $\mathcal M^{0}$ is a center manifold for system~\eqref{e:odefast}, $\mathcal M^{00}$ is a center manifold for system~\eqref{e:emme0slow} and henceforth for system~\eqref{e:odeslow}. The proof of the following result is similar to, but easier than, the proof of Lemma~\ref{l:slow} and it is therefore omitted. See also~\cite[\S4]{BianchiniBressan}. 
\begin{lemma}
\label{l:center} 
There are a sufficiently small constant $\delta>0$ and 
a smooth function $\mf r_{00}: \R^N \times \R \times \R \to \R^{N-2}$ such that   
\begin{equation}
\label{e:erre00}
     ( \mf u, \mf z_0, \sigma ) \in \mathcal M^{00} \cap \mathrm{B}^{2N-1}_{\delta}(\mf u^\ast, \mf 0_{N-1}, 0) 
     \iff  \mf z_0 = \mf r_{00} (\mf u, z_{00}, \sigma) z_{00}, \quad 
     \text{for a suitable $z_{00} \in \R$} .  
\end{equation}
Also, 
\begin{equation}
\label{e:eigen}
  \mf R_0^t \mf A_{22} \mf R_0 (\mf u^\ast, \mf 0, 0)
  \mf r_{00} (\mf u^\ast,0, 0) = \mf 0_{N-2} \quad 
  \text{and} 
  \quad 
  |\mf r_{00} (\mf u^\ast,0, 0)| =1. 
\end{equation}
\end{lemma}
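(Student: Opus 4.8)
The plan is to follow, in a simplified form, the proof of Lemma~\ref{l:slow}. First I would compute the center subspace of the linearization of the \emph{nonsingular} system~\eqref{e:emme0slow} at $(\mf u^\ast, \mf 0_{N-2}, 0)$. Since every right-hand side in~\eqref{e:emme0slow} is proportional to $\mf z_0$, the Jacobian at this point is block triangular: it annihilates the $u_1$-, $\mf u_2$- and $\sigma$-directions and acts on the $\mf z_0$-block as $\mf \Theta_0 (\mf u^\ast, \mf 0_{N-2}, 0)$, which by~\eqref{e:cosaeLambda0} and~\eqref{e:Lambda0} equals $\mf R_0^t \mf A_{22} \mf R_0 (\mf u^\ast, \mf 0_{N-2}, 0)$. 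Hence the spectrum of the Jacobian is the eigenvalue $0$ with multiplicity $N+1$ (from the $u_1$-, $\mf u_2$- and $\sigma$-directions) together with the $N-2$ eigenvalues of $\mf R_0^t \mf A_{22} \mf R_0 (\mf u^\ast, \mf 0_{N-2}, 0)$; by Lemma~\ref{l:key} the latter consists of one zero eigenvalue, $k-2$ strictly negative and $N-k-1$ strictly positive eigenvalues. Therefore the center subspace $M^{00}$ has dimension $N+2$, and more precisely $M^{00} = \R^N \times \ker \mf \Theta_0 (\mf u^\ast, \mf 0_{N-2}, 0) \times \R$, the kernel being one-dimensional; I fix a unit vector $\mf e \in \R^{N-2}$ spanning it.

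Next I would apply the Center Manifold Theorem (as stated in~\cite{Bressan:cm}) to~\eqref{e:emme0slow} at $(\mf u^\ast, \mf 0_{N-2}, 0)$, obtaining an $(N+2)$-dimensional manifold $\mathcal M^{00}$ tangent to $M^{00}$ at that point, parametrized by a chart $\mf m_{00}: M^{00} \to \mathcal M^{00}$ whose composition with the orthogonal projection onto $M^{00}$ is the identity. Because the right-hand side of~\eqref{e:emme0slow} vanishes whenever $\mf z_0 = \mf 0_{N-2}$, every point $(\mf u, \mf 0_{N-2}, \sigma)$ close to $(\mf u^\ast, \mf 0_{N-2}, 0)$ is an equilibrium and hence lies on $\mathcal M^{00}$; consequently the chart has the form $\mf m_{00} (\mf u, z_{00}, \sigma) = (\mf u, \tilde{\mf m}_{00} (\mf u, z_{00}, \sigma), \sigma)$ for a smooth $\tilde{\mf m}_{00}: \R^N \times \R \times \R \to \R^{N-2}$, where I identify the center coordinate with its scalar component $z_{00} \in \R$ along $\mf e$, and $\tilde{\mf m}_{00} (\mf u, 0, \sigma) = \mf 0_{N-2}$ for every $(\mf u, \sigma)$ near $(\mf u^\ast, 0)$.

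I would then invoke Corollary~\ref{c:dividiamo} with $\mf f := \tilde{\mf m}_{00}$ and $y := z_{00}$ to write $\tilde{\mf m}_{00} (\mf u, z_{00}, \sigma) = \mf r_{00} (\mf u, z_{00}, \sigma) \, z_{00}$ for a smooth $\mf r_{00}$ with values in $\R^{N-2}$; this gives the equivalence~\eqref{e:erre00}, the forward implication being the chart itself and the reverse implication following since, near the base point, $z_{00} \mapsto \mf r_{00} (\mf u, z_{00}, \sigma) z_{00}$ is exactly the $\mf z_0$-component of $\mf m_{00}$, so any $(\mf u, \mf z_0, \sigma)$ of that form lies in the image of $\mf m_{00}$, hence on $\mathcal M^{00}$. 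Finally, tangency of $\mathcal M^{00}$ to $M^{00}$ at $(\mf u^\ast, \mf 0_{N-2}, 0)$, combined with the normalization built into the chart, forces $\mf r_{00} (\mf u^\ast, 0, 0) = \partial_{z_{00}} \tilde{\mf m}_{00} (\mf u^\ast, 0, 0) = \mf e$ (after possibly replacing $\mf e$ by $-\mf e$), which yields $|\mf r_{00} (\mf u^\ast, 0, 0)| = 1$ together with $\mf R_0^t \mf A_{22} \mf R_0 (\mf u^\ast, \mf 0, 0) \, \mf r_{00} (\mf u^\ast, 0, 0) = \mf \Theta_0 (\mf u^\ast, \mf 0_{N-2}, 0) \, \mf e = \mf 0_{N-2}$, i.e.~\eqref{e:eigen}.

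The only genuinely delicate ingredient is the spectral count in the first step, which hinges entirely on Lemma~\ref{l:key} (the signature of $\mf R_0^t \mf A_{22} \mf R_0$ at $\mf u^\ast$), whose proof is deferred to~\S~\ref{s:proof}; granting that, everything else is softer than in the proof of Lemma~\ref{l:slow}, because the center direction inside the $\mf z_0$-variable is one-dimensional, so no Gram--Schmidt orthonormalization and no matrix normalization analogous to~\eqref{e:identity} are required --- a single unit vector $\mf e$ suffices.
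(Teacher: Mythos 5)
Your proof is correct and takes exactly the route the paper indicates when it says the argument is ``similar to, but easier than'' the proof of Lemma~\ref{l:slow}: linearize~\eqref{e:emme0slow} at $(\mf u^\ast, \mf 0_{N-2}, 0)$, read off the center subspace from Lemma~\ref{l:key}, apply the Center Manifold Theorem with a chart whose composition with the orthogonal projection onto $M^{00}$ is the identity, use the equilibria $(\mf u, \mf 0_{N-2}, \sigma)$ to see $\tilde{\mf m}_{00}(\mf u, 0, \sigma) \equiv \mf 0_{N-2}$, factor via Corollary~\ref{c:dividiamo}, and obtain~\eqref{e:eigen} from tangency. Your observation that the scalar center direction renders Gram--Schmidt and the $\mf B_{22}$-normalization of~\eqref{e:identity} unnecessary is precisely what makes this ``easier'' than Lemma~\ref{l:slow}, as the paper claims.
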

 We now restrict system~\eqref{e:emme0slow} to $\mathcal M^{00}$. 
\begin{lemma}
\label{l:suemme00}
By restricting system~\eqref{e:emme0slow} (and henceforth system~\eqref{e:odeslow}) to~the manifold $\mathcal M^{00}$ we get 
\be
\label{e:suemme00}
\left\{
\begin{array}{lll}
{u_1}' =  -   e_{11}^{-1}  \mf d^t  \mf r_{00} (\mf u, z_{00}, \sigma) z_{00} { ,} \\
\mf u'_2 =  \mf R_0  \mf r_{00} (\mf u, z_{00}, \sigma) z_{00} { ,} \\
z'_{00}=    \theta_{00}  (\mf u, z_{00}, \sigma) z_{00} { ,} \\ 
\sigma'=0 {.} \\
\end{array}
\right.
\eq
In the previous expression, $\theta_{00}: \R^N \times \R \times \R \to \R$ is a suitable smooth function satisfying 
\be
\label{e:lambda00}
      \theta_{00} (\mf u^\ast, 0, 0) =0. 
\eq 
\end{lemma}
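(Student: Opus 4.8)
The plan is to substitute the parametrization $\mf z_0 = \mf r_{00}(\mf u, z_{00},\sigma) z_{00}$ of $\mathcal M^{00}$ from Lemma~\ref{l:center} into the restricted system~\eqref{e:emme0slow} and read off the resulting equations, exactly as was done one level down in the proof of Lemma~\ref{l:suemme0}. First I would note that $\mathcal M^{00}$ is by construction locally invariant for~\eqref{e:emme0slow}, so a solution starting on it stays on it, and hence its components must satisfy $\mf z_0(x) = \mf r_{00}(\mf u(x), z_{00}(x), \sigma) z_{00}(x)$ for the scalar function $z_{00}$ obtained by projecting the solution onto the one-dimensional ``slow'' direction. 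Plugging this into the first two lines of~\eqref{e:emme0slow} immediately yields the first two lines of~\eqref{e:suemme00}, with the coefficient functions $e_{11}^{-1}\mf d^t \mf r_{00}$ and $\mf R_0 \mf r_{00}$ smooth since $\mf d$, $\mf R_0$ (Lemma~\ref{l:slow} and Corollary~\ref{c:definiamod}) and $\mf r_{00}$ (Lemma~\ref{l:center}) all are.

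The substantive point is the third line. Differentiating the identity $\mf z_0 = \mf r_{00}(\mf u, z_{00},\sigma) z_{00}$ along a solution gives $\mf z_0' = \dot{\mf r}_{00}\, z_{00} + \mf r_{00} z_{00}'$, where $\dot{\mf r}_{00}$ denotes the derivative of $\mf r_{00}$ along the flow; using the already-established first two lines together with $\sigma'=0$, this derivative is a smooth function of $(\mf u, z_{00},\sigma)$ times $z_{00}$, because each of $\mf u_1'$, $\mf u_2'$ is itself proportional to $z_{00}$. On the other hand, from the third line of~\eqref{e:emme0slow} we have $\mf z_0' = \mf \Theta_0(\mf u,\mf z_0,\sigma)\mf z_0 = \mf \Theta_0(\mf u, \mf r_{00}z_{00},\sigma)\mf r_{00} z_{00}$. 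Equating the two expressions for $\mf z_0'$ gives an $(N-2)$-vector equation of the form $\big(\mf r_{00} + z_{00}\,\mf K_1\big) z_{00}' = z_{00}\,\mf K_2$ for smooth matrix/vector-valued $\mf K_1$, $\mf K_2$; I would then left-multiply by $\mf r_{00}^t$ (or, more robustly, by any smooth left-inverse that exists near $(\mf u^\ast,0,0)$ because $|\mf r_{00}(\mf u^\ast,0,0)|=1$ by~\eqref{e:eigen}, so $\mf r_{00}^t\mf r_{00}$ is invertible there) to solve for the scalar $z_{00}'$, obtaining $z_{00}' = \theta_{00}(\mf u, z_{00},\sigma) z_{00}$ with $\theta_{00}$ smooth. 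This step — isolating the scalar equation and checking the coefficient in front of $z_{00}'$ is invertible near the base point — is the main technical obstacle, though it is a direct analogue of the passage from~\eqref{e:biduedue} to~\eqref{e:raccogliamo}--\eqref{e:cosaeLambda0} in the proof of Lemma~\ref{l:suemme0}.

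Finally, to verify~\eqref{e:lambda00} I would evaluate at $(\mf u^\ast, 0,0)$: there the correction terms involving $z_{00}$ drop out, so $\theta_{00}(\mf u^\ast,0,0)$ is, up to the normalization by $\mf r_{00}^t\mf r_{00} = 1$, the Rayleigh-type quotient $\mf r_{00}^t\, \mf \Theta_0(\mf u^\ast,\mf 0_{N-2},0)\,\mf r_{00}(\mf u^\ast,0,0)$; by~\eqref{e:Lambda0} this equals $\mf r_{00}^t \mf R_0^t \mf A_{22}\mf R_0\, \mf r_{00}(\mf u^\ast,\mf 0,0)$, which vanishes because $\mf R_0^t\mf A_{22}\mf R_0(\mf u^\ast,\mf 0,0)\,\mf r_{00}(\mf u^\ast,0,0) = \mf 0_{N-2}$ by the first relation in~\eqref{e:eigen}. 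This gives $\theta_{00}(\mf u^\ast,0,0)=0$ and completes the proof. One bookkeeping remark: throughout one should keep $\delta$ small enough that the center manifolds $\mathcal M^0$, $\mathcal M^{00}$ and all the invertibility statements above hold simultaneously on $\mathrm{B}^{2N-1}_\delta(\mf u^\ast,\mf 0_{N-1},0)$, which is harmless since only finitely many smallness conditions are imposed.
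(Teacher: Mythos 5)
Your proposal is correct and follows essentially the same approach as the paper's proof: the paper likewise substitutes $\mf z_0=\mf r_{00}z_{00}$ into the third line of~\eqref{e:emme0slow}, differentiates the parametrization along the flow, left-multiplies by $\mf r_{00}^t$ to isolate the scalar $z_{00}'$, and verifies~\eqref{e:lambda00} by combining~\eqref{e:Lambda0} with~\eqref{e:eigen}, arriving at the explicit formula~\eqref{e:cosaelambda00} for $\theta_{00}$ that you describe in words.
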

\begin{proof}
We can argue as in the proof of Lemma~\ref{l:suemme0} and therefore we omit most of the details. By plugging the relation $\mf z_0 = \mf r_{00} z_{00}$ into the third line of~\eqref{e:emme0slow} and using~\eqref{e:cosaeLambda0} we arrive at~\eqref{e:suemme00} provided that 
\be 
\label{e:cosaelambda00}
\begin{split}
  \theta_{00}& (\mf u, z_{00}, \sigma):  =
  \Big[ \mf r_{00}^t \{ \mf r_{00} + z_{00} \partial_{z_{00}} \mf r_{00}\} \Big]^{-1}
  \mf r_{00}^t \Big[ \mf \Theta_0 \mf r_{00} + e_{11}^{-1} z_{00} \mf d^t \mf r_{00} \partial_{u_1} \mf r_{00} -
  z_{00}  \big( \mf D_{\mf u_2}\mf r_{00} \big) \mf R_0 \mf r_{00} \Big]{.} 
       \end{split}    
\eq 
To establish~\eqref{e:lambda00}, it suffices to combine~\eqref{e:Lambda0} and~\eqref{e:eigen}. 
\end{proof}
We now collect some properties of the functions $\mf r_{00}$ and $\theta_{00}$ that we need in the following. 
\begin{lemma}
\label{l:negativo}
We have 
\be 
\label{e:negativo}
    \frac{\partial \theta_{00}}{\partial \sigma}(\mf u^\ast, 0, 0)
   < 0. 
\eq
\end{lemma}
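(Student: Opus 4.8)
The plan is to compute $\partial_\sigma \theta_{00}$ at the base point $(\mf u^\ast, \mf 0_{N-2},0)$ explicitly from formula \eqref{e:cosaelambda00}, using the fact that at this point many of the higher-order terms vanish. First I would observe that at $(\mf u^\ast,0,0)$ the prefactor $\big[\mf r_{00}^t\{\mf r_{00}+z_{00}\partial_{z_{00}}\mf r_{00}\}\big]^{-1}$ equals $1$ by \eqref{e:eigen} (the normalization $|\mf r_{00}(\mf u^\ast,0,0)|=1$), and that the terms in \eqref{e:cosaelambda00} carrying an explicit factor $z_{00}$ drop out when we evaluate at $z_{00}=0$. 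Hence, near $(\mf u^\ast,0,0)$,
\[
  \theta_{00}(\mf u, 0,\sigma) = \mf r_{00}^t(\mf u,0,\sigma)\,\mf\Theta_0(\mf u,0,\sigma)\,\mf r_{00}(\mf u,0,\sigma)
  + O\big(|\mf u - \mf u^\ast|^2 + \dots\big),
\]
so differentiating in $\sigma$ and using that $\mf r_{00}(\mf u^\ast,0,0)$ is a null vector of $\mf R_0^t\mf A_{22}\mf R_0(\mf u^\ast,\mf 0,0) = \mf\Theta_0(\mf u^\ast,\mf 0,0)$ (by \eqref{e:Lambda0} and \eqref{e:eigen}), the terms where $\partial_\sigma$ hits $\mf r_{00}$ vanish, leaving
\[
  \frac{\partial\theta_{00}}{\partial\sigma}(\mf u^\ast,0,0)
  = \mf r_{00}^t\,\frac{\partial\mf\Theta_0}{\partial\sigma}(\mf u^\ast,\mf 0,0)\,\mf r_{00}.
\]

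Next I would compute $\partial_\sigma\mf\Theta_0$ at the base point from \eqref{e:cosaeLambda0}. By the same bookkeeping — the prefactor $(\mf R_0^t\mf B_{22}\mf R_0+\mf H_2)$ equals $\mf I_{N-2}$ there by \eqref{e:identity} and \eqref{e:accaunodue}, the $\mf H_1$, $\mf H_2$ and $[\alpha-\sigma]\mf d\mf d^t$ contributions either vanish or have vanishing $\sigma$-derivative at this point, and the $\mf g_1$, $\mf G_2$ terms are higher order — the only surviving $\sigma$-dependence inside the bracket is through $-\sigma\mf E_{22}$. Thus
\[
  \frac{\partial\mf\Theta_0}{\partial\sigma}(\mf u^\ast,\mf 0,0)
  = -\,\mf R_0^t\,\mf E_{22}\,\mf R_0(\mf u^\ast,\mf 0,0) \;+\; (\text{terms killed by }\mf r_{00}\text{ or }\mf r_{00}^t),
\]
and therefore
\[
  \frac{\partial\theta_{00}}{\partial\sigma}(\mf u^\ast,0,0)
  = -\,\mf r_{00}^t\,\mf R_0^t\,\mf E_{22}\,\mf R_0\,\mf r_{00}(\mf u^\ast,\mf 0,0).
\]

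Finally I would conclude by positivity: $\mf E_{22}$ is positive definite (Hypothesis~\ref{h:normal}), the columns of $\mf R_0(\mf u^\ast,\mf 0_{N-2},0)$ are linearly independent (Lemma~\ref{l:slow}), and $\mf r_{00}(\mf u^\ast,0,0)\neq\mf 0$ since $|\mf r_{00}(\mf u^\ast,0,0)|=1$; hence $\mf R_0\mf r_{00}(\mf u^\ast,\mf 0,0)\neq\mf 0_{N-1}$ and the quadratic form $\mf r_{00}^t\mf R_0^t\mf E_{22}\mf R_0\mf r_{00}$ is strictly positive, giving $\partial_\sigma\theta_{00}(\mf u^\ast,0,0)<0$.

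The main obstacle I anticipate is the careful control of which terms in \eqref{e:cosaelambda00} and \eqref{e:cosaeLambda0} actually contribute to the $\sigma$-derivative at the base point: one must argue precisely that every $\sigma$-derivative falling on $\mf r_{00}$ (resp. on $\mf R_0$) is annihilated because $\mf r_{00}^t\mf\Theta_0(\mf u^\ast,\mf 0,0)=\mf 0^t$ and $\mf\Theta_0(\mf u^\ast,\mf 0,0)\mf r_{00}=\mf 0$, and that the terms proportional to $z_{00}$ or to $[\alpha-\sigma]$ (which vanishes at $\mf u^\ast$ by \eqref{e:uast}) contribute nothing. This is essentially a symmetry/linear-algebra argument exploiting that $\mf\Theta_0(\mf u^\ast,\mf 0,0)=\mf R_0^t\mf A_{22}\mf R_0(\mf u^\ast,\mf 0,0)$ is symmetric and that $\mf r_{00}$ spans its kernel (Lemma~\ref{l:key}: the zero eigenvalue is simple), so the only genuine contribution is the one coming from the explicit $-\sigma\mf E_{22}$ term, which has a definite sign.
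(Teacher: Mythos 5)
Your overall strategy matches the paper's: reduce to $\partial_\sigma\theta_{00}(\mf u^\ast,0,0)=\mf r_{00}^t\,\partial_\sigma\mf\Theta_0\,\mf r_{00}$ (using that $\mf r_{00}$ is a null vector of the symmetric matrix $\mf\Theta_0(\mf u^\ast,\mf 0,0)$, so $\sigma$-derivatives hitting $\mf r_{00}$ drop), then extract the definite-sign contribution from the explicit $-\sigma\mf E_{22}$ in \eqref{e:cosaeLambda0}. The reduction to $\mf r_{00}^t\,\partial_\sigma\mf\Theta_0\,\mf r_{00}$ and the handling of $\mf H_1,\mf H_2,\mf g_1,\mf G_2$ are all correct.

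However, there is a genuine gap at the next step. You claim that the $[\alpha-\sigma]e_{11}^{-1}\mf d\mf d^t$ contribution has vanishing $\sigma$-derivative and that the terms where $\partial_\sigma$ falls on $\mf R_0$ (or $\mf R_0^t$) are killed by $\mf r_{00}$. Neither is true. At the base point $\alpha(\mf u^\ast)=0$, so $\partial_\sigma\big([\alpha-\sigma]e_{11}^{-1}\mf d\mf d^t\big)=-e_{11}^{-1}\mf d\mf d^t\neq\mf 0$, which contributes $+e_{11}^{-1}\mf r_{00}^t\mf d\mf d^t\mf r_{00}$ to $\mf r_{00}^t\,\partial_\sigma\mf\Theta_0\,\mf r_{00}$. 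Similarly, $\mf r_{00}^t\,\partial_\sigma\mf R_0^t\,\mf A_{22}\mf R_0\mf r_{00}$ is \emph{not} zero: at $(\mf u^\ast,\mf 0,0)$ one has, from the ODE \eqref{e:fordimezzo} evaluated at the base point, the identity $\mf A_{22}\mf R_0\mf r_{00}=e_{11}^{-1}\mf a_{21}\mf d^t\mf r_{00}$ (this is \eqref{e:eigenatuast}, which the paper establishes as a separate step before the computation), and this right-hand side is not annihilated by $\mf r_{00}^t\,\partial_\sigma\mf R_0^t$. Carrying out the computation using \eqref{e:definiamod}, one finds $2\mf r_{00}^t\,\partial_\sigma\mf R_0^t\,\mf A_{22}\mf R_0\mf r_{00}=-2e_{11}^{-1}\mf r_{00}^t\mf d\mf d^t\mf r_{00}$, so the correct value is
\[
  \frac{\partial\theta_{00}}{\partial\sigma}(\mf u^\ast,0,0)
  = -\mf r_{00}^t\mf R_0^t\mf E_{22}\mf R_0\mf r_{00}
    - e_{11}^{-1}\,\mf r_{00}^t\mf d\mf d^t\mf r_{00},
\]
rather than just the first term as in your proposal. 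You are fortunate that both of the terms you dropped have the same (non-positive) sign, so the final inequality still holds; but as written the argument is incomplete, and it cannot be repaired without establishing the auxiliary identity \eqref{e:eigenatuast}, which requires going back to \eqref{e:cosasono1} and \eqref{e:fordimezzo} and is not an automatic cancellation.
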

\begin{proof}
We proceed according to the following steps. \\
{\sc Step 1:} we establish the equality 
\be 
\label{e:eigenatuast}
   - e^{-1}_{11} \mf a_{21} \mf d^t \mf r_{00} + \mf A_{22} \mf R_0 \mf r_{00} = \mf 0_{N-1} 
   \qquad \text{at $(\mf u^\ast, \mf 0_{N-2}, 0)$.}
\eq 
To establish~\eqref{e:eigenatuast} we recall that any solution of~\eqref{e:suemme0} is a solution of~\eqref{e:odeslow} and hence we plug the relation $\mf z_2 = \mf R_0 \mf r_{00} z_{00}$ into the third line of~\eqref{e:cosasono1} and we divide by $(\alpha - \sigma)$. By using~\eqref{e:definiamod} and dividing by $z_{00}$ we arrive at
\be 
\label{e:fordimezzo}
\mf B_{22}\Big[ (\mf R_0 \mf r_{00})'  + \mf R_0 \mf r_{00} \theta_{00} \Big]= 
[ \mf A_{22} - \sigma \mf E_{22} - \mf G_{2}] \mf R_0 \mf r_{00}  - \, e^{-1}_{11}\mf a_{21}  \mf d^t \mf r_{00}  
 + e^{-1}_{11} \mf g_1  \mf d^t \mf r_{00}  {\comment .} 
\eq 
By evaluating the above expression at the point $(\mf u^\ast, \mf 0_{N-2}, 0)$ and using~\eqref{e:lambda00} we eventually arrive at~\eqref{e:eigenatuast}. \\
{\sc Step 2:} by using the explicit expressions~\eqref{e:cosaeLambda0} and~\eqref{e:cosaelambda00} of $\mf \Theta_0$ and $\theta_{00}$, respectively, and by recalling~\eqref{e:eigen} we arrive at 
\begin{equation}
\label{e:derivata}
    \frac{\partial \theta_{00}}{\partial \sigma}(\mf u^\ast, 0, 0) =  \mf r_{00}^t  \frac{\partial \mf \Theta_0}{\partial \sigma} \mf r_{00} =
     2  \mf r_{00}^t  \frac{\partial \mf R_0^t}{\partial \sigma} \mf A_{22} \mf R_0 \mf r_{00} - 
     \mf r_{00}^t  \mf R_0^t 
 \mf E_{22} \mf R_0  \mf r_{00}^t
    +  \mf r_{00}^t \mf d \mf d^t \mf r_{00} e^{-1}_{11} .  
\end{equation}
By using~\eqref{e:eigenatuast} and~\eqref{e:definiamod} we get 
$$
  \mf r_{00}^t \frac{\partial \mf R_0^t}{\partial \sigma} \mf A_{22} \mf R_0 \mf r_{00} =
  \mf r_{00}^t \frac{\partial ( \mf R_0^t \mf a_{21} e^{-1}_{11})}{\partial \sigma} \mf d^t \mf r_{00} = -e^{-1}_{11} \mf r_{00}^t \mf d   \mf d^t \mf r_{00}{\comment .} 
$$
By plugging the above relation into~\eqref{e:derivata} we conclude that 
$$
   \frac{\partial \theta_{00}}{\partial \sigma}(\mf u^\ast, 0, 0) =  -
     \mf r_{00}^t  \mf R_0^t 
 \mf E_{22} \mf R_0  \mf r_{00}^t
    -  \mf r_{00}^t \mf d \mf d^t \mf r_{00} e^{-1}_{11} {\comment ,} 
$$
and this implies~\eqref{e:negativo} because $e_{11}>0$, $\mf E_{22}$ is positive definite, $\mf r_{00} \neq \mf 0_{N-2}$ owing to~\eqref{e:eigen} and $\mf R_0$ has rank $N-2$. 
\end{proof}
We now recall that $\lambda_k$ is the $k$-th eigenvalue of $\mf E^{-1} \mf A$ and can attain the value $0$. 
\begin{lemma}
\label{l:thetafazero}There is a constant $\delta>0$ such that, if $\mf u \in \mathrm{B}^N_\delta (\mf u^\ast)$, then we have 
\be
\label{e:thetafa0}
    \theta_{00} (\mf u, \mf 0_{N-2}, \lambda_k (\mf u) )=0.  
\eq  
\end{lemma}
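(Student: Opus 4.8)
The plan is to show that the right-hand side of the ODE for $\mf z_0$ in~\eqref{e:emme0slow}, restricted to $\mathcal M^{00}$, degenerates precisely when $\sigma$ equals the characteristic eigenvalue $\lambda_k(\mf u)$, because at such a point the traveling-wave ODE has an equilibrium: the linearization of the hyperbolic part $\mf E^{-1}\mf A$ has a genuine kernel vector. More concretely, fix $\mf u$ near $\mf u^\ast$ and set $\sigma = \lambda_k(\mf u)$. Since $\lambda_k(\mf u)$ is an eigenvalue of $\mf E^{-1}(\mf u)\mf A(\mf u)$, there is $\mf e \neq \mf 0_N$ with $[\mf A(\mf u) - \lambda_k(\mf u)\mf E(\mf u)]\mf e = \mf 0_N$. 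Write $\mf e = (e_1, \mf e_2)^t$ in the block decomposition~\eqref{e:blocku} (with $h=1$). Reading off the two block rows of $[\mf A - \sigma\mf E]\mf e = \mf 0_N$ with the explicit block forms~\eqref{e:blockae2},~\eqref{e:e} gives $[\alpha(\mf u)-\lambda_k(\mf u)]e_{11}e_1 + \mf a_{21}^t\mf e_2 = 0$ and $\mf a_{21}e_1 + [\mf A_{22}-\lambda_k(\mf u)\mf E_{22}]\mf e_2 = \mf 0_{N-1}$. These are exactly the algebraic relations that say the point $\mf v = (e_1, \mf u, \mf e_2, \lambda_k(\mf u))$ — more precisely, with $u_1$-component suitably chosen — is an equilibrium of the traveling-wave field~\eqref{e:cosasono1}: indeed the first component of $\mf h$ is $-e_{11}^{-1}\mf a_{21}^t\mf z_2$ and the third is $\mf B_{22}^{-1}$ times an expression linear in $\mf z_2$, and both vanish when $(\mf z_2,\sigma) = (\mf e_2, \lambda_k(\mf u))$ satisfies the two displayed relations (using $[\alpha-\sigma][\mf A_{22}-\sigma\mf E_{22}]\mf e_2 = -[\alpha-\sigma]\mf a_{21}e_1$ together with $-e_{11}^{-1}\mf a_{21}\mf a_{21}^t\mf e_2 = e_{11}^{-1}[\alpha-\sigma]e_{11}\mf a_{21}e_1 = [\alpha-\sigma]\mf a_{21}e_1$ to cancel, and recalling $\mf g_1, \mf G_2$ vanish at $\mf z_2 = \mf 0$ but here one must instead argue directly that the whole constant vector field evaluated on this state is zero — see below).

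The cleaner route, which avoids worrying about the higher-order terms $\mf g_1, \mf G_2$ (which do \emph{not} vanish along an arbitrary nonzero $\mf z_2$), is to work with the reduced systems rather than the full field. First I would show that $(\mf u, \mf z_2 = \mf e_2, \sigma = \lambda_k(\mf u))$ lies on $\mathcal M^0$: since $[\alpha(\mf u)-\lambda_k(\mf u)]e_{11}e_1 = -\mf a_{21}^t\mf e_2$ we have $\mf a_{21}^t\mf e_2 = [\alpha(\mf u)-\sigma](-e_{11}e_1)$, which for the state to sit in $M^0$ (the center space of $\mf D\mf h(\mf v^\ast)$, cut out by $\mf a_{21}^t(\mf u^\ast)\mf z_2 = 0$) must be checked at $\mf u$ near $\mf u^\ast$; but in fact membership in $\mathcal M^0$ is characterized by~\eqref{e:errezero}, so what I actually need is to produce $\mf z_0$ with $\mf e_2 = \mf R_0(\mf u,\mf z_0,\sigma)\mf z_0$. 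Granting that (by a dimension count as in Lemma~\ref{l:fazero}: when $\alpha(\mf u) \neq \sigma$ one shows instead that $\mf e_2$ is an equilibrium direction of the $\mathcal M^0$-reduced field), the restricted system~\eqref{e:emme0slow} then has $(\mf u, \mf z_0, \sigma)$ as an equilibrium, which via~\eqref{e:erre00} forces $\mf z_0 = \mf r_{00}(\mf u,z_{00},\sigma)z_{00}$ for some $z_{00}$, and the $z_{00}$-equation $z_{00}' = \theta_{00}(\mf u, z_{00},\sigma)z_{00}$ then forces — since the point must be a genuine equilibrium of the $\mathcal M^{00}$-reduced system and the other equations already vanish — that $\theta_{00}(\mf u, z_{00}, \lambda_k(\mf u)) = 0$, and then letting $z_{00}\to 0$ and using smoothness gives $\theta_{00}(\mf u, \mf 0_{N-2}, \lambda_k(\mf u)) = 0$. (Here one must be slightly careful: if the eigenvector $\mf e$ happens to have $\mf e_2 = \mf 0_{N-1}$, the argument is vacuous but then the first block relation gives $[\alpha(\mf u)-\lambda_k(\mf u)]e_{11}e_1 = 0$ with $e_1 \neq 0$, i.e. $\alpha(\mf u) = \lambda_k(\mf u)$, a codimension-one situation one treats by continuity from the generic case. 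For $\mf u$ near $\mf u^\ast$ with $\lambda_k(\mf u) = \alpha(\mf u)$, by Lemma~\ref{l:fazero} $\mf a_{21}^t\mf R_0 = \mf 0$ at such $\mf u$, and the reduced equation collapses appropriately.)

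An alternative and perhaps more robust proof: just verify~\eqref{e:thetafa0} at $\mf u = \mf u^\ast$ from~\eqref{e:lambda00} together with $\lambda_k(\mf u^\ast) = 0$ (from~\eqref{e:uast}), and then show $\partial_{\mf u}[\theta_{00}(\mf u, \mf 0_{N-2}, \lambda_k(\mf u))] \equiv 0$ identically near $\mf u^\ast$ by differentiating the eigenvalue identity $[\mf A - \lambda_k\mf E]\mf r_k = \mf 0$ and feeding it into the explicit formula~\eqref{e:cosaelambda00} for $\theta_{00}$ (whose $\mf z_0 = \mf 0$ value is $\mf r_{00}^t\mf \Theta_0\mf r_{00}$, i.e. $\mf r_{00}^t \mf R_0^t[\mf A_{22}-\sigma\mf E_{22}]\mf R_0\mf r_{00}$ up to terms that are $O(|\mf z_0|)$ and hence drop, plus the $e_{11}^{-1}[\alpha-\sigma]\mf d\mf d^t$ term which one evaluates using Corollary~\ref{c:definiamod}). \textbf{The main obstacle} I anticipate is precisely this: tracking the $[\alpha - \sigma]$-proportional term $-[\alpha-\sigma]e_{11}^{-1}\mf d\mf d^t$ inside $\mf \Theta_0$, because when $\sigma = \lambda_k(\mf u) \neq \alpha(\mf u)$ this term does \emph{not} vanish, so~\eqref{e:thetafa0} is genuinely a statement about a cancellation between $\mf r_{00}^t\mf R_0^t\mf A_{22}\mf R_0\mf r_{00}$, the quadratic-in-$\sigma$ piece from $\mf R_0^t(\mf A_{22}-\sigma\mf E_{22})\mf R_0$, and the $\mf d\mf d^t$ piece — and establishing that cancellation requires using the full strength of Hypothesis~\ref{h:jde} (via Corollary~\ref{c:definiamod}, which ties $\mf d$ to $\mf a_{21}$ and $\alpha$) together with the defining eigenvector relation for $\lambda_k$, in the same spirit as the computation in the proof of Lemma~\ref{l:negativo} (Step~1, equation~\eqref{e:eigenatuast}). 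I would therefore structure the proof to mimic that computation: derive the analogue of~\eqref{e:eigenatuast} with $0$ replaced by $\lambda_k(\mf u)$ and $\mf u^\ast$ replaced by a general nearby $\mf u$, namely $-e_{11}^{-1}\mf a_{21}\mf d^t\mf r_{00} + [\mf A_{22} - \lambda_k\mf E_{22}]\mf R_0\mf r_{00} = \mf 0_{N-1}$, and then contract with $\mf r_{00}^t\mf R_0^t$ and compare with the $\mf z_0 = \mf 0$ evaluation of~\eqref{e:cosaelambda00}.
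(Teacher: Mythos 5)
Your first route breaks down at a concrete point: the state $(\mf u,\mf e_2,\lambda_k(\mf u))$, where $\mf e=(e_1,\mf e_2)^t$ is the $\lambda_k$-eigenvector of $\mf E^{-1}\mf A(\mf u)$, is \emph{not} an equilibrium of the fast field $\mf h$ in~\eqref{e:cosasono1} when $\alpha(\mf u)\neq\lambda_k(\mf u)$. The second component of $\mf h$ is $[\alpha-\sigma]\mf z_2$, which at this state equals $[\alpha(\mf u)-\lambda_k(\mf u)]\mf e_2\neq\mf 0$ generically, and likewise the first component $-e_{11}^{-1}\mf a_{21}^t\mf e_2=[\alpha(\mf u)-\lambda_k(\mf u)]\,e_1\neq 0$. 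The equilibria of the fast system away from the set $\{\alpha=\sigma\}$ are exactly the states with $\mf z_2=\mf 0$, so there is no equilibrium sitting over a nonzero eigenvector $\mf e_2$, and consequently no reason for $(\mf u,\mf e_2,\lambda_k(\mf u))$ to lie on $\mathcal M^0$ nor for the reduced dynamics on $\mathcal M^{00}$ to degenerate there. The ``dimension count as in Lemma~\ref{l:fazero}'' only applies on the codimension-one set $\alpha(\mf u)=\sigma$ and cannot be extended by continuity to yield the pointwise identity~\eqref{e:thetafa0}.

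Your final, ``mimic the computation in Step 1 of Lemma~\ref{l:negativo}'' route is circular. The target identity $-e_{11}^{-1}\mf a_{21}\mf d^t\mf r_{00}+[\mf A_{22}-\lambda_k\mf E_{22}]\mf R_0\mf r_{00}=\mf 0_{N-1}$ is obtained from~\eqref{e:fordimezzo} by evaluating at $(\mf u,\mf 0_{N-2},\sigma)$ and \emph{using} $\theta_{00}(\mf u,\mf 0_{N-2},\sigma)=0$ to drop the left-hand side; that is exactly what happens in the derivation of~\eqref{e:eigenatuast}, where $\theta_{00}(\mf u^\ast,0,0)=0$ is already known from~\eqref{e:lambda00}. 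Deriving the analogue at $\sigma=\lambda_k(\mf u)$ therefore presupposes~\eqref{e:thetafa0}. Keeping the $\theta_{00}$-term, \eqref{e:fordimezzo} at $(\mf u,\mf 0_{N-2},\lambda_k(\mf u))$ reads $\mf B_{22}\mf R_0\mf r_{00}\,\theta_{00}=[\mf A_{22}-\lambda_k\mf E_{22}]\mf R_0\mf r_{00}-e_{11}^{-1}\mf a_{21}\mf d^t\mf r_{00}$, which is an equivalence between $\theta_{00}=0$ and the eigenvector relation, not a proof of either.

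The paper's argument runs in the opposite direction and avoids both obstacles. Since $\partial\theta_{00}/\partial\sigma(\mf u^\ast,0,0)<0$ by Lemma~\ref{l:negativo} and $\theta_{00}(\mf u^\ast,0,0)=0$ by~\eqref{e:lambda00}, the Implicit Function Theorem produces a function $\sigma(\mf u)$ with $\theta_{00}(\mf u,\mf 0_{N-2},\sigma(\mf u))\equiv 0$; plugging this vanishing into~\eqref{e:fordimezzo} yields~\eqref{e:lousiamo}, which together with~\eqref{e:definiamod} shows the nonzero vector $(-e_{11}^{-1}\mf d^t\mf r_{00},\mf R_0\mf r_{00})^t$ lies in $\ker(\mf A-\sigma(\mf u)\mf E)$; hence $\sigma(\mf u)$ is an eigenvalue of $\mf E^{-1}\mf A(\mf u)$, and strict hyperbolicity (Hypothesis~\ref{h:sh}) forces $\sigma(\mf u)=\lambda_k(\mf u)$ since $\sigma(\mf u)$ is close to zero. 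Your proposal identifies the relevant ingredients but supplies no non-circular way to establish the eigenvector relation directly at $\sigma=\lambda_k(\mf u)$; this is the step the Implicit Function Theorem replaces.
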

\begin{proof}
Owing to~\eqref{e:negativo}, we can apply the Implicit Function Theorem and conclude that there is a function $\sigma (\mf u)$ such that 
$\theta_{00} (\mf u, \mf 0_{N-2}, \sigma (\mf u)) \equiv 0.$ By applying~\eqref{e:fordimezzo} at the point $(\mf u, \mf 0_{N-2}, \sigma (\mf u))$
we arrive at 
\be
\label{e:lousiamo}
    [ \mf A_{22} - \sigma \mf E_{22} ] \mf R_0 \mf r_{00}  - \, e^{-1}_{11}\mf a_{21}  \mf d^t \mf r_{00}  
 =\mf 0_{N-1} \quad \text{at $(\mf u, \mf 0_{N-2}, \sigma (\mf u))$.}
\eq
Next, we introduce the vector $\mf v:= \Big( -e_{11}^{-1} \mf d^t \mf r_{00}, \mf R_0 \mf r_{00} \Big)^t$ and by using the previous formula and~\eqref{e:definiamod} we get that $(\mf A - \sigma (\mf u) \mf E ) \mf v = \mf 0_N$. Since $\mf v \neq \mf 0_N$ (because $\mf r_{00} \neq \mf 0_{N-2}$ and $\mf R_0$ has rank $N-2$), this implies that $\sigma(\mf u)$ is an eigenvalue of $\mf E^{-1} \mf A (\mf u)$. This in turn implies that $\sigma (\mf u) = \lambda_k (\mf u)$ because $\sigma$ is close to $0$ and all the other eigenvalues of 
 $\mf E^{-1} \mf A (\mf u)$ are bounded away from $0$ by strict hyperbolicity.  This concludes the proof of the lemma. 
\end{proof}
By using~\eqref{e:lousiamo}, recalling that $\sigma (\mf u) = \lambda_k(\mf u)$ and using the block decompositions~\eqref{e:blockae2} we establish the following property. 
\begin{lemma}
\label{l:autovettore}
The vector $( - e^{-1}_{11}\mf d^t
    \mf r_{00}, 
    \mf R_0   \mf r_{00})^t$ evaluated at the point $ (\mf u,  0,  \lambda_k ( \mf u))$ is an eigenvector of $\mf E^{-1} \mf A (\mf u)$ corresponding to the eigenvalue $\lambda_k (\mf u)$. 
\end{lemma}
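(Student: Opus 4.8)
The plan is to verify directly that the vector $\mf v := ( - e^{-1}_{11}\mf d^t \mf r_{00},\ \mf R_0 \mf r_{00})^t$, evaluated at the point $(\mf u, 0, \lambda_k(\mf u))$, belongs to the kernel of $\mf A(\mf u) - \lambda_k(\mf u)\mf E(\mf u)$, and then to observe that it is nonzero. Write $v_1 := - e^{-1}_{11}\mf d^t \mf r_{00}$ (a scalar) and $\mf v_2 := \mf R_0 \mf r_{00} \in \R^{N-1}$, so that $\mf v = (v_1, \mf v_2^t)^t$, and use the block decompositions~\eqref{e:blockae2} of $\mf A$ and $\mf E$, together with the identity $a_{11} = \alpha\, e_{11}$ that holds in the case $h=1$, as recalled after Hypothesis~\ref{h:sing1d}. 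The computation splits into the scalar first component and the $(N-1)$-dimensional lower block of $[\mf A - \lambda_k\mf E]\mf v$.

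For the first component one gets $(a_{11} - \lambda_k e_{11}) v_1 + \mf a_{21}^t \mf v_2 = -(\alpha - \lambda_k)\mf d^t \mf r_{00} + \mf a_{21}^t \mf R_0 \mf r_{00}$. Here I would invoke~\eqref{e:definiamod} with $\sigma = \lambda_k(\mf u)$ and with $\mf z_0$ specialized to $\mf r_{00}$ (the identity in~\eqref{e:definiamod} holds for every $\mf z_0$), which gives $\mf a_{21}^t \mf R_0 \mf r_{00} = (\alpha - \lambda_k)\mf d^t \mf r_{00}$; hence this component vanishes. For the lower block, $\mf a_{21} v_1 + [\mf A_{22} - \lambda_k \mf E_{22}]\mf v_2 = - e^{-1}_{11}\mf a_{21}\mf d^t \mf r_{00} + [\mf A_{22} - \lambda_k\mf E_{22}]\mf R_0 \mf r_{00}$, which is exactly the left-hand side of~\eqref{e:lousiamo} once one recalls from Lemma~\ref{l:thetafazero} that $\sigma(\mf u) = \lambda_k(\mf u)$; hence it equals $\mf 0_{N-1}$. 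Therefore $[\mf A - \lambda_k\mf E]\mf v = \mf 0_N$, i.e. $\mf E^{-1}\mf A\, \mf v = \lambda_k \mf v$.

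Finally one must check $\mf v \neq \mf 0_N$. Since $|\mf r_{00}(\mf u^\ast, 0, 0)| = 1$ by~\eqref{e:eigen}, we have $\mf r_{00} \neq \mf 0_{N-2}$ in a neighborhood of $\mf u^\ast$, and the columns of $\mf R_0$ are linearly independent by Lemma~\ref{l:slow}; hence $\mf v_2 = \mf R_0 \mf r_{00} \neq \mf 0_{N-1}$, so $\mf v \neq \mf 0_N$ and $\mf v$ is a genuine eigenvector of $\mf E^{-1}\mf A(\mf u)$ with eigenvalue $\lambda_k(\mf u)$. This completes the argument.

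I do not expect a serious obstacle here: the statement is essentially a bookkeeping consequence of the identities~\eqref{e:lousiamo} and~\eqref{e:definiamod}, the block structure~\eqref{e:blockae2}, and Hypothesis~\ref{h:sing1d}, the substantial work having already been done in Lemmas~\ref{l:suemme0}, \ref{l:suemme00} and \ref{l:thetafazero}. The only points requiring a little care are applying~\eqref{e:definiamod} at the correct base point (with $\mf z_0$ replaced by $\mf r_{00}$ and $\sigma = \lambda_k(\mf u)$) and verifying the nonvanishing of $\mf v$, both of which are immediate from the facts already recorded.
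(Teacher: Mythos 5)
Your argument is correct and is essentially the paper's own: the paper cites~\eqref{e:lousiamo}, the identity $\sigma(\mf u)=\lambda_k(\mf u)$ from Lemma~\ref{l:thetafazero}, and the block decomposition~\eqref{e:blockae2}, and the eigenvector equation $(\mf A-\sigma\mf E)\mf v=\mf 0_N$ is already displayed inside the proof of Lemma~\ref{l:thetafazero}. One small refinement is worth recording for the first component. Setting $\mf z_0=\mf r_{00}$ literally in~\eqref{e:definiamod} gives $\mf a_{21}^t\mf R_0(\mf u,\mf r_{00},\lambda_k)\mf r_{00}=(\alpha-\lambda_k)\mf d^t(\mf u,\mf r_{00},\lambda_k)\mf r_{00}$, with $\mf R_0$ and $\mf d$ evaluated at the argument $\mf z_0=\mf r_{00}$, whereas the lemma requires evaluation at $\mf z_0=\mf 0_{N-2}$. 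The clean way to get the identity you actually need is to note that~\eqref{e:definiamod} holds for every $\mf z_0$ and differentiate it in $\mf z_0$ at $\mf z_0=\mf 0_{N-2}$: the contributions of $\partial_{\mf z_0}\mf R_0$ and $\partial_{\mf z_0}\mf d$ drop out (they are contracted against $\mf z_0=\mf 0$), leaving the pointwise row-vector equality
$\mf a_{21}^t\mf R_0(\mf u,\mf 0_{N-2},\sigma)=\big(\alpha(\mf u)-\sigma\big)\,\mf d^t(\mf u,\mf 0_{N-2},\sigma)$,
which, after contracting with $\mf r_{00}$ and setting $\sigma=\lambda_k(\mf u)$, is precisely what your first-component computation requires. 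The paper does not spell this out either, so this is not a substantive deviation; the rest of your argument (the top scalar row, the lower block via~\eqref{e:lousiamo}, and the nonvanishing via Lemmas~\ref{l:slow} and~\ref{l:center}) matches the paper's reasoning exactly.
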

\subsection{The linearly degenerate case (Navier-Stokes and MHD equations)}
\label{ss:ld}
In this paragraph we focus on the linearly degenerate case, which is considerably simpler 
than the general case and it is the case of the Navier-Stokes and MHD equations. More precisely, we make the following assumptions:
\begin{itemize}
\item [I)] Let $\mf r_k (\mf u)$ be an eigenvalue of $\mf E^{-1}\mf A(\mf u)$ corresponding to $\mf \lambda_k (\mf u)$. Then $\nabla \lambda_k \cdot \mf r_k \equiv 0$.  
\item[II)] There is a smooth, invertible diffeomorphism $\mf w: \R^N \to \R^N$ such that $\mf u$ is a smooth solution of~\eqref{e:symmetric} if and only if $\mf w$ satisfies~\eqref{e:viscouscl} for some suitable functions  $\mf f$ and $\mf D$. 
\end{itemize}
Note that the Navier-Stokes and MHD equations satisfy both I) and II). This paragraph aims at establishing the following result.
\begin{lemma}
\label{l:curvekld} Under assumptions I) and II) above, 
there is a sufficiently small constant $\delta>0$ such that the following holds. There is a function $\boldsymbol{\zeta}_k: \mathrm{B}^N_\delta(\mf u^\ast) \times ]-\delta, \delta[ \to \R^N$ satisfying the following properties:
\begin{itemize}
\item[A)] For every $(\tilde{\mf u}, s_k) \in \mathrm{B}^N_\delta(\mf u^\ast) \times ]-\delta, \delta[ $, one (and only one) of the following cases holds true. 
\begin{itemize}
\item[i)] there is a contact discontinuity connecting $\tilde{\mf u}$ and $\boldsymbol{\zeta}_k (\tilde{\mf u}, s_k)$. The speed of the contact discontinuity is nonnegative and close to $0$. 
\item[ii)] there is a steady solution of~\eqref{e:symmetric} (i.e., a boundary layer) such that 
\be
\label{e:blld}
   \mf u(0) = \boldsymbol{\zeta}_k (\tilde{\mf u}, s_k), \quad \lim_{x \to + \infty} \mf u(x) = \tilde{\mf u}. 
\eq
The boundary layer lies on $\mathcal M^{00}$, i.e. it is a solution of~\eqref{e:suemme00}. 
\end{itemize}
\item[B)] The map $\boldsymbol{\zeta}_k$ is Lipschitz continuous with respect to both $\tilde{\mf u}$ and $s_k$. It also is differentiable with respect to $s_k$ at any point $(\tilde{\mf u}, 0)$ and furthermore
\be 
\label{e:dercurvekld}
   \frac{\partial \boldsymbol{\zeta}_k}{\partial s_k} (\tilde{\mf u}, 0) =
     \left(
    \begin{array}{cc}
    -e_{11}^{-1} \mf d^t
    \mf r_{00}  \\
    \mf R_0   \mf r_{00}
    \end{array}
    \right) \quad \text{applied at the point} \quad 
   \left\{
    \begin{array}{ll}
   (\tilde{\mf u},  0,  \lambda_k ( \tilde{\mf u}))
    & \text{if} \;  \lambda_k ( \tilde{\mf u}) \ge 0 {\comment ,} \\
    (\tilde{\mf u},  0,  0)
    & \text{if} \;  \lambda_k ( \tilde{\mf u}) < 0. \\  
    \end{array}
    \right.  
    \eq
    \item[C)] If $\alpha(\tilde{\mf u})>0$, then $\alpha (\boldsymbol{\zeta}_k (\tilde{\mf u}, s_k))>0$
    for every $s_k.$
\end{itemize}
\end{lemma}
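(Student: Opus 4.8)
The plan is to build $\boldsymbol{\zeta}_k$ as the endpoint-at-$x=0$ map of the (nonsingular) flow on $\mathcal M^{00}$, and to use linear degeneracy to convert the "characteristic boundary layer / zero-speed contact discontinuity" dichotomy into the single clean formula~\eqref{e:dercurvekld}. First I would work on the manifold $\mathcal M^{00}$, using the reduced system~\eqref{e:suemme00} from Lemma~\ref{l:suemme00}. The key structural fact is that $z_{00}=0$ is invariant for~\eqref{e:suemme00} (the right-hand sides are all proportional to $z_{00}$), so the set $\{z_{00}=0\}$ consists entirely of equilibria; these are exactly the constant states $\mf u$ near $\mf u^\ast$. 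Fix such an equilibrium $\tilde{\mf u}$. I distinguish two regimes according to the sign of $\lambda_k(\tilde{\mf u})$. If $\lambda_k(\tilde{\mf u})<0$, then by Lemma~\ref{l:negativo} and~\eqref{e:lambda00} we have $\theta_{00}(\tilde{\mf u},0,0)=0$ with $\partial_\sigma\theta_{00}(\mf u^\ast,0,0)<0$, and $\sigma=0$ is the relevant root; a short computation (as in Lemma~\ref{l:thetafazero}) shows $\theta_{00}(\tilde{\mf u},0,0)$ has the sign making $z_{00}=0$ attracting as $x\to+\infty$ along the $z_{00}$-equation with $\sigma=0$, so solutions of~\eqref{e:suemme00} with small $z_{00}(0)=s_k$ decay to $\tilde{\mf u}$ and give a genuine boundary layer; I set $\boldsymbol{\zeta}_k(\tilde{\mf u},s_k):=\mf u(0)$ for this solution. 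If $\lambda_k(\tilde{\mf u})\ge 0$, I instead use $\sigma=\lambda_k(\tilde{\mf u})$ (for which $\theta_{00}(\tilde{\mf u},0,\lambda_k(\tilde{\mf u}))=0$ by Lemma~\ref{l:thetafazero}): here the solution of~\eqref{e:suemme00} with speed $\sigma=\lambda_k(\tilde{\mf u})$ and small $z_{00}(0)=s_k$ is a traveling wave joining $\tilde{\mf u}$ at $+\infty$ to a state at $-\infty$; by assumption~I) the $k$-field is linearly degenerate, so this traveling wave is in fact a contact discontinuity of speed $\lambda_k(\tilde{\mf u})\ge 0$, and I set $\boldsymbol{\zeta}_k(\tilde{\mf u},s_k)$ to be its left state. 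This proves~A), with i) and ii) mutually exclusive because $\lambda_k(\tilde{\mf u})$ is either $<0$ or $\ge 0$; the speed in case i) is nonnegative and close to $0$ by strict hyperbolicity and continuity.

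For B), the Lipschitz dependence on $(\tilde{\mf u},s_k)$ follows from smooth dependence of solutions of the nonsingular ODE~\eqref{e:suemme00} on initial data and parameters, together with the fact that both $\sigma=0$ and $\sigma=\lambda_k(\tilde{\mf u})$ are Lipschitz (indeed smooth) in $\tilde{\mf u}$; the only place Lipschitz (rather than $C^1$) regularity is lost is at the interface $\lambda_k(\tilde{\mf u})=0$ where the two branches are glued, and there they agree to first order since $\sigma=\lambda_k=0$ on that set. Differentiability in $s_k$ at $s_k=0$ and the formula~\eqref{e:dercurvekld} come from differentiating the flow of~\eqref{e:suemme00}: at $s_k=0$ we sit at the equilibrium $z_{00}=0$, so $\partial_{s_k}\mf u(0)$ is governed by the linearization, whose relevant eigenvector is precisely $(-e_{11}^{-1}\mf d^t\mf r_{00},\,\mf R_0\mf r_{00})^t$ evaluated at $(\tilde{\mf u},0,\sigma)$ with $\sigma$ the chosen speed — this is exactly Lemma~\ref{l:autovettore} (for $\lambda_k\ge 0$, with $\sigma=\lambda_k$) and its $\sigma=0$ analogue (for $\lambda_k<0$), which is read off directly from the first two lines of~\eqref{e:suemme00} with $z'_{00}=\theta_{00}z_{00}$. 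Finally, for C), I invoke the last statement of Lemma~\ref{l:invariant} (equivalently the argument behind Lemma~\ref{l:sign} and Hypothesis~\ref{h:jde}): along any solution of~\eqref{e:emme0slow}, hence of~\eqref{e:suemme00}, the quantity $\alpha(\mf u)-\sigma$ never changes sign; since the chosen $\sigma$ is $0$ when $\lambda_k(\tilde{\mf u})<0$ and, when $\lambda_k(\tilde{\mf u})\ge 0$, one checks $\alpha(\tilde{\mf u})>0$ forces $\alpha-\sigma>0$ at $+\infty$ (using $\sigma=\lambda_k(\tilde{\mf u})$ small and $\alpha(\tilde{\mf u})>0$), the sign of $\alpha(\mf u(x))$ is preserved along the orbit and in particular $\alpha(\boldsymbol{\zeta}_k(\tilde{\mf u},s_k))=\alpha(\mf u(0))>0$.

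The main obstacle I expect is making the gluing of the two branches ($\sigma=0$ for $\lambda_k<0$ versus $\sigma=\lambda_k(\tilde{\mf u})$ for $\lambda_k\ge 0$) genuinely Lipschitz across the transition surface $\{\lambda_k=0\}$, and simultaneously getting the stated one-sided derivative formula~\eqref{e:dercurvekld} with its case split to be consistent — i.e.\ checking that the two constructions really do produce the same curve (to first order) when $\lambda_k(\tilde{\mf u})=0$. This requires a careful uniform-in-parameter estimate on the flow of~\eqref{e:suemme00} near the degenerate equilibrium, of the type carried out in~\cite{AnconaBianchini} and~\cite{BianchiniSpinolo:ARMA}; linear degeneracy (assumption~I)) is what keeps $z'_{00}=\theta_{00}z_{00}$ with $\theta_{00}$ vanishing on the whole two-parameter family $\{(\mf u,0,\lambda_k(\mf u))\}$, which is precisely the structural input that makes these estimates go through cleanly rather than merely locally near a single point. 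A secondary, more bookkeeping-type difficulty is checking in case~i) that the traveling wave is a bona fide admissible contact discontinuity (Liu admissible, correct Rankine–Hugoniot / zero strength variation along the $k$-curve), which under~I)–II) reduces to the standard fact that $k$-contact discontinuities coincide with $k$-traveling waves of the physical viscous system.
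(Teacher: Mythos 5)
Your overall strategy — work on $\mathcal{M}^{00}$, split on the sign of $\lambda_k(\tilde{\mf u})$, exploit that $\{z_{00}=0\}$ is a set of equilibria, and use Hypothesis~\ref{h:jde} for property C) — matches the paper's, and your reading of property C) and of the gluing difficulty at $\{\lambda_k=0\}$ is right. But the branch $\lambda_k(\tilde{\mf u})\ge 0$ of your construction contains a genuine error. You claim that the orbit of~\eqref{e:suemme00} with $\sigma=\lambda_k(\tilde{\mf u})$ and small $z_{00}(0)$ is a traveling wave joining $\tilde{\mf u}$ to a second state, and that this ``is'' the contact discontinuity. For a linearly degenerate $k$-field this is not true: the entire $\mf r_k$-integral curve through $\tilde{\mf u}$ is a curve of equilibria of~\eqref{e:suemme00} with $\sigma=\lambda_k(\tilde{\mf u})$ (since $\theta_{00}(\mf u,0,\lambda_k(\mf u))\equiv 0$ by Lemma~\ref{l:thetafazero} and $\lambda_k$ is constant along the curve by assumption~I)), and there is in general \emph{no} heteroclinic orbit between two such equilibria — contact discontinuities do not carry viscous profiles. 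The paper's construction in this regime is purely hyperbolic and never touches the traveling-wave ODE: $\boldsymbol{\zeta}_k(\tilde{\mf u},\cdot)$ is defined directly as the Lax integral curve of $\mf r_k$, the ``contact discontinuity'' in case~i) is a weak solution of~\eqref{e:hyperbolic} only, and formula~\eqref{e:dercurvekld} then follows from Lemma~\ref{l:autovettore}, which identifies $\mf r_k(\tilde{\mf u})$ with the stated vector evaluated at $(\tilde{\mf u},0,\lambda_k(\tilde{\mf u}))$.

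The branch $\lambda_k(\tilde{\mf u})<0$ is closer to the mark, but your parameterization $\boldsymbol{\zeta}_k(\tilde{\mf u},s_k):=\mf u(0)$ with $z_{00}(0)=s_k$ would not produce~\eqref{e:dercurvekld}: along any nontrivial orbit $d\mf u_2/dz_{00}=\mf R_0\mf r_{00}/\theta_{00}$, so using $z_{00}(0)$ as the parameter introduces an extra factor $1/\theta_{00}$ in the tangent vector, and there is also a sign mismatch with the fact that $z_{00}<0$ on the boundary layer while $s_k$ ranges over both signs. What the paper does is desingularize~\eqref{e:suemme00} at $\sigma=0$ by dividing out $z_{00}$ to get~\eqref{e:lindeg1}, solve the Cauchy problem with datum $(\mf u,z_{00})(0)=(\tilde{\mf u},0)$, set $\boldsymbol{\zeta}_k(\tilde{\mf u},s_k):=\mf u(s_k)$ in the new parameter $\tau$, and then recover the genuine boundary layer by the change of variable~\eqref{e:ipsilon}. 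The crucial input is Lemma~\ref{l:lindeg1} (that $z_{00}(\tau)<0$ for $\tau\in\left]0,\delta\right[$), whose proof is precisely where linear degeneracy~I) and shock-curve uniqueness~II) enter; this is also where your claim that ``$\theta_{00}(\tilde{\mf u},0,0)=0$'' when $\lambda_k(\tilde{\mf u})<0$ should instead read $\theta_{00}(\tilde{\mf u},0,0)<0$ (what Lemma~\ref{l:thetafazero} gives is $\theta_{00}(\tilde{\mf u},0,\lambda_k(\tilde{\mf u}))=0$, and then~\eqref{e:negativo} pushes the sign).
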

Note that, owing to Lemma~\ref{l:autovettore}, if $ \lambda_k ( \tilde{\mf u}) \ge 0$ then $\partial \boldsymbol{\zeta}_k/\partial s_k$ evaluated at $s_k$ is an eigenvector of $\mf E^{-1} \mf A$. The proof of Lemma~\ref{l:curvekld} is given in the next paragraph. 
\subsubsection{Proof of Lemma~\ref{l:curvekld}} \label{sss:proofk}
We fix $\delta>0$ (the exact value will be determined in the following) and  $\tilde{\mf u} \in \mathrm{B}^N_\delta (\mf u^\ast)$. We provide the construction of $\boldsymbol{\zeta}_k (\tilde{\mf u}, \cdot)$ by separately considering two cases. \\
{\sc CASE 1:} $\lambda_k (\tilde{\mf u}) \ge 0$. In this case we can use the classical construction, which was originally provided by Lax in~\cite{Lax}. More precisely, in this case $\boldsymbol{\zeta}_k(\tilde{\mf u}, \cdot)$ is the integral curve of the vector field $\mf r_k $, i.e. it is the solution of the Cauchy problem
$$
    \partial \boldsymbol{\zeta}_k / \partial s_k = \mf r_k (\boldsymbol{\zeta}_k), \quad \boldsymbol{\zeta}_k (\tilde{\mf u}, 0) = \tilde{\mf u}. 
$$
Owing to the analysis in~\cite{Lax}, $\boldsymbol{\zeta}_k$ satisfies properties Ai) and B) in the statement of the lemma.  To establish property C), we recall Hypothesis~\ref{h:jde} and Lemma~\ref{l:autovettore} and we conclude that 
$
   \partial ( \alpha\circ \boldsymbol{\zeta}_k)  / \partial s_k =0$ if $\alpha \circ \boldsymbol{\zeta}_k =0$.
This implies that $0$ is an equilibrium for the ODE satisfied by $\alpha \circ \boldsymbol{\zeta}_k$. By the uniqueness part of the Cauchy Lipschitz Picard Lindel\"of Theorem, either $\alpha\circ \boldsymbol{\zeta}_k$ is identically $0$, or it is always different from $0$. This in particular establishes property C). \\
{\sc Case 2:} $\lambda_k (\tilde{\mf u}) < 0$. If we used the classical solution in this case, we would end up with contact discontinuities with \emph{negative} speed, which do not belong to the domain $x>0$. Instead, we study system~\eqref{e:emme0slow} and we set $\sigma=0$. In this way, we study the boundary layers of~\eqref{e:symmetric} that lie on the manifold $\mathcal M^{00}$. 
More precisely, we consider the ODEs  
\be 
\label{e:lindeg1}
   \frac{d u_1}{d\tau} = - \mf d^t \mf r_{00} (\mf u, z_{00}, 0), \quad 
    \frac{d \mf u_2}{d\tau}  = \mf R_0 \mf r_{00}(\mf u, z_{00}, 0), \quad 
    \frac{d z_{00}}{ d\tau} = \theta_{00} (\mf u, z_{00}, 0).   
\eq
Note that the above equations are obtained from~\eqref{e:emme0slow} by taking $\sigma=0$ and dividing by $z_{00}$. We consider the Cauchy problem obtained by coupling~\eqref{e:lindeg1} with the initial datum $(\mf u, z_{00})(0)=(\tilde{\mf u}, 0)$ and we fix $\delta>0$ sufficiently small in such way that the solution is defined on $]-\delta, \delta[$. We now establish
\begin{lemma}
\label{l:lindeg1} There is a sufficiently small constant $\delta>0$ such that, if $|\tilde{\mf u} - \mf u^\ast | <\delta$, then the solution of the Cauchy problem obtained by coupling~\eqref{e:lindeg1} with the initial datum $(\mf u, z_{00})(0)=(\tilde{\mf u}, 0)$ satisfies $z_{00}(s)<0$ if $0<s<\delta$ and $z_{00}(s)>0$ if $-\delta<s<0$. 
\end{lemma}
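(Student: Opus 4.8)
The plan is to follow, along the solution of~\eqref{e:lindeg1} with datum $(\mf u, z_{00})(0) = (\tilde{\mf u}, 0)$, the two scalar quantities $q(\tau) := z_{00}(\tau)$ and $p(\tau) := \lambda_k(\mf u(\tau))$, and to prove that for $\delta$ small, \emph{uniformly} in $\tilde{\mf u} \in \mathrm{B}^N_\delta(\mf u^\ast)$ with $\lambda_k(\tilde{\mf u}) < 0$, one has $\dot q(\tau) < 0$ for every $\tau \in {]-\delta, \delta[}$. Since $q(0) = 0$ this gives at once $q(s) < 0$ for $0 < s < \delta$ and $q(s) > 0$ for $-\delta < s < 0$, which is the assertion.

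\emph{Factorization of $\theta_{00}$.} Integrating $\partial_\sigma\theta_{00}$ from $\sigma = \lambda_k(\mf u)$ to $\sigma = 0$ and using Lemma~\ref{l:thetafazero} (which gives $\theta_{00}(\mf u, \mf 0_{N-2}, \lambda_k(\mf u)) = 0$) one obtains $\theta_{00}(\mf u, \mf 0_{N-2}, 0) = -\lambda_k(\mf u)\,\Phi(\mf u)$, where $\Phi(\mf u) := \int_0^1\partial_\sigma\theta_{00}(\mf u, \mf 0_{N-2}, t\lambda_k(\mf u))\,dt$ is smooth and, by Lemma~\ref{l:negativo} and continuity, satisfies $\Phi \le -c_0 < 0$ on a fixed ball $\mathrm{B}^N_{\delta_0}(\mf u^\ast)$. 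A Hadamard expansion in $z_{00}$ then yields, along the solution, $\dot q = \theta_{00}(\mf u, q, 0) = -\Phi(\mf u)\,p + \Psi(\mf u, q)\,q$ with $\Psi$ smooth and bounded; in particular $|\dot q| \le C(|p| + |q|)$, and $\dot q(0) = -\Phi(\tilde{\mf u})\lambda_k(\tilde{\mf u}) < 0$ because $\lambda_k(\tilde{\mf u}) < 0$ and $\Phi(\tilde{\mf u}) < 0$.

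\emph{Drift of $p$ (the key step).} On $\mathcal M^{00}$, after the reparametrization that turns~\eqref{e:suemme00} with $\sigma = 0$ into~\eqref{e:lindeg1}, the velocity of $\mf u$ is $\dot{\mf u} = (-e_{11}^{-1}\mf d^t\mf r_{00},\, \mf R_0\mf r_{00})^t$ evaluated at $(\mf u, q, 0)$. By Lemma~\ref{l:autovettore} this same vector, evaluated instead at $(\mf u, 0, \lambda_k(\mf u))$, is an eigenvector of $\mf E^{-1}\mf A(\mf u)$ for $\lambda_k(\mf u)$, hence it is annihilated by $\nabla\lambda_k(\mf u)$ thanks to the linear degeneracy assumption~I). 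Since the two evaluation points differ only by the smooth substitutions of $q$ for $0$ and of $\sigma = 0$ for $\sigma = \lambda_k(\mf u)$, the two vectors differ by $O(|q| + |\lambda_k(\mf u)|)$, and therefore $|\dot p| = |\nabla\lambda_k(\mf u)\cdot\dot{\mf u}| \le C_1(|p| + |q|)$ on $\mathrm{B}^N_{\delta_0}(\mf u^\ast)$.

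\emph{Conclusion.} What is left is a uniform bootstrap. While $\mf u(\tau) \in \mathrm{B}^N_{\delta_0}(\mf u^\ast)$, the two differential inequalities above and Gr\"onwall's lemma give $|p(\tau)| + |q(\tau)| \le e^{C|\tau|}|\lambda_k(\tilde{\mf u})|$, whence $|p(\tau) - p(0)| \le C_1|\tau|e^{C|\tau|}|\lambda_k(\tilde{\mf u})|$ and $|q(\tau)| \le C|\tau|e^{C|\tau|}|\lambda_k(\tilde{\mf u})|$; since moreover $|\mf u(\tau) - \mf u^\ast| \le C|\tau| + \delta$, for $\delta$ small the solution never leaves $\mathrm{B}^N_{\delta_0}(\mf u^\ast)$ on ${]-\delta, \delta[}$, so a standard continuation argument makes all these estimates valid there. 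Choosing $\delta \le \delta_0$ small enough (an absolute choice) so that $C_1\delta e^{C\delta} \le 1/2$ forces $p(\tau) \le -|\lambda_k(\tilde{\mf u})|/2 < 0$ for all $\tau \in {]-\delta, \delta[}$; inserting this and the bound on $|q|$ into $\dot q = -\Phi(\mf u)p + \Psi(\mf u, q)q$, and using $-\Phi \ge c_0$ together with $|\Psi(\mf u, q)q| \le c_0|\lambda_k(\tilde{\mf u})|/4$ (again for $\delta$ small), we obtain $\dot q(\tau) \le -c_0|\lambda_k(\tilde{\mf u})|/4 < 0$ throughout, and the claim follows. I expect the estimate $|\dot p| \le C_1(|p| + |q|)$ with $C_1$ independent of $\tilde{\mf u}$ to be the main obstacle: it is exactly the point where assumption~I) and Lemma~\ref{l:autovettore} are indispensable, since otherwise $\lambda_k(\mf u(\tau))$ could drift away from $\lambda_k(\tilde{\mf u})$ by a comparable amount over a subinterval of ${]0, \delta[}$, and $z_{00}$ might return to $0$ there.
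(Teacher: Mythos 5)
Your proof is correct but follows a genuinely different route from the paper's. The paper argues by contradiction: having shown $\dot z_{00}(0)<0$, it assumes $z_{00}$ first returns to $0$ at some $t>0$, so that $\lambda_k(\mf u(t))\ge 0$, and then reparametrizes the orbit on $]0,t[$ by $y(\tau)=\int_{t/2}^\tau z_{00}^{-1}(\xi)\,d\xi$ to exhibit a zero-speed travelling wave joining $\tilde{\mf u}$ to $\mf u(t)$. This places $\mf u(t)$ on the $k$-th admissible wave fan curve from $\tilde{\mf u}$, whose uniqueness requires assumption~II) together with~\cite[Corollary~3.3]{Bianchini}; since $\lambda_k$ is constant along that curve by assumption~I), the inequality $\lambda_k(\mf u(t))\ge 0>\lambda_k(\tilde{\mf u})$ is impossible. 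Your approach is direct and quantitative: you encode linear degeneracy as the pointwise estimate $|\dot p|\le C_1(|p|+|q|)$ by pairing Lemma~\ref{l:autovettore} (which identifies the flow direction at $(\mf u,0,\lambda_k(\mf u))$ with the $\lambda_k$-eigenvector) with assumption~I), factor $\theta_{00}(\mf u,\mf 0,0)=-\lambda_k(\mf u)\Phi(\mf u)$ through Lemma~\ref{l:thetafazero} and Lemma~\ref{l:negativo}, and close a Gr\"onwall bootstrap that keeps $p$ uniformly bounded away from $0$ on $]-\delta,\delta[$ and hence forces $\dot q<0$ throughout. Both arguments lean on linear degeneracy as the decisive ingredient, exactly as you anticipate, but yours dispenses with the machinery of the admissible wave fan curve (and in particular with assumption~II)) and produces the sharper quantitative bound $\dot q\le -c_0|\lambda_k(\tilde{\mf u})|/4$ rather than a mere sign, at the cost of a somewhat longer computation; the paper's is shorter once the wave-fan apparatus is already set up and makes the geometric content (an orbit cannot cross the set $\{\lambda_k=0\}$) more visible.
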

\begin{proof}
We fix $s>0$ and we show that $z_{00}(z)<0$. The proof of the other implication is analogous. First, we recall that
$\lambda_k (\tilde{\mf u})<0$ and we infer that
\be
\label{e:vagiu}
   \frac{d z_{00}}{d \tau} (0)= 
   \theta_{00}(\tilde{\mf u}, 0, 0) \stackrel{\eqref{e:thetafa0}}{=} \int_{\lambda_k (\tilde{\mf u})}^0 \frac{\partial \theta_{00}}{\partial \sigma} 
   (\tilde{\mf u}, 0, \sigma) d \sigma <0. 
\eq
To establish the last inequality, we have used~\eqref{e:negativo} and the fact that the constant $\delta$ is small. Since the derivative at $\tau=0$ is negative by~\eqref{e:vagiu}, then $z_{00}(\tau)<0$ for $\tau>0$ sufficiently small. Assume by contradiction that $z_{00}(s)>0$ for some $s>0$. We introduce the value $t$ by setting $
  t : = \mathrm{min} \{ \tau: z_{00}(\tau) = 0 \} 
$
and we point out that  and 
$d z_{00}/d\tau\ge 0$ at $\tau=t$. This means that $\theta_{00} (\mf u(t), 0, 0) \ge 0$ and by arguing as in~\eqref{e:vagiu} we conclude that $\lambda_k (\mf u(t)) \ge 0$ (because otherwise $\theta_{00} (\mf u(t), 0, 0) < 0$).  Next, we recall that $z_{00}<0$ on $]0, t[$ and we introduce the function $y$ by setting 
\be
\label{e:cambiold}
   y(\tau) = \int_{t/2}^\tau \frac{1}{z_{00}(\xi)}d \xi {\comment .}
\eq
Note that $y$ is an invertible map from $]0, t[$ onto $\R$. We term $y^{-1}$ its inverse and we point out that 
  \be
  \label{e:derivoo}
    \frac{d (u_1 \circ y^{-1})}{d x}=  \frac{d u_1 }{d \tau} 
    \left( \frac{ d y}{d \tau} \right)^{-1}=
    - \mf d^t \mf r_{00} z_{00},  \quad 
     \frac{d (\mf u_2 \circ y^{-1})}{d x}=
    \mf R_0 \mf r_{00} z_{00},  \quad   
    \frac{d (z_{00} \circ y^{-1})}{dx} =
    \theta_{00} z_{00}. 
\eq
Also, $\lim_{x \to + \infty} \mf u\circ y^{-1}(x) = \tilde{\mf u}$, $\lim_{x \to - \infty} \mf u\circ y^{-1}(x) = \mf u (t)$. Owing to~\eqref{e:suemme00}, this implies that there is solution of~\eqref{e:bltw} (i.e., a traveling wave) with $\sigma =0$ which 
connects $\tilde{\mf u}$ and $\mf u(t)$. This implies that $\mf u(t)$ lies on the $k$-th admissible wave fan curve starting at $\tilde{\mf u}$, which is unique owing to assumption II) at the beginning of this paragraph (see also~\cite[Corollary~3.3]{Bianchini}). Owing to assumption I), the value of $\lambda_k$ along this curve is constant, which contradicts the fact that $\lambda_k (\tilde{\mf u}))<0$, $\lambda_k (\mf u(t))\ge 0$. This implies that $z_{00}(s)<0$ for $s>0$ and concludes the proof.  
\end{proof}
We are now ready to define $\boldsymbol{\zeta}_k (\tilde{\mf u}, \cdot)$ in the case where $\lambda_k (\tilde{\mf u})<0$: we consider the Cauchy problem obtained by coupling~\eqref{e:lindeg1} with the initial datum $(\mf u, z_{00})(0)= (\tilde{\mf u}, 0)$ and we set $\boldsymbol{\zeta}_k(\tilde{\mf u}, s_k):= \mf u(s_k)$. To 
establish property C) in the statement of Lemma~\ref{l:curvekld} we set $a(t) := \alpha (\mf u(t))$ and we point out that, owing to Hypothesis~\ref{h:jde} and to~\eqref{e:definiamod}, $d a / dt =0$ when $a=0$. By the uniqueness part of the Cauchy Lipschitz Picard Lindel\"of Theorem, $a$ cannot change sign, which yields property C).
Property B) in the statement of Lemma~\ref{l:curvekld} is satisfied owing to classical results on ODEs. To establish property Aii), we set 
\be
\label{e:ipsilon}
    y(\tau) = \int_{s_k}^\tau \frac{1}{z_{00}(\xi)}d \xi {\comment .}
\eq
Owing to Lemma~\ref{l:lindeg1}, $y$ is an invertible map from $]0, s_k]$ onto $[0, + \infty[$. We term $y^{-1}$ its inverse and we point out that~\eqref{e:derivoo} holds. Because of the way we have constructed the manifold $\mathcal M^{00}$, this means that $\mf u \circ y^{-1}$ is a steady solution of~\eqref{e:symmetric} satisfying~\eqref{e:blld}. This concludes the proof of Lemma~\ref{l:curvekld}.
\subsection{The general case}
\label{ss:general}
In this paragraph we extend the analysis in~\S~\ref{ss:ld} to the general case, i.e. we remove the assumptions I) and II) at the beginning of~\S~\ref{ss:ld}. 
\begin{lemma}
\label{l:curvek}
There is a sufficiently small constant $\delta>0$ such that the following holds. There is a function $\boldsymbol{\zeta}_k: \mathrm{B}^N_\delta(\mf u^\ast) \times ]-\delta, \delta[ \to \R^N$ satisfying the following properties:
\begin{itemize}
\item[A)] For every $(\tilde{\mf u}, s_k) \in \mathrm{B}^N_\delta(\mf u^\ast) \times ]-\delta, \delta[ $, 
there is $\mf{\underline u} \in \R^N$ such that 
\begin{itemize}
\item[A1)] the value $\tilde{\mf u}$ is connected to $\underline{\mf u}$ by rarefaction waves an at most countable number of Liu admissible shocks or contact discontinuities. Also, the speed of the shocks, the contact discontinuities and the rarefaction waves is nonnegative and close to $0$. 
     \item[A2)] there is a steady solution (i.e., a \emph{boundary layer}) of~\eqref{e:symmetric} such that 
     \be
     \label{e:2bdproblem}
        \mf u (0) = \boldsymbol{\zeta}_k (\tilde{\mf u}, s_k), \quad 
        \lim_{x \to + \infty} \mf u(x) = \underline{\mf u}. 
     \eq
    \end{itemize}
    The boundary layer lies on the manifold $\mathcal M^{00}$, i.e. it satisfies~\eqref{e:suemme00}. 
\item[B)] The map $\boldsymbol{\zeta}_k$ is Lipschitz continuous with respect to both $\tilde{\mf u}$ and $s_k$. It also is differentiable with respect to $s_k$ at any point $(\tilde{\mf u}, 0)$ and furthermore~\eqref{e:dercurvekld} holds true. 
\end{itemize}
\end{lemma}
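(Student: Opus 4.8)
The plan is to carry out the construction of $\boldsymbol{\zeta}_k$ directly on the center manifold $\mathcal M^{00}$ by a Riemann–solver type argument, using the reduced system~\eqref{e:suemme00} in place of the integral curve of $\mf r_k$. The starting point is the same reduced picture as in~\cite[\S4]{BianchiniBressan}: for each fixed $\sigma$ close to $0$, the equilibria of~\eqref{e:suemme00} are exactly the states with $z_{00}=0$, and, by Lemma~\ref{l:thetafazero} together with Lemma~\ref{l:negativo}, the sign of the coefficient $\theta_{00}(\mf u,0,\sigma)$ governing the linearised $z_{00}$–dynamics at such an equilibrium coincides with the sign of $\lambda_k(\mf u)-\sigma$. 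Hence a heteroclinic orbit of~\eqref{e:suemme00} with speed $\sigma$ (a travelling wave lying on $\mathcal M^{00}$ when $\sigma>0$, a boundary layer when $\sigma=0$) connects two $z_{00}=0$ states satisfying the usual Lax–type inequalities, and along it $z_{00}$ keeps a fixed sign. As in~\cite[\S4]{BianchiniBressan} this lets us parametrise by an arclength–type variable $s$ the one–parameter family of states reachable from $\tilde{\mf u}$ in this way, and to introduce the associated scalar reduced flux $f$, whose derivative $f'(s)$ equals the generalised $k$–speed realised at $s$, so that $f'(0)=\lambda_k(\tilde{\mf u})$.

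Having set this up, I would build the curve $\boldsymbol{\zeta}_k(\tilde{\mf u},\cdot)$ as in the boundary Riemann solver of~\cite{AnconaBianchini}: for $s_k\ge 0$ one considers the monotone concave envelope $\monconc_{[0,s_k]} f$ (for $s_k<0$ the monotone convex envelope is used instead, with a symmetric argument). The intervals on which this envelope agrees with $f$ produce rarefaction waves; the maximal segments on which it is affine produce Liu admissible shocks or contact discontinuities — here, since assumption II) of~\S\ref{ss:ld} has been dropped, the relevant notions of shock curve and of Liu admissible discontinuity are those furnished by~\cite[Corollary~3.3]{Bianchini}, which are available also for non-conservative systems — and by construction all of these waves carry speed $\ge 0$; this is where a small constant $\tilde c>0$ and the ``monotone'' part of the envelope enter, guaranteeing that the portion of the construction which would otherwise run at nonpositive speed is realised instead by a boundary layer on $\mathcal M^{00}$ with $\sigma=0$. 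This boundary layer is produced, as in the proof of Lemma~\ref{l:curvekld}, by integrating~\eqref{e:lindeg1} with $\sigma=0$ and reparametrising via $x=y(\tau)$ as in~\eqref{e:ipsilon}, so that~\eqref{e:derivoo} holds; the role played in the linearly degenerate case by Lemma~\ref{l:lindeg1} (guaranteeing that $z_{00}$ does not change sign) is now taken over by the envelope itself, which singles out once and for all the maximal initial $s$–interval on which the realised speed is nonpositive. One then sets $\boldsymbol{\zeta}_k(\tilde{\mf u},s_k)$ to be the state at $x=0$, and $\underline{\mf u}$ to be the state at which the wave–pattern part — which connects $\tilde{\mf u}$ to $\underline{\mf u}$ by rarefactions and at most countably many Liu admissible shocks or contact discontinuities of nonnegative speed, giving A1) — is glued to the boundary–layer part, which realises~\eqref{e:2bdproblem} and lies on $\mathcal M^{00}$, giving A2).

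For statement B), Lipschitz continuity of $\boldsymbol{\zeta}_k$ in $\tilde{\mf u}$ and $s_k$ follows from the Lipschitz dependence of the monotone concave envelope on its data and of the flows of the smooth ODEs~\eqref{e:suemme00} and~\eqref{e:lindeg1} on parameters, exactly as in~\cite{AnconaBianchini,BianchiniBressan}. For the derivative at $s_k=0$: if $\lambda_k(\tilde{\mf u})\ge 0$ the curve leaves $\tilde{\mf u}$ through a rarefaction, hence tangentially to $\mf r_k(\tilde{\mf u})$, which by Lemma~\ref{l:autovettore} is precisely the vector on the right–hand side of~\eqref{e:dercurvekld} evaluated at $(\tilde{\mf u},0,\lambda_k(\tilde{\mf u}))$; if $\lambda_k(\tilde{\mf u})<0$ the curve leaves $\tilde{\mf u}$ through a boundary layer, and differentiating~\eqref{e:lindeg1} at $\tau=0$ while using $|\mf r_{00}|=1$ gives $\partial\boldsymbol{\zeta}_k/\partial s_k(\tilde{\mf u},0)=(-e_{11}^{-1}\mf d^t\mf r_{00},\,\mf R_0\mf r_{00})^t$ evaluated at $(\tilde{\mf u},0,0)$, which is again the right–hand side of~\eqref{e:dercurvekld}.

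I expect the main obstacle to be the gluing described in the second paragraph. Unlike the linearly degenerate case, where Lemma~\ref{l:curvekld} splits cleanly into the disjoint alternatives CASE~1 and CASE~2, a single $k$–wave fan here may contain rarefactions, shocks and a boundary layer simultaneously (for instance when $\lambda_k$ is negative near $\tilde{\mf u}$ but becomes positive further along the curve), so one must show that the nonpositive–speed portion is always a single initial $s$–segment — so that $\underline{\mf u}$ is well defined — and that the concatenated object is genuinely a boundary layer on $x\in[0,+\infty[$ followed by a forward wave fan, with the matching at $x=0$ consistent and all selected wave speeds $\ge 0$. Propagating the Lipschitz and differentiability estimates through this concatenation, and controlling the constant $\tilde c$ and the ``monotone'' truncation of the envelope, is the technically delicate point.
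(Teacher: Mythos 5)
Your plan follows essentially the same route as the paper: the construction of $\boldsymbol{\zeta}_k$ is carried out on $\mathcal M^{00}$ by solving a Riemann-solver-type fixed point problem, the monotone concave (resp.\ convex) envelope of a reduced flux $f$ separating the forward wave fan from a zero-speed boundary layer, with the boundary layer recovered via the change of variables~\eqref{e:ipsilon}--\eqref{e:derivoo}. The paper makes this precise by writing down the fixed point system~\eqref{e:mappaT} explicitly, so that $\mf u$, $z_{00}$, $\sigma$ and $f$ are determined simultaneously; you invoke~\cite{AnconaBianchini,BianchiniBressan} where the paper states and cites Lemma~\ref{l:admissiblestates}.

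Two points are underestimated in your write-up. First, the ``gluing'' concern you flag at the end does not arise: since the derivative of $\monconc_{[0,s_k]} f$ is a non-increasing, non-negative function, the locus $\{\sigma=0\}$ is automatically a single terminal $\tau$-segment $[\underline\tau,s_k]$ (not an initial one); this is exactly what the paper isolates as Lemma~\ref{l:concave}, and it is what makes $\underline{\mf u}$ well defined in one stroke. Second, for property B) your argument is too quick. When $\lambda_k(\tilde{\mf u})>0$ the curve need not leave $\tilde{\mf u}$ along a rarefaction (it could begin with a shock if $f$ is not concave near the origin), and the correct tangent vector is obtained only after showing that, as $s_k\to 0^+$, one has $z_{00}(0)\to 0$ and $\sigma(0)\to\lambda_k(\tilde{\mf u})$ when $\lambda_k(\tilde{\mf u})\ge 0$, and $\sigma(0)\to 0$ when $\lambda_k(\tilde{\mf u})<0$. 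This is the content of Lemma~\ref{l:passlim}, whose proof relies on the quantitative envelope estimates~(5.7)--(5.8) of~\cite{AnconaMarson}; without it the identification of $\partial\boldsymbol{\zeta}_k/\partial s_k(\tilde{\mf u},0)$ with the right-hand side of~\eqref{e:dercurvekld} is not justified. Filling these two gaps would turn your outline into the paper's proof.
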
 
To establish the previous lemma we rely  on a construction introduced in~\cite{AnconaBianchini,BianchiniSpinolo:ARMA}. The proof is provided in~\S~\ref{sss:proof1},\S~\ref{sss:proofA} and~\S~\ref{sss:proofB}. We discuss the basic idea underpinning the construction of $\boldsymbol{\zeta}_k$ in Remark~\ref{r:mon}.
\subsubsection{Construction of the function $\boldsymbol{\zeta}_k$}\label{sss:proof1}
We fix $\delta>0$ (the exact value of $\delta$ will be determined in the following), $\tilde{\mf u} \in \mathrm{B}^N_\delta(\mf u^\ast)$ and $s_k \in ]-\delta, \delta[$, $s_k >0$. 
We consider the fixed point problem  
\begin{equation}
\label{e:mappaT}
\left\{
\begin{array}{ll}
u_1 (\tau) = \displaystyle{
\tilde{u}_1 - \int_0^{\tau} 
 e_{11}^{-1} \mf d^t \mf r_{00} (\mf u (s), z_{00} (s), \sigma (s)) ds } { ,} \\
\mf u_2 (\tau) = \tilde{\mf u}_2 + 
\displaystyle{ \int_0^{\tau} 
\mf R_0 \mf r_{00} (\mf u (s), z_{00} (s), \sigma (s)) ds } {,} \\
 z_{00} (\tau) = \tilde c \Big[ f(\tau) - {\displaystyle \monconc_{[0, s_k ]} } f (\tau) \Big] { ,}
\\
\sigma (\tau) =
\displaystyle{\frac{d}{d \tau}  \monconc_{[0, s_k ]} f  } {.} \\
\end{array}
\right.
\end{equation}
In the previous expression, $\mf r_{00}$ and $\theta_{00}$ are the same as in~\eqref{e:suemme00} and the constant $\tilde c$ is defined by setting 
\be
\label{e:ci}
   \tilde c: = - \frac{\partial \theta_{00}}{\partial \sigma}(\tilde{\mf u}, 0, \lambda_k (\tilde{\mf u}))
   >0. 
\eq
To establish the last inequality, we have used Lemma~\ref{l:negativo}, which implies that $\partial \theta_{00} / \partial \sigma<0$ in a small enough neighbourhood of $(\mf u^\ast, 0, 0)$.  
Also, the function $f$ is defined by setting 
\be
\label{e:cosaeffe}
   f(\tau) : = \int_0^\tau \left[ \frac{\theta_{00}  (\mf u (s), z_{00} (s), \sigma (s))}{\tilde c} + \sigma(s)  \right] ds  {,}  
\eq 
and the monotone concave envelope is given by 
\be 
\label{e:cosaemonconc}
         \monconc_{[0, s_k ]} f (\tau): =
          \inf  \Big\{ 
        h(\tau): \ h:[0, s_k] \to \R \; 
         \text{is concave, non decreasing and $h(s) \ge f(s)
         \, \forall \, s \in [0, s_k]$}
         \Big\},
\eq 
see Figure~\ref{f:mon} for a representation. 
\begin{figure}
\begin{center}
\caption{The function $f$ (black) and its monotone concave envelope (red)} 
\psfrag{u}{$\underline \tau$}
\psfrag{t}{$\bar \tau$} 
\psfrag{f}{$f$}  
\label{f:mon}
\bigskip
\includegraphics[scale=0.4]{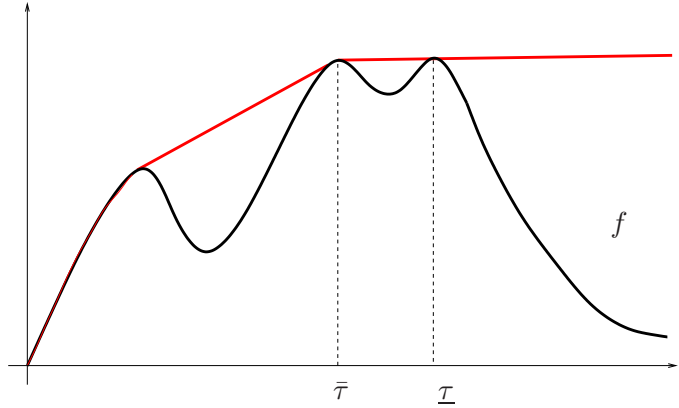}
\end{center}
\end{figure}
If $s_k<0$, one should consider the same fixed problem as in~\eqref{e:mappaT}, where the monotone concave envelope is replace by the monotone convex envelope, i.e. 
\be 
\label{e:cosaemonconv}
      \monconv_{[s_k, 0 ]} f (\tau)=
         \sup \Big\{ 
          h(\tau): \ h:[0, s_k] \to \R \; 
         \text{is convex, non decreasing and $h(s) \leq f(s)
         \, \forall \, s \in [s_k, 0]$}
         \Big\}. 
\eq 
We have the following result. 
\begin{lemma}
\label{l:admissiblestates}
There is a sufficiently small constant $\delta>0$ such that, for every $\tilde{\mf u} \in \mathrm{B}^N_\delta (\mf u^\ast)$, 
$s_k \in [0, \delta[$, there is a unique triplet of continuous functions $(\mf u, z_{00}, \sigma)$ satisfying~\eqref{e:mappaT} which is confined in a sufficiently small neighbourhood of $(\tilde{\mf u}, 0, 0)$. The same holds if $s_k \in ]-\delta, 0]$, provided in~\eqref{e:mappaT} we replace~\eqref{e:cosaemonconc} with~\eqref{e:cosaemonconv}. 
\end{lemma}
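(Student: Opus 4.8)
The plan is to prove this by a fixed point argument, in the spirit of the constructions in \cite{AnconaBianchini,BianchiniSpinolo:ARMA}. First I would observe that \eqref{e:mappaT} can be recast as a fixed point problem for the pair $(\mf u, f)$ alone: given a Lipschitz function $f: [0, s_k] \to \R$ with $f(0) = 0$, define the \emph{derived} quantities
\[
z_{00} : = \tilde c \big( f - \monconc_{[0, s_k]} f \big), \qquad \sigma : = \frac{d}{d \tau} \monconc_{[0, s_k]} f
\]
(the latter existing a.e.\ and non-increasing since $\monconc f$ is concave), and let $\mathcal T (\mf u, f) : = (\mf u^{\mathrm{new}}, f^{\mathrm{new}})$, where $\mf u^{\mathrm{new}}$ is given by the first two lines of \eqref{e:mappaT} with $(\mf u, z_{00}, \sigma)$ inserted, and $f^{\mathrm{new}}(\tau) : = \int_0^\tau [ \theta_{00}(\mf u, z_{00}, \sigma)/\tilde c + \sigma ] \, ds$ as in \eqref{e:cosaeffe}. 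A fixed point of $\mathcal T$ yields, on setting $z_{00}, \sigma$ as above, a triplet solving \eqref{e:mappaT}, and conversely; so it suffices to produce a unique fixed point in a small set. I would work on the closed, convex, equicontinuous set $\mathcal B$ of pairs $(\mf u, f) \in C([0, s_k]; \R^N) \times C([0, s_k]; \R)$ with $\mf u(0) = \tilde{\mf u}$, $f(0) = 0$, $\|\mf u - \tilde{\mf u}\|_\infty \le r$, $\|f\|_\infty \le r$ and $\mf u$, $f$ Lipschitz with fixed constants $C_0$, $C_1 \delta$ (the Lipschitz bounds are needed only to control total variations below), endowed with the weighted norm $\|(\mf u, f)\|_\ast : = \|\mf u\|_\infty + M \|f\|_\infty$ for a large constant $M$ to be chosen.

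Next I would record the properties of the monotone concave envelope that the argument rests on: (i) $\|\monconc f_1 - \monconc f_2\|_\infty \le \|f_1 - f_2\|_\infty$, so $f \mapsto z_{00}$ is Lipschitz with constant $2 \tilde c$ into $C([0, s_k])$; (ii) if $f$ is Lipschitz and $f(0) = 0$, then $\monconc f(0) = 0$ and $\monconc f(s_k) = \max_{[0, s_k]} f$, whence $\mathrm{TV}(\monconc f) = \max_{[0,s_k]} f$; (iii) $\sigma$ is non-increasing with $\|\sigma\|_\infty \le \|f'\|_\infty$ and $\mathrm{TV}(\sigma) \le 2 \|f'\|_\infty$. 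Using these together with $\theta_{00}(\mf u^\ast, 0, 0) = 0$ (equation \eqref{e:lambda00}), $\lambda_k(\mf u^\ast) = 0$, $|\mf r_{00}(\mf u^\ast, 0, 0)| = 1$ and smoothness of the coefficients, one checks that $\mathcal T$ maps $\mathcal B$ into itself for $r$ and $\delta$ small: the integrals in \eqref{e:mappaT} run over an interval of length $\le \delta$, and near $(\mf u^\ast, 0, 0)$ all the integrands are $O(\delta)$, so $\|(f^{\mathrm{new}})'\|_\infty = O(\delta)$, hence $z_{00} = O(\delta^2)$, $\sigma = O(\delta)$, and $(\mf u^{\mathrm{new}}, z_{00}, \sigma)$ stays in the small neighbourhood of $(\tilde{\mf u}, 0, 0)$ where Lemma~\ref{l:negativo} applies (in particular $\tilde c > 0$); moreover $\mathrm{TV}(\mf u^{\mathrm{new}}) = O(\delta)$, $\mathrm{TV}(z_{00}) = O(\delta^2)$, $\mathrm{TV}(\sigma) = O(\delta)$.

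The contraction estimate is the core of the proof, and the main obstacle is that $f \mapsto \sigma = (\monconc f)'$ is \emph{not continuous} in the sup norm, so $\sigma_1 - \sigma_2$ cannot be bounded by $f_1 - f_2$ directly. The remedy is that $\sigma_1 - \sigma_2 = (\monconc f_1 - \monconc f_2)'$ occurs only inside integrals $\int_0^\tau \boldsymbol\phi(s)\, (\sigma_1 - \sigma_2)(s) \, ds$ with $\boldsymbol\phi$ a first-order difference of the smooth coefficients $\mf r_{00}, \theta_{00}$ evaluated along elements of $\mathcal B$, hence Lipschitz with $\mathrm{TV}(\boldsymbol\phi) = O(\delta)$ by the total-variation bounds above; integration by parts gives
\[
\int_0^\tau \boldsymbol\phi \, (\sigma_1 - \sigma_2) \, ds = \boldsymbol\phi(\tau)\, (\monconc f_1 - \monconc f_2)(\tau) - \int_0^\tau (\monconc f_1 - \monconc f_2) \, d \boldsymbol\phi ,
\]
the $\tau = 0$ boundary term vanishing by property (ii), so that by (i) the whole expression is bounded by $\big( \|\boldsymbol\phi\|_\infty + \mathrm{TV}(\boldsymbol\phi) \big)\, \|f_1 - f_2\|_\infty$. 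In the $f^{\mathrm{new}}$ component the coefficient of $\sigma_1 - \sigma_2$ is moreover itself $O(\delta)$: this is exactly the cancellation built into the choice \eqref{e:ci} of $\tilde c$, since $\theta_{00}(\mf u, z_{00}, \sigma)/\tilde c + \sigma = \lambda_k(\mf u) + O(|z_{00}|) + O\big(|\mf u - \tilde{\mf u}| + |\sigma - \lambda_k(\mf u)|\big)\,|\sigma - \lambda_k(\mf u)|$ near the base point (using \eqref{e:thetafa0}). Combining these bounds with the factor $|s_k| \le \delta$ from all the outer integrations and with (i), one gets
\[
\|\mf u^{\mathrm{new}}_1 - \mf u^{\mathrm{new}}_2\|_\infty \le C \delta\, \|\mf u_1 - \mf u_2\|_\infty + C\, \|f_1 - f_2\|_\infty , \qquad \|f^{\mathrm{new}}_1 - f^{\mathrm{new}}_2\|_\infty \le C \delta\, \big( \|\mf u_1 - \mf u_2\|_\infty + \|f_1 - f_2\|_\infty \big),
\]
hence
\[
\|\mathcal T(\mf u_1, f_1) - \mathcal T(\mf u_2, f_2)\|_\ast \le C \delta (1 + M)\, \|\mf u_1 - \mf u_2\|_\infty + \Big( \frac{C}{M} + C \delta \Big)\, M\, \|f_1 - f_2\|_\infty ,
\]
and choosing first $M$ large so that $C/M < 1/4$ and then $\delta$ small so that $C\delta(1+M) < 1/4$ makes $\mathcal T$ a contraction of $\mathcal B$. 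The Banach fixed point theorem, applied on the complete set $\mathcal B$, then gives the unique triplet $(\mf u, z_{00}, \sigma)$ claimed; along the fixed point $\mf u$ and $z_{00}$ turn out to be Lipschitz and $\sigma$ continuous. The case $s_k \in \, ]-\delta, 0]$ is handled identically, with $\monconc$ replaced by $\monconv$, all the envelope properties holding in the mirrored form. The delicate point throughout, to be stressed, is precisely the control of the concave-envelope derivative, circumvented by the integration-by-parts identity above together with the smallness of the relevant total variations (a consequence of $|s_k| \le \delta$), which in turn is what forces both the weighted norm and the particular normalization \eqref{e:ci} of $\tilde c$.
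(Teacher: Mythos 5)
Your fixed-point argument is sound and follows the approach the paper intends: the paper omits its own proof of this lemma, citing it as the same fixed-point scheme as in \cite[Lemma 3.4]{BianchiniSpinolo:ARMA} and \cite[\S3]{Bianchini}, and your construction (reformulating in $(\mf u, f)$, weighted sup norm, integration by parts against $\monconc f_1 - \monconc f_2$ to circumvent the discontinuity of $f \mapsto (\monconc f)'$, and the cancellation $1 + \partial_\sigma\theta_{00}/\tilde c = O(\delta)$ built into the normalization \eqref{e:ci}) is exactly the machinery those references use. The one place you assert rather than argue is the continuity of $\sigma$ at the fixed point, but this too follows from the same cancellation: a jump $\Delta>0$ in $\sigma$ would force a jump $\ge \Delta$ in $f'$ (by one-sided tangency of $\monconc f$ to $f$ at the corner) while the identity $f' = \theta_{00}/\tilde c + \sigma$ bounds that jump by $O(\delta)\Delta$, giving a contradiction for $\delta$ small.
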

The proof of the above lemma is based on the same fixed point argument  as in~~\cite[Lemma 3.4]{BianchiniSpinolo:ARMA} (see also~\cite[\S~3]{Bianchini}) and is therefore omitted. We can now define $\boldsymbol{\zeta}_k$ by setting
\be
\label{e:gikappa}
    \boldsymbol{\zeta}_k (\tilde{\mf u}, s_k)= \mf u(s_k), \quad \text{$(\mf u, z_{00}, \sigma)$ satisfying} \;
    \left\{
    \begin{array}{ll}
    \text{\eqref{e:mappaT}} & \text{if $s_k \ge 0$}, \\
    \text{\eqref{e:mappaT} with~\eqref{e:cosaemonconc} replaced 
    by~\eqref{e:cosaemonconv}} & \text{if $s_k<0$}. \\    
    \end{array}
    \right. 
\eq    
\subsubsection{Proof of Lemma~\ref{l:curvek}, property A)} 
\label{sss:proofA}
We only consider the case $s_k \ge 0$, the case $s_k<0$ is entirely analogous. First, we discuss an elementary result on the monotone concave envelope. 
Given $s_k>0$ and $f \in C^0 ([0, s_k])$ we introduce the values $m$ and $\underline{\tau}$ by setting 
\be
\label{e:massimo}
   m: = \max \big\{ f(\tau): \ \tau \in [0, s_k] \big\}, \quad 
    \underline \tau: = \max \big\{ \tau \in [0, s_k]: \; 
    f(\tau) = m \big\},
\eq
see also Figure~\ref{f:mon}. Note that the constant function $h(\tau)=m$ is a concave, non decreasing function satisfying $h \ge f$, hence by definition~\eqref{e:cosaemonconc} we have 
\be
\label{e:stasopra}
    m \ge \monconc_{[0, s_k]} f (\tau) \ge 
    \mathrm{conc}_{[0, s_k]} f (\tau) \ge f(\tau), \quad \text{for every $\tau \in [0, s_k]$}.
\eq
In the previous expressions, $\mathrm{conc}_{[0, s_k]} f$ denotes the concave envelope of the function $f$ on $[0, s_k]$. Note that the above chain of inequalities implies that $m$ is also the maximum of the function $\monconc_{[0, s_k]} f $. We have the following elementary result. The proof is standard and therefore omitted. 
\begin{lemma}
\label{l:concave}
If $f \in C^0 ([0, s_k])$, then
\be
\label{e:monconvcon} 
   \monconc_{[0, s_k]} f (\tau) =
   \left\{
   \begin{array}{ll}
   \mathrm{conc}_{[0, s]} f (\tau) & \tau \leq \underline{\tau} { ,} \\
  m & 
   \tau \ge \underline \tau {.}
   \end{array}
   \right.
\eq
\end{lemma}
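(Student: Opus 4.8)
The plan is to read off both branches of \eqref{e:monconvcon} directly from the defining property of $\monconc_{[0,s_k]}$: the function $\monconc_{[0,s_k]}f$ is the \emph{smallest} map on $[0,s_k]$ that is concave, non-decreasing and everywhere $\ge f$. I would first dispose of the branch $\tau\ge\underline\tau$. From $\monconc_{[0,s_k]}f\ge f$ and $f(\underline\tau)=m$ (see \eqref{e:massimo}) we get $\monconc_{[0,s_k]}f(\underline\tau)\ge m$, while \eqref{e:stasopra} gives $\monconc_{[0,s_k]}f\le m$ on the whole interval; hence $\monconc_{[0,s_k]}f(\underline\tau)=m$. Since $\monconc_{[0,s_k]}f$ is non-decreasing, for $\tau\in[\underline\tau,s_k]$ one has $m=\monconc_{[0,s_k]}f(\underline\tau)\le\monconc_{[0,s_k]}f(\tau)\le m$, which forces $\monconc_{[0,s_k]}f(\tau)=m$ and gives the second branch.

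For the branch $\tau\le\underline\tau$, the inequality ``$\ge$'' is immediate: the restriction of $\monconc_{[0,s_k]}f$ to $[0,\underline\tau]$ is concave and $\ge f$, hence dominates the concave envelope of $f$ on $[0,\underline\tau]$. For ``$\le$'' I would build the explicit competitor that coincides with $\mathrm{conc}_{[0,\underline\tau]}f$ on $[0,\underline\tau]$ and equals the constant $m$ on $[\underline\tau,s_k]$. Two remarks make this work: first, since the constant $m$ is concave, non-decreasing and $\ge f$ on $[0,\underline\tau]$, minimality of the envelope yields $\mathrm{conc}_{[0,\underline\tau]}f\le m$ there; second, for a continuous function the concave envelope agrees with the function at the endpoints of its interval of definition, so $\mathrm{conc}_{[0,\underline\tau]}f(\underline\tau)=f(\underline\tau)=m$, i.e. the two pieces match at $\underline\tau$. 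With these, the glued function is $\ge f$ (trivially on $[0,\underline\tau]$, and $=m\ge f$ on $[\underline\tau,s_k]$), it is non-decreasing (a concave function attaining its maximum at the right endpoint $\underline\tau$ of the first piece is non-decreasing there, and it is constant afterwards), and it is concave on all of $[0,s_k]$; hence it is an admissible competitor and dominates $\monconc_{[0,s_k]}f$. Restricting to $[0,\underline\tau]$ gives $\monconc_{[0,s_k]}f\le\mathrm{conc}_{[0,\underline\tau]}f$, and combined with ``$\ge$'' this is the first branch. The very same competitor also shows that $\mathrm{conc}_{[0,s_k]}f$ and $\mathrm{conc}_{[0,\underline\tau]}f$ agree on $[0,\underline\tau]$, so the formula may be stated with either interval.

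The only step that is not purely formal is the concavity of the glued function at the junction $\underline\tau$, and I expect this to be the main (though minor) obstacle: for $a<\underline\tau<b$ and $c=\lambda a+(1-\lambda)b$ one must check the defining concavity inequality; when $c\le\underline\tau$ it reduces, using that $t\mapsto t\,\mathrm{conc}_{[0,\underline\tau]}f(a)+(1-t)m$ is non-increasing (because $\mathrm{conc}_{[0,\underline\tau]}f(a)\le m$), to the elementary estimate $(\underline\tau-c)/(\underline\tau-a)\le(b-c)/(b-a)$, which holds since $a\le c\le\underline\tau\le b$; when $c>\underline\tau$ it reduces to $m\ge\mathrm{conc}_{[0,\underline\tau]}f(a)$. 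Everything else is bookkeeping with \eqref{e:stasopra}, \eqref{e:cosaemonconc} and the minimality property of envelopes, which is why the authors could legitimately call the proof standard.
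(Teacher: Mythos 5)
Your proof is correct. The paper declares this lemma ``standard'' and omits the proof, so there is nothing to compare against; your write-up is a clean instance of the expected argument. The two branches are handled exactly as one would: the constant branch follows from $\monconc_{[0,s_k]}f(\underline\tau)=m$ together with monotonicity and the global bound by $m$; the other branch follows from the glue-and-compare argument with the explicit competitor. One small remark: to see that the two pieces of your competitor match at $\underline\tau$ you invoke the general fact that the concave envelope of a continuous function on a compact interval agrees with the function at the endpoints. That fact is true (and your appeal to it is fine), but here it is overkill, because $\underline\tau$ is a maximizer of $f$: any concave $h\ge f$ satisfies $h(\underline\tau)\ge m$, while the constant $m$ is itself an admissible concave majorant, so $\mathrm{conc}_{[0,\underline\tau]}f(\underline\tau)=m=f(\underline\tau)$ immediately. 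The remaining checks (monotonicity of a concave function maximized at the right endpoint, concavity of the glued function across $\underline\tau$, and the observation that the same competitor shows $\mathrm{conc}_{[0,s_k]}f=\mathrm{conc}_{[0,\underline\tau]}f$ on $[0,\underline\tau]$, resolving the apparent ambiguity in the statement) are all sound.
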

We now recall~\eqref{e:gikappa} and we complete the proof of Lemma~\ref{l:curvek}, property A).  First, we point out that $z_{00} (\tau) \leq 0$ for every $\tau$. Next, we recall the definition~\eqref{e:massimo} of $\underline \tau$ and we separately consider two cases. \\
{\sc Case 1:} $\underline \tau = s_k$. In this case we set $\underline{\mf u}: = \mf u(s_k)$. Property A2) is trivially satisfied, it suffices to take $\mf u(t) \equiv \underline{ \mf u}$. We now establish property A1). Owing to Lemma~\ref{l:concave}, we can apply~\cite[Theorem 3.2]{Bianchini} and conclude that $\tilde{\mf u}$  is connected to $\underline{\mf u}= \mf u(s_k)$ by rarefaction waves and a sequence of  contact discontinuities and shocks satisfying~Liu admissibility condition. Since the shocks and contact discontinuities speed is $\sigma$, then the speed is nonnegative.  \\
{\sc Case 2:} $\underline \tau < s_k$.  In this case we set $\underline{\mf u} : = \mf u(\underline \tau)$. Owing to Lemma~\ref{l:concave} and to~\cite[Theorem 3.2]{Bianchini}, $\underline{\mf u}$ satisfies property A1). We now establish property A2). Note that, by the definition of $\underline \tau$, if $\underline \tau < s_k$ then $z_{00}(\tau)<0$ on $]\underline \tau, s_k]$. Also, owing to~\eqref{e:stasopra}, $z_{00} (\underline \tau) =0$ and, owing to Lemma~\ref{l:concave}, $\sigma (\tau) \equiv 0$ on $]\underline \tau, s_k]$. We now consider the same function $y$ as in~\eqref{e:ipsilon} and we point out that $y$ is an invertible function from $]\underline \tau, s_k]$ onto $[0, + \infty[$. We term $y^{-1}$ its inverse and we point out that~\eqref{e:derivoo} holds. This implies that $(\mf u \circ y^{-1}, z_{00} \circ y^{-1}, 0)$ is a solution of~\eqref{e:emme0slow} and hence a boundary layer for~\eqref{e:symmetric}. Also, it satisfies~\eqref{e:2bdproblem} and this concludes the proof of A2). 
\subsubsection{Proof of Lemma~\ref{l:curvek}, property B)}\label{sss:proofB}
The proof of the Lipschitz continuity property follows from an argument analogous to the argument in the proof of~\cite[Lemma 14.3]{BianchiniBressan} and it is therefore omitted. We now focus on the proof of property~\eqref{e:dercurvekld}. 
We first establish a preliminary result.
\begin{lemma}
\label{l:passlim}
        Fix a sequence $s_k^n \to 0^+$ and term $(\mf u^n(0)$, $z_{00}^n(0)$, $\sigma^n(0))$ the value attained at $\tau=0$ by the solution of the fixed point problem~\eqref{e:mappaT}, which is defined on the interval $[0, s^n_k]$. Then 
        \be
\label{e:ilimiti}
    \lim_{n \to +\infty} \mf u^n(0) = \tilde{\mf u}, \quad 
     \lim_{n \to +\infty} z_{00}^n(0) = 0, \quad 
     \lim_{n \to +\infty} \sigma^n(0) =
      \left\{
      \begin{array}{ll}
      \lambda_k (\tilde{\mf u}) & \mathrm{if} \ \lambda_k (\tilde{\mf u}) \ge 0 {\comment ,} \\
     0 &  \mathrm{if} \  \lambda_k (\tilde{\mf u})< 0 {\comment .} \\
      \end{array}
      \right.  
\eq   
\end{lemma}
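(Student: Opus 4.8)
The plan is to treat the three limits separately: the first two are immediate, and only the third requires an argument. Evaluating the fixed point system~\eqref{e:mappaT} at $\tau=0$, the integrals defining $u_1$ and $\mf u_2$ vanish, so $\mf u^n(0)=\tilde{\mf u}$ for \emph{every} $n$; moreover $f_n(0)=0$, where $f_n$ denotes the function~\eqref{e:cosaeffe} of the $n$-th problem, and since $\monconc_{[0,s_k^n]}f_n$ is the running maximum of the plain concave envelope $\mathrm{conc}_{[0,s_k^n]}f_n$ (which agrees with $f_n$ at the left endpoint), one gets $\monconc f_n(0)=0$, hence $z_{00}^n(0)=\tilde c\,[f_n(0)-\monconc f_n(0)]=0$ for every $n$. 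Thus the first two statements hold as constant sequences, and attention focuses on $\sigma^n(0):=(\monconc f_n)'(0^+)$.

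For $\sigma^n(0)$ I would first record the elementary a priori bounds $|\mf u^n(\tau)-\tilde{\mf u}|\le Cs_k^n$, $|f_n(\tau)|\le Cs_k^n$ and $|z_{00}^n(\tau)|\le Cs_k^n$ on $[0,s_k^n]$, which follow from~\eqref{e:mappaT} and the boundedness of the integrands on the small neighbourhood where the fixed point lives; in particular $0\le\sigma^n\le\delta$. Setting $\Psi(\sigma):=\theta_{00}(\tilde{\mf u},0,\sigma)/\tilde c+\sigma$, the identity $f_n'=\theta_{00}(\mf u^n,z_{00}^n,\sigma^n)/\tilde c+\sigma^n$ and the Lipschitz continuity of $\theta_{00}$ give $f_n'(\tau)=\Psi(\sigma^n(\tau))+O(s_k^n)$ uniformly on $[0,s_k^n]$. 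Freezing the coefficients in this way is useful because of three facts: $\Psi(\lambda_k(\tilde{\mf u}))=\lambda_k(\tilde{\mf u})$ by~\eqref{e:thetafa0}; $\Psi'(\lambda_k(\tilde{\mf u}))=0$ by the very definition~\eqref{e:ci} of $\tilde c$; and, by Lemma~\ref{l:negativo} together with continuity, $\sigma\mapsto\theta_{00}(\tilde{\mf u},0,\sigma)$ is strictly decreasing near $\lambda_k(\tilde{\mf u})$ with $\lambda_k(\tilde{\mf u})$ its only zero there. Hence $\lambda_k(\tilde{\mf u})$ is the unique fixed point of $\Psi$ in a small neighbourhood of $0$, and (as in the computation~\eqref{e:vagiu}) $\theta_{00}(\tilde{\mf u},0,0)$ has the sign of $\lambda_k(\tilde{\mf u})$.

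Next I would exploit the geometry of $\monconc f_n$ near $\tau=0$. Since $\sigma^n=(\monconc f_n)'$ is non-negative and non-increasing, and near $0$ one has $\monconc f_n=\mathrm{conc}_{[0,s_k^n]}f_n$ by Lemma~\ref{l:concave} (in the degenerate subcase where $f_n$ attains its maximum at $\tau=0$ one has $\monconc f_n\equiv0$, so $\sigma^n(0)=0$, and then $f_n'(0^+)=\theta_{00}(\tilde{\mf u},0,0)/\tilde c\le0$ forces $\lambda_k(\tilde{\mf u})\le0$, compatibly with the claim), two cases remain. If $\tau=0$ is a limit of contact points of $\mathrm{conc}\,f_n$ with $f_n$, then at each regular such contact point $\tau$ one has $f_n'(\tau)=\sigma^n(\tau)$, hence $\theta_{00}(\mf u^n(\tau),z_{00}^n(\tau),\sigma^n(\tau))=0$; letting $\tau\to0^+$ and using $\mf u^n(0)=\tilde{\mf u}$, $z_{00}^n(0)=0$ and the existence of $\sigma^n(0^+)$ gives $\theta_{00}(\tilde{\mf u},0,\sigma^n(0))=0$, forcing $\sigma^n(0)=\lambda_k(\tilde{\mf u})$, which is possible only when $\lambda_k(\tilde{\mf u})\ge0$. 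If instead $\monconc f_n$ is affine on a maximal interval $[0,\tau_1]$ with $\tau_1\in(0,s_k^n]$ (so $\sigma^n\equiv\sigma^n(0)$ there and $\monconc f_n(\tau_1)=f_n(\tau_1)$), then $\sigma^n(0)=f_n(\tau_1)/\tau_1=\tau_1^{-1}\int_0^{\tau_1}f_n'(s)\,ds=\Psi(\sigma^n(0))+O(s_k^n)$, so $\theta_{00}(\tilde{\mf u},0,\sigma^n(0))=O(s_k^n)$, and by the non-degeneracy of the zero $\lambda_k(\tilde{\mf u})$ we get $\sigma^n(0)=\lambda_k(\tilde{\mf u})+O(s_k^n)$. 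Collecting the cases: if $\lambda_k(\tilde{\mf u})\ge0$ both give $\sigma^n(0)=\lambda_k(\tilde{\mf u})+O(s_k^n)\to\lambda_k(\tilde{\mf u})$; if $\lambda_k(\tilde{\mf u})<0$ the first case cannot occur and in the second case $\sigma^n(0)>0$ would force $\sigma^n(0)=\lambda_k(\tilde{\mf u})+O(s_k^n)<0$ for $n$ large, a contradiction, so $\sigma^n(0)=0$ for $n$ large.

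The step I expect to be most delicate is not any single estimate but the careful bookkeeping of the structure of $\monconc f_n$ near the endpoint $\tau=0$: justifying the dichotomy above, establishing the identity $f_n'=\sigma^n$ along the contact set (which uses that $f_n$ is $C^1$ off the countably many corners of $\monconc f_n$ and that $\monconc f_n$ is differentiable at accumulation points of contact points), and treating the borderline configurations $\lambda_k(\tilde{\mf u})=0$ and $\sigma^n(0)=0$. An alternative, perhaps more transparent, route is a compactness argument: rescale $\tau=s_k^n\rho$, extract via Helly's theorem a subsequence along which the monotone rescaled speed converges, pass to the limit in~\eqref{e:mappaT} to obtain a solution of the frozen-coefficient problem $g'=\Psi(\hat\sigma)$, $\hat\sigma=(\monconc_{[0,1]}g)'$, $g(0)=0$, and then read off $\hat\sigma(0^+)$ from its explicit solution ($\hat\sigma\equiv\lambda_k(\tilde{\mf u})$ when $\lambda_k(\tilde{\mf u})\ge0$; $\hat\sigma\equiv0$ with $g(\rho)=\Psi(0)\rho$ when $\lambda_k(\tilde{\mf u})<0$), using uniqueness for that limiting problem; there the subtle point becomes the continuity of $\monconc$ and of its right derivative at the endpoint under uniform convergence.
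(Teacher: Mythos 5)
Your proposal is correct and arrives at the same conclusion via a genuinely different, and in places cleaner, route than the paper. Both arguments hinge on the same three ingredients: $\theta_{00}(\tilde{\mf u},0,\lambda_k(\tilde{\mf u}))=0$ from~\eqref{e:thetafa0}, the strict monotonicity $\partial_\sigma\theta_{00}<0$ from Lemma~\ref{l:negativo}, and the choice of $\tilde c$ in~\eqref{e:ci}. The paper extracts convergent subsequences of $z_{00}^n(0)$ and $\sigma^n(0)$, splits on whether $\max_{[0,s_k^n]}f^n$ is positive or zero, and in the positive case invokes the envelope stability estimates (5.7)--(5.8) of Ancona and Marson to get $z_{00}^n(0)\to 0$ and $\sigma^n(0)\to(f^n)'(0)=:\ell$, whence $\theta_{00}(\tilde{\mf u},0,\ell)=0$ and $\ell=\lambda_k(\tilde{\mf u})$. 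You instead note that $\monconc_{[0,s_k^n]}f^n(0)=f^n(0)=0$, so $z_{00}^n(0)=0$ \emph{identically}; this is a legitimate simplification that makes the estimate (5.8) step unnecessary. For the third limit you replace the subsequence argument by a direct geometric analysis of the envelope at $\tau=0^+$ (degenerate case $\max f^n=0$; contact points of $\mathrm{conc}\,f^n$ accumulating at $0^+$; maximal affine segment $[0,\tau_1]$ with contact at $\tau_1$), reading off $\sigma^n(0)$ up to $O(s_k^n)$ from the frozen map $\Psi(\sigma)=\theta_{00}(\tilde{\mf u},0,\sigma)/\tilde c+\sigma$. What this buys is a self-contained argument that avoids the external reference; what it costs is exactly the envelope bookkeeping you flag (exhaustiveness of the trichotomy, matching of $(\mathrm{conc}\,f^n)'$ and $(f^n)'$ at contact points, and the coincidence of the affine case with the degenerate case when $\sigma^n(0)=0$). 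These details check out, and the borderline configuration $\lambda_k(\tilde{\mf u})=0$ is handled consistently by all three branches, so the proof as sketched is sound.
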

\begin{proof}
First, we point out that the first equality in~\eqref{e:ilimiti} is trivial since $\mf u^n (0) = \tilde{\mf u}$ for every $n$. We are left to establish the other two equalities. We point out that it suffices to show that, for every subsequence, there is a further subsequence (which for simplicity we do not re-label) satisfying~\eqref{e:ilimiti}. Also, since $\{ z_{00}^n (0) \}_{n \in \mathbb N}$ and 
 $\{ \sigma^n (0) \}_{n \in \mathbb N}$ are both bounded, then we can assume (up to subsequences, that we do not re-label) that the limits in~\eqref{e:ilimiti} exist. In particular, we term $\ell$ the limit of $\sigma^n(0)$. Also,  we term 
$f^n$ the function defined as in~\eqref{e:cosaeffe} and defined on $[0, s^n_k]$. Note that $f^n(0)=0$ for every $n$. Up to subsequences (that we do not re-label), we can also assume that one of the following cases holds true. \\
{\sc Case 1:} $\max_{[0, s_k^n]} f^n>0$ for every $n$. Since $f^n(0)=0$, owing to Lemma~\ref{l:concave}, this implies that 
$\monconc_{[0, s_k^n]} f\equiv \mathrm{conc}_{[0, s^n_k]} f$ in a right neighbourhood of $0$ for every $n$. We now apply estimate (5.8) in~\cite[p. 314]{AnconaMarson} with $g=0$ and we conclude that $\lim_{n \uparrow +\infty} z_{00}^n(0) = 0$. Next, we apply estimate (5.7)  in~\cite[p. 314]{AnconaMarson} with $g(\tau) : = (f^n)'(0) \tau$ and we conclude that 
$
    \lim_{n  \uparrow +\infty} \sigma^n(0) =  \lim_{n  \uparrow +\infty} (f^n)'(0). 
$
Recall that this limit is termed $\ell$. Owing to~\eqref{e:cosaeffe}, this implies that 
$\theta_{00} (\tilde{\mf u}, 0, \ell) / \tilde c + \ell = \ell$, i.e. that  
$\theta_{00} (\tilde{\mf u}, 0, \ell) =0$. By recalling~\eqref{e:thetafa0} and that $\partial \theta_{00}/\partial \sigma<0$, we conclude that $\ell = \lambda_k (\tilde{\mf u})$. Since $\sigma^n (\tau) \ge 0$ for every $\tau$, then $\ell \ge 0$ and hence $\lambda_k(\tilde{\mf u}) \ge 0$. \\
{\sc Case 2:} $\max_{[0, s_k^n]} f^n=0$ for every $n$. Owing to~\eqref{e:stasopra}, this implies that $\monconc_{[0, s_k^n]} f(0)=0$ and hence that $z_{00}^n(0)=0$, which gives the second limit in~\eqref{e:ilimiti}. Also, since $\monconc_{[0, s_k^n]} f$ is a nondecreasing function, then in this case we have $\monconc_{[0, s_k^n]} f \equiv 0$, which implies $\sigma^n(0) =0$ and hence $\ell=0$.   \\
To conclude the proof of~\eqref{e:ilimiti}, we are left to show that, if $\lambda_k (\tilde{\mf u}) >0$, then we 
are necessarily in {\sc Case 1}. Assume by contradiction that $\lambda_k (\tilde{\mf u}) >0$ and that we are in {\sc Case 2}, which implies $\sigma^n(0) \equiv 0$. Owing to~\eqref{e:cosaeffe} we have 
$$
   (f^n)'(0)\stackrel{\eqref{e:cosaeffe}}{=} \frac{\theta_{00} (\tilde{\mf u}, 0,0))}{\tilde c}
   \stackrel{\eqref{e:thetafa0}}{=}
   \frac{1}{\tilde c} \int_{\lambda_k (\tilde{\mf u})}^{0} \frac{\partial \theta_{00}}{\partial \sigma} 
   (\tilde{\mf u}, 0, \xi)
   d \xi  \stackrel{\eqref{e:ci}}{=}
   \lambda_k (\tilde{\mf u}) + \mathcal O(1) |  \lambda_k (\tilde{\mf u})|^2. 
$$
Since $\lambda_k (\tilde{\mf u})>0$ is close to $0$, the above chain of equalities implies that $(f^n)'(0) >0$, which contradicts the assumption that the maximum of $f^n$ is attained at $\tau=0$. This concludes the proof of~\eqref{e:ilimiti}. 
\end{proof}
{We} can now establish~\eqref{e:dercurvekld}: we only show that~\eqref{e:dercurvekld} is the right derivative, to show that it also the left derivative the argument is analogous. Owing to~\eqref{e:mappaT}, 
$$
    u_1(s_k) = \tilde u_1 - s_k \mf d^t \mf r_{00}(\mf u(0), z_{00} (0), \sigma(0)) + \mathcal O(1) s_k^2, 
   \qquad 
   \mf u_2 (s_k) = \tilde{\mf u}_2 + s_k \mf R_0 \mf r_{00}(\mf u(0), z_{00} (0), \sigma(0)) + \mathcal O(1) s_k^2. 
$$
We now pass to the limit $s_k \to 0^+$ and by using~\eqref{e:ilimiti} we arrive at~\eqref{e:dercurvekld}. 
\begin{remark}
\label{r:mon}
The basic idea underpinning the construction of $\boldsymbol{\zeta}_k$ is the following. We recall the construction of the $i$-th admissible wave fan curve given in~\cite[\S~14]{BianchiniBressan} and~\cite{Bianchini}: one considers the same fixed point problem as in~\eqref{e:mappaT}, the only difference is that one takes the concave envelope instead of the monotone concave envelope. Very loosely speaking, the basic idea in~\cite{BianchiniBressan,Bianchini} is that the intervals  where $z_{00}<0$ correspond to shocks or contact discontinuities with speed $\sigma$, the sets where $z_{00}=0$ and $\sigma$ is strictly increasing correspond to rarefactions (see the proof of~\cite[Lemma 14.1]{BianchiniBressan} for a more detailed explanation) and finally the sets where $z_{00}=0$ and $\sigma$ is constant correspond again to contact discontinuities. To understand why in the case of the initial-boundary value problem we replace the concave envelope with the monotone concave envelope we {first of all} recall Lemma~\ref{l:concave}.  
The basic idea underpinning the construction of $\boldsymbol{\zeta}_k$ is that we use the same construction as in the Cauchy problem ``as long as possible", i.e. as long as derivative of the concave envelope, i.e. $\sigma$, is non-negative. After that, instead of using waves with negative speed, which are not admissible, we use boundary layers, i.e. we take 
$\sigma =0$. 
\end{remark}
\subsection{Application to the Navier-Stokes and MHD equations}  \label{ss:4:nsmhd}
\subsubsection{Navier-Stokes equations} \label{sss:4:ns} We recall the discussion in~\S~\ref{ss:h:ns} and~\S~\ref{sss:3:ns} and we point out that the analysis in~\S~\ref{ss:cma} is actually redundant in this case because the manifold $\mathcal M^{00}$ is the whole manifold $\mathcal M^0$.  Indeed, by linearizing~\eqref{e:suemme0} at $(\mf u^\ast, 0, 0)$ we obtain a nilpotent matrix and hence the center space is the whole $\R^5$. 
Since an eigenvector of $\mf E^{-1} \mf A(\mf u)$ associated to $\lambda_2 (\mf u) = u$ is $\mf r_2 (\mf u)=(\rho, 0, -\theta)^t$, then the second vector field is linearly degenerate and hence we can apply the analysis in~\S~\ref{ss:ld} and we do not need the analysis in~\S~\ref{ss:general}. 
\subsubsection{MHD equations with $\eta>0$} \label{sss:4:mhd} We recall the discussion in~\S~\ref{ss:h:mhd} and~\S~\ref{sss:3:mhd} and, in particular, that the dimension of $\mathcal M^0$ is $13$. In this case we need the analysis~\S~\ref{ss:cma} since the manifold $\mathcal M^{00}$ has dimension $9$ and it is strictly contained in $\mathcal M^0$. We recall~\eqref{e:mhd:Theta} and~\eqref{e:eigen} and we conclude that $\mf r_{00}(\mf u^\ast, 0, 0)= (\mf 0_2^t, \mf 0_2^t, 1)^t$.  
Since an eigenvector of $\mf E^{-1} \mf A(\mf u)$ associated to $\lambda_4 (\mf u) = u$ is $\mf r_4 (\mf u)=(\rho, \mf 0_2^t, 0, \mf 0_2^t, -\theta)^t$, then the fourth vector field is linearly degenerate and hence we can apply the analysis in~\S~\ref{ss:ld} and we do not need the analysis in~\S~\ref{ss:general}. 
\section{Boundary layers lying on $\mathcal M^0$}
\label{s:blemme0}
In this section we complete the analysis of the boundary layers lying on $\mathcal M^0$. Note that in~\S~\ref{s:center} we have considered the characteristic boundary layers, which lie on $\mathcal M^0$. As we point out in~\S~\ref{sss:5:ns}, in the case of the Navier-Stokes the manifold $\mathcal M^0$ only contains the characteristic boundary layers and hence the analysis in this section is basically useless. However, in general there might be non characteristic boundary layers lying on $\mathcal M^0$, for instance this happens in the case of the MHD equations, see~\S~\ref{sss:5:mhd}. The main result of this section is Theorem~\ref{t:slowmanifold}. In~\S~\ref{ss:proofsmld} we provide its proof in the linearly degenerate case, in~\S~\ref{ss:pgeneral} we deal with the general case. In~\S~\ref{ss:5:nsmhd} we describe how the analysis applies to the Navier-Stokes and MHD equations. As in the previous sections, we only consider the case $h=1$ and we refer to~\S~\ref{s:bigk} for the case $h>1$. We first establish a corollary of Lemma~\ref{l:key}.  
\begin{lemma}
\label{l:thetaper}
There is a sufficiently small constant $\delta>0$ such that the following holds. If $|\mf u - \mf u^\ast|<\delta$, then all the eigenvalues of the matrix $\Theta_0(\mf u, \mf 0_{N-2}, 0)$ are real numbers. Also, the signature is as follows: $k-2$ eigenvalues are strictly negative, $N-k-1$ are strictly positive and one eigenvalue is $\theta_{00}
(\mf u,0, 0)$ and has exactly the same sign as $\lambda_k (\mf u)$. 
\end{lemma}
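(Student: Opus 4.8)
The plan is to reduce the statement to a spectral count for the \emph{symmetric} matrix
$\mf S(\mf u) := \mf R_0^t \mf A_{22} \mf R_0(\mf u, \mf 0_{N-2}, 0) - \alpha(\mf u)\, e_{11}^{-1}(\mf u)\, \mf d\, \mf d^t(\mf u, \mf 0_{N-2}, 0)$, and then invoke Lemma~\ref{l:key} together with a continuity argument. The first step is to simplify $\Theta_0(\mf u, \mf 0_{N-2}, 0)$. Starting from the explicit formula~\eqref{e:cosaeLambda0}, I note that at $\mf z_0 = \mf 0_{N-2}$ the correction matrices $\mf H_1, \mf H_2$ vanish by~\eqref{e:accaunodue}, and the higher order coefficients $\mf g_1, \mf G_2$ vanish as well: indeed, by the first two lines of~\eqref{e:emme0slow} the derivative $\mf u'=(u_1',\mf u_2')^t$ vanishes when $\mf z_0=\mf 0_{N-2}$, and $\mf g_1,\mf G_2$ are higher order terms in $\mf u_x$ by property iv) of Hypothesis~\ref{h:normal}. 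Hence $\Theta_0(\mf u, \mf 0_{N-2}, 0) = \mf M(\mf u)^{-1}\mf S(\mf u)$ with $\mf M(\mf u):=\mf R_0^t \mf B_{22}\mf R_0(\mf u,\mf 0_{N-2},0)$, which is symmetric positive definite because $\mf B_{22}$ is symmetric positive definite and the columns of $\mf R_0$ are linearly independent (Lemma~\ref{l:slow}), while $\mf S(\mf u)$ is symmetric because $\mf A_{22}$ is symmetric and $\mf d\,\mf d^t$ is symmetric. Consequently $\Theta_0(\mf u, \mf 0_{N-2}, 0)$ is similar to the symmetric matrix $\mf M^{-1/2}\mf S\mf M^{-1/2}$, so all of its eigenvalues are real, and by Sylvester's law of inertia its signature coincides with that of $\mf S(\mf u)$.

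Next I would count the signature of $\mf S(\mf u)$. At $\mf u=\mf u^\ast$ one has $\alpha(\mf u^\ast)=0$ by~\eqref{e:uast}, hence $\mf S(\mf u^\ast)=\mf R_0^t \mf A_{22}\mf R_0(\mf u^\ast,\mf 0_{N-2},0)$, whose signature is given by Lemma~\ref{l:key}: one zero eigenvalue, $k-2$ strictly negative, $N-k-1$ strictly positive (and $2\le k\le N-1$). Since the eigenvalues of the symmetric matrix $\mf S(\mf u)$ depend continuously on $\mf u$, after shrinking $\delta$ I obtain that for $|\mf u-\mf u^\ast|<\delta$ the matrix $\mf S(\mf u)$, and hence $\Theta_0(\mf u, \mf 0_{N-2}, 0)$, has exactly $k-2$ strictly negative eigenvalues, exactly $N-k-1$ strictly positive eigenvalues, and exactly one eigenvalue in a prescribed small neighbourhood of $0$.

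It remains to identify that last eigenvalue with $\theta_{00}(\mf u,0,0)$ and to pin down its sign. For the identification I would use the invariance of $\mathcal M^{00}$: the point $(\mf u,\mf 0_{N-2},0)$ lies on $\mathcal M^{00}$ (take $z_{00}=0$ in~\eqref{e:erre00}) and is an equilibrium of~\eqref{e:emme0slow}, and from the parametrization $\mf z_0=\mf r_{00}(\mf u,z_{00},\sigma)z_{00}$ one computes that the $\mf z_0$–component of $T_{(\mf u,\mf 0_{N-2},0)}\mathcal M^{00}$ is the line spanned by $\mf r_{00}(\mf u,0,0)$. Since the tangent space of an invariant manifold at an equilibrium is invariant under the linearization of the vector field, and the Jacobian of~\eqref{e:emme0slow} at $(\mf u,\mf 0_{N-2},0)$ acts on the $\mf z_0$–block precisely as $\Theta_0(\mf u,\mf 0_{N-2},0)$, this forces $\mf r_{00}(\mf u,0,0)$ to be an eigenvector of $\Theta_0(\mf u,\mf 0_{N-2},0)$; by~\eqref{e:cosaelambda00} evaluated at $z_{00}=0$ the corresponding eigenvalue equals $\theta_{00}(\mf u,0,0)$. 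As $\theta_{00}(\mf u,0,0)\to\theta_{00}(\mf u^\ast,0,0)=0$ while the other $N-3$ eigenvalues stay bounded away from $0$, for $\delta$ small this is the near-zero eigenvalue found above. Finally, for its sign I would combine Lemma~\ref{l:thetafazero}, giving $\theta_{00}(\mf u,\mf 0_{N-2},\lambda_k(\mf u))=0$, with Lemma~\ref{l:negativo} and continuity, giving $\partial_\sigma\theta_{00}(\mf u,0,\sigma)<0$ on the relevant range of $\sigma$; writing $\theta_{00}(\mf u,0,0)=\int_{\lambda_k(\mf u)}^{0}\partial_\sigma\theta_{00}(\mf u,0,\sigma)\,d\sigma$ then shows $\theta_{00}(\mf u,0,0)$ has the same sign as $\lambda_k(\mf u)$.

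The main obstacle I anticipate is the bookkeeping in the first step: one must verify carefully that \emph{all} the higher order and correction terms in~\eqref{e:cosaeLambda0} genuinely drop out at $\mf z_0=\mf 0_{N-2}$, so that $\Theta_0(\mf u,\mf 0_{N-2},0)$ really has the clean form $\mf M^{-1}\mf S$ with $\mf S$ symmetric; and, secondarily, the tangent-space/linearization argument that identifies $\mf r_{00}(\mf u,0,0)$ as an eigenvector. Once these are in place, the remaining ingredients — similarity to a symmetric matrix, continuity of eigenvalues, and a one-dimensional integration — are routine.
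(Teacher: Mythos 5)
Your proposal is correct, and the bulk of it---reality of the eigenvalues via similarity to a symmetric matrix (effectively Lemma~\ref{l:sym}), the signature count from Lemma~\ref{l:key} plus continuity of eigenvalues, and the sign of $\theta_{00}(\mf u,0,0)$ via~\eqref{e:thetafa0},~\eqref{e:negativo} and an integration in $\sigma$ as in~\eqref{e:vagiu}---coincides with the paper's argument. The one place where you take a genuinely different route is the identification of the near-zero eigenvalue with $\theta_{00}(\mf u,0,0)$. The paper does this by a direct computation: it evaluates~\eqref{e:fordimezzo} at $(\mf u,\mf 0_{N-2},0)$, where $\mf g_1$, $\mf G_2$ and the derivative term drop out, left-multiplies by $\mf R_0^t$, and uses~\eqref{e:definiamod} to land on $\big(\mf R_0^t\mf B_{22}\mf R_0\big)\mf r_{00}\,\theta_{00} = \big(\mf R_0^t\mf A_{22}\mf R_0 - \alpha e_{11}^{-1}\mf d\mf d^t\big)\mf r_{00}$, i.e. exactly the eigencouple statement. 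You instead invoke the invariance of $T_{(\mf u,\mf 0_{N-2},0)}\mathcal M^{00}$ under the linearization of~\eqref{e:emme0slow}, together with the observation that the $\mf z_0$-projection of this tangent space is the line spanned by $\mf r_{00}(\mf u,0,0)$ and that the Jacobian is block-lower-triangular in the $\mf z_0$ variable. That argument is valid (provided one notes that the center manifold is chosen $C^1$), and it is more conceptual in that it does not require unpacking~\eqref{e:fordimezzo}; the paper's computation, on the other hand, is more self-contained and avoids any appeal to general invariant-manifold theory at this point. Both are fine; I would just make the tangent-space invariance claim (``the tangent space of an invariant manifold at an equilibrium is invariant under the linearization'') explicit and cite or prove it, since it is used crucially and is not otherwise stated in the paper.
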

\begin{proof}
First, we recall~\eqref{e:cosaeLambda0} and by applying Lemma~\ref{l:sym} we conclude that all the eigenvalues of  $\Theta_0(\mf u, \mf 0_{N-2}, 0)$  are real numbers. We recall Lemma~\ref{l:key} and by the continuity of the eigenvalues we infer that, if $\mf u$ belongs to a sufficiently small neighbourhood of $\mf u^\ast$, there are $k-2$ strictly negative eigenvalues, $N-k-1$ strictly positive eigenvalues, and one eigenvalue close to $0$. We now show that the eigenvalue close to $0$ is $\theta_{00}
(\mf u, \mf 0_{N-2}, 0)$: it suffices to recall~\eqref{e:fordimezzo}, evaluate it at the point $(\mf u, \mf 0_{N-2}, 0)$, recall that $\mf G_2$ and $\mf g_1$ vanish, left multiply~\eqref{e:fordimezzo} times $\mf R_0^t$ and use~\eqref{e:definiamod}. To show that $\theta_{00}$ has exactly the same sign as $\lambda_k(\mf u)$, we recall~\eqref{e:negativo},~\eqref{e:thetafa0} and argue as in~\eqref{e:vagiu}.  
\end{proof}
We fix the eigenvectors $\mf p_2({\mf u}) , \dots, \mf p_{k-1}(\mf u)  \in \R^{N-2}$ 
corresponding to the $k-2$ strictly negative eigenvalues of $\Theta_0(\mf u, \mf 0_{N-2}, 0)$ (the ones different from $\theta_{00}(\mf u, 0, 0)$, if $\lambda_k (\mf u)<0$). We can choose them in such a way that $\mf p_i^t(\mf u^\ast)  \mf p _j (\mf u^\ast)=0$ if $i\neq j$ and $|\mf p_i(\mf u^\ast) |=1 $ for every $i$.  
We also define the vectors $\mf q_2, \dots, \mf q_{k-1} \in \R^{N}$ by setting 
\begin{equation}
\label{e:definiscot}
      \mf q  (\mf u) : = 
      \left(
      \begin{array}{ccc}
      - e_{11}^{-1} \mf d^t ({\mf u}, \mf 0_{N-2},0) \mf p_i (\mf u) \\
      \mf R_0 ( \mf u, \mf 0_{N-2},0) \mf p_i (\mf u)  \\
      \end{array}
      \right), \quad i=2, \dots, k-1. 
\end{equation}
\begin{theorem}
\label{t:slowmanifold}
There is a sufficiently small constant $\delta>0$ such that the following holds. There is a function $\boldsymbol{\psi}_{sl}: \mathrm{B}^N_\delta(\mf u^\ast) \times \mathrm{B}^{k-1}_\delta(\mf 0_{k-1}) \to \R^N$ satisfying the following properties:
\begin{itemize}
\item[A)] For every $(\tilde{\mf u}, s_2, \dots, s_k) \in \mathrm{B}^N_\delta(\mf u^\ast) \times  \mathrm{B}^{k-1}_\delta(\mf 0_{k-1})$, 
there is $\mf{\underline u} \in \R^N$ such that property A1) in the statement of Lemma~\ref{l:curvek} holds true and furthermore
\begin{itemize}
     \item[A2)] there is a steady solution (i.e., a \emph{boundary layer}) of~\eqref{e:symmetric} such that 
     \be
     \label{e:2bdproblem2}
        \mf u (0) = \boldsymbol{\psi}_{sl} (\tilde{\mf u}, s_2, \dots, s_k), \quad 
        \lim_{x \to + \infty} \mf u(x) = \underline{\mf u}. 
     \eq
    \end{itemize}
    The boundary layer lies on the manifold $\mathcal M^{0}$, i.e. it satisfies~\eqref{e:emme0slow}. 
\item[B)] The map $\boldsymbol{\psi}_{sl}$ is Lipschitz continuous with respect to both $\tilde{\mf u}$ and $s_2, \dots, s_k$. It also is differentiable with respect to $s_2, \dots, s_k$ at any point $(\tilde{\mf u}, \mf 0_{k-1})$ and the columns of the Jacobian matrix are the vector in~\eqref{e:dercurvekld} and the vectors $\mf q_2, \dots, \mf q_{k-1}$ in~\eqref{e:definiscot}.
\end{itemize}
\end{theorem}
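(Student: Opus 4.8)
The plan is to build $\boldsymbol{\psi}_{sl}$ out of two objects that are already available: the characteristic boundary layer curve $\boldsymbol{\zeta}_k$ of \S~\ref{s:center}, which lives on $\mathcal M^{00}$ and accounts for the center direction $\mf r_{00}$ (i.e.\ the variable $s_k$), and a $(k-2)$-dimensional stable (slaving) foliation of $\mathcal M^0$ over $\mathcal M^{00}$, which will account for the remaining coordinates $s_2,\dots,s_{k-1}$. I would run the linearly degenerate case of \S~\ref{ss:proofsmld} and the general case of \S~\ref{ss:pgeneral} in parallel, the only difference being which version of $\boldsymbol{\zeta}_k$ (Lemma~\ref{l:curvekld} or Lemma~\ref{l:curvek}) is fed into the construction.

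First I would fix the hyperbolic splitting. Since boundary layers are steady, I work on the invariant slice $\{\sigma=0\}$ of system~\eqref{e:emme0slow}, which by Lemma~\ref{l:invariant} is a nonsingular ODE on (the $\sigma=0$ slice of) $\mathcal M^0$ whose trajectories converging as $x\to+\infty$ are exactly the boundary layers lying on $\mathcal M^0$. The equilibria form $\{\mf z_0=\mf 0_{N-2}\}$, and the linearization at $(\mf u^\ast,\mf 0_{N-2})$ has its $\mf z_0$-block equal to $\Theta_0(\mf u^\ast,\mf 0_{N-2},0)$; by Lemma~\ref{l:key} and Lemma~\ref{l:thetaper} this block has one zero eigenvalue (direction $\mf r_{00}$), $k-2$ strictly negative eigenvalues (directions $\mf p_2,\dots,\mf p_{k-1}$) and $N-k-1$ strictly positive ones, while all $\mf u$-directions are center directions. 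Hence the center manifold has dimension $N+1$ (it is $\mathcal M^{00}$ with $\sigma$ set to $0$), the stable manifold has dimension $k-2$, and the unstable one $N-k-1$; a boundary layer on $\mathcal M^0$ must avoid the unstable directions, and its center part must be a converging center trajectory, hence one of the characteristic boundary layers described by $\boldsymbol{\zeta}_k$.

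Next I would apply the slaving/stable manifold machinery of \S~\ref{s:slaving} (see also~\cite{Perko}) to the invariant manifold $\mathcal M^{00}$ inside $\mathcal M^0$: this produces, for every orbit of~\eqref{e:emme0slow} with $\sigma=0$ lying in $\mathcal M^{00}$, a $(k-2)$-dimensional stable fiber consisting of trajectories that approach that orbit exponentially as $x\to+\infty$; the fibers depend Lipschitz-continuously on the base orbit and, at an equilibrium $\mf u$, are tangent to the stable eigenspace, whose $(u_1,\mf u_2)$-components are spanned precisely by the vectors $\mf q_i(\mf u)$ of~\eqref{e:definiscot}. In particular a trajectory on such a fiber has the same limit as $x\to+\infty$ as its base orbit. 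I then define $\boldsymbol{\psi}_{sl}(\tilde{\mf u},s_2,\dots,s_k)$ to be the point reached, along the stable fiber through $\boldsymbol{\zeta}_k(\tilde{\mf u},s_k)$, by moving the amount $(s_2,\dots,s_{k-1})$, normalizing the fiber parametrization so that the partial derivative in the $s_i$ direction at the base point equals $\mf q_i$. Properties A1) and A2) are then immediate: by construction the boundary layer through $\boldsymbol{\psi}_{sl}$ lies on $\mathcal M^0$, satisfies~\eqref{e:emme0slow}, and converges to the same $\underline{\mf u}$ as the underlying characteristic boundary layer, which is connected to $\tilde{\mf u}$ by the waves of property A1) of Lemma~\ref{l:curvek}. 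For B), Lipschitz continuity is inherited from the Lipschitz dependence of $\boldsymbol{\zeta}_k$ and of the stable foliation; differentiability at $(\tilde{\mf u},\mf 0_{k-1})$ with the stated Jacobian follows because $\boldsymbol{\psi}_{sl}(\tilde{\mf u},0,\dots,0,s_k)=\boldsymbol{\zeta}_k(\tilde{\mf u},s_k)$, which gives the column~\eqref{e:dercurvekld}, while the remaining columns are $\mf q_2,\dots,\mf q_{k-1}$ by the tangency just described.

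The hard part will be the gluing in the general case: there $\boldsymbol{\zeta}_k$ is only Lipschitz and is defined implicitly through the monotone concave envelope fixed point~\eqref{e:mappaT}, so the base orbit of the stable fiber varies only Lipschitz-continuously and may carry an intricate structure of shocks, rarefactions and boundary layers; one must check that the stable foliation remains well defined and Lipschitz over this whole family, that attaching it creates no wave of negative speed (so that A1) is preserved verbatim), and that the composite map is genuinely differentiable at $s=0$ with the claimed derivative — which is best established not by differentiating a composition but by a direct linearization at $s=0$, mirroring the argument behind~\eqref{e:dercurvekld}. These difficulties are essentially absent in the linearly degenerate case, where $\boldsymbol{\zeta}_k$ is the explicit integral/boundary-layer curve of Lemma~\ref{l:curvekld}, which is why \S~\ref{ss:proofsmld} is carried out first and separately.
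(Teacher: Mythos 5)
Your plan matches the paper's proof: the paper also realizes $\boldsymbol{\psi}_{sl}$ by taking the base orbit $\mathbf v_0$ on $\mathcal M^{00}$ produced by $\boldsymbol{\zeta}_k$ (from Lemma~\ref{l:curvekld} or Lemma~\ref{l:curvek}), then attaching the $(k-2)$-dimensional fast-stable foliation of $\mathcal M^0$ over $\mathcal M^{00}$ via Lemmas~\ref{l:uniformdecay} and~\ref{l:slaving}, reading off the Jacobian columns from~\eqref{e:dercurvekld} and~\eqref{e:definiscot}, and controlling Lipschitz dependence on the base orbit through Lemma~\ref{l:dipendenzav0}. Your unified treatment of the linearly degenerate and general cases (feeding the appropriate $\boldsymbol{\zeta}_k$ into one construction) is a slight streamlining of the paper's presentation, which first runs the linearly degenerate case with a direct Stable Manifold Theorem application in~\S~\ref{ss:proofsmld} before the slaving-manifold argument of~\S~\ref{ss:pgeneral}, but the underlying mechanism is the same.
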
 
The proof of Theorem~\ref{t:slowmanifold} relies on the construction in~\S~\ref{s:center}. For this reason, we first provide the proof in the linearly degenerate case (see conditions I) and II) at the beginning of~\S~\ref{ss:ld}). Next, we consider the general case.
\subsection{Proof of Theorem~\ref{t:slowmanifold} in the linearly degenerate case} 
\label{ss:proofsmld}
We assume conditions I) and II) at the beginning of~\ref{ss:ld}, we fix $(\tilde{\mf u}, s_2, \dots, s_k)$ as in the statement of the theorem and we recall case i) and ii) in the statement of Lemma~\ref{l:curvekld}. Next, 
we proceed according to the following steps. \\  
{\sc Step 1:} we assume $\lambda_k (\tilde{\mf u}) \ge 0$ and define the value $\boldsymbol{\psi}_{sl} (\tilde{\mf u}, s_2, \dots, s_k)$. First, we recall that $\underline{\mf u} = \boldsymbol{\zeta}_k (\tilde{\mf u}, s_k)$ satisfies property A1) in the statement of Lemma~\ref{l:curvekld}.  Next, we linearize system~\eqref{e:emme0slow} at the point $(\underline{\mf u}, \mf 0_{N-2}, 0)$, we recall that $\lambda_k(\underline{\mf u})= \lambda_k (\tilde{\mf u}) \ge 0$ by linear degeneracy  and owing to Lemma~\ref{l:thetaper} we conclude that the stable space (i.e., the space generated by the eigenvalues corresponding to eigenvectors with strictly negative real part) is 
$$
    M^-: = \big\{ \underline{\mf u}, \mf z_0, 0): \, \mf z_0 \in \mathrm{span} 
    < \mf p_2(\underline{\mf u}), \dots, \mf p_{k-1}(\underline{\mf u})> \big\}. 
$$
We apply the Stable Manifold Theorem and we determine a map $\tilde{\boldsymbol{\psi}}_{sl}$, attaining values in $\R^N \times \R^{N-2} \times \R$, which parameterizes the stable manifold. We term $\boldsymbol{\psi}_{sl}$  the projection of $\tilde{\boldsymbol{\psi}}_{sl}$ onto $\R^N$ (i.e. $\boldsymbol{\psi}_{sl}$ are the first $N$ components of $\tilde{\boldsymbol{\psi}}_{sl}$). By construction, property A2) in the statement of the theorem is satisfied.  Note that $\boldsymbol{\psi}_{sl}$ depends on $s_2, \dots, s_{k-1}$ and on $\underline{\mf u} = \boldsymbol{\zeta}_k (\tilde{\mf u}, s_k)$, so as a matter of fact it depends on $s_2, \dots, s_k, \tilde{\mf u}$. By relying on property B) in the statement of Lemma~\ref{l:curvekld} and on the Stable Manifold Theorem we can establish property B) in the statement of Theorem~\ref{t:slowmanifold}. \\
{\sc Step 2:} we assume $\lambda_k (\tilde{\mf u}) < 0$ and define the value $\boldsymbol{\psi}_{sl} (\tilde{\mf u}, s_2, \dots, s_k)$. We set $\underline {\mf u}:= \tilde{\mf u}$ and point out that property A1) in the statement of Lemma~\ref{l:curvekld} is trivially satisfied. Next, we linearize system~\eqref{e:emme0slow} at the point $(\tilde{\mf u}, \mf 0_{N-2}, 0)$ and owing to Lemma~\ref{l:thetaper} we conclude that, since $\lambda_k (\tilde{\mf u}) < 0$, then the stable space has dimension $k-1$. We apply the Stable Manifold Theorem, we determine a map $\tilde{\boldsymbol{\psi}}_{sl}$ parametrizing the stable manifold and we term $\boldsymbol{\psi}_{sl}$ its projection onto $\R^N$ (i.e. $\boldsymbol{\psi}_{sl}$ are the first $N$ components of $\tilde{\boldsymbol{\psi}}_{sl}$). This implies that properties A2) and B) in the statement of the theorem are satisfied. 
\subsection{Proof of Theorem~\ref{t:slowmanifold} in the general case}
\label{ss:pgeneral}
We fix $(\tilde{\mf u}, s_2, \dots, s_k)$ as in the statement of the theorem and we proceed according to the following steps.\\
{\sc Step 1:} we recall the statement of Lemma~\ref{l:curvek} and we conclude that there is $\underline{\mf u}$ satisfying A1) and A2) in there. In particular, there is a boundary layer lying on $\mathcal M^{00}$ and satisfying~\eqref{e:2bdproblem}. Since the boundary layer lies on $\mathcal M^{00}$, then it has the form $(\mf u, z_{00}, 0)$. By the analysis in \S~\ref{ss:cma}, this implies that $(\mf u, z_{00} \mf r_{00}, 0)$ is an orbit lying on $\mathcal M^0$. We term it $\mf v_0$ and we point out that $\mf v_0(0) = (\boldsymbol{\zeta}_k (\tilde{\mf u}, s_k), z_{00} \mf r_{00}, 0)$. \\
{\sc Step 2:} we apply Lemma~\ref{l:uniformdecay} in the case where system~\eqref{e:ode} is given by~\eqref{e:emme0slow}, $\mf v^\ast: = (\mf u^\ast, \mf 0_{N-2}, 0)$ and $\check{\mf v}:~=~(\boldsymbol{\zeta}_k(\tilde{\mf u}, s_k), \mf 0_{N-2}, 0).$ Owing to Lemma~\ref{l:key}, $n_- = k-2$. By applying Lemma~\ref{l:uniformdecay} we define a map $\mf m_-$, which depends on $\boldsymbol{\zeta}_k(\tilde{\mf u}, s_k), s_2, \dots, s_{k-1}$ and hence on $\tilde{\mf u}$ and $s_1, \dots, s_k$. \\
{\sc Step 3:}  we apply Lemma~\ref{l:slaving} with~\eqref{e:ode}, $\mf v^\ast$ and $\check{\mf v}$ as in {\sc Step 2} and $\mf v_0 (0)$ as in {\sc Step 1}. 
We define a map $\mf m_p$ which depends on $\mf v_0(0)$ and $s_2, \dots, s_{k-1}$, and hence on $\tilde{\mf u}$ and $s_1, \dots, s_k$. Note that $\mf m_-$ and $\mf m_{p}$ both attain values in $\R^N \times \R^{N-2} \times  \R$. We set 
\be 
\label{e:effeslow}
   \tilde{\boldsymbol{\psi}}_{sl} (\tilde{\mf u}, s_2, \dots, s_k)= \mf v_0(0) + 
    \mf m_- (\tilde{\mf u}, s_2, \dots, s_k) - \check{\mf v} +
    \mf m_p (\tilde{\mf u}, s_2, \dots, s_k).  
\eq 
We term $\boldsymbol{\psi}_{sl} $ the first $N$ components of $ \tilde{\boldsymbol{\psi}}_{sl}$, i.e. the projection of  $\tilde{\boldsymbol{\psi}}_{sl}$
onto $\R^N$. 
Note that owing to~\eqref{e:expdecay2}  the solution of~\eqref{e:emme0slow} with initial datum $  \tilde{\boldsymbol{\psi}}_{sl} $ approaches the boundary layer $\mf v_0$ at exponential rate as $x \to + \infty$. In particular, the $\mf u$ component converges to $\underline{\mf u}$ and hence~\eqref{e:2bdproblem2} is satisfied. We remark in passing that, since the last component of $\mf v_0$ is identically $0$, so is the last component of  $\tilde{\boldsymbol{\psi}}_{sl}$ because $\sigma$ is constant on the orbits of~\eqref{e:emme0slow}.   \\
{\sc Step 4:} to establish property B) in the statement of Theorem~\ref{t:slowmanifold} we combine property B) in Lemma~\ref{l:curvek} with the regularity part of Lemma~\ref{l:uniformdecay}, with~\eqref{e:diffat0} and with Lemma~\ref{l:dipendenzav0}. 
\vspace{0.5cm}

To conclude this paragraph we make some heuristic comment on the proof of Theorem~\ref{t:slowmanifold}. In particular, we recall {\sc Case 1} and {\sc Case 2} at the end of~\S~\ref{sss:proofA} and we describe the structure of the boundary layer satisfying~\eqref{e:2bdproblem2} in these cases. \\
{\sc Case 1:} $\underline \tau= s_k$. In this case $\underline{\mf u} = \boldsymbol{\zeta}_k (\tilde{\mf u}, s_k)$ and the boundary layer lying on $\mathcal M^{00}$ is trivial, i.e. $\mf u(x) \equiv \underline{\mf u}$. Also, $z_{00}(s_k)=0$ and hence $\mf v_0 (0)= \check{\mf v}$, which owing to~\eqref{e:diffat0} implies that  $\mf m_p (\tilde{\mf u}, s_2, \dots, s_k)=\mf 0_{2N-1}$. This implies that  $\tilde{\boldsymbol{\psi}}_{sl} (\tilde{\mf u}, s_2, \dots, s_k)=
    \mf m_- (\tilde{\mf u}, s_2, \dots, s_k)$. Hence, in this case the boundary layer satisfying~\eqref{e:2bdproblem2} does not have any component lying on~$\mathcal M^{00}$ and the $\mf u$ component decays to $\underline{\mf u}$ at exponential rate. \\
{\sc Case 2:} $\underline \tau< s_k$. In this case there is a nontrivial boundary layer lying on $\mathcal M^{00}$, which satisfies~\eqref{e:2bdproblem}. By applying Lemma~\ref{l:slaving} we construct a slaving manifold of solutions of~\eqref{e:emme0slow} that approaches the boundary layer lying on $\mathcal M^{00}$ at exponential rate. In this case the boundary layer satisfying~\eqref{e:2bdproblem2}  will in general converge to $\underline{\mf u}$ at a slower rate than in {\sc Case 1}. 
\subsection{Application to the Navier-Stokes and MHD equations}\label{ss:5:nsmhd}
\subsubsection{Navier-Stokes equation} \label{sss:5:ns} We recall the discussion in~\S~\ref{sss:4:ns}, that the manifold $\mathcal M^0$ coincides with the manifold $\mathcal M^{00}$ and that $k-2=0$. This implies that the analysis in~\S~\ref{s:blemme0} is actually redundant in this case.  

\subsubsection{MHD equations with $\eta>0$} \label{sss:5:mhd} We recall the discussion in~\S~\ref{sss:3:mhd} 
and that $k-2=2$. This implies that we need the analysis in~\S~\ref{s:blemme0} and the function $\boldsymbol{\psi}_{sl}$ depends on $\tilde{\mf u}$ and on $3$ other variables $s_2, s_3, s_4$.
\section{Complete boundary layers analysis} 
\label{s:complete}
Very loosely speaking, in this section we combine the ``slow" boundary layers lying on $\mathcal M^0$ with the ``fast" boundary layers lying on a stable manifold for~\eqref{e:odefast}. See~\S~\ref{ss:roadmap} for a more detailed discussion. Owing to the nonlinearity, we cannot simply add the ``slow" and the ``fast" boundary layers, but we have to take into account possible interactions. From the technical viewpoint, this issue is tackled by relying on the notion of slaving manifold, which is overviewed in~\S~\ref{s:slaving}. The exposition is organized as follows. In~\S~\ref{ss:fastvaria} we work on system~\eqref{e:odefast}, where $\mf h$ is given by~\eqref{e:cosasono1}. In~\S~\ref{ss:back} we show that (in some cases) we can actually go back to the original system~\eqref{e:odeslow}. This requires a quite careful analysis which uses Hypothesis~\ref{h:jde}. As in the previous sections, we focus on the case $h=1$ and we refer to~\S~\ref{s:bigk} for the case $h>1$.  
\subsection{Fast variable analysis}\label{ss:fastvaria} First, we have to introduce some notation. We recall the proof of Theorem~\ref{t:slowmanifold} and the fact that the map $\boldsymbol{\psi}_{sl} (\tilde{\mf u}, s_2, \dots, s_k)$ is the projection onto $\R^N$ of a map $\tilde{\boldsymbol{\psi}}_{sl} $, attaining values in $\R^N \times \R^{N-2} \times \R$ such that i) the last component of $\tilde{\boldsymbol{\psi}}_{sl}(\tilde{\mf u}, s_2, \dots, s_k)$ is identically $0$;  ii) for every $(\tilde{\mf u}, s_2, \dots, s_k)$ the solution of the Cauchy problem obtained by coupling~\eqref{e:emme0slow} with the initial datum $\tilde{\boldsymbol{\psi}}_{sl}(\tilde{\mf u}, s_2, \dots, s_k)$ satisfies $\lim_{x \to + \infty} \mf u(x) = \underline{\mf u}$, where $\underline{\mf u}$ is a certain state depending on $\tilde{\mf u}$ and $s_k$. Next, we recall that system~\eqref{e:emme0slow} is obtained from~\eqref{e:suemme0} through the change of variables $x = \alpha y$, and 
that~\eqref{e:suemme0} is system~\eqref{e:odefast} restricted on $\mathcal M^0$, which is a center manifold for system~\eqref{e:odefast}.  We now consider the solution of the Cauchy problem obtained by coupling the initial datum $\tilde{\boldsymbol{\psi}}_{sl}(\tilde{\mf u}, s_2, \dots, s_k)$
     with system~\eqref{e:emme0slow}: this provides a solution of~\eqref{e:odefast} lying on the center manifold $\mathcal M^0$. We term it 
     $\mf v_0 [\tilde{\mf u}, s_2, \dots, s_k]$. Note that this is not the same $\mf v_0$ as in~\S~\ref{ss:pgeneral} and that, for any given  $\tilde{\mf u}, s_2, \dots, s_k$,   $\mf v_0 [\tilde{\mf u}, s_2, \dots, s_k]$ is a function of $y$. The next lemma states that~\eqref{e:odefast} has an invariant manifold of orbits approaching   $\mf v_0 [\tilde{\mf u}, s_2, \dots, s_k]$.   
\begin{lemma}
\label{l:fastvaria} 
There is a sufficiently small constant $\delta>0$ such that the following holds. There is a function $\tilde{\boldsymbol{\psi}}_b: \mathrm{B}^N_\delta(\mf u^\ast) \times \mathrm{B}^{k}_\delta(\mf 0_{k}) \to \R^N \times \R^{N-1} \times \R$ such that 
\begin{itemize}
\item[A1)] The last component of $\tilde{\boldsymbol{\psi}}_b$ is identically $0$. 
\item[A2)] For every $(\tilde{\mf u}, s_1, \dots, s_k)$ the following holds: if $\mf v$ is the solution of the Cauchy problem obtained by coupling the initial datum $\mf v(0)= \tilde{\boldsymbol{\psi}}_b (\tilde{\mf u}, s_1, \dots, s_k)$ with  system~\eqref{e:odefast} (where $\mf h$ is given by~\eqref{e:cosasono1}), then 
     \be
     \label{e:completedecay}
         \lim_{y \to + \infty} \Big| \mf v(y) - \mf v_0 [\tilde{\mf u}, s_2, \dots, s_k](y) \Big| = 0. 
     \eq
\item[B)] The map $\boldsymbol{\psi}_b$ is Lipschitz continuous with respect to both $\tilde{\mf u}$ and $s_1, \dots, s_k$. It also is differentiable with respect to $s_1, \dots, s_k$ at any point $(\tilde{\mf u}, \mf 0_{k})$ and the columns of the Jacobian matrix are the vector in~\eqref{e:dercurvekld}, the vectors $\mf q_2, \dots, \mf q_{k-1}$ in~\eqref{e:definiscot} and the vector $(1, \mf 0_{N-1})^t$.
\end{itemize}
\end{lemma}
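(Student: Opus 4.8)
The plan is to obtain $\tilde{\boldsymbol{\psi}}_b$ as a \emph{slaving manifold} for the fast system~\eqref{e:odefast} erected over the center manifold $\mathcal M^0$, taking the orbits $\mf v_0[\tilde{\mf u},s_2,\dots,s_k]$ as base orbits. The first step is to record the spectral structure of the linearization $\mf D\mf h(\mf v^\ast)$ of~\eqref{e:cosasono1} at $\mf v^\ast=(\mf u^\ast,\mf 0_{N-1},0)$. Since $\alpha(\mf u^\ast)=0$ and $\mf z_2=\mf 0$ there, every partial derivative of $\mf h$ vanishes except those in the $\mf z_2$--column: the $u_1$--row reduces to $-e_{11}^{-1}\mf a_{21}^t$ and the $\mf z_2$--row to $-e_{11}^{-1}\mf B_{22}^{-1}\mf a_{21}\mf a_{21}^t$, while the $\mf u_2$-- and $\sigma$--rows are identically zero. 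By Lemma~\ref{l:signature} with $h=1$ (equivalently, its symmetrization through Lemma~\ref{l:sym}), the matrix $-e_{11}^{-1}\mf B_{22}^{-1}\mf a_{21}\mf a_{21}^t$ has exactly one strictly negative eigenvalue, uniformly bounded away from $0$, and the eigenvalue $0$ with multiplicity $N-2$. Hence $\mf D\mf h(\mf v^\ast)$ has a one--dimensional stable eigenspace, a $(2N-1)$--dimensional center eigenspace equal to $T_{\mf v^\ast}\mathcal M^0$, and no unstable part, with a genuine spectral gap. A short computation shows that the stable eigenvector has the form $(w_1,\mf 0_{N-1},\mf w_z,0)$ with $w_1\neq 0$ and $\mf w_z$ parallel to $\mf B_{22}^{-1}\mf a_{21}$; in particular its projection onto the $\mf u$--space is a nonzero multiple of $(1,\mf 0_{N-1})^t$. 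Working with the fast variable is precisely what makes this picture available: in the slow variable the ``fast'' eigenvalue would be of order $-\mathrm{const}/\alpha$, singular at $\mf u^\ast$.

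Next I would set up the base orbits. For $(\tilde{\mf u},s_2,\dots,s_k)$ in the relevant balls, the solution of~\eqref{e:emme0slow} issued from $\tilde{\boldsymbol{\psi}}_{sl}(\tilde{\mf u},s_2,\dots,s_k)$ is a boundary layer lying on $\mathcal M^0$; reparametrizing it by the fast variable via $x=\alpha y$ yields the orbit $\mf v_0[\tilde{\mf u},s_2,\dots,s_k]$ of~\eqref{e:odefast}, which lies on $\mathcal M^0$, is defined for all $y\ge 0$ and stays, for $\delta$ small, in a fixed small neighbourhood of $\mf v^\ast$, and along which $\sigma\equiv 0$ since $\dot\sigma\equiv 0$ on~\eqref{e:odefast}. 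I would then invoke the slaving manifold result of~\S~\ref{s:slaving} applied to~\eqref{e:odefast} at $\mf v^\ast$ relative to the base orbit $\mf v_0[\tilde{\mf u},s_2,\dots,s_k]$: it produces a Lipschitz family of orbits of~\eqref{e:odefast} that approach the base orbit at exponential rate as $y\to+\infty$, parametrized by the one--dimensional stable fibre, i.e.\ by a single new parameter $s_1$, with the base orbit itself corresponding to $s_1=0$. Defining $\tilde{\boldsymbol{\psi}}_b(\tilde{\mf u},s_1,\dots,s_k)$ to be the initial value of the orbit with fast parameter $s_1$ sitting over $\mf v_0[\tilde{\mf u},s_2,\dots,s_k]$, property A2) is exactly the defining property of the slaving manifold, and property A1) follows because the whole family has constant $\sigma$ converging to the value $0$ carried by $\mf v_0$, hence $\sigma\equiv 0$ along it.

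For property B), the Lipschitz continuity follows from the Lipschitz dependence provided by the slaving manifold result composed with property B) of Theorem~\ref{t:slowmanifold}. For the derivative at $(\tilde{\mf u},\mf 0_k)$: along $s_1=0$ one has $\boldsymbol{\psi}_b(\tilde{\mf u},0,s_2,\dots,s_k)=\boldsymbol{\psi}_{sl}(\tilde{\mf u},s_2,\dots,s_k)$, so by Theorem~\ref{t:slowmanifold}~B) the columns of the Jacobian in the $s_2,\dots,s_k$ directions are the vector in~\eqref{e:dercurvekld} and the vectors $\mf q_2,\dots,\mf q_{k-1}$ of~\eqref{e:definiscot}; the column in the $s_1$ direction is the tangent at the base point to the fast stable fibre, which by the standard differentiability of a slaving (stable-type) manifold at its base orbit equals the stable eigenvector of $\mf D\mf h(\mf v^\ast)$ computed above, whose projection onto $\R^N$ is $(1,\mf 0_{N-1})^t$ after normalizing $s_1$.

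The main obstacle is the careful application of the abstract slaving construction: one must verify that the fast-stable rate is uniform and genuinely separated from the slow center dynamics (which is what forces the use of~\eqref{e:odefast} instead of the singular system~\eqref{e:odeslow}), and that the base orbit $\mf v_0$ remains, for all relevant $y$, inside the joint domain of validity of the center manifold $\mathcal M^0$ of Lemma~\ref{l:slow}, of Theorem~\ref{t:slowmanifold} and of the slaving manifold. The second delicate point is promoting the generically merely Lipschitz map $\tilde{\boldsymbol{\psi}}_b$ to one that is differentiable at the base point with the prescribed Jacobian, which requires combining the base-point differentiability of the slaving manifold with that of $\boldsymbol{\psi}_{sl}$. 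Reconciling this fast-variable picture with the original slow system~\eqref{e:odeslow} is postponed to~\S~\ref{ss:back} and is not needed here.
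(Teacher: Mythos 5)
Your proposal is correct and follows essentially the same route as the paper: both realize $\tilde{\boldsymbol{\psi}}_b$ as the slaving manifold for~\eqref{e:odefast} over the base orbits $\mf v_0[\tilde{\mf u},s_2,\dots,s_k]$, invoking Lemmas~\ref{l:uniformdecay} and~\ref{l:slaving} of~\S~\ref{s:slaving} with $n_-=1$ (via Lemma~\ref{l:signature}), and then deducing A1), A2) and B) exactly as you describe. Your explicit spectral computation of $\mf D\mf h(\mf v^\ast)$ and of its one-dimensional stable eigenspace is not written out in the paper, but it correctly unpacks what the paper delegates to the citation of Lemma~\ref{l:signature}.
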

\begin{proof}
We use the notion of slaving manifold and in particular Lemma~\ref{l:slaving}.  We fix $(\tilde{\mf u}, s_2, \dots, s_k)$, we set $\mf v^\ast: =(\mf u^\ast, \mf 0_{N-1}, 0)$ and $\check{\mf v}: = (\boldsymbol{\psi}_{sl}, \mf 0_{N-1}, 0)$. The function $\boldsymbol{\psi}_{sl}$ is the same as in the statement of Theorem~\ref{t:slowmanifold} and it is evaluated at the point $(\tilde{\mf u}, s_2, \dots, s_k)$. We apply Lemma~\ref{l:uniformdecay} with~\eqref{e:ode} given by~\eqref{e:odefast} and $\mf v^\ast$ and $\check{\mf v}$ as before. Note that by linearizing~\eqref{e:odefast}, applying Lemma~\ref{l:signature} and recalling that in we are considering the case $h=1$ we conclude that the number $n_-$ in the statement of Lemma~\ref{l:uniformdecay} is $1$. Owing to Lemma~\ref{l:uniformdecay}, we can define a function $\mf m_-$, which depends 
on $s_1$ and $\check{\mf v}$ and hence (recalling the expression of $\check{\mf v}$) on $s_2, \dots, s_k$ and $\tilde{\mf u}$. Next, we apply Lemma~\ref{l:slaving} with $\mf v_0(0) = \mf v_0 [\tilde{\mf u}, s_1, \dots, s_k](0)$ and $\mf v^\ast$ and $\check{\mf v}$ as before.
     We set 
     \be 
\label{e:effebi1}
\tilde{\boldsymbol{\psi}}_b (\tilde{\mf u}, s_1, \dots, s_k): =  \mf m_- (\tilde{\mf u}, s_1, \dots, s_k) - \check{\mf v} +
       \mf m_p (\tilde{\mf u}, s_1, \dots, s_k) + \mf v_0 [\tilde{\mf u}, s_1, \dots, s_k](0) . 
\eq  
Owing to~\eqref{e:expdecay2}, property A2) in the statement of Lemma~\ref{l:fastvaria} holds true. To establish property A1) it suffices to recall  
that the last component of $\tilde{\boldsymbol{\psi}_{sl}}(\tilde{\mf u}, s_1, \dots, s_k)$ and hence of $\mf v_0 [\tilde{\mf u}, s_1, \dots, s_k]$
is identically $0$, use~\eqref{e:completedecay} and recall that the last component of every solution of~\eqref{e:odefast} is constant. 

To establish property B), we combine property B) in the statement of Theorem~\ref{t:slowmanifold}, the regularity statements in Lemma~\ref{l:uniformdecay},~\eqref{e:diffat0} and Lemma~\ref{l:dipendenzav0}. 
\end{proof}
\subsection{Back to the original variables} \label{ss:back}
We recall that the function $\mf v$ satisfying~\eqref{e:completedecay} is a solution of~\eqref{e:odefast}. We now want to go back to the original system~\eqref{e:odeslow} and obtain a boundary layer of~\eqref{e:symmetric}. This is possible owing to Lemma~\ref{l:back} below. In the statement of the lemma, $\boldsymbol{\psi}_b$ is the projection onto $\R^N$ of the map $\tilde{\boldsymbol{\psi}}_b$, which attains values in $\R^N \times \R^{N-1} \times \R$. 
\begin{lemma}
\label{l:back}
If $\alpha \circ \boldsymbol{\psi}_b (\tilde{\mf u}, s_1, \dots, s_k) >0$ then there is a solution of~\eqref{e:odeslow} such that $\alpha (\mf u(x)))>0$ for every $x>0$ and furthermore 
\be
     \label{e:2bdproblem4}
        \sigma \equiv 0, \qquad 
        \mf u (0) = \boldsymbol{\psi}_b (\tilde{\mf u}, s_1, \dots, s_k), \qquad 
        \lim_{x \to + \infty} \mf u(x) = \underline{\mf u}. 
     \eq
\end{lemma}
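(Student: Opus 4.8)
The plan is to obtain the desired boundary layer by reparametrizing, through the slow variable $x$, the orbit of the fast system~\eqref{e:odefast} produced by Lemma~\ref{l:fastvaria}; the bulk of the work will be to check that this reparametrization is defined on all of $[0,+\infty[$. Fix $(\tilde{\mf u},s_1,\dots,s_k)$ and let $\mf v(y)=(u_1,\mf u_2,\mf z_2,\sigma)(y)$ be the solution of~\eqref{e:odefast} with $\mf v(0)=\tilde{\boldsymbol{\psi}}_b(\tilde{\mf u},s_1,\dots,s_k)$. Since the last component of $\mf h$ in~\eqref{e:cosasono1} is $0$ and the last component of $\tilde{\boldsymbol{\psi}}_b$ vanishes (property A1) in Lemma~\ref{l:fastvaria}), we have $\sigma\equiv 0$ along $\mf v$; by property A2), $|\mf v(y)-\mf v_0[\tilde{\mf u},s_2,\dots,s_k](y)|\to 0$ as $y\to+\infty$, and at exponential rate, this being part of the slaving manifold construction (see~\eqref{e:expdecay2}). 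Recall that $\mf v_0$, reparametrized by the slow variable, is exactly the boundary layer of Theorem~\ref{t:slowmanifold}: it lies on $\mathcal M^0$, is defined for all $x\in[0,+\infty[$, and its $\mf u$-component tends to $\underline{\mf u}$.

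\textbf{Step 1: $\alpha$ stays positive along $\mf v$.} I would set $a(y):=\alpha(\mf u(y))$ and differentiate, using the first two lines of~\eqref{e:cosasono1} with $\sigma=0$:
\[
   \dot a = (\partial_{u_1}\alpha)\,\dot u_1 + (\nabla_{\mf u_2}\alpha)^t\dot{\mf u}_2 = -(\partial_{u_1}\alpha)\,e_{11}^{-1}\mf a_{21}^t\mf z_2 + a\,(\nabla_{\mf u_2}\alpha)^t\mf z_2 .
\]
By Hypothesis~\ref{h:jde} the scalar $\partial_{u_1}\alpha$ vanishes on $\{\alpha=0\}$ while $\nabla\alpha\neq\mf 0_N$ there, so the division argument already used in the proofs of Lemma~\ref{l:sign} and Corollary~\ref{c:definiamod} gives $\partial_{u_1}\alpha(\mf u)=\alpha(\mf u)\,c(\mf u)$ for a smooth scalar $c$. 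Hence $\dot a=g\,a$ for a continuous, bounded function $g$ (depending on the orbit), and moreover $g(y)\to 0$ as $y\to+\infty$ because $\mf z_2(y)\to\mf 0$ as $\mf v(y)$ converges. Since $a(0)=\alpha\circ\boldsymbol{\psi}_b(\tilde{\mf u},s_1,\dots,s_k)>0$ by assumption, the scalar linear equation $\dot a=g\,a$ forces $a(y)=a(0)\exp\!\big(\int_0^y g\big)>0$ for every $y\ge 0$.

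\textbf{Step 2: the time change $x(y):=\int_0^y a(s)\,ds$ maps $[0,+\infty[$ onto $[0,+\infty[$.} It is smooth and, by Step 1, strictly increasing, so the only issue is $X:=\int_0^{+\infty}a(y)\,dy=+\infty$. If $\alpha(\underline{\mf u})>0$ this is immediate, since $a(y)\to\alpha(\underline{\mf u})$; the work is the characteristic case $\alpha(\underline{\mf u})=0$ (note $\alpha(\underline{\mf u})\ge 0$ because $a\ge 0$). Here I would argue on $\mf v_0$: it lies on $\mathcal M^0$, so by Lemma~\ref{l:invariant} the sign of $\alpha$ along $\mf v_0$ is constant; it cannot be negative (this would clash with the forward orientation of the slow time in Theorem~\ref{t:slowmanifold}), nor identically zero (that would make $\mf v_0$ the equilibrium $(\underline{\mf u},\mf 0,0)$, whence $a(y)\le L|\mf v(y)-\mf v_0(y)|\le Ce^{-cy}$, contradicting $a(y)=a(0)\exp(\int_0^y g)$ with $a(0)>0$ and $g\to 0$). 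Thus either $\mf v_0\equiv(\underline{\mf u},\mf 0,0)$ with $\alpha(\underline{\mf u})>0$ (already treated), or $\mf v_0$ is a nontrivial orbit with $\alpha>0$ along it; in the latter case its slow time $x_0(y)=\int_0^y\alpha(\mf u_{\mf v_0})$ must diverge, because otherwise $\mf v_0(y)$ would converge to an equilibrium of~\eqref{e:odefast}, hence of~\eqref{e:emme0slow}, reached in finite slow time, forcing $\mf v_0$ constant by uniqueness. Then, using Lipschitz continuity of $\alpha$ and the exponential estimate $|\mf v(y)-\mf v_0(y)|\le Ce^{-cy}$,
\[
   X \ge \int_0^{+\infty}\alpha(\mf u_{\mf v_0}(y))\,dy - \int_0^{+\infty}\!\big|a(y)-\alpha(\mf u_{\mf v_0}(y))\big|\,dy = +\infty .
\]

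\textbf{Step 3: conclusion, and the main obstacle.} Letting $y(\cdot)$ be the inverse of $x(\cdot)$ and $\hat{\mf v}(x):=\mf v(y(x))$, and using $dy/dx=1/\alpha(\mf u)$ and $\sigma\equiv 0$, the chain rule gives $\hat{\mf v}'=\mf h(\hat{\mf v})/(\alpha(\hat{\mf u})-0)$, i.e.\ precisely~\eqref{e:odeslow}; so $\hat{\mf u}$ is a boundary layer of~\eqref{e:symmetric}. By Step 1, $\alpha(\hat{\mf u}(x))>0$ for $x>0$; by construction $\hat{\mf u}(0)=\mf u(0)=\boldsymbol{\psi}_b(\tilde{\mf u},s_1,\dots,s_k)$; and $\lim_{x\to+\infty}\hat{\mf u}(x)=\lim_{y\to+\infty}\mf u(y)=\underline{\mf u}$, since $\mf u(y)-\mf u_{\mf v_0}(y)\to\mf 0$ while $\mf u_{\mf v_0}(y)\to\underline{\mf u}$ (the fast time along $\mf v_0$ runs to $+\infty$ together with the slow time). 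This yields~\eqref{e:2bdproblem4}. The delicate part is Step 2: one must rule out that the reparametrized orbit reaches only a finite value of $x$, and in the characteristic case $\alpha(\underline{\mf u})=0$ this cannot be seen by crude estimates on $a(y)$ alone but has to be transferred from the already-constructed slow boundary layer $\mf v_0$, which lives on all of $[0,+\infty[$ in the slow variable, combined with the identity $\dot a=g\,a$ with $g(y)\to 0$ and the exponential rate of convergence $\mf v\to\mf v_0$.
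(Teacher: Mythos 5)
Your overall roadmap — reparametrize the fast orbit by the slow variable $x(y)=\int_0^y \alpha(\mf u)\,dy$, show $\alpha$ stays positive, show $x(y)\to+\infty$, and then reparametrize — is exactly the paper's, and Steps~1 and~3 essentially reproduce the paper's manipulations. The crucial gap is in Step~2, and it stems from the fact that the linear ODE $\dot a = g\,a$ you derived, while sufficient for sign preservation, does \emph{not} control the divergence of $\int_0^\infty a(y)\,dy$: a bounded $g$ allows $a$ to decay exponentially, and your assertion that $g(y)\to 0$ is not justified (nor is it true in general; if $\alpha\equiv 0$ along $\mf v_0$, the fast orbit $\mf v_0$ is the constant $\tilde{\boldsymbol{\psi}}_{sl}$, whose $\mf z_2$-component need \emph{not} vanish, so $g$ need not decay — what is actually available, and what the paper uses, is the bound $|g|\leq \unpo\,\delta<2\gamma$ obtained by shrinking the neighborhood, not a decay of $g$). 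Similarly, your treatment of the ``$\alpha<0$ along $\mf v_0$'' subcase (``forward orientation of the slow time'') is not an argument; the slow orbit $\mf v_0^{\rm slow}$ can perfectly well be extended to negative slow times, so nothing forbids the fast orbit from visiting them. Finally, the uniqueness argument in the remaining subcase rests on the unstated step that the putative equilibrium $\mf v_0^{\rm slow}(X_0)$ of~\eqref{e:odefast} satisfies $\alpha>0$ there (hence $\mf z_0=\mf 0$ there), which requires a separate appeal to Lemma~\ref{l:invariant} on the slow orbit; as written, ``equilibrium of~\eqref{e:odefast}, hence of~\eqref{e:emme0slow}'' is false when $\alpha=0$.

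The missing idea is Lemma~\ref{l:diverge0}: on $\mathcal M^0$ the function $\alpha_0(y)=\alpha(\mf u_0(y))$ satisfies a \emph{quadratic} ODE $d\alpha_0/dy=\alpha_0^2\,\tilde g$, not merely a linear one. The extra factor of $\alpha$ comes from using Hypothesis~\ref{h:jde} twice: once to factor $\partial_{u_1}\alpha=\alpha\,c$ (which you did), and once via~\eqref{e:definiamod} to factor $\mf a_{21}^t\mf R_0=(\alpha-\sigma)\mf d^t$, together with the observation that $(\nabla_{\mf u_2}\alpha)^t\mf R_0=\mf 0$ on $\{\alpha=0\}$ since $\nabla_{\mf u_2}\alpha$ is a combination of columns of $\mf A_{21}$. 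The comparison principle then gives the rational lower bound $\alpha_0(y)\geq \alpha_0(0)/(m y\,\alpha_0(0)+1)$, whose integral diverges — and, with the opposite sign, diverges to $-\infty$, which is precisely what rules out the ``$\alpha<0$'' subcase cleanly once combined with the exponential estimate $|\alpha(\mf u)-\alpha(\mf u_0)|\leq \unpo e^{-2\gamma y}$. This single quadratic estimate handles all three subcases you treat by hand, and without it the argument does not close.
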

\subsection{Proof of Lemma~\ref{l:back}}
The proof of Lemma~\ref{l:back} relies on some preliminary results. 
\begin{lemma}
\label{l:diverge0}
Let $(\mf u_0, \mf z_{20}, 0)$ be a solution of~\eqref{e:odefast} lying on $\mathcal M^0$. If $\alpha (\mf u_0)>0$ at $y=0$, then  $\alpha (\mf u_0)>0$ for every $y \ge 0$
and furthermore
\be
\label{e:diverge2}
\int_0^{+ \infty} \alpha (\mf u_0(y)) dy = + \infty.  
\eq   
Conversely, if $\alpha (\mf u_0)<0$ at $y=0$, then  $\alpha (\mf u_0)<0$ for every $y \ge 0$
and the integral at the left hand side of~\eqref{e:diverge2} equals $- \infty$.
\end{lemma}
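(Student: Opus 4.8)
The plan is to exploit the structure of system~\eqref{e:suemme0}, which is the restriction of~\eqref{e:odefast} to $\mathcal M^0$, together with Hypothesis~\ref{h:jde} and Corollary~\ref{c:definiamod}. Since the solution $(\mf u_0, \mf z_{20}, 0)$ lies on $\mathcal M^0$, we may write $\mf z_{20} = \mf R_0(\mf u_0, \mf z_0, \sigma)\mf z_0$ for a suitable $\mf z_0$, and $(\mf u_0, \mf z_0, 0)$ solves~\eqref{e:suemme0} with $\sigma = 0$. First I would set $a(y) := \alpha(\mf u_0(y))$ and compute $\dot a$. Using the first two lines of~\eqref{e:suemme0} (with $\sigma=0$) and the fact that, by Hypothesis~\ref{h:jde}, $\nabla\alpha$ is of the form $(\mf 0_h^t, \boldsymbol\xi^t)$ with $\boldsymbol\xi$ a linear combination of the columns of $\mf A_{21} = \mf a_{21}$, we obtain $\dot a = \nabla\alpha(\mf u_0)\cdot \dot{\mf u}_0 = \boldsymbol\xi^t \dot{\mf u}_{0,2} = \boldsymbol\xi^t\,[\alpha(\mf u_0)]\,\mf R_0\mf z_0$, i.e. $\dot a = a(y)\,g(y)$ for a continuous function $g$ (here I have used that $\alpha - \sigma = \alpha$ since $\sigma=0$, and that $\boldsymbol\xi^t\mf R_0$ is bounded). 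In particular $a=0$ is an equilibrium, so by the uniqueness part of the Cauchy--Lipschitz--Picard--Lindelöf theorem, if $a(0)>0$ then $a(y)>0$ for every $y\ge 0$ (and similarly $a(0)<0$ forces $a(y)<0$ throughout), which gives the sign statements.

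For the divergence of the integral, the key point is that $a$ cannot decay to $0$ too fast. From $\dot a = a\,g$ with $g$ bounded, say $|g|\le L$ on the relevant neighbourhood, we get $a(y)\ge a(0)\,e^{-Ly}$ for $y\ge 0$ in the case $a(0)>0$; this is not quite enough by itself to force $\int_0^\infty a\,dy=+\infty$, so one needs a finer argument. The cleaner route is to observe that $\mf v_0 := (\mf u_0, \mf z_{20}, 0)$ is a solution of~\eqref{e:odefast} lying on the center manifold $\mathcal M^0$, hence is globally bounded and confined to a small neighbourhood of $\mf v^\ast$; moreover, since $\mf z_0$ satisfies the third line of~\eqref{e:suemme0}, which is of the form $\dot{\mf z}_0 = a\,\mf\Theta_0\mf z_0$, the quantity $|\mf z_0|$ also cannot grow. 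The relation $\dot a = a g$ integrates to $a(y) = a(0)\exp(\int_0^y g(s)\,ds)$, so $\int_0^\infty a\,dy = +\infty$ unless $\int_0^y g(s)\,ds \to -\infty$. I would rule the latter out by showing that the orbit $\mf v_0(y)$ converges as $y\to+\infty$ to an equilibrium of~\eqref{e:odefast} (this uses that $\mathcal M^0$ is a center manifold and the orbit is bounded and forward-complete, together with a LaSalle-type or monotonicity argument on $\mf z_0$), whence $\dot{\mf u}_{0,2}(y)\to 0$; since $\dot{\mf u}_{0,2} = a\,\mf R_0\mf z_0$ and $\mf R_0$ is nonsingular, this is compatible with $a\to 0$ only if additionally $\mf z_0 \to 0$, and one then reads off that $\int_0^\infty a\,dy$ must diverge because $a$ itself must tend to a nonzero constant times... — more carefully, the argument is that the $\mf u_2$ component $\mf u_{0,2}(y) = \mf u_{0,2}(0) + \int_0^y a(s)\mf R_0(s)\mf z_0(s)\,ds$ converges, so $\int_0^\infty a\mf R_0\mf z_0\,ds$ converges; I would combine this with the explicit form of the slaving/center dynamics to conclude that in fact $a$ does not decay (the orbit spends unbounded ``$y$-time'' near a nondegenerate equilibrium precisely because $\int a\,dy$ is the elapsed ``$x$-time'', which is infinite since the limit $\underline{\mf u}$ is reached only as $x\to+\infty$).

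The main obstacle I anticipate is making the last step fully rigorous: one must show that the change of variables $dx/dy = a(y) = \alpha(\mf u_0(y))$ sends $y\in[0,+\infty)$ onto $x\in[0,+\infty)$, i.e. exactly that $\int_0^\infty a\,dy = +\infty$, and this is essentially equivalent to asserting that a boundary layer on $\mathcal M^0$ reaches its asymptotic state only in infinite ``slow time.'' I would handle this by reversing the viewpoint: the boundary layer in the original variable $x$ solves~\eqref{e:emme0slow}, its $\mf u$-component tends to $\underline{\mf u}$ as $x\to+\infty$ (this is how the center-stable manifold orbits were constructed in~\S~\ref{s:blemme0} and~\S~\ref{ss:fastvaria}), and $x$ and $y$ are related by $dx/dy = a$; since $x$ ranges over all of $[0,+\infty)$ while $a$ stays in a bounded neighbourhood, $y$ must also range over all of $[0,+\infty)$ and the identity $x(y) = \int_0^y a(s)\,ds$ forces $\int_0^\infty a\,ds = \lim_{y\to\infty} x(y) = +\infty$. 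The converse case $a(0)<0$ is identical after replacing $a$ by $-a$, giving $\int_0^\infty a\,dy = -\infty$.
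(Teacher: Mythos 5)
Your proof of the sign statement is fine in spirit, but the argument for the divergence of the integral has a genuine gap, and the gap is exactly where you flagged uncertainty.

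The structural fact you miss is that on $\mathcal M^0$ the derivative of $a := \alpha(\mf u_0)$ is \emph{quadratic} in $a$, not merely linear. You computed $\dot a = \nabla\alpha\cdot\dot{\mf u}_0$ and substituted $\nabla\alpha = (\mf 0_h^t,\boldsymbol\xi^t)$, but Hypothesis~\ref{h:jde} gives this form of $\nabla\alpha$ \emph{only at points where $\alpha = 0$}; away from those points $\partial_{u_1}\alpha$ need not vanish, so the term $\partial_{u_1}\alpha\cdot\dot u_1$ cannot simply be dropped. The correct route is to apply Lemma~\ref{l:dividiamo} to $f := \partial_{u_1}\alpha$ (which vanishes on $\{\alpha=0\}$ by Hypothesis~\ref{h:jde}) to factor $\partial_{u_1}\alpha = \alpha g$; since by~\eqref{e:suemme0} both $\dot u_1$ and $\dot{\mf u}_2$ carry an explicit factor $\alpha$, and since at $\alpha=0$ the remaining $\mf u_2$-contribution $\nabla_{\mf u_2}\alpha^t\mf R_0\mf z_0$ is proportional to $\mf a_{21}^t\mf R_0\mf z_0$, which by~\eqref{e:definiamod} is again $\mathcal O(\alpha)$, one obtains after another application of Lemma~\ref{l:dividiamo} that $\dot a = a^2\,\tilde g$ for a smooth $\tilde g$ bounded on the relevant compact. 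The comparison principle then gives $a(y)\ge a(0)/(m\,y\,a(0)+1)$, a $1/y$ lower bound whose integral diverges. Your $\dot a = a\,g$ estimate only yields $a(y)\ge a(0)e^{-Ly}$, which integrates to a finite value, as you yourself noted.

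Your fallback argument in the last paragraph is circular: you assert that ``$x$ ranges over all of $[0,+\infty)$'' and then read off $\int_0^\infty a\,dy = +\infty$, but the very content of~\eqref{e:diverge2} is that the change of variables $dx/dy = a$ maps $[0,+\infty)$ \emph{onto} $[0,+\infty)$; the boundary-layer orbit on $\mathcal M^0$ is constructed in the fast variable $y$, and one must \emph{prove}, not assume, that the corresponding slow time $x$ is unbounded. Without the quadratic structure of $\dot a$ there is no a priori reason to exclude $\int_0^\infty a\,dy < \infty$, which would correspond to a boundary layer reaching its asymptotic state at finite $x$.
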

\begin{proof}[Proof of Lemma~\ref{l:diverge0}]
We only consider the case where $\alpha (\mf u_0)>0$ at $y=0$, the other case is analogous.  
We apply Lemma~\ref{l:dividiamo} with $a: = \alpha$ and  $f: = \partial_{u_1} \alpha$ (the partial derivative of $\alpha$ with respect to the first component of $\mf u$) and we point out that, owing to Hypothesis~\ref{h:jde}, the hypotheses of Lemma~\ref{l:dividiamo} are satisfied. By applying Lemma~\ref{l:dividiamo} we conclude that $\partial_{u_1} \alpha = \alpha g$ for some suitable function $g$.  

Next, we set $\alpha_0: = \alpha (\mf u_0)$ and we recall that by restricting~\eqref{e:odefast} on $\mathcal M^0$ we obtain~\eqref{e:suemme0}. By using the equality  $\partial_{u_1} \alpha = \alpha g$, using again Hypothesis~\ref{h:jde} and  recalling~\eqref{e:definiamod} and $\sigma=0$ we conclude that $d \alpha_0/ dy = \alpha_0^2 \tilde g$ for some smooth function $\tilde g$ (its precise expression is not important here). We term $m$ a constant satisfying $|\tilde g| \leq m$ (recall that $\mf u_0$ is confined in a neighbourhood of $\mf u^\ast$ by definition of center manifold) and by the comparison principle for ODEs we arrive at 
\be 
\label{e:bdasotto}
   \frac{\alpha_0(0)}{m y \alpha_0(0)+1} \leq \alpha_0(y), 
\eq
which implies that, if $\alpha_0(0)>0$, then $\alpha_0 (y)>0$ for every $y>0$ and furthermore~\eqref{e:diverge2} holds true.  
\end{proof}
By relying on Lemma~\ref{l:diverge0} we establish the following result. 
\begin{lemma}
\label{l:diverge}
Let  $\tilde{\boldsymbol{\psi}_b}$ and $\mf v$ be the same as in the statement of Lemma~\ref{l:fastvaria} and let $\mf u$ denote the first $N$ components of $\mf v$. Fix $\tilde{\mf u}, s_1, \dots, s_k$.  If $\alpha (\boldsymbol{\psi}_b(\tilde{\mf u}, s_1, \dots, s_k))>0$, then $\alpha(\mf u(y))>0$ for every $y\ge 0$ and furthermore 
\be
\label{e:diverge}
\int_0^{+ \infty} \alpha (\mf u(y)) dy = + \infty.  
\eq  
\end{lemma}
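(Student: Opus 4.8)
The plan is to establish the two assertions separately. For the sign of $\alpha(\mf u(\cdot))$ I would argue as in Lemma~\ref{l:diverge0} and Lemma~\ref{l:invariant}. By Hypothesis~\ref{h:jde} the function $\partial_{u_1}\alpha$ vanishes on $\{\alpha=0\}$, so Lemma~\ref{l:dividiamo} provides a smooth scalar function $g$ with $\partial_{u_1}\alpha=\alpha\,g$. Since the last component of $\tilde{\boldsymbol{\psi}}_b$ vanishes (property~A1) of Lemma~\ref{l:fastvaria}) and is constant along solutions of~\eqref{e:odefast}, we have $\sigma\equiv0$ along $\mf v$, so~\eqref{e:cosasono1} gives $\dot u_1=-e_{11}^{-1}\mf a_{21}^t\mf z_2$ and $\dot{\mf u}_2=\alpha\,\mf z_2$. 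Setting $a(y):=\alpha(\mf u(y))$ and differentiating along the orbit,
\be
\dot a=\partial_{u_1}\alpha\,\dot u_1+\nabla_{\mf u_2}\alpha\cdot\dot{\mf u}_2
=\alpha(\mf u)\Big(-e_{11}^{-1} g\,\mf a_{21}^t\mf z_2+\nabla_{\mf u_2}\alpha\cdot\mf z_2\Big)=:c(y)\,a(y),
\eq
where $c$ is continuous and bounded because the orbit stays in a small neighbourhood of $\mf u^\ast$; note that $c(y)=\boldsymbol{\ell}(\mf u(y))\cdot\mf z_2(y)$ for a bounded vector field $\boldsymbol{\ell}$. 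Hence $a(y)=a(0)\exp\!\big(\int_0^y c(s)\,ds\big)$, which is strictly positive for every $y\ge0$ since $a(0)=\alpha(\boldsymbol{\psi}_b(\tilde{\mf u},s_1,\dots,s_k))>0$. This proves the first assertion and records, for later use, the representation of $a$ and the structure of $c$.

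For the divergence of the integral I would bring in the orbit $\mf v_0:=\mf v_0[\tilde{\mf u},s_2,\dots,s_k]$, which lies on $\mathcal M^0$ and which $\mf v$ approaches by~\eqref{e:completedecay}; combining this with the exponential decay estimate~\eqref{e:expdecay2} of the slaving manifold construction gives $|\mf v(y)-\mf v_0(y)|\le Ce^{-\gamma y}$ for suitable $C,\gamma>0$, hence $|\alpha(\mf u(y))-\alpha(\mf u_0(y))|\le C'e^{-\gamma y}$, where $\mf u_0$ denotes the first $N$ components of $\mf v_0$. If $\alpha(\mf u_0(0))>0$, then Lemma~\ref{l:diverge0} gives $\int_0^{+\infty}\alpha(\mf u_0(y))\,dy=+\infty$, so that
\be
\int_0^Y\alpha(\mf u(y))\,dy\ \ge\ \int_0^Y\alpha(\mf u_0(y))\,dy-\int_0^Y C'e^{-\gamma y}\,dy\ \ge\ \int_0^Y\alpha(\mf u_0(y))\,dy-\frac{C'}{\gamma},
\eq
and the right-hand side tends to $+\infty$ as $Y\to+\infty$, which (recalling that $\alpha(\mf u(\cdot))>0$) yields~\eqref{e:diverge}. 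It therefore remains to exclude the possibilities $\alpha(\mf u_0(0))<0$ and $\alpha(\mf u_0(0))=0$.

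If $\alpha(\mf u_0(0))<0$, the quadratic ODE $d\alpha_0/dy=\alpha_0^2\tilde g$ with $|\tilde g|\le m$ from the proof of Lemma~\ref{l:diverge0} (see~\eqref{e:bdasotto}) gives $\alpha_0(y)\le -|\alpha_0(0)|/(1+m|\alpha_0(0)|y)<0$; since the absolute value of this quantity decays only polynomially and eventually dominates $C'e^{-\gamma y}$, we would obtain $\alpha(\mf u(y))<0$ for large $y$, contradicting the first assertion. If $\alpha(\mf u_0(0))=0$, the same ODE forces $\alpha_0\equiv0$, and then by~\eqref{e:suemme0} (with $\sigma\equiv0$) the orbit $\mf v_0$ is a constant equilibrium whose $\mf u$-component is a state $\underline{\mf u}$ with $\alpha(\underline{\mf u})=0$. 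Consequently $|a(y)|=|\alpha(\mf u(y))-\alpha(\underline{\mf u})|\le C'e^{-\gamma y}$, so $\int_0^y c(s)\,ds\le\mathcal O(1)-\gamma y$; on the other hand $c(y)=\boldsymbol{\ell}(\mf u(y))\cdot\mf z_2(y)\to\boldsymbol{\ell}(\underline{\mf u})\cdot\mf z_2^\ast=0$, where $\mf z_2^\ast$ is the $\mf z_2$-component of $\mf v_0$: indeed, by Remark~\ref{r:acca1} the gradient $\nabla\alpha(\underline{\mf u})$ is parallel to $(0,\mf a_{21}^t(\underline{\mf u}))$, while $\mf a_{21}^t(\underline{\mf u})\mf z_2^\ast=0$ by~\eqref{e:definiamod} since $\alpha(\underline{\mf u})=0$. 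Thus $\frac1y\int_0^y c(s)\,ds\to0$, which contradicts $\int_0^y c(s)\,ds\le\mathcal O(1)-\gamma y$. Hence $\alpha(\mf u_0(0))>0$ and the proof is complete.

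I expect the heart of the matter to be the exclusion of the degenerate case $\alpha(\mf u_0(0))=0$: this is exactly where one has to play off the qualitative statement ``$\mf v$ approaches $\mf v_0$'' against the quantitative exponential rate~\eqref{e:expdecay2}, the representation $\dot a=c(y)\,a$ with $c$ bounded and $c(y)\to0$, and the structure of $\nabla\alpha$ near $\{\alpha=0\}$ encoded in Hypothesis~\ref{h:jde} and Remark~\ref{r:acca1}. By contrast, the first assertion and the case $\alpha(\mf u_0(0))>0$ are short, and the case $\alpha(\mf u_0(0))<0$ uses only the polynomial lower bound on $|\alpha(\mf u_0)|$ coming from the quadratic ODE of Lemma~\ref{l:diverge0}. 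All the algebraic identities ($\partial_{u_1}\alpha=\alpha g$, the expression for $c$, the vanishing $\mf a_{21}^t\mf z_2^\ast=0$) are routine consequences of Hypothesis~\ref{h:jde}, Lemma~\ref{l:dividiamo} and~\eqref{e:definiamod}.
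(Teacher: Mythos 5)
Your proof is correct and follows essentially the same strategy as the paper's: you write $\alpha(\mf u)=\alpha(\mf u_0)+(\alpha(\mf u)-\alpha(\mf u_0))$ with the second term decaying exponentially by the slaving-manifold estimate, and then split into the three cases $\alpha(\mf u_0(0))>0$, $<0$, $=0$, concluding via Lemma~\ref{l:diverge0} in the first and deriving contradictions in the other two. The exclusion of $\alpha(\mf u_0(0))<0$ is the same idea stated pointwise rather than at the level of the integral. The one place where you genuinely deviate is the degenerate case $\alpha(\mf u_0(0))=0$: the paper bounds $|\dot a/a|\le\unpo\delta$ using only that $\mf z_2$ stays in $\mathrm B^{N-1}_\delta$, and then shrinks $\delta$ so that $\unpo\delta<2\gamma$; you instead exploit the extra algebraic identity $\boldsymbol\ell(\underline{\mf u})\cdot\mf z_2^\ast=0$ (from Hypothesis~\ref{h:jde} and~\eqref{e:definiamod}) to show $c(y)\to0$, so that $\tfrac1y\int_0^y c\to0$ contradicts the linear-in-$y$ decay of $\log a$. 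This buys you an argument that does not need to further shrink $\delta$ relative to the spectral gap $\gamma$, at the cost of one extra algebraic observation; the paper's version is shorter but ties $\delta$ to $\gamma$. Both routes are valid.
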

\begin{proof}[Proof of Lemma~\ref{l:diverge}]
We assume $\alpha (\boldsymbol{\psi}_{b})>0$: by construction, this means that $\alpha (\mf u) >0$ at $y=0$. By combining Hypothesis~\ref{h:jde} and~\eqref{e:odefast} and recalling~\eqref{e:cosasono1} and that $\sigma=0$ we conclude that $d \mf \alpha (u(y))/dy$ is $0$ when $\alpha (\mf u(y))=0$. This implies that, if $\alpha (\mf u(y))>0$ at $y=0$, then $\alpha (\mf u(y))>0$ for every $y$.   We are left to establish~\eqref{e:diverge}{\comment.}

We recall the definition of $\mf v_0$ given before the statement of Lemma~\ref{l:fastvaria} and we term $\mf u_0$ the first $N$ components of $\mf v_0$. We recall that $\mf v_0$ lies on $\mathcal M^0$, and hence satisfies Lemma~\ref{l:diverge0}. Owing to~\eqref{e:expdecay}, 
\be 
\label{e:convgamma}
   |\alpha (\mf u(y)) - \alpha (\mf u_0(y))| \leq \unpo e^{-2\gamma y },
\eq
for every $y>0$ and for a suitable constant $\gamma>0$. We separately consider the following cases. \\
{\sc Case 1:} $\alpha (\mf u_0 )>0$ at $y=0$. We write $\alpha (\mf u)= \alpha (\mf u_0) + \alpha (\mf u)- \alpha (\mf u_0)$ and by combining~\eqref{e:diverge2} and~\eqref{e:convgamma} we arrive at~\eqref{e:diverge}. \\
{\sc Case 2:} $\alpha (\mf u_0) < 0$ at $y=0$. By combining the decomposition $\alpha (\mf u)= \alpha (\mf u_0) + \alpha (\mf u)- \alpha (\mf u_0)$ with the second part of Lemma~\ref{l:diverge} we conclude that the right hand side of~\eqref{e:diverge} is $- \infty$, which contradicts the fact that $\alpha (\mf u(y))>0$ for every $y \ge 0$. This means that {\sc Case 2} cannot occur. \\
{\sc Case 3:} $\alpha (\mf u_0) = 0$ at $y=0$. Owing to Lemma~\ref{l:diverge}, this implies that $\alpha (\mf u_0)=0$ for every $y \ge 0$ and hence that $\alpha (\mf u)= \alpha (\mf u)-\alpha (\mf u_0)$ for every $y$. Owing to~\eqref{e:convgamma} this implies that $|\alpha (\mf u(y))|\leq \unpo e^{-2\gamma y }$ for every $y \ge 0$ and for some $\gamma >0$. On the other hand, by combining Hypothesis~\ref{h:jde},~\eqref{e:cosasono1} and~\eqref{e:odefast} and recalling Lemma~\ref{l:dividiamo} and that $\mf z_2$ is confined in $\mathrm{B}_\delta^{N-1} (\mf 0_{N-1})$ we conclude that $d \alpha (\mf u (y)) / dy = \unpo \delta \alpha (\mf u(y)).$ By the comparison principle for ODEs, this implies that $\alpha (\mf u (y)) \ge \alpha (\mf u(0)) \exp(\unpo \delta y)$. If $\delta$ is sufficiently small, this contradicts the estimate $|\alpha (\mf u(y))|\leq \unpo e^{-2\gamma y }$, provided that $\alpha (\mf u (0))\neq 0$. Hence, we actually have $\alpha (\mf u (0))=\alpha (\boldsymbol{\psi}_b)=0$, which contradicts the assumption $\alpha (\boldsymbol{\psi}_b)>0$. This means that {\sc Case 3} cannot occur and concludes the proof of Lemma~\ref{l:diverge}. \end{proof}
We can now provide the 
\begin{proof}[Proof of Lemma~\ref{l:back}] We fix $\tilde{\mf u}, s_1, \dots, s_k$ and, as in the statement of  Lemma~\ref{l:fastvaria}, we term $\mf v$ the solution of the Cauchy problem obtained by coupling~\eqref{e:odefast} with the initial condition 
     $\mf v(0) = \tilde{\boldsymbol{\psi}}_b (\tilde{\mf u}, s_1, \dots, s_k)$.  We term $\mf u$ the first $N$ components of $\mf v$.  We define the function $x : [0, + \infty[ \to \R$ by setting  
\be \label{e:changev}
   x (y) : = \int_0^y \alpha (\mf u(z)) dz. 
\eq
Assume that $\alpha(\boldsymbol{\psi}_b(\tilde{\mf u}, s_1, \dots, s_k))>0$: owing to Lemma~\ref{l:diverge}, this implies that $x$ is an invertible  
change of variables from $[0, + \infty[$ onto $[0, + \infty[$. We term $\eta$ its inverse. The function $\mf v \circ \eta$ is a solution of~\eqref{e:odeslow} and $\alpha\circ \mf u \circ \eta (x)>0$ for every $x>0$ owing to Lemma~\ref{l:diverge}. The see that $ \mf u \circ \eta$ satisfies~\eqref{e:2bdproblem4} we recall~\eqref{e:completedecay} and we point out that by definition (see the discussion before the statement of Lemma~\ref{l:fastvaria}) the first $N$ components of the function $\mf v_0$ are a function $\mf u$ satisfying~\eqref{e:2bdproblem4}. This concludes the proof of Lemma~\ref{l:back}. 
\end{proof}
\subsection{Application to the Navier-Stokes and MHD equations} \label{ss:6:nsmhd}
\subsubsection{Navier-Stokes equations} \label{sss:6:ns} We recall the discussion in~\S~\ref{ss:h:ns} and~\S~\ref{sss:4:ns} and that $k=2$ and we conclude that that $\boldsymbol{\psi}_b$ depends on $(\tilde{\mf u}, s_1, s_2)$. Concerning the analysis in~\S~\ref{ss:back}, recall that the function $\alpha$ is $\alpha (\rho, u, \theta)=u$.   
\subsubsection{MHD equations with $\eta>0$} \label{sss:6:mhd} We recall the discussion in~\S~\ref{sss:5:mhd}, that $k=4$ and that $\alpha (\mf u) =u$. We conclude that that $\boldsymbol{\psi}_b$ depends on $(\tilde{\mf u}, s_1, s_2, s_3, s_4)$.
\section{Proof of the main results}
\label{s:brie} In this section we establish the proof of Theorem~\ref{t:main}, Proposition~\ref{p:nc} and Corollary~\ref{c:traccia} in the case where $h=1$, and we refer to~\S~\ref{s:bigk} for the case $h>1$. 
\subsection{Proof overview}
We overview the proof  of Proposition~\ref{p:nc} and Corollary~\ref{c:traccia}. Theorem~\ref{t:main} is actually a corollary of 
Proposition~\ref{p:nc} and we discuss its proof in~\S~\ref{ss:ptm}.  Also, note that we mostly focus on the case where the data $\mf u_i$ and $\mf u_b$ are close to a state $\mf u^\ast$ satisfying~\eqref{e:uast}, which is the case we termed \emph{doubly characteristic}. If~\eqref{e:uast} does not hold,  the analysis is actually simpler, and we discuss it in~\S~\ref{ss:nonchar}. 

The very basic idea of the proof of Theorem~\ref{t:main} and Proposition~\ref{p:nc} is the same as in the paper by Lax~\cite{Lax}. We recall that the key point in~\cite{Lax} is the construction of the $i$-th admissible wave fan curve $\boldsymbol{\phi}_i (\tilde{\mf u})$ through a given state $\tilde{\mf u}$. This curve contains all the states that can be connected to $\tilde{\mf u}$ by either a rarefaction wave or an admissible shock or a contact discontinuity with speed close to the $i$-th eigenvector of $\mf E^{-1} \mf A$. One then composes the curves $\boldsymbol{\phi}_1, \dots, \boldsymbol{\phi}_N$, takes the inverse function and determines a solution of the Cauchy problem obtained by patching together finitely many rarefaction waves, admissible shocks and contact discontinuities. In the present paper we focus on the initial-boundary value problem and we work in the domain $x \in [0, + \infty[$. We use the admissible wave-fan curves $\boldsymbol{\phi}_{k+1}, \dots, \boldsymbol{\phi}_N$ (a more general version than those in~\cite{Lax}), to connect the initial datum $\mf u_i$ to some state $\tilde{\mf u}$. Next, we have to describe the states that can be connected to $\tilde{\mf u}$ by rarefaction waves, admissible shocks and contact discontinuities with speed bigger than, but close to, $0$, and by boundary layers. This is done by using the analysis discussed in the previous section, and in particular Theorem~\ref{t:slowmanifold} and Lemma~\ref{l:fastvaria}. To conclude we have to show that the composite map is invertible. 

The proof is organized as follows. In~\S~\ref{ss:awfc} we recall the construction of the admissible wave fan curve under more general hypotheses than those in~\cite{Lax}. In~\S~\ref{ss:fullbc} we discuss the proof of the main results in the case where we assign $N$ boundary conditions on~\eqref{e:symmetric},  in~\S~\ref{ss:enneuno} we consider the case where we assign $N-1$ boundary conditions. In~\S~\ref{ss:ptm} we provide the proof of Theorem~\ref{t:main}, which follows from the proof of Proposition~\ref{p:nc}. In~\S~\ref{ss:brie:proof} we establish the proof of two technical lemmas. 
In~\S~\ref{ss:nonchar} we discuss the case where~\eqref{e:uast} is violated. 
\subsection{The admissible wave fan curve} \label{ss:awfc}
The admissible wave fan curve $\boldsymbol{\phi}_{i}$ was first constructed in~\cite{Lax} under the assumptions that the system is in conservation form and that the $i$-th characteristic field is either linearly degenerate or genuinely nonlinear. These hypotheses were later relaxed in a series of papers by Liu~\cite{Liu1,Liu2}, Tzavaras~\cite{Tzavaras} and Bianchini~\cite{Bianchini}.  We now recall a result from~\cite{Bianchini}. 
\begin{lemma}
\label{l:Taireg} Under Hypotheses~\ref{h:normal},$\dots$,~\ref{h:jde},
for every $i=(k+1), \dots, N$, there is a sufficiently small constant $\delta>0$ such that the following holds. There is a function $\boldsymbol{\phi}_i: \mathrm{B}^N_\delta(\mf u^\ast) \times ]-\delta, \delta[ \to \R^N$ satisfying the following properties:
\begin{itemize}
\item[i)] For every $(\tilde{\mf u}, s_i) \in \mathrm{B}^N_\delta(\mf u^\ast) \times ]-\delta, \delta[ $, the value $\tilde{\mf u}$ (on the right) is connected to $\boldsymbol{\phi}_i^{s_i} (\tilde{\mf u})$ (on the left) by rarefaction waves and an at most countable number of shocks and contact discontinuities satisfying Liu admissibility condition. 
\item[ii)] The map $\boldsymbol{\phi}_i$ is Lipschitz continuous with respect to both $\tilde{\mf u}$ and $s_i$. It also is differentiable with respect to $s_i$ at any point $(\tilde{\mf u}, 0)$ and furthermore
\be 
\label{e:dercurvei}
   \left. \frac{\partial \boldsymbol{\phi}^{s_i}_i (\tilde{\mf u}) }{\partial s_i} \right|_{s_i=0} = 
   \mf r_i (\tilde{\mf u}), 
   \eq
   where $\mf r_i (\tilde{\mf u})$ is an eigenvector of $\mf E^{-1}\mf A(\tilde{\mf u})$ associated to $\lambda_i (\tilde{\mf u})$.
\end{itemize}
\end{lemma}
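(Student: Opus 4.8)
Since Lemma~\ref{l:Taireg} is essentially a transcription, for the families $i = k+1, \dots, N$, of results of Bianchini~\cite{Bianchini} (which build in turn on~\cite{Lax,Liu1,Liu2,Tzavaras,BianchiniBressan}), the plan is to check that the hypotheses of the present paper put us in the setting of those works and then to import the construction. The key structural observation is that, since $i \geq k+1$, strict hyperbolicity~\eqref{e:sh} gives $\lambda_i(\mf u^\ast) > 0$, whereas $\alpha(\mf u^\ast) = 0$ by~\eqref{e:uast}; hence $\alpha(\mf u) - \sigma$ stays bounded away from $0$ for $(\mf u, \sigma)$ near $(\mf u^\ast, \lambda_i(\mf u^\ast))$, and the travelling-wave ODE~\eqref{e:odeslow} is \emph{non-singular} for the $i$-th family. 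Consequently none of the genuinely new difficulties of the doubly characteristic case --- the $\mathcal M^0$ and $\mathcal M^{00}$ machinery developed from~\S~\ref{ss:emme0} to~\S~\ref{s:complete} --- is needed here.

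First I would reduce the travelling-wave equation~\eqref{e:bltw}, equivalently~\eqref{e:odeslow}, to a low-dimensional center manifold, exactly as in~\cite[\S4]{BianchiniBressan}. Linearising at the equilibrium $(\mf u^\ast, \mf 0_{N-1}, \lambda_i(\mf u^\ast))$ and using the Kawashima--Shizuta condition~\eqref{e:ks}, Hypothesis~\ref{h:normal} (so that $\mf A$ is symmetric and $\mf B_{22}$ positive definite), and strict hyperbolicity, one checks that exactly one eigenvalue of the linearised field crosses the imaginary axis as $\sigma$ passes through $\lambda_i(\mf u^\ast)$, and that this crossing is simple. The Center Manifold Theorem~\cite{Bressan:cm} then yields a manifold on which~\eqref{e:odeslow} reduces to a system with a single scalar ``essential'' variable $z$, of the form
\[
  \dot{\mf u} = \mf r_i(\mf u)\,z + \mathcal{O}(z^2), \qquad \dot z = \theta_i(\mf u, z, \sigma)\,z, \qquad \dot\sigma = 0,
\]
where $\mf r_i$ is an eigenvector of $\mf E^{-1}\mf A$ associated with $\lambda_i$; by the computation used in Lemma~\ref{l:negativo}, $\theta_i$ vanishes at $(\mf u^\ast, 0, \lambda_i(\mf u^\ast))$, satisfies $\partial_\sigma \theta_i < 0$ there, and hence $\theta_i(\mf u, 0, \lambda_i(\mf u)) \equiv 0$.

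Next I would run the fixed-point construction of~\cite[\S3]{Bianchini} (equivalently~\cite[\S14]{BianchiniBressan}): this is the analogue of~\eqref{e:mappaT} but with the \emph{concave} envelope $\mathrm{conc}_{[0,s_i]}f$ in place of the monotone concave envelope, which is the correct choice because for $i \geq k+1$ all wave speeds are close to $\lambda_i(\mf u^\ast)>0$ and hence admissible, so the intervals where $z<0$ encode Liu-admissible shocks and contact discontinuities while the intervals where $z=0$ and $\sigma$ is strictly increasing encode rarefactions. The contraction argument of~\cite[Lemma~3.4]{BianchiniSpinolo:ARMA} (see also~\cite[Lemma~14.3]{BianchiniBressan}) gives a unique small solution $(\mf u, z, \sigma)$, Lipschitz in $(\tilde{\mf u}, s_i)$, and I would set $\boldsymbol{\phi}_i^{s_i}(\tilde{\mf u})$ equal to the $\mf u$-component at $\tau = s_i$; property i) is then~\cite[Theorem~3.2]{Bianchini}. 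For property ii) I would argue as in the derivation of~\eqref{e:dercurvekld}: since $\lambda_i(\tilde{\mf u})>0$, the analogue of Lemma~\ref{l:passlim} gives $z(0)\to 0$ and $\sigma(0)\to\lambda_i(\tilde{\mf u})$ as $s_i\to 0$ (we are always in the analogue of {\sc Case 1} there), so expanding the fixed-point identity to first order in $s_i$ yields $\boldsymbol{\phi}_i^{s_i}(\tilde{\mf u}) = \tilde{\mf u} + s_i\,\mf r_i(\tilde{\mf u}) + \mathcal{O}(s_i^2)$, i.e.\ \eqref{e:dercurvei}.

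The main point requiring care --- and there is no genuinely new analytic difficulty --- is the bookkeeping in the center-manifold reduction: one has to make sure that the singular viscosity $\mf B$ does not generate spurious center directions beyond the single scalar variable $z$, and that Hypotheses~\ref{h:normal}--\ref{h:jde} translate correctly into the hypotheses used in~\cite{Bianchini}. Both verifications proceed exactly as in~\cite[\S4]{BianchiniBressan}, using positivity of $\mf B_{22}$ and the Kawashima--Shizuta condition~\eqref{e:ks}; once they are made explicit the proof reduces to a transcription of the cited results.
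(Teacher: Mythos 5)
Your proposal is correct and matches the paper's approach: the paper gives no proof of Lemma~\ref{l:Taireg}, simply recalling the result from~\cite{Bianchini}, and your argument amounts to verifying that the present hypotheses put us in the setting of that work. The key simplification you correctly identify --- that for $i\geq k+1$ strict hyperbolicity and~\eqref{e:uast} give $\lambda_i(\mf u^\ast)>0=\alpha(\mf u^\ast)$, so $\alpha(\mf u)-\sigma$ is bounded away from zero and~\eqref{e:odeslow} is non-singular for the $i$-th family --- is precisely why the doubly-characteristic machinery of~\S~\ref{ss:emme0}--\S~\ref{s:complete} is not invoked for these families and the classical construction of~\cite{Bianchini,BianchiniBressan} applies directly.
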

Note that, if the system is not in conservation form, the Rankine-Hugoniot conditions are not defined and hence the fact that two given states 
$\mf u^-$ and $\mf u^+ \in \R^N$ can be the left and the right state of a shock or a contact discontinuity \emph{depends} on the underlying viscous mechanism.  The notion of shock curve in the nonconservative case is discussed in~\cite[\S3]{Bianchini}.
\subsection{The case where we assign a full boundary condition} \label{ss:fullbc}
In this paragraph we tackle the case where $\alpha>0$ at the boundary, which implies that $\boldsymbol{\beta}(\mf u, \mf u_b) = \mf u- \mf u_b$ and hence that we assign $N$ boundary conditions at $x=0$. We first establish the proof of Proposition~\ref{p:nc}, next the proof of Corollary~\ref{c:traccia}.  
\subsubsection{Proof of Proposition~\ref{p:nc}}
First, we fix $\mf u_i$ and we consider the map 
\be 
\label{e:brs1}
  \boldsymbol{\zeta}_{tot} (s_1, \dots, s_N): = \boldsymbol{\psi}_b(\boldsymbol{\phi}^{s_{k+1}}_{k+1}\circ\dots \boldsymbol{\phi}^{s_N}_{N} (\mf u_i),  s_1, \dots, s_k).
\eq 
The meaning of the above formula is the following: we are evaluating the function $\boldsymbol{\psi}_b$ (the same as in the statement of Lemma~\ref{l:fastvaria}) at the point 
$(\tilde{\mf u}, s_1, \dots, s_k)$, where $\tilde{\mf u}$ is given by $\boldsymbol{\phi}^{s_{k+1}}_{k+1}\circ\dots \boldsymbol{\phi}^{s_N}_{N} (\mf u_i)$. 
\begin{lemma}
\label{l:brie1} Assume that $\mf u^\ast$ satisfies~\eqref{e:uast}. There is a sufficiently small constant $\delta >0$ such that, if $|\mf u_i - \mf u^\ast| < \delta$, then the map $\boldsymbol{\zeta}_{tot}$ defined as in~\eqref{e:brs1} is locally invertible in a neighborhood of $\mf 0_N$. 
\end{lemma}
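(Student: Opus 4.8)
The plan is to prove local invertibility by means of a Lipschitz inverse function theorem. Since $\boldsymbol{\zeta}_{tot}$ is a composition of maps each of which is Lipschitz continuous and differentiable at the relevant base point (Lemma~\ref{l:Taireg} ii) and Lemma~\ref{l:fastvaria} B)), it is itself Lipschitz on a small ball and differentiable at $\mf 0_N$; moreover, exactly as in the analysis of the admissible wave fan curves in~\cite{BianchiniBressan,Bianchini,BianchiniSpinolo:ARMA}, the remainder $\boldsymbol{\zeta}_{tot}(\mf s)-\boldsymbol{\zeta}_{tot}(\mf 0_N)-\mf M\mf s$ (with $\mf M:=D\boldsymbol{\zeta}_{tot}(\mf 0_N)$) has a Lipschitz constant on $\mathrm{B}^N_\delta(\mf 0_N)$ that tends to $0$ as $\delta\to 0^+$. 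Hence, once we know that $\mf M$ is invertible, a standard contraction argument (solve $\mf s=\mf M^{-1}(\mf u-\mf u_i)-\mf M^{-1}[\boldsymbol{\zeta}_{tot}(\mf s)-\mf u_i-\mf M\mf s]$) yields a local inverse. Note that $\boldsymbol{\zeta}_{tot}(\mf 0_N)=\mf u_i$, because $\boldsymbol{\phi}^{s_i}_i$ reduces to the identity at $s_i=0$ and $\boldsymbol{\psi}_b(\mf u_i,\mf 0_k)=\mf u_i$, the boundary layer associated to $s_1=\dots=s_k=0$ being trivial.

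Next I would compute the columns of $\mf M$. The parameters $s_{k+1},\dots,s_N$ enter $\boldsymbol{\zeta}_{tot}$ only through $\boldsymbol{\phi}^{s_{k+1}}_{k+1}\circ\dots\circ\boldsymbol{\phi}^{s_N}_{N}(\mf u_i)$, which equals $\mf u_i$ at $\mf s=\mf 0_N$; using that $\boldsymbol{\psi}_b(\cdot,\mf 0_k)$ is the identity near $\mf u_i$ and formula~\eqref{e:dercurvei}, the $s_j$-column is $\mf r_j(\mf u_i)$ for $j=k+1,\dots,N$. The parameters $s_1,\dots,s_k$ enter only the second argument of $\boldsymbol{\psi}_b$ (the first argument being frozen at $\mf u_i$), so by property B) of Lemma~\ref{l:fastvaria} the associated columns are: $(1,\mf 0_{N-1})^t$ (column of $s_1$), the vectors $\mf q_2,\dots,\mf q_{k-1}$ of~\eqref{e:definiscot} (columns of $s_2,\dots,s_{k-1}$), and the vector of~\eqref{e:dercurvekld} (column of $s_k$). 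All these columns depend continuously on $\mf u_i$, so it suffices to show that $\mf M$ is invertible at $\mf u_i=\mf u^\ast$ and then conclude by continuity of the determinant.

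The \textbf{crux of the argument}, and the step I expect to be the main obstacle, is the linear independence of these $N$ vectors at $\mf u^\ast$. Since $\lambda_k(\mf u^\ast)=0\ge 0$, the $s_k$-column is evaluated at $(\mf u^\ast,\mf 0_{N-2},0)$ and, by Lemma~\ref{l:autovettore}, is the eigenvector $\mf r_k(\mf u^\ast)$ of $\mf E^{-1}\mf A(\mf u^\ast)$ associated to the eigenvalue $0$; together with $\mf r_{k+1}(\mf u^\ast),\dots,\mf r_N(\mf u^\ast)$ these are $N-k+1$ eigenvectors for distinct eigenvalues, hence linearly independent by Hypothesis~\ref{h:sh}. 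The remaining $k-1$ columns are $(1,\mf 0_{N-1})^t$ and $\mf q_2,\dots,\mf q_{k-1}$; by~\eqref{e:definiscot} the latter are the image of the eigenvectors $\mf p_2,\dots,\mf p_{k-1}$ of $\mf \Theta_0(\mf u^\ast,\mf 0_{N-2},0)=\mf R_0^t\mf A_{22}\mf R_0(\mf u^\ast,\mf 0_{N-2},0)$ (associated to its $k-2$ strictly negative eigenvalues, Lemma~\ref{l:key}) under the injective linear map $\mf w\mapsto(-e_{11}^{-1}\mf d^t\mf w,\ \mf R_0\mf w)$, injectivity being a consequence of the full column rank of $\mf R_0$ (Lemma~\ref{l:slow}). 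It then remains to check that $(1,\mf 0_{N-1})^t$ and $\mf q_2,\dots,\mf q_{k-1}$ are independent modulo $\mathrm{span}\{\mf r_k(\mf u^\ast),\dots,\mf r_N(\mf u^\ast)\}$. This is a finite-dimensional linear algebra computation, of the same flavour as the one performed in~\cite{BianchiniSpinolo:ARMA}: one uses that $\mathrm{Im}\,\mf R_0(\mf u^\ast,\mf 0_{N-2},0)$ is the orthogonal complement of $\mf a_{21}(\mf u^\ast)$ in $\R^{N-1}$ (Lemma~\ref{l:fazero}) together with the block form~\eqref{e:blockae2} of the eigenvalue relation $\mf A(\mf u^\ast)\mf v=\lambda\mf E(\mf u^\ast)\mf v$, in which $a_{11}(\mf u^\ast)=\alpha(\mf u^\ast)e_{11}(\mf u^\ast)=0$ by the doubly-characteristic conditions~\eqref{e:char}, \eqref{e:uast}. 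The delicate point is precisely to exploit these relations at $\mf u^\ast$ to separate the ``fast'' hyperbolic direction $(1,\mf 0_{N-1})^t$ from the range of $\mf R_0$ and from the incoming eigendirections. Once $\mf M$ is invertible at $\mf u^\ast$, Lemma~\ref{l:brie1} follows from the first paragraph.
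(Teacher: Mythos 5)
Your scaffolding matches the paper's proof of Lemma~\ref{l:brie1} exactly: Lipschitz continuity plus differentiability at $\mf 0_N$ (from Lemma~\ref{l:fastvaria}~B) and Lemma~\ref{l:Taireg}~ii)), Clarke's Lipschitz implicit/inverse function theorem, reduction by continuity to the case $\mf u_i=\mf u^\ast$, and the correct identification of the $N$ columns of the Jacobian. The first half of your sketch of the linear independence (the independence of $\mf r_k,\dots,\mf r_N$ among themselves and the independence of $\mf q_2,\dots,\mf q_{k-1}$ among themselves via injectivity of $\mf w\mapsto(-e_{11}^{-1}\mf d^t\mf w,\mf R_0\mf w)$) also matches the paper's Step~1 in the proof of Lemma~\ref{l:invertiamo}.

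The gap is in the step you yourself flag as ``the delicate point'': showing that $\mathrm{span}\{\mf r_k,\dots,\mf r_N\}$ intersects $\mathrm{span}\{(1,\mf 0_{N-1})^t,\mf q_2,\dots,\mf q_{k-1}\}$ trivially. The ingredients you point to — that $\mathrm{Im}\,\mf R_0$ is the orthogonal complement of $\mf a_{21}$, and the block form~\eqref{e:blockae2} with $a_{11}(\mf u^\ast)=0$ — are true and are used, but they do not by themselves close the argument. The paper's mechanism is a sign/quadratic-form argument: one takes $\boldsymbol{\xi}$ in the intersection and computes $\boldsymbol{\xi}^t\mf A\boldsymbol{\xi}$ in two ways. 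Expanding in the eigenbasis $\mf r_k,\dots,\mf r_N$ and using the $\mf E$-orthogonality~\eqref{e:ortho} gives $\boldsymbol{\xi}^t\mf A\boldsymbol{\xi}=\sum_{j\ge k}b_j^2\lambda_j|\mf r_j|^2\ge 0$; expanding in the other basis and using $\mf a_{21}^t\mf R_0=\mf 0^t$, $a_{11}=0$ and $\mf p_i^t\mf p_j=0$ gives $\boldsymbol{\xi}^t\mf A\boldsymbol{\xi}=\sum_{i=2}^{k-1}a_i^2\varsigma_i|\mf p_i|^2\le 0$ with $\varsigma_i<0$. Comparing forces $a_2=\dots=a_{k-1}=0=b_{k+1}=\dots=b_N$, and the residual relation $a_1(1,\mf 0_{N-1})^t=b_k\mf r_k$ is killed by the Kawashima--Shizuta condition~\eqref{e:ks} (a nonzero vector of that form would be simultaneously a kernel element of $\mf B$ and an eigenvector of $\mf E^{-1}\mf A$). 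Your sketch mentions neither the sign argument via $\boldsymbol{\xi}^t\mf A\boldsymbol{\xi}$ nor the use of the Kawashima--Shizuta condition for the last coefficient; without these, the ``finite-dimensional linear algebra computation'' does not go through, so this part of the proof is genuinely missing.
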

The proof of Lemma~\ref{l:brie1} is provided in~\S~\ref{ss:brie:proof}.  Owing to Lemma~\ref{l:brie1}, the equation $\boldsymbol{\zeta}_{tot} (s_1, \dots, s_N) = \mf u_b$ uniquely determines the values of $(s_1, \dots, s_N)$ provided $|\mf u_i - \mf u_b|$ is sufficiently small. We now exhibit $\mf u$ satisfying properties i),$\dots$ iii) in the statement of Proposition~\ref{p:nc}. We set $\tilde{\mf u}: = \boldsymbol{\phi}^{s_{k+1}}_{k+1}\circ\dots \boldsymbol{\phi}^{s_N}_{N} (\mf u_i)$ and we point out that, owing to Lemma~\ref{l:Taireg}, $\tilde{\mf u}$ (on the left) and $\mf u_i$ (on the right) are joined by rarefactions waves and Liu admissible shocks and contact discontinuities.  In particular, if the system is in conservation form and every vector field is either genuinely nonlinear or linearly degenerate one can use Lax's construction~\cite{Lax}. Assume for a moment that we can indeed use the same construction as in~\cite{Lax} and consider the Cauchy problem between $\mf u_i$ (on the right) and $\tilde{\mf u}$ (on the left): there is a value $\tilde{\lambda}>0$ close to $\lambda_{k+1} (\tilde{\mf u})$ such that the solution of the Cauchy problem is identically equal to $\tilde{\mf u}$ for $x < \tilde{\lambda} t$. The solution of the Riemann problem with data $\mf u_i$ (on the right) and $\tilde{\mf u}$ (on the left) in the general case is constructed in~\cite{Bianchini,BianchiniBressan} and it is identically equal to $\tilde{\mf u}$ for $x < \tilde{\lambda} t$.

    We now define the function $\mf u$ satisfying the statement of Proposition~\ref{p:nc} on the set $x > \tilde{\lambda} t$: on this set we define it as the solution of the Riemann problem with data $\mf u_i$ (on the right) and $\tilde{\mf u}$ (on the left) and we refer to~\cite{Bianchini,BianchiniBressan,Lax} for the explicit expression (see in particular~\cite[formula (14.7)]{BianchiniBressan}).  Next, we consider the value $s_k$ and we complete the proof of Proposition~\ref{p:nc}. 

We first assume that the $k$-th vector field is linearly degenerate, namely we assume conditions I) and II) at the beginning of~\S~\ref{ss:ld}. We separately consider the following cases:
\begin{itemize}
\item if $\lambda_k (\tilde{\mf u}) >0$, then we recall (see~\S~\ref{ss:proofsmld}) that $\underline{\mf u}= \boldsymbol{\zeta}_k (\tilde{\mf u}, s_k)$ and we set $\bar{\mf u}:= \underline{\mf u} = \boldsymbol{\zeta}_k (\tilde{\mf u}, s_k)$. 
We set 
\be 
\label{e:brsld0}
     \mf u(t, x) = \left\{
     \begin{array}{ll}
     \text{see~\cite{Bianchini,BianchiniBressan,Lax}} & x > \tilde{\lambda} t{,} \\
     \tilde{\mf u} & \lambda_k (\tilde{\mf u}) t < x < \tilde{\lambda} t{,} \\ 
     \bar{\mf u}  & 0 <  x < \lambda_k (\tilde{\mf u}) t{,}  \\
     \end{array}
     \right.
\eq 
and we now show that the above function satisfies  properties i), ii) and iii) in the statement of Proposition~\ref{p:nc}. We recall the proof of Lemma~\ref{l:curvekld} and we conclude that property Ai) in the statement of Lemma~\ref{l:curvekld} is satisfied, which in turn implies (recalling the definition of $\tilde{\mf u})$ that properties i) and ii)  in the statement of Proposition~\ref{p:nc} are satisfied. Property iii)$_1$ is trivial since $\underline{\mf u}= \bar{\mf u}$. To establish property iii)$_2$ we use~\eqref{e:2bdproblem4} and we recall that we have imposed 
$\boldsymbol{\psi}_b (\tilde{\mf u}, s_1, \dots, s_k)=\mf u_b$. 
\item if $\lambda_k (\tilde{\mf u} )= 0$, then we recall (see~\S~\ref{ss:proofsmld}) that $\underline{\mf u}= \boldsymbol{\zeta}_k (\tilde{\mf u}, s_k)$ and in this case we set $\bar{\mf u}: = \tilde{\mf u}$. We set 
\be 
\label{e:brsld1}
     \mf u(t, x) = \left\{
     \begin{array}{ll}
     \text{see~\cite{Bianchini,BianchiniBressan,Lax}} & x > \tilde{\lambda} t{,} \\
     \tilde{\mf u} & 0 < x < \tilde{\lambda} t{.} \\ 
     \end{array}
     \right.
\eq 
Note that property iii)$_1$ in the statement of Theorem~\ref{t:main} is satisfied because property Ai) in the statement of Lemma~\ref{l:curvekld} is satisfied, which implies (since $\lambda_k (\tilde{\mf u}) = 0$ and the field is linearly degenerate) that there a Liu admissible contact discontinuity joining $\bar{\mf u} = \tilde{\mf u}$
(on the right) with $\underline{\mf u}= \boldsymbol{\zeta}_k (\tilde{\mf u}, s_k)$ (on the left). We can establish the other properties of Proposition~\ref{p:nc} by arguing as in the previous case. 
\item if $\lambda_k (\tilde{\mf u} )< 0$, then we recall (see~\S~\ref{ss:proofsmld}) that $\underline{\mf u}= \tilde{\mf u}$ and we set $\bar{\mf u}: = \underline{\mf u}= \tilde{\mf u}$ and define $\mf u$ as in~\eqref{e:brsld1}. Recall that property Aii) in the statement of Lemma~\ref{l:curvekld} is satisfied. By the definition of $\tilde{\mf u}$, properties i) and ii) in the statement of Proposition~\ref{p:nc} are satisfied. Property iii)$_1$ is trivial since $\underline{\mf u}= \bar{\mf u}$. To establish property iii)$_2$ we use~\eqref{e:2bdproblem4} and we recall that we have imposed 
$\boldsymbol{\psi}_b (\tilde{\mf u}, s_1, \dots, s_k)=\mf u_b$. 
\end{itemize} 
We now consider the general case, when assumptions I) and II) at the beginning of~\S~\ref{ss:ld} are not necessarily satisfied.  We assume for simplicity $s_k>0$ (the case $s_k <0$ is analogous) and we recall that the function $f$ is defined as in~\eqref{e:cosaeffe} and that $\underline{\mf u}= \mf u_k(\underline \tau)$, where $\mf u_k$ satisfies~\eqref{e:mappaT} and $\underline \tau$ is as in~\eqref{e:massimo}. We let $\sigma$ be the same as in~\eqref{e:mappaT} and we recall that $\sigma$ is non-negative and non-increasing. We define the value $\bar \tau$ by setting 
\be 
\label{e:taubarra}
   \bar \tau : =\left\{ \begin{array}{ll}
    \min\{ \tau \in [0, s_k]: \;  \sigma (\tau)= 0 \} & \sigma (s_k) =0{,} \\
    s_k & \sigma(s_k) > 0{.} \\
    \end{array}
    \right.
\eq 
See also Figure~\ref{f:mon}. 
We now define $\mf u$ by setting 
\be 
\label{e:brs2}
     \mf u(t, x) = \left\{
     \begin{array}{ll}
     \text{see~\cite{Bianchini,BianchiniBressan,Lax}} & x > \tilde{\lambda} t{,} \\
     \tilde{\mf u} & \sigma(0) t < x < \tilde{\lambda} t{,} \\ 
     \mf u_k (\tau) & \sigma (\tau) t = x, \; 0 \leq \tau \leq \bar \tau, \\
     \end{array}
     \right.
\eq 
where as before $\mf u_k$ satisfies~\eqref{e:mappaT}. Note that the trace of the function $\mf u$ in~\eqref{e:brs2} on the $t$ axis is $\bar{\mf u}: = \mf u_k (\bar \tau)$. We now verify that the function $\mf u$ in~\eqref{e:brs2} satisfies the properties in the statement of Theorem~\ref{t:main}. We need the following result (where we denote by $f$ the same function as in~\eqref{e:cosaeffe} and by $\underline \tau$ the same value as in~\eqref{e:massimo})
\begin{lemma}
\label{l:taubarra} We have $\bar \tau \leq \underline{\tau}$ and $\monconc_{[0, s_k]}f (\bar \tau) = f (\bar \tau) = f (\underline \tau)=m$, where $m$ is the maximum of $f$ as in~\eqref{e:massimo}.
\end{lemma}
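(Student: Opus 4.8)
The plan is to read off everything from the structure of the monotone concave envelope $\monconc_{[0,s_k]}f$ together with the definition~\eqref{e:taubarra} of $\bar\tau$. First I would record the elementary facts about $\sigma$: since $\monconc_{[0,s_k]}f$ is concave and non-decreasing, its derivative $\sigma=(\monconc_{[0,s_k]}f)'$ is non-negative and non-increasing. Hence the zero set $\{\tau\in[0,s_k]:\sigma(\tau)=0\}$, when non-empty, is an interval $[\bar\tau,s_k]$; when it is empty we are in the second alternative of~\eqref{e:taubarra} and $\bar\tau=s_k$. In either case $\monconc_{[0,s_k]}f$ is constant on $[\bar\tau,s_k]$, and since it is non-decreasing with maximum $m$ (as observed right after~\eqref{e:stasopra}), that constant value is $m$; in particular $\monconc_{[0,s_k]}f(\bar\tau)=m$.

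Next I would establish $\bar\tau\le\underline\tau$. If this failed I would pick $\tau'\in(\underline\tau,\bar\tau)$; Lemma~\ref{l:concave} gives $\monconc_{[0,s_k]}f(\tau')=m$, so $\tau'$ lies in the interior of the flat part of the envelope and $\sigma(\tau')=0$, contradicting $\tau'<\bar\tau$ and the minimality in~\eqref{e:taubarra}. Thus $\bar\tau\in[0,\underline\tau]$, and by Lemma~\ref{l:concave} the envelope coincides on $[0,\bar\tau]$ with the ordinary concave envelope $\mathrm{conc}_{[0,s_k]}f$.

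The core step is then to show $\monconc_{[0,s_k]}f(\bar\tau)=f(\bar\tau)$. If $\bar\tau=\underline\tau$ (which in particular covers $\bar\tau=s_k$, since $\sigma(s_k)>0$ would force $\monconc_{[0,s_k]}f$ to be strictly increasing up to $s_k$ and hence $\underline\tau=s_k$) the equality is immediate, because $\monconc_{[0,s_k]}f(\underline\tau)=m=f(\underline\tau)$ by~\eqref{e:massimo} and Lemma~\ref{l:concave}. If $\bar\tau=0<\underline\tau$ I would invoke the endpoint property of the concave envelope, $\mathrm{conc}_{[0,s_k]}f(0)=f(0)$ (proved by exhibiting, for each $\eps>0$, a non-decreasing affine $\ell\ge f$ with $\ell(0)\le f(0)+\eps$, and symmetrically at $s_k$). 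In the remaining case $0<\bar\tau<\underline\tau$, suppose for contradiction $\monconc_{[0,s_k]}f(\bar\tau)>f(\bar\tau)$; then $\bar\tau$ lies in a maximal relatively open interval $(p,q)$ of the open set $\{\monconc_{[0,s_k]}f>f\}$, on which the concave envelope is affine — and $q\le\underline\tau$, since $\monconc_{[0,s_k]}f(\underline\tau)=m=f(\underline\tau)$ keeps $\underline\tau$ out of this set. On $(p,q)$ the slope $\sigma$ is constant, equal to $\sigma(\bar\tau)=0$, so $\sigma\equiv0$ there; any $\tau\in(p,\bar\tau)$ then satisfies $\sigma(\tau)=0$ with $\tau<\bar\tau$, contradicting~\eqref{e:taubarra}. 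Hence $\monconc_{[0,s_k]}f(\bar\tau)=f(\bar\tau)$, which together with the first paragraph gives $f(\bar\tau)=m$; combined with $f(\underline\tau)=m$ from~\eqref{e:massimo} this is exactly the assertion of the lemma.

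The main obstacle I anticipate is this last step: one must carefully rule out that $\bar\tau$ sits in the interior of an affine piece of the envelope, and treat the borderline configurations $\bar\tau=0$ and $\bar\tau=\underline\tau$ separately. The clean way to organize it is to use the characterization of the monotone concave envelope through its contact set with $f$ (it is affine off that set), in combination with the fact that, by~\eqref{e:taubarra}, $\bar\tau$ is the very first point at which the slope $\sigma$ vanishes. Everything else is bookkeeping with the monotonicity and concavity of $\monconc_{[0,s_k]}f$ already recalled in~\eqref{e:stasopra} and Lemma~\ref{l:concave}.
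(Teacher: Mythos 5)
Your proof is correct and follows essentially the same approach as the paper's: both rest on Lemma~\ref{l:concave}, on the standard fact that the concave envelope is affine on components of $\{\monconc_{[0,s_k]}f>f\}$ (which forces $\sigma$ to be locally constant there), and on the minimality of $\bar\tau$ in~\eqref{e:taubarra}. The only deviations are presentational — you establish $\monconc_{[0,s_k]}f(\bar\tau)=m$ before $\bar\tau\le\underline\tau$, and in the subcase $0<\bar\tau<\underline\tau$ you place $\bar\tau$ directly in an affine piece of the envelope rather than extracting a sequence $\tau_n\uparrow\bar\tau$ of contact points as the paper does, but the mechanism and the case split ($\bar\tau=\underline\tau$, $\bar\tau=0$, $0<\bar\tau<\underline\tau$) are the same.
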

We postpone the proof of Lemma~\ref{l:taubarra} and we point out that, owing to the inequality  
$\bar \tau \leq \underline{\tau}$ and to Lemma~\ref{l:concave}, $\monconc_{[0, s_k]}f = \mathrm{conc}_{[0, s_k]}f$ on $[0, \bar \tau]$. This implies that we can apply the analysis in~\cite{Bianchini} and conclude that $\bar{\mf u} = \mf u_k(\bar \tau)$ (on the left) is connected to $\tilde{\mf u}$ (on the right) by  rarefaction waves and a sequence of Liu admissible shocks or contact discontinuities  with strictly positive speed and that the function $\mf u$ in~\eqref{e:brs2} satisfies properties i) and ii) in the statement of Proposition~\ref{p:nc}. 

To establish property iii), we separately consider the the cases $\bar \tau =s_k$ and $\bar \tau < s_k$. Assume $\bar \tau=s_k$: since $\bar \tau \leq \underline{\tau}$ by Lemma~\ref{l:taubarra}, we can infer from $\bar \tau=s_k$ that $\bar \tau = \underline \tau$ and hence $\bar{\mf u} = \underline{\mf u}$, which implies that property iii)$_1$ is trivially satisfied. To establish property iii)$_2$ we use~\eqref{e:2bdproblem4}. 

If $\bar \tau<s_k$, we recall that $f(\bar \tau) = f(\underline \tau) =m$ by Lemma~\ref{l:taubarra} and that $\monconc_{[0, s_k]} f \equiv 
\mathrm{conc}_{[0, s_k]} f$ on  $[0, \underline \tau]$ by Lemma~\ref{l:concave}: by applying again the analysis in~\cite{Bianchini} we conclude that $\bar{\mf u}$ (on the right) and $\underline{\mf u}$ (on the left) are connected by a  Liu admissible shock or contact discontinuity. To establish property iii)$_2$ in the statement of Proposition~\ref{p:nc} we use~\eqref{e:2bdproblem4}. This concludes the proof of Proposition~\ref{p:nc}. \begin{proof}[Proof of Lemma~\ref{l:taubarra}]
To establish the inequality $\bar \tau \leq \underline \tau$, we recall that by Lemma~\ref{l:concave} $\mathrm{moncon}_{[0, s_k]}f \equiv m$ on $[\underline \tau, s_k]$, which in turn implies that 
$\sigma \equiv 0$  on $[\underline \tau, s_k]$ because by definition $\sigma$ is the derivative of $\mathrm{moncon}_{[0, s_k]}f \equiv m$. By the definition of $\bar \tau$, this implies that $\bar \tau \leq \underline \tau$. 

We now establish the equalities $\monconc_{[0, s_k]}f (\bar \tau) = f (\bar \tau) = f (\underline \tau)=m$ by separately considering the following cases. \\
{\sc Case 1:} $\bar \tau=s_k$. This implies $\bar \tau=\underline \tau$. By~\eqref{l:concave} and the definition of $\underline \tau$ (see~\eqref{e:massimo}) we have  $\monconc_{[0, s_k]}f (\underline \tau) = f (\underline \tau)=m$ and hence we conclude that 
$\monconc_{[0, s_k]}f (\bar \tau) = f (\bar \tau) = f (\underline \tau)=m$, which concludes the proof of the lemma. \\
{\sc Case 2:} $\bar \tau< s_k$. Since $\sigma$ is a non-increasing function, then $\sigma \equiv0$ on $[\bar \tau, s_k]$ and in particular $\sigma \equiv 0$ on $[\bar \tau, \underline \tau]$, which implies that $ \monconc_{[0, s_k]}f (\bar \tau)= \monconc_{[0, s_k]}f (\underline \tau)$. On the other hand, by~\eqref{l:concave} and the definition of $\underline \tau$ (see~\eqref{e:massimo}) we have
 $ \monconc_{[0, s_k]}f (\underline \tau)=f(\underline \tau)=m$. To conclude we are left to establish the equality $f(\bar \tau) = \monconc_{[0, s_k]}f (\bar \tau)$. We separately consider the cases $\bar \tau=0$ and $\bar \tau >0$. \\
{\sc Case 2A:} $\bar \tau >0$. We recall that by Lemma~\ref{l:concave} and the inequality $\bar \tau \leq \underline \tau$ we have $\monconc_{[0, s_k]}f \equiv \mathrm{conc}_{[0, s_k]}f$ on $[0, \bar \tau]$. We claim that there is a strictly increasing sequence $\{ \tau_n \}$ such that $\tau_n \uparrow \bar \tau$
as $n \to \infty$ and $f(\tau_n) =  \mathrm{conc}_{[0, s_k]}f (\tau_n)$. If this were not the case, then there would be $\tau_0< \bar \tau$ such that $f < \mathrm{conc}_{[0, s_k]}f$ on $]\tau_0, \bar \tau[$. This would imply that $\sigma$ is constant on $]\tau_0, \bar \tau[$. Since $\sigma$ is a continuous function and $\sigma (\bar \tau)=0$, then we would have $\sigma \equiv 0$ on $]\tau_0, \bar \tau[$, and this would contradict the definition of $\bar \tau$. Hence the sequence  $\{ \tau_n \}$ exists and by taking the limit in the equality $f(\tau_n) =  \mathrm{conc}_{[0, s_k]}f (\tau_n) = \mathrm{monconc}_{[0, s_k]}f (\tau_n) $ we establish the desired equality. 
 \\
 {\sc Case 2B:} $\bar \tau =0$. Since $f(0)=0$ by the definition~\eqref{e:cosaeffe} of $f$, then  $\mathrm{conc}_{[0, s_k]}f(0)=0$. 
Since $\monconc_{[0, s_k]}f \equiv \mathrm{conc}_{[0, s_k]}f$ on $[0, \underline \tau]$, then $\monconc_{[0, s_k]}f (0) = f(0) =0$ and this concludes the proof of the lemma. 
\end{proof}
\subsubsection{Proof of Corollary~\ref{c:traccia}}
We recall that in~\S~\ref{ss:fullbc} we are focusing on the case where $\alpha (\mf u_b) > 0$. We first show that $\alpha (\underline{ \mf u}) \ge 0$. To see this 
we recall Lemma~\ref{l:back} and that $\mf u_b = \boldsymbol{\psi}_b (\tilde{\mf u}, s_1, \dots, s_k)$. Owing to~\eqref{e:2bdproblem4} and to the continuity of $\alpha$, we conclude that $\alpha (\underline{\mf u}) \ge 0$.  

We are left to prove that, if $\alpha (\underline{\mf u}) \ge 0$, then $\alpha (\bar{\mf u}) \ge 0$. We first consider the linearly degenerate case. More precisely, we assume that properties I) and II) at the beginning of~\S~\ref{ss:ld} are satisfied. We recall that by the analysis at the previous paragraph if either $\lambda_k(\tilde{\mf u})>0$ or $\lambda_k (\tilde{\mf u})<0$, then $\underline{\mf u}=\bar{\mf u}$ 
and hence the property $\alpha (\bar{\mf u}) \ge 0$ is trivial. If $\lambda_k (\tilde{\mf u})=0$, we recall 
that $\underline{\mf u}$ and $\bar{\mf u}$ both lie on the curve $\boldsymbol{\zeta}_k(\tilde{\mf u}, s_k)$, which is defined as in proof of Lemma~\ref{l:curvekld}, and hence the implication $\alpha (\underline{\mf u}) \ge 0 \implies \alpha (\bar{\mf u}) \ge 0$
follows from property C) in the statement of Lemma~\ref{l:curvekld}.

We now establish the implication $\alpha (\underline{\mf u}) \ge 0 \implies \alpha (\bar{\mf u}) \ge 0$ in the general case. We recall that $\underline{\mf u} = \mf u_k (\underline \tau)$ and that $\bar{\mf u} = \mf u_k (\bar \tau)$ for suitable values $\underline \tau$ and $\bar \tau$ and for $\mf u_k$ satisfying~\eqref{e:mappaT}. To establish the implication it suffices to show that $\alpha \circ \mf u_k$ cannot change sign on $[0, s_k]$, and to prove this it suffices to show that, if $\alpha \circ \mf u_k =0$, then $\partial (\alpha \circ \mf u_k)/ \partial \tau =0$. To see this, we use again~\eqref{e:jde},~\eqref{e:definiamod} and the structure of the derivative $\partial \mf u_k/ \partial \tau$, which comes from~\eqref{e:mappaT}. 
\subsection{The case where we assign $N-1$ boundary conditions}\label{ss:enneuno}
In this paragraph we establish the proof of Proposition~\ref{p:nc} and Corollary~\ref{c:traccia} in the case where $\alpha\leq 0$ at the boundary, which implies that $\boldsymbol{\beta}(\mf u, \mf u_b) = \mf 0_N$ if and only if $\mf u_2- \mf u_{2b}= \mf 0_{N-1}$ and hence that we assign $N-1$ boundary conditions at $x=0$ on~\eqref{e:symmetric}. 

First, we fix $\mf u_i$ and we term $\mf p_{\mf u_2}: \R^N \to \R^{N-1}$ the projection 
$\mf p_{\mf u_2}(u_1, \mf u_2)= \mf u_2$ and we consider the map 
\be 
\label{e:brs3}
   \boldsymbol{\zeta}_{par} (s_2, \dots, s_N): = \mf p_{\mf u_2} \circ \boldsymbol{\psi}_{sl}(\boldsymbol{\phi}^{s_{k+1}}_{k+1}\circ\dots \boldsymbol{\phi}^{s_N}_{N} (\mf u_i),  s_2, \dots, s_k).
\eq 
The meaning of the above formula is the following: we are evaluating the function $\boldsymbol{\psi}_{sl}$ (the same as in the statement of Theorem~\ref{t:slowmanifold}) at the point 
$(\tilde{\mf u}, s_2, \dots, s_k)$, where $\tilde{\mf u}$ is given by $\boldsymbol{\phi}^{s_{k+1}}_{k+1}\circ\dots \boldsymbol{\phi}^{s_N}_{N} (\mf u_i)$. 
\begin{lemma}
\label{l:brie2} Assume that $\mf u^\ast$ satisfies~\eqref{e:uast}. There is a sufficiently small constant $\delta >0$ such that, if $|\mf u_i - \mf u^\ast| < \delta$, then the map $\boldsymbol{\zeta}_{par}$ defined as in~\eqref{e:brs1} is locally invertible in a neighborhood of  $\mf 0_{N-1}$. 
\end{lemma}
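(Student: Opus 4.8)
The plan is to follow the same strategy as in the proof of Lemma~\ref{l:brie1}: show that $\boldsymbol{\zeta}_{par}$ is Lipschitz continuous, differentiable at $\mf 0_{N-1}$, and that its differential at $\mf 0_{N-1}$ is invertible; local invertibility then follows from the classical inverse function theorem for Lipschitz maps that are differentiable at a point, exactly as in~\cite{Lax,BianchiniBressan}. Lipschitz continuity of $\boldsymbol{\zeta}_{par}$ is immediate, since it is the composition of the Lipschitz maps $\boldsymbol{\phi}_{k+1}, \dots, \boldsymbol{\phi}_N$ (Lemma~\ref{l:Taireg}), of $\boldsymbol{\psi}_{sl}$ (Theorem~\ref{t:slowmanifold}) and of the linear projection $\mf p_{\mf u_2}$; differentiability at $\mf 0_{N-1}$, together with the chain rule at the base point, follows from the argument in~\cite[Lemma 14.3]{BianchiniBressan}, which is tailored precisely to compositions of Lipschitz maps that are differentiable at the relevant base points.

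Next I would identify the differential $D\boldsymbol{\zeta}_{par}(\mf 0_{N-1})$. Setting $s_2 = \dots = s_k = 0$ in Theorem~\ref{t:slowmanifold} produces the trivial boundary layer, so $\boldsymbol{\psi}_{sl}(\tilde{\mf u}, \mf 0_{k-1}) = \tilde{\mf u}$ for every $\tilde{\mf u}$ in a neighbourhood of $\mf u^\ast$; in particular $\boldsymbol{\zeta}_{par}(\mf 0_{N-1}) = \mf p_{\mf u_2}(\mf u_i)$ and the derivative of $\boldsymbol{\psi}_{sl}$ with respect to its first argument, evaluated at $(\mf u_i, \mf 0_{k-1})$, is the identity. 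Combining this with~\eqref{e:dercurvei} and with property B) of Theorem~\ref{t:slowmanifold}, the columns of $D\boldsymbol{\zeta}_{par}(\mf 0_{N-1})$ are the images under $\mf p_{\mf u_2}$ of the following $N-1$ vectors of $\R^N$: the vector on the right-hand side of~\eqref{e:dercurvekld} evaluated at $\tilde{\mf u} = \mf u_i$, the vectors $\mf q_2, \dots, \mf q_{k-1}$ of~\eqref{e:definiscot}, and the eigenvectors $\mf r_{k+1}(\mf u_i), \dots, \mf r_N(\mf u_i)$ of $\mf E^{-1}\mf A(\mf u_i)$ associated with the strictly positive eigenvalues $\lambda_{k+1}, \dots, \lambda_N$.

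It remains to prove that these $N-1$ vectors of $\R^{N-1}$ are linearly independent, and here I would reduce the statement to Lemma~\ref{l:brie1}. By property B) of Lemma~\ref{l:fastvaria} the differential of $\boldsymbol{\zeta}_{tot}$ at $\mf 0_N$ has exactly the $N-1$ vectors above as columns, together with the one extra column $(1, \mf 0_{N-1})^t$, and Lemma~\ref{l:brie1} asserts that this $N\times N$ matrix is invertible, i.e. that the corresponding $N$ vectors form a basis of $\R^N$. Consequently, if a linear combination of the $\mf p_{\mf u_2}$-projections of the first $N-1$ of these vectors vanishes, then the same linear combination of the vectors themselves lies in $\mathrm{span}\{(1, \mf 0_{N-1})^t\}$, and the basis property forces all the coefficients to be zero; hence $D\boldsymbol{\zeta}_{par}(\mf 0_{N-1})$ is invertible and the proof concludes by the Lipschitz inverse function theorem. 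The main obstacle is therefore the one already underlying Lemma~\ref{l:brie1}, namely showing that the characteristic direction~\eqref{e:dercurvekld}, the slow stable directions $\mf q_2, \dots, \mf q_{k-1}$, the admissible wave directions $\mf r_{k+1}, \dots, \mf r_N$ and the hyperbolic direction $(1, \mf 0_{N-1})^t$ span $\R^N$; at $\mf u^\ast$ this rests on Lemma~\ref{l:key} (which pins down the number of independent slow stable directions through the signature of $\mf R_0^t \mf A_{22} \mf R_0$) together with strict hyperbolicity, the bookkeeping being somewhat delicate because the $\mf q_i$ are not themselves eigenvectors of $\mf E^{-1}\mf A$.
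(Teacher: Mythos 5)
Your argument is correct and is essentially the paper's own proof: the paper likewise reduces to showing that the $\mf p_{\mf u_2}$-projections of the $N-1$ vectors $\mf q_2,\dots,\mf q_{k-1},\mf r_k,\dots,\mf r_N$ (at $\mf u^\ast$) are independent, and it does so by observing that a vanishing combination of the projections forces the combination of the preimages into $\mathrm{span}\{(1,\mf 0_{N-1}^t)^t\}$, whence Lemma~\ref{l:invertiamo} kills all coefficients. Your detour through Lemma~\ref{l:brie1} rather than a direct appeal to Lemma~\ref{l:invertiamo} is equivalent, since that lemma is exactly what underlies Lemma~\ref{l:brie1}.
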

The proof of Lemma~\ref{l:brie2} is established in~\S~\ref{ss:brie:proof}. To establish the proof of Proposition~\ref{p:nc} we impose 
\be
\label{e:lbc}
      \mf p_{\mf u_2} (\mf u_b) = \mf p_{\mf u_2} \circ \boldsymbol{\psi}_{sl}(\boldsymbol{\phi}^{s_{k+1}}_{k+1}\circ\dots \boldsymbol{\phi}^{s_N}_{N} (\mf u_i),  s_2, \dots, s_k),  
\eq
which is equivalent to say that $\boldsymbol{\beta} (\boldsymbol{\psi}_{sl}(\boldsymbol{\phi}^{s_{k+1}}_{k+1}\circ\dots \boldsymbol{\phi}^{s_N}_{N} (\mf u_i),  s_2, \dots, s_k), \mf u_b )= \mf 0_N$. 

The rationale underpinning~\eqref{e:lbc} is the following. If $\alpha (\mf u_b) \leq 0$, then we can only impose a boundary condition on the last $N-1$ conditions on the solution of~\eqref{e:symmetric}, i.e. we can impose a boundary condition on 
$\mf u_2$. This loss of boundary condition is consistent with the fact that there is no ``fast component" of the boundary layers, i.e. we do not need the analysis in~\S~\ref{s:complete}. Indeed, recall that in~\S~\ref{s:complete} we have constructed the fast component of the boundary layers and a key
     point in the construction is that, if $\alpha (\mf u(0)) >0$, then~\eqref{e:changev} is an 
     invertible change of variables from $[0, + \infty[$ onto $[0, + \infty[$. If $\alpha (\mf u(0)) <0$, then~\eqref{e:changev} 
    maps $[0, + \infty[$ onto $]- \infty, 0]$ and if $\alpha (\mf u(0)) =0$ then~\eqref{e:changev} does not define a change of variables.  This implies that formula~\eqref{e:2bdproblem4} does not extend to the case $\alpha (\boldsymbol{\psi}_b) \leq 0$ and explains why there is no ``fast component" of the boundary layers in the case $\alpha (\mf u_b) \leq 0$.

Owing to Lemma~\ref{l:brie2}, equation~\eqref{e:lbc} uniquely determines the values of $(s_2, \dots, s_N)$ and the rest of the proof of Proposition~\ref{p:nc} is basically the same as in the case where we assign a full boundary condition, see~\S~\ref{ss:fullbc}. The main difference is that when in~\S~\ref{ss:fullbc} we use~\eqref{e:2bdproblem4} here we have to use~\eqref{e:2bdproblem2}. The proof of Corollary~\ref{c:traccia} is also the same. We omit the details. 

\subsection{Proof of Theorem~\ref{t:main}}
\label{ss:ptm}
The proof of Theorem~\ref{t:main} follows from the proof of Proposition~\ref{p:nc}. Indeed, assume that the hypotheses of Theorem~\ref{t:main} are satisfied: in particular, there is an invertible diffeomorphism $\mf u \leftrightarrow \mf w$ such that, if $\mf w$ satisfies~\eqref{e:viscouscl}, then $\mf u$ satisfies~\eqref{e:symmetricee}, and viceversa.  Apply Proposition~\ref{p:nc} to $\mf u$ and consider the function $\mf w(t, x): = \mf w(\mf u(t, x))$. 
We claim that $\mf w$ satisfies properties a), b), c) and d) in the statement of Theorem~\ref{t:main}. Properties a) and d2) are a direct consequence of properties i) and iii)$_2$ in the statement of Proposition~\ref{p:nc}. To establish the other properties, we point out  that, by the proof of Proposition~\ref{p:nc}, $\bar{\mf u}$ is connected to $\mf u_i$ by rarefaction waves, shocks and contact discontinuities constructed by relying on the admissible wave fan curves defined in~\cite{Bianchini} (see also Lemma~\ref{l:Taireg} in here). By the analysis in~\cite{Bianchini}, this implies that the state $\bar{\mf w} = \mf w(\bar{\mf u})$ is connected to $\bar{\mf w} = \mf w(\mf u_i)$  by rarefaction waves, shocks and contact discontinuities  and that $\mf w(t, x)$ is a distributional solution of~\eqref{e:cl}. Very loosely speaking, the reason why this is true is because the admissible wave fan curve $\boldsymbol{\phi_i}$ is constructed by relying on smooth solutions (more precisely, traveling waves solutions) of~\eqref{e:symmetricee}, and smooth solutions of~\eqref{e:symmetricee} are in a one-to-one correspondence with smooth solutions of~\eqref{e:viscouscl}.

\subsection{Proof of Lemmas~\ref{l:brie1} and~\ref{l:brie2}} \label{ss:brie:proof}
The proof is organized as follows. First, we recall some notation and we establish a preliminary result, i.e. Lemma~\ref{l:invertiamo}. Next, we establish Lemma~\ref{l:brie1} and Lemma~\ref{l:brie2}. 

We recall that $\mf q_2 (\mf u), \dots, \mf q_k(\mf u)$ are the vectors defined as in~\eqref{e:definiscot} and $\mf r_{k}(\mf u), \dots, \mf r_N(\mf u)$ are the eigenvectors of $\mf E^{-1} \mf A(\mf u)$ associated to $\lambda_k(\mf u), \dots, \lambda_N (\mf u)$, respectively.  Note that, owing to~\eqref{e:uast} and Lemma~\ref{l:autovettore}, $\mf q_k(\mf u^\ast)= \mf r_k(\mf u^\ast)$. 
\begin{lemma}
\label{l:invertiamo}
Let $\mf u^\ast$ satisfy~\eqref{e:uast}.  The vectors 
\be 
\label{e:linindip}
  \left(
  \begin{array}{cc}
  1 \\
  \mf 0_{N-1} \\
  \end{array}
  \right), 
  \mf q_2 (\mf u^\ast), \dots, \mf q_{k-1} (\mf u^\ast), 
  \mf r_k (\mf u^\ast), \dots, \mf r_{N} (\mf u^\ast).   
\eq
are linearly independent. 
\end{lemma}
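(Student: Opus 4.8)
The plan is to show linear independence by testing these $N$ vectors against a suitable spanning set of covectors built from the block structure~\eqref{e:blockae2}. Suppose a linear combination vanishes:
\[
   c_1 \begin{pmatrix} 1 \\ \mf 0_{N-1} \end{pmatrix}
   + \sum_{i=2}^{k-1} c_i \, \mf q_i (\mf u^\ast)
   + \sum_{j=k}^{N} c_j \, \mf r_j (\mf u^\ast) = \mf 0_N .
\]
First I would project onto the $\mf u_2$-components (the last $N-1$ coordinates). The vector $(1, \mf 0_{N-1})^t$ disappears, and by~\eqref{e:definiscot} the $\mf u_2$-part of $\mf q_i(\mf u^\ast)$ is $\mf R_0(\mf u^\ast, \mf 0_{N-2}, 0) \mf p_i(\mf u^\ast)$. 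The key structural fact is that at $\mf u^\ast$, owing to~\eqref{e:eigenatuast} and Lemma~\ref{l:autovettore}, the $\mf u_2$-parts of $\mf q_k(\mf u^\ast) = \mf r_k(\mf u^\ast)$ and of $\mf q_2(\mf u^\ast), \dots, \mf q_{k-1}(\mf u^\ast)$ all lie in the column space of $\mf R_0(\mf u^\ast, \mf 0_{N-2}, 0)$, and moreover they are eigenvectors of the symmetric matrix $\mf R_0^t \mf A_{22} \mf R_0(\mf u^\ast, \mf 0_{N-2}, 0)$: the $\mf p_i$ correspond to the $k-2$ strictly negative eigenvalues and $\mf r_{00}(\mf u^\ast, 0, 0)$ to the zero eigenvalue (see Lemma~\ref{l:key} and~\eqref{e:eigen}). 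Hence, after left-multiplying the $\mf u_2$-equation by $\mf R_0^t$ and using that $\mf R_0$ has full column rank, the vectors $\mf R_0^t \mf B_{22}\,[\,\text{$\mf u_2$-parts of }\mf q_2,\dots,\mf q_{k-1},\mf r_k\,]$ become pairwise orthogonal nonzero vectors (using~\eqref{e:identity}), so these terms are linearly independent among themselves.

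Next I would bring in the remaining $\mf r_{k+1}(\mf u^\ast), \dots, \mf r_N(\mf u^\ast)$. These are eigenvectors of $\mf E^{-1}\mf A(\mf u^\ast)$ for the strictly positive eigenvalues $\lambda_{k+1}, \dots, \lambda_N$ (by~\eqref{e:sh} these are bounded away from $0$). On the other hand, by Lemma~\ref{l:autovettore} the vector $\mf q_k(\mf u^\ast)=\mf r_k(\mf u^\ast)$ is the $\lambda_k$-eigenvector, and $\lambda_k(\mf u^\ast)=0$ by~\eqref{e:uast}. By strict hyperbolicity (Hypothesis~\ref{h:sh}) the full family $\mf r_k(\mf u^\ast), \dots, \mf r_N(\mf u^\ast)$ is linearly independent, being eigenvectors for distinct eigenvalues. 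So the structure splits cleanly: the $\mf r_j$ for $j \ge k+1$ contribute directions transverse to everything else because, applying $\mf E(\mf u^\ast)^{-1}\mf A(\mf u^\ast)$ to the relation and subtracting, the zero-eigenvalue block ($c_1 (1,\mf 0)^t$, the $\mf q_i$, $\mf r_k$) maps to a combination still in the span of $\mf r_k$ and the $\mf u_2$-directions handled above, while the $\mf r_j$, $j\ge k+1$, scale by their distinct nonzero eigenvalues; an elementary Vandermonde-type argument then forces $c_{k+1} = \cdots = c_N = 0$.

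With $c_{k+1}=\cdots=c_N=0$ we are left with $c_1 (1,\mf 0_{N-1})^t + \sum_{i=2}^{k-1} c_i \mf q_i(\mf u^\ast) + c_k \mf r_k(\mf u^\ast) = \mf 0_N$. Looking at the $\mf u_2$-components and using the orthogonality established in the first paragraph (after multiplying by $\mf R_0^t \mf B_{22}$) gives $c_2 = \cdots = c_{k-1} = c_k = 0$; and then $c_1 = 0$ immediately from the $u_1$-component. The main obstacle is the bookkeeping in the middle paragraph: one must be careful that $\mf r_k(\mf u^\ast)$ genuinely has the block form $\big(-e_{11}^{-1}\mf d^t \mf r_{00}, \, \mf R_0 \mf r_{00}\big)^t$ at $\mf u^\ast$ (this is exactly Lemma~\ref{l:autovettore} evaluated at $(\mf u^\ast, 0, \lambda_k(\mf u^\ast)) = (\mf u^\ast, 0, 0)$, consistent with~\eqref{e:uast}), so that it sits in the same ``zero-eigenvalue'' subspace as the $\mf q_i$ and interacts with them only through the nondegenerate pairing $\mf R_0^t \mf B_{22} \mf R_0$; once that identification is made, the distinct-eigenvalue argument for $\mf r_{k+1},\dots,\mf r_N$ and the orthogonality of the $\mf p_i$ do the rest.
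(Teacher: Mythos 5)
There is a genuine gap in the middle paragraph, at the ``Vandermonde-type argument.'' Your plan is to apply $\mf E^{-1}\mf A(\mf u^\ast)$ to the vanishing linear combination and use the distinctness of the eigenvalues $\lambda_{k+1},\dots,\lambda_N>0$ to eliminate $c_{k+1},\dots,c_N$. For that to work you need the ``zero-eigenvalue block'' --- the span of $(1,\mf 0_{N-1})^t$, the $\mf q_i$, and $\mf r_k$ --- to be invariant under $\mf E^{-1}\mf A(\mf u^\ast)$, or at least for its image to be controllable in a way that doesn't reintroduce components along $\mf r_{k+1},\dots,\mf r_N$. That invariance fails. Indeed, at $\mf u^\ast$ we have $a_{11}(\mf u^\ast)=0$, so $\mf A(\mf u^\ast)(1,\mf 0_{N-1})^t=(0,\mf a_{21})^t$ and $\mf E^{-1}\mf A(\mf u^\ast)(1,\mf 0_{N-1})^t=(0,\mf E_{22}^{-1}\mf a_{21})^t$. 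The $\mf u_2$-parts of $\mf q_2,\dots,\mf q_{k-1}$ and of $\mf r_k$ all lie in the column space of $\mf R_0(\mf u^\ast,\mf 0_{N-2},0)$, which by~\eqref{e:definiamod} is exactly the orthogonal complement of $\mf a_{21}(\mf u^\ast)$. But $\mf a_{21}^t\mf E_{22}^{-1}\mf a_{21}>0$ since $\mf E_{22}$ is positive definite and $\mf a_{21}\neq\mf 0_{N-1}$ by Lemma~\ref{l:kaw}, so $\mf E_{22}^{-1}\mf a_{21}$ does not lie in that orthogonal complement. Thus $\mf E^{-1}\mf A(1,\mf 0_{N-1})^t$ escapes your ``zero-eigenvalue block'' and the Vandermonde reduction does not go through without further argument. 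Moreover, trying to decompose its excess along $\mf r_{k+1},\dots,\mf r_N$ would amount to assuming the direct-sum decomposition you are trying to prove.

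The paper avoids this by a sign-based argument rather than an eigenvalue-elimination argument. It writes $\boldsymbol{\xi}\in W_+\cap(W_b\oplus W_-)$ in both bases and evaluates the quadratic form $\boldsymbol{\xi}^t\mf A(\mf u^\ast)\boldsymbol{\xi}$ twice: using the $\mf r_j$-expansion and the $\mf T$-orthogonality~\eqref{e:ortho} with $\mf T=\mf E$, it equals $\sum_j b_j^2\lambda_j|\mf r_j|^2\ge 0$; using the $\mf q_i$-expansion and the relation $\mf a_{21}^t\mf R_0=\mf 0$, it equals $\sum_i a_i^2\varsigma_i|\mf p_i|^2\le 0$ with $\varsigma_i<0$. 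This forces all $a_i$ and all $b_j$ with $j>k$ to vanish, leaving $a_1(1,\mf 0_{N-1})^t=b_k\mf r_k$, which the Kawashima--Shizuta condition~\eqref{e:ks} kills. Note that this last step --- invoking~\eqref{e:ks} --- is also something your sketch omits: after reducing to $c_1(1,\mf 0_{N-1})^t+c_k\mf r_k=\mf 0_N$, the $u_1$-component alone does not obviously give $c_1=0$ unless you already know the $u_1$-component of $\mf r_k(\mf u^\ast)$ does not cancel it, and that is exactly what~\eqref{e:ks} prevents. The orthogonality of $\mf p_i$ and $\mf r_{00}$ under $\mf R_0^t\mf B_{22}\mf R_0$, which you use correctly, does take care of $c_2,\dots,c_k$ once $c_{k+1},\dots,c_N$ are gone, but it does not substitute for the missing Vandermonde/invariance step.
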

\begin{proof}[Proof of Lemma~\ref{l:invertiamo}]
In the proof we assume that the vectors $\mf q_2, \dots, \mf q_{k-1}, 
  \mf r_k, \dots, \mf r_{N}$ are alway evaluated at the point $\mf u^\ast$. First, we make the following remarks.
  \begin{itemize}
  \item The vectors $\mf r_k, \dots, \mf r_N$ are linearly independent because they are eigenvectors associated to different eigenvalues.   We term $W_+$ the space generated by $\mf r_k, \dots, \mf r_N$, which has dimension $N+1-k$. 
  \item The vectors $\mf q_2, \dots, \mf q_{k-1}$  are also linearly independent. To see this, we recall definition~\eqref{e:definiscot}, the fact that $\mf p_2, \dots \mf p_{k-1}$ are linearly independent and the fact that the matrix $\mf R_0$ has maximal rank. We term $W_-$ the space generated by $\mf q_2, \dots, \mf q_{k-1}$, which has dimension $k-2$.
  \item We also term $W_b$ the space generated by $(1, \mf 0_{N-1})^t$
  \end{itemize}
We now proceed according to the following steps. \\
{\sc Step 1:}  we show that $W_b \cap W_- = \big\{ \mf 0_N \big\}$. It suffices to combine~\eqref{e:definiscot}, the fact that $\mf p_2, \dots \mf p_{k-1}$ are linearly independent and the fact that the matrix $\mf R_0$ has maximal rank. \\
{\sc Step 2:} we show that $W_+ \cap \big( W_b \oplus W_-\big) = \big\{ \mf 0_N \big\}. $ We fix $\boldsymbol{\xi}$ in the intersection, namely 
\be 
\label{e:cosaeq}
    \boldsymbol{\xi} = \sum_{i=2}^{k-1} a_i \mf q_i + a_1 
    \left(
        \begin{array}{cc}
        1 \\
        \mf 0_{N-1} \\
        \end{array}
        \right) = \sum_{j=k}^N b_i \mf r_j
\eq 
for suitable coordinates $a_1, \dots, a_{k-1}$ and $b_k, \dots, b_N$. We want to show that $\boldsymbol{\xi}= \mf 0_N$. We use the second equality in~\eqref{e:cosaeq}, we recall that $\mf A \mf r_j= \lambda_j \mf E \mf r_j$ and finally we use~\eqref{e:ortho} with $\mf T= \mf E$. We conclude that   
\be
\label{e:>0}
  \boldsymbol{\xi}^t \mf A \boldsymbol{\xi} = \sum_{j=k}^N b_j^2 \lambda_j  |\mf r_i|^2{\comment.}
\eq 
Next, we point out that, owing to the relation $\mf a_{21}^t \mf R_0(\mf u^\ast, \mf 0_{N-2}, 0) = \mf 0^t_{N-2}$ (see~\eqref{e:definiamod}) , 
$$
    \mf q_i^t \mf A \mf q_i = 
    \left(
    \begin{array}{cc}
          - e_{11}^{-1} \mf d^t \mf p_i  \\
           \mf R_0\mf p_i 
    \end{array}
    \right)^t   
    \left(
    \begin{array}{cc}
          0 & \mf a_{21}^t  \\
          \mf a_{21} & \mf A_{22} \\
    \end{array}
    \right)
    \left(
    \begin{array}{cc}
             - e_{11}^{-1} \mf d^t \mf p_i  \\
           \mf R_0\mf p_i 
    \end{array}
    \right) =
    \varsigma_i |\mf p_i|^2, 
$$
where $\varsigma_i<0$ is the eigenvector of $\mf R_0^t \mf A_{22} \mf R_0(\mf u^\ast, \mf 0_{N-2}, 0)$ associated to $\mf p_i$. 
By using the first equality in~\eqref{e:cosaeq} and the relation $\mf p_i^t\mf p_j = 0$ if $i \neq j$ (see the discussion before~\eqref{e:definiscot}) we arrive at 
\be
\label{e:<0}
  \boldsymbol{\xi}^t \mf A \boldsymbol{\xi} = \sum_{i=2}^{k-1} a_i^2 \varsigma_i |\mf p_i|^2 .
\eq 
We now compare~\eqref{e:>0} and~\eqref{e:<0} and we recall that $\lambda_k (\mf u^\ast)=0$, $0< \lambda_{k+1} (\mf u^\ast)< \dots < \lambda_N(\mf u^\ast)$ and $\varsigma_2, \dots, \varsigma_{k-1} <0$. We conclude that $a_2=\dots =a_{k-1}=0=b_{k+1} = \dots = b_N$. 
This implies that $a_1 (1, \mf 0_{N-1})^t= b_k \mf r_k$, but owing to the Kawashima-Shizuta condition~\eqref{e:ks} this implies that $a_1=b_k=0$, namely that $\boldsymbol{\xi} = \mf 0_N$.  This concludes the proof of the lemma.  
\end{proof}
We can now provide the 
\begin{proof}[Proof of Lemma~\ref{l:brie1}]
First, we combine property B) in the statement of Lemma~\ref{l:fastvaria} with property B) in the statement of Lemma~\ref{l:Taireg} and we conclude that the map under consideration is Lipschitz continuous and differentiable at $(s_1, \dots, s_N) = \mf 0_{N}$. 
Next, we recall a version of the {I}mplicit {F}unction Theorem valid for Lipschitz continuous maps (see~\cite[p.253]{Clarke}) and we conclude that to show that the map is invertible it suffices to show that the columns of the Jacobian matrix evaluated at $\mf 0_N$ are linearly independent. By continuity, it suffices to prove that they are linearly independent in the case where $\mf u_i= \mf u^\ast$. This is true by Lemma~\ref{l:invertiamo}. The proof of the lemma is complete.  
\end{proof}
Finally, we provide the 
\begin{proof}[Proof of Lemma~\ref{l:brie2}]
We argue as in the proof of Lemma~\ref{l:brie1} and we conclude that it suffices to show that 
 the vectors 
$
    \mf p_{\mf u_2} \circ \mf q_2, \dots, \mf p_{\mf u_2} \circ \mf q_{k-1} (\mf u^\ast),  
    \mf p_{\mf u_2} \circ \mf r_k, \dots,  \mf p_{\mf u_2} \circ \mf r_N(\mf u^\ast)
$
are linearly independent. Assume that 
$$
    \sum_{i=2}^{k-1} a_i \mf p_{\mf u_2} \circ \mf q_i (\mf u^\ast) +
    \sum_{j=k}^{N} b_j \mf p_{\mf u_2} \circ \mf r_j (\mf u^\ast) = \mf 0_{N-1}, 
$$
for some real numbers $a_2, \dots, a_{k-1}, b_k, \dots, b_N$. This implies that the vector $\sum_{i=2}^{k-1} a_i\mf q_i +
    \sum_{j=k}^{N} b_j \mf r_j$ belongs to the space generated by $(1, \mf 0_{N-2}^t)$. Owing to Lemma~\ref{l:invertiamo}, this implies that $a_2= \dots= a_{k-1}= b_k= \dots= b_N=0$ and shows that the vectors  $\mf p_{\mf u_2} \circ \mf q_2 (\mf u^\ast), \dots, \mf p_{\mf u_2} \circ \mf q_{k-1} (\mf u^\ast),  
    \mf p_{\mf u_2} \circ \mf r_k(\mf u^\ast), \dots,  \mf p_{\mf u_2} \circ \mf r_N (\mf u^\ast)$ are linearly independent. 
\end{proof}
 
\subsection{The case where~\eqref{e:uast} does not hold} \label{ss:nonchar}
We recall that the data  $\mf u_b$ and $\mf u_i$ in the statement of Theorem~\ref{t:main} and Proposition~\ref{p:nc} are sufficiently close. We can assume that they belong to a sufficiently small neighborhood of a given state $\mf u^\ast$. We have so far discussed considered the case where $\mf u^\ast$ satisfies~\eqref{e:uast}. We now discuss the proof of Theorem~\ref{t:main}, Proposition~\ref{p:nc} and Corollary~\ref{c:traccia} in the case where~\eqref{e:uast} does not hold. We separately consider the following cases. \\
{\sc Case 1:} if $\alpha (\mf u^\ast) =0$, but all the eigenvalues of $\mf E^{-1} \mf A$ are bounded away from $0$, then the boundary is not characteristic, and the analysis is the same as in the present paper, but simpler.  In particular, Lemma~\ref{l:key} modifies as follows: let $n$ be the number of strictly negative eigenvalues of $\mf E^{-1} \mf A$, then the matrix $\mf R_0^t \mf A_{22} \mf R_0 (\mf u^\ast, \mf 0_{N-2}, 0)$ is nonsingular and has $n-1$ eigenvalues with strictly negative real part, and $N-n-1$ eigenvalues with strictly positive real part (each eigenvalue is counted according to its multiplicity). Since the matrix $\mf R_0^t \mf A_{22} \mf R_0 (\mf u^\ast, \mf 0_{N-2}, 0)$ is nonsingular, then we do not need the analysis in~\S~\ref{s:center}. The rest of the analysis is basically the same. \\
{\sc Case 2:} if $\alpha (\mf u^\ast) \neq 0$, then  $\alpha$ is bounded away from $0$ in a sufficiently small neighborhood  
of $\mf u^\ast$. Assume that $\alpha- \lambda_i$ is also bounded away from $0$, for every $\lambda_i$ positive eigenvalues of $\mf E^{-1} \mf A$. Then we can apply the analysis in~\cite{BianchiniSpinolo:ARMA}. \\
{\sc Case 3:}  assume that $\alpha$ is bounded away from $0$, but $\alpha (\mf u^\ast)= \lambda_j (\mf u^\ast)>0$ for some $\lambda_j$ positive eigenvalue of $\mf E^{-1} \mf A$.  We can then apply the analysis in~\cite{BianchiniSpinolo:ARMA} to study the boundary layers, but we need some of the analysis of the present paper to construct the $j$-th admissible wave fan curve. Indeed, to construct this curve we have to study the equation of the traveling waves with speed $\sigma$ close to $\lambda_j$. This means that we have to study~\eqref{e:odeslow}, which is singular at $\mf u^\ast$ when $\sigma = \lambda_j(\mf u^\ast)$. To tackle this issue we consider~\eqref{e:odefast}, we linearize at $\mf v^\ast=(\mf u^\ast, \mf 0_{N-2}, \lambda_j (\mf u^\ast)$ and we construct a center manifold. By arguing as in~\S~\ref{ss:emme0}, we can show that the restriction of~\eqref{e:odeslow} is~\eqref{e:emme0slow} for a suitable function $\mf \Theta_0$ attaining the value $\mf R_0^t [\mf A_{22} - \lambda_j \mf E_{22}]\mf R_0$ at the point 
$(\mf u^\ast, \mf 0_{N-2}, \lambda_j (\mf u^\ast))$. By arguing as in the proof of Lemma~\ref{l:key} we get that the matrix 
$\mf R_0^t [\mf A_{22} - \lambda_j \mf E_{22}]\mf R_0$ evaluated at $(\mf u^\ast, \mf 0_{N-2}, \lambda_j (\mf u^\ast))$ is singular.  We can then argue as in~\S~\ref{ss:cma} and construct a center manifold. To construct the $j$-th admissible wave fan curve we rely on the analysis in~\cite{Bianchini} and argue as in~\S~\ref{ss:general}, the main difference is that in~\eqref{e:mappaT} we have to take the concave envelope instead of the monotone concave envelope.  

\section{The case $h>1$ (the dimension of the kernel of $\mf B$ is larger than 1)}
\label{s:bigk}
In this section we discuss how to extend the analysis at the previous section to the case where $h>1$. Note that, owing to the discussion in~\S~\ref{ss:h:mhd}, this is the case of the MHD equations with $\eta =0$, where $h=3$. 
We now separately discuss the extension of the analysis of each of the previous sections. 
\subsection{The analysis in~\S~\ref{ss:emme0}}
By arguing as in~\S~\ref{ss:emme0}, we can write~\eqref{e:bltw} in the form~\eqref{e:odeslow} provided
\be
\label{e:sonoloro}
\mf h( \mf v) : =
\left(
\begin{array}{ccc}
- \mf E_{11}^{-1} \mf A^t_{21}\mf z_2 \\
\big[  \alpha - \sigma    \big] \mf z_2 \\
\mf B_{22}^{-1}\big([\alpha-\sigma ]
[ \mf A_{22} - \sigma \mf E_{22} - \mf G_{2}] - \,  \mf A_{21} \mf E^{-1}_{11}  \mf A_{21} 
^t  + \mf G_1 \mf E^{-1}_{11} \mf A_{21}^t 
\big) \mf z_2  \\
0 \\
\end{array}
\right){,}
\qquad \mf v : =
\left(
\begin{array}{cc}
\mf u_1 \\
 \mf u_2 \\
\mf z_2 \\
\sigma \\
\end{array}
\right){.}
\eq
Note that now $\mf u_1 \in \R^h$, $\mf u_2, \mf z_2 \in \R^{N-h}$. By linearizing the above equation at the point $(\mf u^\ast, \mf 0_{N-h}, 0)$ we get that the center space is given by 
$$
    M^0 : = { \Big\{} (\mf u_1, \mf u_2, \mf z_2, \sigma) \in \R^{2N}: \ \mf A^t_{21}(\mf u^\ast) \mf z_2 =\mf 0_h {\Big\}}. 
$$ 
Note that the relation $\mf A^t_{21}(\mf u^\ast) \mf z_2 =\mf 0_h$ means that $\mf z_2$ is perpendicular to the $h$ columns of $\mf A_{21}$, which are linearly independent by Lemma~\ref{l:kaw}. This implies that the dimension of (any) center manifold is $2N-2h+1$. We can then repeat the analysis in~\S~\ref{ss:emme0} with no relevant change. Note that now $\mf z_0 \in \R^{N-2h}$ and $\mf R_0 \in \mathbb{M}^{(N-h) \times (N-2h)}$ and recall that $N \ge 2h$ by Lemma~\ref{l:dimensioni}. Also, note that~\eqref{e:definiamod} becomes 
\be
\label{e:definiamod2}
  \mf A_{21}^t \mf R_0 (\mf u, \mf z_0, \sigma) \mf z_0 = [\alpha (\mf u)-\sigma ] \mf D^t (\mf u, \mf z_0, \sigma) \mf z_0
\eq
for a suitable function $\mf D$ attaining values in $\mathbb M^{(N-2h) \times h}$. We eventually arrive at~Lemma~\ref{l:invariant} with~\eqref{e:emme0slow} replaced by  
\be
\label{e:emme0slow2}
\left\{
\begin{array}{lll}
{\mf u_1}' =  -\mf E_{11}^{-1} \mf D^t (\mf u, \mf z_0, \sigma) \mf z_0{,} \\
\mf u'_2 =  \mf R_0  (\mf u, \mf z_0, \sigma) \mf z_0{,} \\
\mf z'_0 =   \mf \Theta_0  (\mf u, \mf z_0, \sigma) \mf z_0{,} \\ 
\sigma' = 0. \\
\end{array}
\right.
\eq
Note that the function $\mf \Theta_0$ attains values in $\mathbb M^{(N-2h) \times (N-2h)}$ and satisfies~\eqref{e:Lambda0}. By arguing as in the proof of Lemma~\ref{l:key} (see also~\cite[Lemma~4.7]{BianchiniSpinolo:ARMA} and apply it with $q=h$, $n_{11}=0$) one can establish the following result. 
\begin{lemma}
\label{l:key2}
Assume that $\mf u^\ast$ satisfies~\eqref{e:uast}.  Then $h+1 \leq k \leq N-h$ and the signature of the matrix $\mf R_0^t \mf A_{22} \mf R_0 (\mf u^\ast, \mf 0_{N-2}, 0) $ is as follows: 
\begin{itemize}
\item $1$ eigenvalue is $0$;
\item $k-h-1$ eigenvalues are strictly negative;
\item $N-k-h$ eigenvalues are strictly positive. 
\end{itemize}
As usual, each eigenvalue is counted according to its multiplicity. 
\end{lemma}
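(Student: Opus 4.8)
The plan is to reduce Lemma~\ref{l:key2} to a signature computation for a symmetric bilinear form and then invoke the congruence lemma \cite[Lemma~4.7]{BianchiniSpinolo:ARMA} (applied with $q=h$ and $n_{11}=0$), exactly as in the proof of Lemma~\ref{l:key}. First I would identify the column space of $\mf R_0(\mf u^\ast,\mf 0_{N-2h},0)$. Since $\alpha(\mf u^\ast)=0$, evaluating~\eqref{e:definiamod2} at $\mf u=\mf u^\ast$, $\sigma=0$ gives $\mf A_{21}^t(\mf u^\ast)\,\mf R_0(\mf u^\ast,\mf z_0,0)\,\mf z_0\equiv\mf 0_h$, and differentiating this identity at $\mf z_0=\mf 0_{N-2h}$ yields $\mf A_{21}^t(\mf u^\ast)\,\mf R_0(\mf u^\ast,\mf 0_{N-2h},0)=\mf 0_{h\times(N-2h)}$ (this is the $h>1$ analogue of Lemma~\ref{l:fazero} and is routine). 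By Lemma~\ref{l:kaw} the $h$ columns of $\mf A_{21}(\mf u^\ast)$ are linearly independent, so the subspace $V:=\ker\mf A_{21}^t(\mf u^\ast)\subseteq\R^{N-h}$ has dimension $N-2h$ (note $N\ge 2h$ by Lemma~\ref{l:dimensioni}); since the $N-2h$ columns of $\mf R_0(\mf u^\ast,\mf 0_{N-2h},0)$ are linearly independent and lie in $V$, they form a basis of $V$. Hence the symmetric matrix $\mf R_0^t\mf A_{22}\mf R_0(\mf u^\ast,\mf 0_{N-2h},0)$ is congruent to the matrix of the quadratic form $\mf z\mapsto\mf z^t\mf A_{22}(\mf u^\ast)\mf z$ restricted to $V$; in particular it has real eigenvalues and, by Sylvester's inertia theorem, the same signature as that restricted form.

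Next I would relate this restricted form to the full symmetric matrix $\mf A(\mf u^\ast)$. Because $\alpha(\mf u^\ast)=0$, Hypothesis~\ref{h:sing1d} forces $\mf A_{11}(\mf u^\ast)=\mf 0_{h\times h}$, so $\mf A(\mf u^\ast)=\bigl(\begin{smallmatrix}\mf 0_h&\mf A_{21}^t\\\mf A_{21}&\mf A_{22}\end{smallmatrix}\bigr)$. Since $\mf A_{21}(\mf u^\ast)$ has full column rank $h$, its columns can be completed to a basis of $\R^{N-h}$, which yields an invertible $\mf Q$ with $\mf Q\mf A_{21}(\mf u^\ast)=\bigl(\begin{smallmatrix}\mf I_h\\\mf 0\end{smallmatrix}\bigr)$. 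Conjugating $\mf A(\mf u^\ast)$ by the congruence $\mathrm{diag}(\mf I_h,\mf Q)$ and then using the resulting off-diagonal identity block to clear the coupling by symmetric row/column operations (this is precisely the computation in \cite[Lemma~4.7]{BianchiniSpinolo:ARMA}) shows that $\mf A(\mf u^\ast)$ is congruent to $\bigl(\begin{smallmatrix}\mf 0_h&\mf I_h\\\mf I_h&\mf 0_h\end{smallmatrix}\bigr)\oplus\tilde{\mf A}$, where $\tilde{\mf A}$ represents $\mf z^t\mf A_{22}(\mf u^\ast)\mf z$ on $V$. The first summand has exactly $h$ positive and $h$ negative eigenvalues, so the signature of $\mf A(\mf u^\ast)$ equals the signature of $\mf R_0^t\mf A_{22}\mf R_0(\mf u^\ast,\mf 0_{N-2h},0)$ shifted by $(h,0,h)$.

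To conclude I would compute the signature of $\mf A(\mf u^\ast)$ directly. Since $\mf E(\mf u^\ast)$ is symmetric and positive definite, $\mf E^{-1}\mf A(\mf u^\ast)$ is similar to $\mf E^{-1/2}\mf A(\mf u^\ast)\mf E^{-1/2}$, which is congruent to $\mf A(\mf u^\ast)$; hence, arguing as in the proof of Lemma~\ref{l:signature} via Lemma~\ref{l:sym}, $\mf A(\mf u^\ast)$ and $\mf E^{-1}\mf A(\mf u^\ast)$ have the same number of negative, zero and positive eigenvalues, which by~\eqref{e:sh} together with $\lambda_k(\mf u^\ast)=0$ are $k-1$, $1$ and $N-k$ respectively. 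Subtracting the shift $(h,0,h)$ from the previous step gives that $\mf R_0^t\mf A_{22}\mf R_0(\mf u^\ast,\mf 0_{N-2h},0)$ has $k-h-1$ strictly negative, one zero, and $N-k-h$ strictly positive eigenvalues; nonnegativity of these three counts forces $h+1\le k\le N-h$, and the lemma follows.

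The only genuinely delicate point is the congruence reduction in the second step — that the coupling block $\mf A_{21}$ can be eliminated leaving exactly the hyperbolic pair $\bigl(\begin{smallmatrix}\mf 0_h&\mf I_h\\\mf I_h&\mf 0_h\end{smallmatrix}\bigr)$ plus the restriction of $\mf A_{22}$ to $V$. This is a purely linear-algebraic fact already isolated as \cite[Lemma~4.7]{BianchiniSpinolo:ARMA}, so no new calculation is required; everything else (the identification of $\mathrm{col}\,\mf R_0(\mf u^\ast,\mf 0_{N-2h},0)$ with $V$, the $\mf E$-congruence) is routine bookkeeping, and the hypotheses enter only through $\mf A_{11}(\mf u^\ast)=\mf 0_{h\times h}$ (Hypothesis~\ref{h:sing1d}), $\operatorname{rank}\mf A_{21}(\mf u^\ast)=h$ (Lemma~\ref{l:kaw}), and the structure~\eqref{e:definiamod2}.
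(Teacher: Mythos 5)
Your proof is correct, but it takes a genuinely different and more elementary route than the one the paper follows for Lemma~\ref{l:key} (of which Lemma~\ref{l:key2} is stated as a direct extension). The paper's argument passes through the polynomial $\mathcal P(s)=\det(\mf A-s\mf B)$: Lemma~\ref{l:pieta} determines the signature of the roots of $\mathcal P$ by a homotopy that regularizes $\mf B$ to $\mf B+w\mf I_N$ and tracks the bounded branches as $w\to 0^+$, Lemma~\ref{l:iff} identifies those roots with the eigenvalues of $\mf R_0^t\mf A_{22}\mf R_0$, and a further perturbation of $\mf A_{22}$ handles multiplicities. Your argument avoids both $\mathcal P$ and the regularization and works by pure Sylvester inertia: you identify $\operatorname{col}\mf R_0(\mf u^\ast,\mf 0,0)$ with $\ker\mf A_{21}^t(\mf u^\ast)$ (this is indeed the $h>1$ analogue of Lemma~\ref{l:fazero}, valid by Lemma~\ref{l:kaw} and Lemma~\ref{l:dimensioni}), reduce $\mf A(\mf u^\ast)=\bigl(\begin{smallmatrix}\mf 0&\mf A_{21}^t\\\mf A_{21}&\mf A_{22}\end{smallmatrix}\bigr)$ by congruence to the hyperbolic block $\bigl(\begin{smallmatrix}\mf 0_h&\mf I_h\\\mf I_h&\mf 0_h\end{smallmatrix}\bigr)$ of inertia $(h,0,h)$ plus the restriction of $\mf A_{22}$ to that kernel, and read off the inertia $(N-k,1,k-1)$ of $\mf A(\mf u^\ast)$ from strict hyperbolicity, \eqref{e:uast} and Lemma~\ref{l:counting}. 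Subtracting gives the claim, and the bound $h+1\le k\le N-h$ falls out from nonnegativity of the three counts. Your route is shorter, self-contained, and the extension from $h=1$ to general $h$ is transparent; the paper's route has the advantage of producing, along the way, the separate statement about $\det(\mf A-s\mf B)$ (reused elsewhere in the paper) and some information about how the spectrum degenerates as $\mf B$ becomes singular. One bookkeeping remark: what the paper quotes from \cite[Lemma~4.7]{BianchiniSpinolo:ARMA} is the root-signature statement for $\det(\mf A-s\mf B)$, i.e.\ essentially Lemma~\ref{l:pieta}, not the congruence elimination of the coupling block $\mf A_{21}$. That elimination (take $\mf Q$ with $\mf Q\mf A_{21}=\bigl(\begin{smallmatrix}\mf I_h\\\mf 0\end{smallmatrix}\bigr)$, conjugate by $\mathrm{diag}(\mf I_h,\mf Q)$, then clear the $(2,2)$ and $(2,3)$ blocks against the off-diagonal identity, using symmetry of $\mf Q\mf A_{22}\mf Q^t$) is elementary and checks out, so you should simply carry it out inline rather than attribute it to that reference.
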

\subsubsection{Applications to the MHD equations with $\eta=0$}
We recall the discussion in~\S~\ref{sss:h:mhdeta0} and that $\mf u= (\rho, \mf b, u, \mf w, \theta)^t$. Note that $\mf u_1 =(\rho, \mf b)^t$ and $\mf u_2 = (u, \mf w, \theta)^t$. Since $N=7$ and $h=3$, then 
the dimension of the manifold $\mathcal M^0$ is $9$, $\mf z_0$ is a real valued function, $\mf R_0$ attains value in $\R^4$ and it is perpendicular to each column of  $\mf A_{21}$ at $(\mf u^\ast, 0, 0)$ owing to~\eqref{e:definiamod2}. By recalling~\eqref{e:identity} we conclude that 
$\mf R_0(\mf u^\ast, 0, 0)~=~(0,  \mf 0_2^t, \sqrt{\rho^\ast \theta^\ast/k(\rho^\ast)})^t$. The function $\mf \Theta_0$ is real valued and attains the value $0$ at $(\mf u^\ast, 0, 0)$, which is consistent with Lemma~\ref{l:key} since in this case $k-h-1=0$ and $N-k-h=0$ (recall that $k=4$).  
\subsection{The analysis in~\S~\ref{s:center}}
The analysis in~\S~\ref{s:center} extends with only minor changes to the case $h>1$. The dimension 
of the manifold $\mathcal M^{00}$ is $N+2$ and one can construct the curve $\boldsymbol{\zeta}_k$ by arguing as in~\S~\ref{ss:ld} (linearly degenerate case) and~\S~\ref{ss:general} (general case). Note that~\eqref{e:dercurvekld} is replaced by  
\be
\label{e:dercurvekld2}
 \frac{\partial \boldsymbol{\zeta}_k}{\partial s_k} (\tilde{\mf u}, 0) =
     \left(
    \begin{array}{cc}
    -\mf E_{11}^{-1} \mf D^t
    \mf r_{00}  \\
    \mf R_0   \mf r_{00}
    \end{array}
    \right) \quad \text{applied at the point} \quad 
   \left\{
    \begin{array}{ll}
   (\tilde{\mf u},  0,  \lambda_k ( \tilde{\mf u}))
    & \text{if} \;  \lambda_k ( \tilde{\mf u}) \ge 0 \\
    (\tilde{\mf u},  0,  0)
    & \text{if} \;  \lambda_k ( \tilde{\mf u}) < 0. \\  
    \end{array}
    \right.  
\eq 
Note that Lemma~\ref{l:autovettore} extends and hence, if $\lambda (\tilde{\mf u})\ge0$, then 
$\partial \boldsymbol{\zeta}_k/\partial s_k$ evaluated at $ (\tilde{\mf u}, 0)$ is an eigenvector of $\mf E^{-1} \mf A (\tilde{\mf u}).$

\subsubsection{Applications to the MHD equations with $\eta=0$}\label{sss:mhd3}
We recall the discussion in~\S~\ref{ss:h:mhd} and and we point out that the analysis in~\S~\ref{ss:cma} is actually redundant in this case because the manifold $\mathcal M^{00}$ is actually the whole manifold $\mathcal M^0$.  Indeed, by linearizing~\eqref{e:emme0slow2} at $(\mf u^\ast, 0, 0)$ we obtain a nilpotent matrix and hence the center space is the whole $\R^9$. 

Since an eigenvector of $\mf E^{-1} \mf A(\mf u)$ associated to $\lambda_4 (\mf u) = u$ is $\mf r_4 (\mf u)=(\rho, \mf 0_2^t, 0, \mf 0_2^t, -\theta)^t$, then the fourth vector field is linearly degenerate and hence we can apply the analysis in~\S~\ref{ss:ld} and we do not need the analysis in~\S~\ref{ss:general}. 
\subsection{The analysis in~\S~\ref{s:blemme0}} The analysis in~\S~\ref{s:center} extends with only minor changes to the case $h>1$. In particular, we can extend Theorem~\ref{t:slowmanifold} and by applying Lemma~\ref{l:key2} we get that $\boldsymbol{\psi}_{sl}$ depends on $\tilde{\mf u}$, on $s_k$ and on other $k-h-1$ scalar variables $s_{h+1}, \dots, s_{k-1}$. The function $\boldsymbol{\psi}_{sl}$ has the same regularity as in property B) in the statement of Theorem~\ref{t:slowmanifold}. The columns of the Jacobian matrix evaluated at the point $(\tilde{\mf u}, \mf 0_{k-h})$ are the vector in~\eqref{e:dercurvekld2} and the vectors 
\be 
\label{e:definiamoq}
    \mf q_i(\mf u) : = 
    \left(
    \begin{array}{cc}
    -\mf E_{11}^{-1}\mf D^t \mf p_i \\
     \mf R_0  \mf p_i \\
    \end{array}
    \right), \quad i=h+1, \dots, k-1, 
\eq 
where $\mf p_{h+1}, \dots, \mf p_{k-1} \in \R^{N-2h}$ are eigenvectors of $\mf R_0^t \mf A_{22} \mf R_0 (\mf u, \mf 0_{N-2h}, 0)$ associated to strictly negative eigenvalues (the ones different from $\theta_{00} (\mf u,0, 0)$ if $\lambda_k (\mf u)<0$). 
\subsubsection{Applications to the MHD equations with $\eta=0$} We recall the discussion in~\S~\ref{sss:mhd3}, that the manifold $\mathcal M^0$ coincides with the manifold $\mathcal M^{00}$ and that $k-h-1=0$. This implies that the analysis in~\S~\ref{s:blemme0} is actually redundant in this case and the function $\boldsymbol{\psi}_{sl}$ in Theorem~\ref{t:slowmanifold} is the same as the function $\boldsymbol{\zeta}_k$ in Lemma~\ref{l:curvekld}.  
\subsection{The analysis in~\S~\ref{s:complete}}
We first focus on Lemma~\ref{l:fastvaria}: part A) in the statement of the lemma extends with no change to the case $h>1$. Part B) should be modified as follows: the regularity is the same, but the columns of the Jacobian marix at $(\mf u^\ast, \mf 0_k)$ are the vector in~\eqref{e:dercurvekld2},  the vectors $\mf q_{h+1}, \dots, \mf q_{k}$ (defined as in~\eqref{e:definiamoq}) and the vectors 
\be 
\label{e:definiamos}
     \mf s_i : =  
     \left(
    \begin{array}{cc}
    -\mf E_{11}^{-1}\mf A_{21}^t \mf t_j\\
    \mf 0_{N-h} \\
    \end{array}
    \right), \quad j=1, \dots, h, 
\eq 
where $\mf t_1, \dots, \mf t_h$ are linearly independent eigenvectors of $- \mf B_{22} \mf A_{21} \mf E_{11}^{-1}\mf A_{21}^t (\mf u^\ast)$ associated the eigenvalues with negative real part (recall Lemma~\ref{l:signature}). 

The main differences in the proof of Lemma~\ref{l:fastvaria} are: i) $\boldsymbol{\psi}_{sl}$ depends on $(\tilde{\mf u}, s_{h+1}, \dots, s_k)$; ii) when we use Lemma~\ref{l:uniformdecay} we apply Lemma~\ref{l:signature} to determine $n_-$ and, if $h>1$, then $n_-=h$. The rest of the proof is basically the same. 
The other results in~\S~\ref{s:complete} extend with no relevant change to the case $h>1$. 
\subsubsection{Applications to the MHD equations with $\eta=0$}
Since $h=3$ and $k=4$, then the function $\boldsymbol{\psi}_b$ depends on $(s_1, s_2, s_3, s_4)$ and on $\tilde{\mf u}$. The function $\alpha (\mf u)$ in~\S~\ref{ss:back} is $\alpha (\mf u) =u.$
\subsection{The analysis in \S~\ref{s:brie}}
To establish Theorem~\ref{t:main} in the case $h>1$ we basically argue as in the case $h=1$. The main difference is that when $\alpha \leq 0$ at the boundary we have to define the function $\boldsymbol{\zeta}_{par}$ by using the projection $\mf p_{\mf u_2}: \R^N \to \R^{N-h}$ by setting $\mf p_{\mf u_2}(\mf u_1, \mf u_2) = \mf u_2.$ The statement of Lemmas~\ref{l:brie1} and~\ref{l:brie2} extends to the case $h>1$. 
To establish their proof we rely on the following extension of Lemma~\ref{l:invertiamo}. 
\begin{lemma}
\label{l:invertiamo2}
Assume that $\mf u^\ast$ satisfies~\eqref{e:uast} and let $\mf s_1, \dots, \mf s_h$, $\mf q_{h+1}, \dots \mf q_{k-1}$  be as in~\eqref{e:definiamos} and~\eqref{e:definiamoq}, respectively.  Then the vectors $\mf s_1 (\mf u^\ast), \dots, \mf s_h (\mf u^\ast)$, $\mf q_{h+1} (\mf u^\ast), \dots \mf q_{k-1} (\mf u^\ast)$, $\mf r_k  (\mf u^\ast), \dots, \mf r_N  (\mf u^\ast)$ are linearly independent. 
\end{lemma}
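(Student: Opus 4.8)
The plan is to mimic the proof of Lemma~\ref{l:invertiamo} verbatim, replacing the single ``boundary vector'' $(1,\mf 0_{N-1})^t$ by the $h$ ``fast'' vectors $\mf s_1,\dots,\mf s_h$ of~\eqref{e:definiamos} and the $k-2$ vectors $\mf q_2,\dots,\mf q_{k-1}$ by the $k-h-1$ vectors $\mf q_{h+1},\dots,\mf q_{k-1}$ of~\eqref{e:definiamoq}. Throughout all vectors are evaluated at $\mf u^\ast$. First I would set up the three subspaces as before: $W_+:=\mathrm{span}\langle \mf r_k,\dots,\mf r_N\rangle$, which has dimension $N+1-k$ and whose generators are linearly independent as eigenvectors of $\mf E^{-1}\mf A$ associated to distinct eigenvalues; $W_-:=\mathrm{span}\langle \mf q_{h+1},\dots,\mf q_{k-1}\rangle$, whose generators are linearly independent because $\mf p_{h+1},\dots,\mf p_{k-1}$ are (being mutually orthogonal eigenvectors of $\mf R_0^t\mf A_{22}\mf R_0$) and $\mf R_0$ has maximal rank, so $\dim W_-=k-h-1$; and $W_b:=\mathrm{span}\langle \mf s_1,\dots,\mf s_h\rangle$, whose generators are linearly independent because $\mf t_1,\dots,\mf t_h$ are (eigenvectors of a symmetric matrix associated to nonzero eigenvalues, by Lemma~\ref{l:signature}) and $\mf E_{11}^{-1}\mf A_{21}^t$ has trivial kernel on $\mathrm{span}\langle\mf t_1,\dots,\mf t_h\rangle$ since $\mf A_{21}$ has linearly independent columns (Lemma~\ref{l:kaw}) and $\mf E_{11}^{-1}$ is nonsingular. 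Counting dimensions, $(N+1-k)+(k-h-1)+h=N$, so it suffices to show $W_b+W_-+W_+$ is a direct sum.

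The argument then proceeds in the same two steps as in Lemma~\ref{l:invertiamo}. Step~1: $W_b\cap W_-=\{\mf 0_N\}$. Here the key observation is structural: every vector in $W_-$ has the form $(-\mf E_{11}^{-1}\mf D^t\mf p,\ \mf R_0\mf p)^t$ with $\mf p\in\mathrm{span}\langle\mf p_{h+1},\dots,\mf p_{k-1}\rangle$, so its lower block is $\mf R_0\mf p$; every vector in $W_b$ has the form $(-\mf E_{11}^{-1}\mf A_{21}^t\mf t,\ \mf 0_{N-h})^t$, so its lower block is $\mf 0_{N-h}$. An element of the intersection therefore has $\mf R_0\mf p=\mf 0_{N-h}$, hence $\mf p=\mf 0$ (maximal rank of $\mf R_0$), so the element lies in $W_b$; writing it as $-\mf E_{11}^{-1}\mf A_{21}^t\mf t$ with $\mf t$ a combination of the $\mf t_j$'s, and using injectivity of $\mf E_{11}^{-1}\mf A_{21}^t$ on that span (as above), we get $\mf t=\mf 0$ and the element vanishes. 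Step~2: $W_+\cap(W_b\oplus W_-)=\{\mf 0_N\}$. Fix $\boldsymbol\xi$ in the intersection, writing $\boldsymbol\xi=\sum_{i=h+1}^{k-1}a_i\mf q_i+\sum_{j=1}^{h}c_j\mf s_j=\sum_{\ell=k}^{N}b_\ell\mf r_\ell$. Evaluate the quadratic form $\boldsymbol\xi^t\mf A\boldsymbol\xi$ in two ways. From the $W_+$ representation, using $\mf A\mf r_\ell=\lambda_\ell\mf E\mf r_\ell$ and $\mf E$-orthogonality of the $\mf r_\ell$, one gets $\boldsymbol\xi^t\mf A\boldsymbol\xi=\sum_{\ell=k}^N b_\ell^2\lambda_\ell\,\mf r_\ell^t\mf E\mf r_\ell\ge 0$ (since $\lambda_k(\mf u^\ast)=0$ and $\lambda_{k+1},\dots,\lambda_N>0$). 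From the $W_b\oplus W_-$ representation, I compute $\mf q_i^t\mf A\mf q_i$, $\mf s_j^t\mf A\mf s_j$ and the cross terms using the block decomposition~\eqref{e:blockae} and the identity $\mf A_{21}^t\mf R_0(\mf u^\ast,\mf 0_{N-2h},0)=\mf 0$ coming from~\eqref{e:definiamod2}. The diagonal $\mf q_i$-terms give $\varsigma_i|\mf p_i|^2<0$ with $\varsigma_i$ the (negative) eigenvalue of $\mf R_0^t\mf A_{22}\mf R_0$; the $\mf s_j$-terms expand to $-\mf t_j^t\mf A_{21}\mf E_{11}^{-1}\mf A_{21}^t\mf t_j$, which is $\mf t_j^t\mf B_{22}\mf t_j$ times the corresponding negative eigenvalue of $-\mf B_{22}^{-1}\mf A_{21}\mf E_{11}^{-1}\mf A_{21}^t$ (Lemma~\ref{l:signature}), hence $<0$; and the $\mf q_i$–$\mf s_j$ cross terms vanish because the lower block $\mf R_0\mf p_i$ pairs against $\mf A_{21}^t$ and $\mf A_{21}^t\mf R_0=\mf 0$, while the $\mf q$–$\mf q$ and $\mf s$–$\mf s$ off-diagonal terms vanish by orthogonality of the $\mf p_i$'s and by choosing the $\mf t_j$'s $\mf B_{22}$-orthonormal. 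So $\boldsymbol\xi^t\mf A\boldsymbol\xi\le 0$, a sum of nonpositive terms.

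Comparing the two computations forces $b_{k+1}=\dots=b_N=0$ and all $a_i=0$ and all $c_j=0$, leaving $\boldsymbol\xi=b_k\mf r_k$; but $\mf r_k$ is an eigenvector of $\mf E^{-1}\mf A$, hence (Kawashima--Shizuta condition~\eqref{e:ks}) not in the kernel of $\mf B$, whereas every element of $W_b\oplus W_-$ that is $\mf E^{-1}\mf A$-null would have to lie there — more precisely, one checks $\mf r_k\notin W_b\oplus W_-$ directly by noting that any such combination with zero quadratic form must be $\mf 0$, so $b_k=0$ and $\boldsymbol\xi=\mf 0_N$. This establishes linear independence. The main obstacle I anticipate is bookkeeping rather than conceptual: correctly organizing the quadratic-form computation so that all cross terms manifestly vanish and the sign count is clean — in particular tracking that the $\mf s_j$ vectors contribute through the $-\mf A_{21}\mf E_{11}^{-1}\mf A_{21}^t$ block (Lemma~\ref{l:signature}) exactly as the fast stable directions should, and that the degenerate direction $\lambda_k=0$ is handled by the same argument that concluded Lemma~\ref{l:invertiamo}. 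A secondary point to be careful about is confirming $h+1\le k\le N-h$ (Lemma~\ref{l:key2}) so that the index ranges $i=h+1,\dots,k-1$ and $\ell=k,\dots,N$ are nonempty/consistent and the dimension count $N$ comes out exactly.
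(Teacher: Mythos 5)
Your overall plan---decompose into the three subspaces $W_b$, $W_-$, $W_+$, count dimensions, and compare the quadratic form $\boldsymbol\xi^t\mf A\boldsymbol\xi$ computed from the two representations---is the same route the paper takes (it simply re-runs Steps~1 and~2 of Lemma~\ref{l:invertiamo}). However, there is a concrete error in the central computation. You assert that the diagonal term $\mf s_j^t\mf A\mf s_j$ expands to $-\mf t_j^t\mf A_{21}\mf E_{11}^{-1}\mf A_{21}^t\mf t_j<0$. It does not. Since the lower block of $\mf s_j$ in~\eqref{e:definiamos} is $\mf 0_{N-h}$, the block decomposition~\eqref{e:blockae} gives
\[
\mf s_j^t\mf A(\mf u^\ast)\mf s_j \;=\; \big(\mf E_{11}^{-1}\mf A_{21}^t\mf t_j\big)^t\,\mf A_{11}(\mf u^\ast)\,\big(\mf E_{11}^{-1}\mf A_{21}^t\mf t_j\big),
\]
and by Hypothesis~\ref{h:sing1d} together with~\eqref{e:uast} we have $\mf A_{11}(\mf u^\ast)=\alpha(\mf u^\ast)\mf E_{11}(\mf u^\ast)=\mf 0_{h\times h}$. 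Hence $\mf s_j^t\mf A\mf s_j=0$, and likewise every $\mf s_j^t\mf A\mf s_{j'}=0$, so the $\mf B_{22}$-orthonormalization of the $\mf t_j$ you propose is unnecessary. (The quantity $\mf t_j^t\mf A_{21}\mf E_{11}^{-1}\mf A_{21}^t\mf t_j$ would appear if $\mf s_j$ had a nonzero lower block, but~\eqref{e:definiamos} deliberately puts $\mf 0_{N-h}$ there.)

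This error leaves a real gap. With the correct values, the comparison $\sum_\ell b_\ell^2\lambda_\ell\,\mf r_\ell^t\mf E\mf r_\ell = \sum_i a_i^2\varsigma_i|\mf p_i|^2$ forces only $a_{h+1}=\dots=a_{k-1}=0$ and $b_{k+1}=\dots=b_N=0$; the $c_j$'s drop out of both sides and are not determined. This also explains the internal inconsistency in your closing sentence: you claim the comparison forces all $c_j=0$, yet then write $\boldsymbol\xi=b_k\mf r_k$, whereas $a_i=c_j=0$ already gives $\boldsymbol\xi=\mf 0_N$ from the $W_b\oplus W_-$ representation (and the remark ``any such combination with zero quadratic form must be $\mf 0$'' is false, since the form vanishes identically on the nontrivial subspace $W_b$). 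The correct finish is exactly where the Kawashima--Shizuta condition must carry the weight, as in the paper: after the quadratic form step you are left with $\sum_{j=1}^h c_j\mf s_j=b_k\mf r_k$; observe that $W_b=\{(\mf u_1,\mf 0_{N-h})^t:\mf u_1\in\R^h\}=\ker\mf B$, while $\mf r_k$ is an eigenvector of $\mf E^{-1}\mf A$ and hence by~\eqref{e:ks} not in $\ker\mf B$. Thus $b_k=0$, so $\sum c_j\mf s_j=\mf 0_N$, and $c_j=0$ by linear independence of the $\mf s_j$. Finally, a secondary point: your justification of that independence via Lemma~\ref{l:kaw} alone is insufficient, since $\ker\mf A_{21}^t$ has dimension $N-2h>0$; what is needed is that $\mathrm{span}\langle\mf t_1,\dots,\mf t_h\rangle$ avoids this kernel, which holds because a nonzero vector in the intersection would be a $0$-eigenvector of $-\mf B_{22}^{-1}\mf A_{21}\mf E_{11}^{-1}\mf A_{21}^t$, contradicting that the $\mf t_j$ correspond to strictly negative eigenvalues.
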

\begin{proof}[Proof of Lemma~\ref{l:invertiamo}]
In the proof we always assume that the vectors are evaluated at the point $\mf u^\ast$. 
By arguing as in the proof of Lemma~\ref{l:invertiamo} we obtain that the vectors $\mf q_{h+1}, \dots \mf q_{k-1} $ are linearly independent and we term $W^-$ the generated subspace, which has dimension $k-h-1$. Also, we term $W^+$ the space generated by 
$\mf r_k  , \dots, \mf r_N  $ and point out that $W^+$ has dimension $N-k+1$.

We now show that the vectors $\mf s_1, \dots, \mf s_h$ are linearly independent. Assume that there are ${a_1, \dots, a_h \in \R}$ such that $$
   \sum_{i=1}^h a_i \mf s_i= \mf 0_N,
$$ 
then this implies that 
$$
    \sum_{i=1}^h a_i \mf E_{11}^{-1}\mf A_{21}^t \mf t_j= \mf 0_h
$$ 
and hence that 
$$ 
    - \sum_{i=1}^h  a_i \mf B_{22}^{-1} \mf A_{21}\mf E_{11}^{-1}\mf A_{21}^t \mf t_i= \mf 0_h.
$$
On the other hand, since $\mf t_1, \dots, \mf t_h$ are eigenvectors of the matrix $ - \mf B_{22}^{-1} \mf A_{21}\mf E_{11}^{-1}\mf A_{21}^t$ associated to strictly negative eigenvalues, this implies that there is a null linear combination of $\mf t_1, \dots, \mf t_h$. Since by assumption 
$\mf t_1, \dots, \mf t_h$ are linearly independent, this implies that all the coefficients of the linear combination are $0$. Since the $i$-th coefficient is the product between $a_i$ and a strictly negative eigenvalue of $\mf B_{22}^{-1} \mf A_{21}\mf E_{11}^{-1}\mf A_{21}^t$, this implies that $a_1=\dots=a_h=0$ and shows that $\mf s_1, \dots, \mf s_h$ are linearly independent. We term $W^b$ the subspace generated by $\mf s_1, \dots, \mf s_h$, which has dimension $h$, and by recalling~\eqref{e:definiamos} we conclude that $W^b = \{ (\mf u_1, \mf 0_{N-h}): \; \mf u_1 \in \R^h\}$. 

By repeating {\sc Step 1, 2} and {\sc 3} in the proof of Lemma~\ref{l:invertiamo} we then conclude the proof of Lemma~\ref{l:invertiamo2}. 
\end{proof}

\section{Factorization  results} \label{s:fattorizzo}
The following lemma is elementary, we provide the proof for the sake of completeness.
\begin{lemma}
\label{l:dividiamo}
Fix a natural number $m > 2$. 
Assume $f: \R^d \to \R$ and $a: \R^d \to \R$ are two $C^m$ function that satisfy the following properties: 
\begin{itemize}
\item[i.] If $a (\mf x) = 0$, then $\nabla a (\mf x) \neq \mf 0_d$. 
\item[ii.] If $a (\mf x) =0$, then $f (\mf x) = 0$. 
\end{itemize}
Then there is a unique $C^{m-1}$ function $g: \R^d \to \R$ such that 
\begin{equation}
\label{e:aperg}
     f (\mf x ) = a (\mf x) g (\mf x), \quad \text{for every $\mf x \in \R^d$}.
\end{equation}
\end{lemma}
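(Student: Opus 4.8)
The plan is to reduce the statement to a one-dimensional division lemma along lines transverse to the zero set of $a$, and then verify the regularity of the resulting quotient $g$. First I would observe that the conclusion is local: it suffices to produce, for each point $\mf x_0$, a $C^{m-1}$ function $g$ satisfying \eqref{e:aperg} in a neighborhood of $\mf x_0$, because the quotient is uniquely determined wherever $a \neq 0$ (there $g = f/a$), so local solutions automatically agree on overlaps and patch to a global $C^{m-1}$ function. Away from $\{a = 0\}$ there is nothing to prove: $g := f/a$ is $C^m$ there. So fix $\mf x_0$ with $a(\mf x_0) = 0$; by hypothesis i., $\nabla a (\mf x_0) \neq \mf 0_d$.

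Next I would straighten the zero set of $a$. Since $\nabla a(\mf x_0) \neq \mf 0_d$, after a $C^m$ (indeed affine, by the implicit function theorem / a linear change of coordinates followed by solving for one variable) change of variables we may assume $a(\mf x) = x_1$ near the origin, i.e. the first coordinate. Under this change $f$ becomes a $C^m$ function, still denoted $f$, with $f(0, x_2, \dots, x_d) = 0$ for all $(x_2, \dots, x_d)$ near $0$, by hypothesis ii. Now I would apply the standard Hadamard-type lemma: write
\be
   f(x_1, x_2, \dots, x_d) = f(x_1, \mf x') - f(0, \mf x') = x_1 \int_0^1 \partial_{x_1} f(t x_1, \mf x')\, dt,
\eq
where $\mf x' = (x_2, \dots, x_d)$. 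Set $g(x_1, \mf x') := \int_0^1 \partial_{x_1} f(t x_1, \mf x')\, dt$. Since $f \in C^m$, the integrand $\partial_{x_1} f$ is $C^{m-1}$ jointly in $(t, x_1, \mf x')$, and differentiation under the integral sign (legitimate because all partials up to order $m-1$ of $\partial_{x_1} f$ are continuous on the compact parameter domain $[0,1]$) shows $g \in C^{m-1}$. In the original coordinates this yields $f = a\, g$ with $g$ of class $C^{m-1}$ near $\mf x_0$; the hypothesis $m > 2$ guarantees $m - 1 \ge 2 \ge 1$, so $g$ is at least $C^1$, which is what is needed in the applications (e.g. in Lemma~\ref{l:sign}, Corollary~\ref{c:definiamod}, Lemma~\ref{l:diverge0}).

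Finally I would record uniqueness: if $a g_1 = a g_2 = f$, then $g_1 = g_2$ on the open dense set $\{a \neq 0\}$, and by continuity $g_1 = g_2$ everywhere; this also re-justifies that the local quotients glue consistently. I do not expect a genuine obstacle here — the only point requiring a little care is the differentiation-under-the-integral-sign argument that transfers $C^m$ regularity of $f$ into $C^{m-1}$ regularity of $g$, together with the bookkeeping that the coordinate straightening of $\{a=0\}$ is itself $C^m$ (hence does not degrade the final regularity class below $C^{m-1}$); that is the step I would write out most carefully. Everything else is routine.
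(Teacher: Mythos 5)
Your proof is correct and follows essentially the same route as the paper's: straighten $\{a=0\}$ to a coordinate hyperplane using the nonvanishing of $\nabla a$, apply the Hadamard one-variable division lemma (your $\int_0^1 \partial_{x_1}f(tx_1,\mf x')\,dt$ is the paper's $\frac{1}{x_1}\int_0^{x_1}\partial_{x_1}f$ after substituting $\xi = tx_1$), and conclude uniqueness by density of $\{a\neq 0\}$. The one small divergence is that you glue local quotients by invoking uniqueness directly rather than by the paper's explicit partition-of-unity construction; this is a cleaner but equivalent way to globalize.
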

\begin{proof}
We proceed according to the following steps. \\
{\sc Step 1:} we write $\mf x \in \R^d$ as $\mf x = (x_1, \mf x_2)^t$, $x_1\in \R$, $\mf x_2 \in \R^{d-1}$ and 
we exhibit a function $g$ satisfying~\eqref{e:aperg} in the case where $a(x_1, \mf x_2)=x_1$. We set 
$$
    g (\mf x) := \frac{f (x_1, \mf x_2)}{x_1} \; \text{if} \; x_1 \neq 0, 
    \qquad 
    \frac{\partial f}{\partial x_1} (0, \mf x_2) \; \text{if} \; x_1 =0. 
$$
By using the representation
$$
g(x_1, {\mf x}_2) = \frac{1}{x_1} 
     \int_0^{x_1} 
   \frac{\partial f}{\partial x_1} (\xi, {\mf x}_2)d \xi \quad \text{if $x_1 \neq 0${\comment,}}
 $$  
we can show that the above function is continuous and by using the Taylor expansion formula we can find its derivatives, which are continuous up to the order $m-1$.   \\
{\sc Step 2:} we consider the general case.  We fix a point $\mf x^\ast $ such that $a (\mf x^\ast) =0$. Owing to assumption i), $\nabla a (\mf x^\ast) \neq \mf 0_d$. Just to fix the ideas, we assume that $\partial a / \partial x_1 \neq 0$. We consider the map $\mf w: \R^d \to \R^d$ defined by setting
\begin{equation}
\label{e:cambiocoordinate}
    \mf w (\mf x) :=
    \left(
    \begin{array}{cc}
    a (\mf x) \\
    {\mf x}_2 \\
    \end{array}
    \right).
\end{equation}
Since by assumption $\partial a / \partial x_1 \neq 0$ at $\mf x^\ast$, then the Jacobian matrix $\mf D \mf w (\mf x^\ast)$ is non singular. Owing to the Local Invertibility Theorem, there is a radius $r (\mf x^\ast)>0$ such that $ \mf w$ is invertible in $\mathrm{B}^d_{r (\mf x^\ast)}(\mf x^\ast).$ We term $\mf w^{-1}$ its inverse and we point out that, by construction, 
$a (\mf w^{-1}(\mf y)) = y_1$. We can then apply {\sc Step 1} to the functions $a \circ \mf w^{-1}$ and $f \circ \mf w^{-1}$ and infer that there is $g_{\mf x^\ast}$ such that 
$
   f( \mf w^{-1} (\mf y) )= a ( \mf w^{-1} (\mf y) )g_{\mf x^\ast} (\mf y).
$
We can then consider the function $g_{\mf x^\ast} \circ \mf w$, which satisfies~\eqref{e:aperg}, but is only defined in $\mathrm{B}^d_{r (\mf x^\ast)}(\mf x^\ast).$ To obtain a globally defined function and 
conclude the proof of the lemma, we consider the closed set 
$$
   \mathcal A= 
   \{ \mf x \in \R^d: \, a (\mf x) =0 \} \subseteq \bigcup_{\mf x^\ast \in \mathcal A}
  \mathrm{B}^d_{r (\mf x^\ast)}(\mf x^\ast). 
$$
We can find a countable, locally finite set of points $\{ \mf  x_n^\ast \}_{n \ge1}$ such that 
$$
   \mathcal A:= 
   \{ \mf x \in \R^d: \, a (\mf x) =0 \} \subseteq \bigcup_{n=1}^\infty
   \mathrm{B}^d_{r (\mf x^\ast_n)/2}(\mf x^\ast_n). 
$$
We can then construct a partition of unity associated to the above covering. In particular, we can fix a sequence of smooth functions $\{ \theta_n \}_{n \ge 0}$ such that 1) $\sum_{n=0}^\infty \theta_n (\mf x) \equiv 1$; 2) for every $\mf x \in \R^d$, there are at most finitely many $n$'s such that 
$\theta_n (\mf x) \neq 0$; 3) for every $n\ge 1$, the support of $\theta_n$ is contained in $\mathrm{B}^d_{ r (\mf x^\ast_n)} (\mf x^\ast_n)$; 4) the support of $\theta_0$ is contained in $\R^d \setminus \bigcup_{n\ge 1} \mathrm{B}^d_{ r (\mf x^\ast_n)/2} (\mf x^\ast_n)$. 
We can now define the function $g$ by setting 
$$
    g (\mf x) : =
    \sum_{n=1}^\infty \theta_n (\mf x) g_{\mf x_n} \big(\mf w (\mf x) \big) + \frac{f(\mf x)}{a (\mf x)} \theta_0 (\mf x). 
$$
Note that the above series pointwise converges owing to property 2) above. Also, the function $g$ is of class $C^{m-1}$ because both the $\theta$-s and the $g_{\mf x_n}$-s are $C^{m-1}$. To establish~\eqref{e:aperg} it suffices to point out that 
$$
    a (\mf x) g (\mf x)  =
    \sum_{n=1}^\infty \theta_n (\mf x)  a (\mf x)
    g_{\mf x_n} \big(\mf w (\mf x) \big) + f(\mf x) \theta_0 (\mf x)=
    \underbrace{ \sum_{n=0}^\infty \theta_n (\mf x)}_{=1}  f (\mf x)
 = f(\mf x). 
$$
{\sc Step 3:} we establish uniqueness of the function $g$. 
By combining condition i) in the statement of the lemma with the Implicit Function Theorem we infer that $\mathcal A$ can be locally represented as an hypersurface. In particular, $\mathcal A$ is a closed set with empty interior. Owing to~\eqref{e:aperg}, we have $g = f/a$ on $\R^d \setminus \mathcal A$. Since $g$ is by definition $C^{m-1}$, and in particular locally Lipschitz continuous, it can be uniquely extended to the closure of  $\R^d \setminus \mathcal A$, that is  $\R^d$.
\end{proof}
\begin{corol}
\label{c:dividiamo}
Assume that $\mf f: \R^{k_1} \times \R^{k_2} \to \R^{k_3}$ is a $C^{m}$ function satisfying 
\begin{equation}
\label{e:c:fazero}
    \mf f(\mf x, \mf 0_{k_2}) =0, \quad \text{for every $\mf x \in \R^{k_1}$}.
\end{equation}
Then there is a $C^{m-1}$ function $\mf G: 
\R^{k_1} \times \R^{k_1} \to \mathbb M^{k_3 \times k_2}$ such that 
\begin{equation}
\label{e:c:tesi}
    \mf f(\mf x, \mf y) = \mf G(\mf x, \mf y) \mf y.   
\end{equation}
\end{corol}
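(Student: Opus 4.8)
The plan is to derive Corollary~\ref{c:dividiamo} from Lemma~\ref{l:dividiamo} by a coordinate-by-coordinate argument. First I would fix notation: write $\mf f = (f_1, \dots, f_{k_3})^t$ and $\mf y = (y_1, \dots, y_{k_2})^t$, so that it suffices to produce, for each component $f_\ell$ of $\mf f$, a row $\big(G_{\ell 1}(\mf x, \mf y), \dots, G_{\ell k_2}(\mf x, \mf y)\big)$ of $C^{m-1}$ functions with $f_\ell(\mf x, \mf y) = \sum_{j=1}^{k_2} G_{\ell j}(\mf x, \mf y)\, y_j$. Collecting these rows into the matrix $\mf G$ then yields~\eqref{e:c:tesi}. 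Thus the whole statement reduces to the scalar case $k_3 = 1$, which I treat next.

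For the scalar case, the natural approach is a telescoping decomposition in the $\mf y$ variables. I would introduce the intermediate points $\mf y^{(j)} := (y_1, \dots, y_j, 0, \dots, 0)^t$ for $j = 0, \dots, k_2$, so that $\mf y^{(0)} = \mf 0_{k_2}$ and $\mf y^{(k_2)} = \mf y$, and write the telescoping sum
\be
   f(\mf x, \mf y) = f(\mf x, \mf y^{(k_2)}) - f(\mf x, \mf y^{(0)}) = \sum_{j=1}^{k_2} \big[ f(\mf x, \mf y^{(j)}) - f(\mf x, \mf y^{(j-1)}) \big],
\eq
where we used~\eqref{e:c:fazero} to drop the $\mf y^{(0)}$ term. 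Now the $j$-th summand is a $C^m$ function of $(\mf x, \mf y)$ that vanishes whenever its $j$-th slot $y_j$ equals $0$, since then $\mf y^{(j)} = \mf y^{(j-1)}$. To extract the factor $y_j$, I would apply Lemma~\ref{l:dividiamo} with $d := k_1 + k_2$, with $a(\mf x, \mf y) := y_j$ (so that $\nabla a$ is the nonzero constant vector $\mf e_j$, and hypothesis i. of the lemma holds trivially) and with the function $f(\mf x, \mf y^{(j)}) - f(\mf x, \mf y^{(j-1)})$ in the role of the lemma's $f$ (hypothesis ii. being exactly the vanishing just noted). This produces a $C^{m-1}$ function $G_j(\mf x, \mf y)$ with $f(\mf x, \mf y^{(j)}) - f(\mf x, \mf y^{(j-1)}) = y_j\, G_j(\mf x, \mf y)$. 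Summing over $j$ gives $f(\mf x, \mf y) = \sum_{j=1}^{k_2} G_j(\mf x, \mf y)\, y_j$, which is the desired row; note that $m > 2$ as required by Lemma~\ref{l:dividiamo} is inherited from the hypothesis on $\mf f$.

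Alternatively, and perhaps more cleanly, one can bypass the telescoping and use the integral form of the fundamental theorem of calculus directly: since $\mf f(\mf x, \mf 0_{k_2}) = \mf 0_{k_3}$, write $\mf f(\mf x, \mf y) = \int_0^1 \frac{d}{dt} \mf f(\mf x, t\mf y)\, dt = \Big( \int_0^1 D_{\mf y} \mf f(\mf x, t\mf y)\, dt \Big) \mf y$, and set $\mf G(\mf x, \mf y) := \int_0^1 D_{\mf y}\mf f(\mf x, t\mf y)\, dt$. Differentiation under the integral sign shows $\mf G$ is $C^{m-1}$. I expect the main (minor) obstacle is simply bookkeeping: making sure the regularity class is tracked correctly — each application of Lemma~\ref{l:dividiamo} or the integral representation costs one derivative, landing in $C^{m-1}$ as claimed — and observing that, unlike in Lemma~\ref{l:dividiamo}, no uniqueness assertion is made here (indeed $\mf G$ is genuinely non-unique when $k_2 > 1$), so there is nothing further to prove on that front. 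I would present the integral-representation proof as the primary argument for brevity, mentioning the telescoping/Lemma~\ref{l:dividiamo} route as the reason the statement is placed as a corollary.
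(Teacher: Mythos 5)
Your telescoping argument is essentially the paper's proof: the paper specializes to $k_2 = 2$ for readability and writes $\mf f(\mf x, y_1, y_2) = \mf f(\mf x, y_1, 0) + \bigl[\mf f(\mf x, y_1, y_2) - \mf f(\mf x, y_1, 0)\bigr]$, applying Lemma~\ref{l:dividiamo} with $a = y_1$ and $a = y_2$ respectively; your sequence $\mf y^{(j)}$ is exactly the general-$k_2$ version of that decomposition, and you are careful (more so than the paper's phrasing) to invoke the lemma with $d = k_1 + k_2$ so that $C^{m-1}$ regularity in all variables, including $\mf x$, comes out directly rather than being left implicit.

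Your alternative via the integral representation $\mf G(\mf x, \mf y) = \int_0^1 D_{\mf y}\mf f(\mf x, t\mf y)\,dt$ is a genuinely different and cleaner route (it is Hadamard's lemma in the factored form). It bypasses Lemma~\ref{l:dividiamo} entirely, produces all $k_2$ columns of $\mf G$ in one stroke without telescoping, gives a canonical choice of $\mf G$, and in fact works under the weaker hypothesis $m \ge 1$, whereas the lemma-based route inherits the restriction $m > 2$. What the paper's route buys is architectural economy — Lemma~\ref{l:dividiamo} is needed elsewhere in the paper (e.g.\ to factor $\alpha - \sigma$ out of non-polynomial expressions where the zero set is not a coordinate hyperplane), so deriving the corollary from it avoids introducing a second mechanism; the integral trick only handles the special case $a(\mf x, \mf y) = y_j$ and cannot replace the lemma in those other applications. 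Both arguments are correct; presenting the integral proof as primary, as you suggest, would be a legitimate simplification of this particular corollary.
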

\begin{proof}
First, we point out that we can restrict with no loss of generality to the case when 
$k_3=1$. Indeed, if $k_3 >1$ then we apply the result in the case $k_3 =1$ to each component of $\mf f$. Also, for simplicity we assume $k_2=2$ and we denote by $(y_1, y_2)$ the components of $\mf y$. The case $k_2 >2$ does not pose additional challenges. We decompose $\mf f$ as 
\begin{equation}
\label{e:c:decompo}
    \mf f(\mf x, y_1, y_2) =  \mf f(\mf x, y_1, 0) +
    \mf f(\mf x, y_1, y_2) - \mf f(\mf x, y_1, 0). 
\end{equation}
For every fixed $\mf x$, the term $\mf f(\mf x, y_1, 0)$ satisfies the assumptions of Lemma~\ref{l:dividiamo} provided that $a(y_1) = y_1$. Hence, $$
    \mf f(\mf x, y_1, 0) = g_1 (\mf x, y_1) y_1
$$
for some $C^{m-1}$ function $g_1: \R^{k_1} \times \R \to \R$. We now consider the second term on the right hand side of~\eqref{e:c:decompo} and we point out that for every fixed $\mf x$, $y_1$ it satisfies the assumptions of Lemma~\ref{l:dividiamo} provided that $a (y_2) =y_2$. This yields to  
$
     \mf f(\mf x, y_1, y_2) - \mf f(\mf x, y_1, 0) = g_2 (\mf x, y_1, y_2) y_2. 
$
By plugging the above equalities into~\eqref{e:c:decompo}  we arrive at~\eqref{e:c:tesi} provided that $\mf G: = (g_1, g_2)$. This concludes the proof of the corollary.   
\end{proof}
\section{Proof of Lemma~\ref{l:key}}
\label{s:proof}
\subsection{Preliminary results}
We first quote two linear algebra results.
\begin{lemma}
\label{l:sym}
         Let $\mf S$, $\mf T \in \mathbb M^{d \times d}$ be two real symmetric matrices and assume that $\mf T$ is positive definite. 
         Then all the eigenvalues of $\mf T^{-1}\mf S$ are real numbers and the matrix $\mf T^{-1}\mf S$ is diagonalizable through a real matrix. Also, assume that $\mf w_i$ and $\mf w_j$ are eigenvectors associated to different eigenvalues. Then
         \be 
         \label{e:ortho}
                \mf w_i^t  \mf T \mf w_j=0{\comment.}
         \eq  
         \end{lemma}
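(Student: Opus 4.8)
The statement to prove is Lemma~\ref{l:sym}: for real symmetric $\mf S$ and real symmetric positive definite $\mf T$ of the same size, all eigenvalues of $\mf T^{-1}\mf S$ are real, $\mf T^{-1}\mf S$ is diagonalizable over $\mathbb R$, and eigenvectors associated to distinct eigenvalues are $\mf T$-orthogonal.

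\begin{proof}[Proof of Lemma~\ref{l:sym}]
The plan is to use the positive square root of $\mf T$ to reduce the generalized eigenvalue problem $\mf S \mf w = \lambda \mf T \mf w$ to an ordinary one for a symmetric matrix. First I would recall that, since $\mf T$ is symmetric and positive definite, it admits a unique symmetric positive definite square root $\mf P$ with $\mf P^2 = \mf T$; $\mf P$ is invertible and $\mf P^{-1}$ is again symmetric positive definite. Set $\mf C := \mf P^{-1} \mf S \mf P^{-1}$. Then $\mf C$ is symmetric, since $\mf C^t = (\mf P^{-1})^t \mf S^t (\mf P^{-1})^t = \mf P^{-1}\mf S\mf P^{-1} = \mf C$, using that $\mf S$ and $\mf P^{-1}$ are symmetric. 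By the spectral theorem $\mf C$ has only real eigenvalues and an orthonormal basis of real eigenvectors.

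Next I would transfer this information back to $\mf T^{-1}\mf S$. Observe that $\mf T^{-1}\mf S = \mf P^{-1}(\mf P^{-1}\mf S\mf P^{-1})\mf P = \mf P^{-1}\mf C\mf P$, so $\mf T^{-1}\mf S$ is similar (over $\mathbb R$) to the symmetric matrix $\mf C$. Similarity preserves eigenvalues, so all eigenvalues of $\mf T^{-1}\mf S$ are real; and since $\mf C$ is diagonalizable through a real orthogonal matrix, $\mf T^{-1}\mf S$ is diagonalizable through the real matrix obtained by composing with $\mf P^{-1}$. Concretely, if $\mf C = \mf Q \mf \Lambda \mf Q^t$ with $\mf Q$ real orthogonal and $\mf \Lambda$ real diagonal, then $\mf T^{-1}\mf S = (\mf P^{-1}\mf Q)\mf \Lambda (\mf P^{-1}\mf Q)^{-1}$.

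Finally I would prove the $\mf T$-orthogonality~\eqref{e:ortho}. Let $\mf w_i, \mf w_j$ be eigenvectors of $\mf T^{-1}\mf S$ with $\mf T^{-1}\mf S \mf w_i = \lambda_i \mf w_i$, $\mf T^{-1}\mf S \mf w_j = \lambda_j \mf w_j$ and $\lambda_i \neq \lambda_j$; equivalently $\mf S \mf w_i = \lambda_i \mf T \mf w_i$ and $\mf S \mf w_j = \lambda_j \mf T \mf w_j$. Then $\mf w_j^t \mf S \mf w_i = \lambda_i \mf w_j^t \mf T \mf w_i$, while transposing the second relation and multiplying by $\mf w_i$ gives $\mf w_j^t \mf S \mf w_i = \mf w_i^t \mf S \mf w_j = \lambda_j \mf w_i^t \mf T \mf w_j = \lambda_j \mf w_j^t \mf T \mf w_i$, where I used that $\mf S$ and $\mf T$ are symmetric. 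Subtracting, $(\lambda_i - \lambda_j)\mf w_j^t \mf T \mf w_i = 0$, and since $\lambda_i \neq \lambda_j$ we conclude $\mf w_j^t \mf T \mf w_i = 0$, which is~\eqref{e:ortho}.
\end{proof}

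Since every step here is a standard linear-algebra manipulation, there is no serious obstacle; the only point requiring a word of care is the existence and symmetry of the square root $\mf P$ of $\mf T$, which follows from diagonalizing $\mf T$ by an orthogonal matrix and taking positive square roots of the (positive) eigenvalues.
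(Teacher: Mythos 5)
Your proof is correct and takes essentially the same route as the paper: both reduce $\mf T^{-1}\mf S$ to a similar symmetric matrix via a factorization of $\mf T$ (the paper writes $\mf T = \mf M^t\mf M$ for some invertible $\mf M$ and conjugates to $(\mf M^t)^{-1}\mf S\mf M^{-1}$; you take the special case $\mf M = \mf P$ symmetric), and the $\mf T$-orthogonality argument via symmetry of $\mf S$ and $\mf T$ is the same. The only cosmetic difference is your choice of the symmetric square root rather than an arbitrary factor.
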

\begin{proof}
The result is known, but we provide the proof for the sake of completeness. Since $\mf T$ is symmetric and positive definite, then $\mf T = \mf M^t \mf M$ for some invertible matrix $\mf M$. For every $\mf r \in \R^d$, $\lambda \in \R$ we have  
$$
    \mf T^{-1} \mf S \mf r = \lambda \mf r \iff \mf S \mf r = \lambda \mf T \mf r \iff \mf S (\mf M)^{-1} \mf M \mf r =
    \lambda \mf M^t \mf M \mf r \iff (\mf M^t)^{-1} \mf S (\mf M)^{-1} \mf M \mf r =
    \lambda \mf M \mf r
$$
This implies that $(\lambda, \mf r)$ is an eigencouple for $\mf T^{-1} \mf S$ if and only if $(\lambda, \mf M \mf r)$ is an eigencouple for  $(\mf M^t)^{-1} \mf S (\mf M)^{-1}$. Since $(\mf M^t)^{-1} \mf S (\mf M)^{-1}$ is a symmetric matrix, then it has  real eigenvalues. Also, it is diagonalizable. We term $\mf r_1, \dots, \mf r_d$ its eigenvectors, which are linearly independent. Since $\mf M \mf r_1, \dots,\mf M \mf r_d$ are also linearly independent, we conclude that $\mf T^{-1} \mf S$ is also diagonalizable. 

We are left to establish~\eqref{e:ortho}. We fix two eigenvalues $\lambda_i \neq \lambda_j$ and we consider the relations $\mf S \mf w_i = \lambda_i \mf T \mf w_i$ and $\mf S \mf w_j = \lambda_j \mf T \mf w_j$, which owing to the fact that $\mf S$ is symmetric yields $\lambda_i \mf w_j^t \mf T \mf w_i= \lambda_j \mf w_i^t \mf T \mf w_j$. Since $\mf T$ is also symmetric, this implies~\eqref{e:ortho}. 
\end{proof}
We now quote a result concerning the signature of $\mf T^{-1} \mf S$. The proof is based on an homotopy argument, see~\cite[Lemma~3.1]{BianchiniSpinolo:ARMA} and~\cite{BenzoniGavageSerreZumbrun,MajdaPego}. 
\begin{lemma}
\label{l:counting}
Under the same assumptions as in Lemma~\ref{l:sym}, the signature of $\mf T^{-1} \mf S$ is the same as the signature of $\mf S$. In other words, $\mf T^{-1} \mf S$ has the same number of strictly negative and strictly positive eigenvalues as $\mf S$. Also, $\mf T^{-1}\mf S$ admits the eigenvalue $0$ if and only if $\mf S$ does, and the multiplicity is the same. 
\end{lemma}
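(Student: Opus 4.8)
The final statement is Lemma~\ref{l:counting}: under the hypotheses of Lemma~\ref{l:sym} (with $\mf S,\mf T$ symmetric and $\mf T$ positive definite), the matrix $\mf T^{-1}\mf S$ has the same signature as $\mf S$.

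My plan is to use the decomposition $\mf T=\mf M^t\mf M$ with $\mf M$ invertible, exactly as in the proof of Lemma~\ref{l:sym}. The key observation is that $(\lambda,\mf r)$ is an eigencouple of $\mf T^{-1}\mf S$ if and only if $(\lambda,\mf M\mf r)$ is an eigencouple of the symmetric matrix $(\mf M^t)^{-1}\mf S\mf M^{-1}$. Hence $\mf T^{-1}\mf S$ and $(\mf M^t)^{-1}\mf S\mf M^{-1}$ have the same eigenvalues with the same algebraic (and geometric) multiplicities; in particular they have the same signature. So the lemma reduces to the claim that the symmetric matrix $(\mf M^t)^{-1}\mf S\mf M^{-1}$ has the same signature as $\mf S$; but this is precisely Sylvester's law of inertia, since $(\mf M^t)^{-1}\mf S\mf M^{-1}=(\mf M^{-1})^t\mf S\,\mf M^{-1}$ is a congruence transformation of $\mf S$ by the invertible matrix $\mf M^{-1}$. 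First I would recall Sylvester's law of inertia (congruent symmetric matrices have equal numbers of positive, negative, and zero eigenvalues), then assemble the two observations above. The statement about the eigenvalue $0$ and its multiplicity follows immediately, since $\mf T^{-1}\mf S$ and $(\mf M^t)^{-1}\mf S\mf M^{-1}$ are similar, and the latter is congruent to $\mf S$, so $\dim\ker(\mf T^{-1}\mf S)=\dim\ker\mf S$.

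Alternatively, and this is the route the authors hint at with the homotopy argument, one can argue as follows. Consider the family $\mf T_s:=(1-s)\mf I_d+s\mf T$ for $s\in[0,1]$; each $\mf T_s$ is symmetric and positive definite (a convex combination of positive definite matrices), hence invertible, so $\mf T_s^{-1}\mf S$ is well defined and depends continuously on $s$. By Lemma~\ref{l:sym} applied to the pair $(\mf S,\mf T_s)$, the matrix $\mf T_s^{-1}\mf S$ has only real eigenvalues for every $s$. Moreover $\mf T_s^{-1}\mf S$ is singular if and only if $\mf S$ is singular (since $\mf T_s$ is invertible), so the number of zero eigenvalues is constant — in fact $0$ — along the homotopy when $\mf S$ is nonsingular, and the number of nonzero eigenvalues cannot change sign without passing through $0$. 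Hence the number of strictly positive and strictly negative eigenvalues of $\mf T_s^{-1}\mf S$ is constant in $s$; evaluating at $s=0$ gives $\mf S$ and at $s=1$ gives $\mf T^{-1}\mf S$, which yields the claim. When $\mf S$ is singular one first restricts to the orthogonal complement of $\ker\mf S$, on which $\mf S$ is nonsingular, or simply invokes the congruence argument above which handles the singular case uniformly.

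The main (very minor) obstacle is bookkeeping when $\mf S$ is singular: the homotopy argument, taken naively, only controls the continuous movement of the nonzero eigenvalues, so one must separately argue that the dimension of the kernel is preserved. The cleanest fix is the congruence/Sylvester route, which handles all cases at once and makes the multiplicity statement transparent; I would present that as the main proof, and mention the homotopy perspective only as a remark, citing~\cite[Lemma~3.1]{BianchiniSpinolo:ARMA} and~\cite{BenzoniGavageSerreZumbrun,MajdaPego} as the authors do. No deep idea is required here — the content is entirely Sylvester's law of inertia together with the elementary similarity observation from Lemma~\ref{l:sym}.
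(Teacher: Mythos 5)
Your proof is correct, and it takes a genuinely different (and cleaner) route than the one the paper gestures at. The paper does not prove Lemma~\ref{l:counting} at all --- it only cites a homotopy argument from~\cite[Lemma~3.1]{BianchiniSpinolo:ARMA} and~\cite{BenzoniGavageSerreZumbrun,MajdaPego}. In that argument one deforms $\mf T$ continuously to the identity through positive definite matrices, invokes Lemma~\ref{l:sym} to keep all eigenvalues of $\mf T_s^{-1}\mf S$ real along the homotopy, and tracks the eigenvalues; since $\mf T_s$ is always invertible, none of them can cross $0$, so the counts of positive and negative eigenvalues are constant. Your primary argument instead writes $\mf T = \mf M^t \mf M$ and observes the single clean identity
\[
\mf T^{-1}\mf S \;=\; \mf M^{-1}\bigl[(\mf M^{-1})^t \mf S\, \mf M^{-1}\bigr]\mf M,
\]
so that $\mf T^{-1}\mf S$ is similar to a matrix congruent to $\mf S$; Sylvester's law of inertia then gives the signature equality, and similarity gives equality of kernel dimensions, which handles the multiplicity of $0$ in one stroke. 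This is sharper than the homotopy argument on exactly the point you flag: when $\mf S$ is singular the homotopy only shows the nonzero eigenvalues cannot change sign, and one still needs a separate (easy, but separate) observation that $\dim\ker(\mf T_s^{-1}\mf S) = \dim\ker\mf S$ is constant; your congruence route makes that automatic. The one thing I would tighten is the wording ``same eigenvalues with the same algebraic (and geometric) multiplicities; in particular they have the same signature'': similarity preserves eigenvalues and multiplicities, but ``signature'' is a statement about the symmetric matrix $(\mf M^{-1})^t\mf S\mf M^{-1}$, not about the non-symmetric $\mf T^{-1}\mf S$; what you really use is that similarity preserves the spectrum (with multiplicity), and then Sylvester gives the counts for the symmetric representative. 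Your final write-up already makes this clear, so this is a cosmetic remark only.
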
 
We now quote a particular case of~\cite[Lemma 4.7]{BianchiniSpinolo:ARMA} and for completeness we provide a sketch of the proof.
\begin{lemma}
\label{l:pieta}
Assume that $\mf A$ and $\mf B$ satisfy Hypothesis~\ref{h:ks} with $h=1$ and evaluate them at a point $\mf u^\ast$ satisfying~\eqref{e:uast}. 
Then all the roots of the polynomial   
\begin{equation}
\label{e:pieta}
   \mathcal P(s) = \mathrm{det} (\mf A  - s \mf B)
\end{equation}
are real numbers. Also, $k-2$ are strictly negative, $N-k-1$ are strictly positive and the root $0$ has multiplicity one. 
\end{lemma}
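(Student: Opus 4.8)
\textbf{Proof plan for Lemma~\ref{l:pieta}.}
The plan is to reduce the statement about the roots of $\mathcal P(s) = \mathrm{det}(\mf A - s\mf B)$ to the signature computation already carried out in Lemma~\ref{l:signature} (with $h=1$). First I would use the block structure: by Hypothesis~\ref{h:normal} with $h=1$, the matrix $\mf B$ is $\mathrm{diag}(0, \mf B_{22})$ with $\mf B_{22}$ symmetric positive definite, and $\mf A$ has the block form~\eqref{e:blockae2}. I would expand the determinant $\mathrm{det}(\mf A - s\mf B)$ along the first row/column using the Schur complement: since the $(1,1)$ entry of $\mf A - s\mf B$ is $a_{11}(\mf u^\ast)$, which equals $\alpha(\mf u^\ast) e_{11}(\mf u^\ast) = 0$ by~\eqref{e:sing1d} and~\eqref{e:uast}, the top-left block is singular, so I would instead factor via the $(N-1)\times(N-1)$ block. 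One gets
$$
   \mathcal P(s) = \mathrm{det}(\mf A_{22} - s\mf B_{22}) \cdot a_{11} - \mf a_{21}^t \,\mathrm{adj}(\mf A_{22} - s\mf B_{22})\, \mf a_{21},
$$
and since $a_{11}=0$ at $\mf u^\ast$ this collapses to $\mathcal P(s) = -\mf a_{21}^t\,\mathrm{adj}(\mf A_{22} - s\mf B_{22})\,\mf a_{21}$. A cleaner route is to observe that, because $\mf B$ is singular with one-dimensional kernel, $\mathcal P$ has degree at most $N-1$ in $s$, and that for $s$ not a root of $\mathrm{det}(\mf A_{22} - s\mf B_{22})$ we can write $\mathcal P(s) = -\mathrm{det}(\mf A_{22} - s\mf B_{22})\cdot \mf a_{21}^t(\mf A_{22}-s\mf B_{22})^{-1}\mf a_{21}$.

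Next I would relate the roots of $\mathcal P$ to an eigenvalue problem for a single matrix so that Lemma~\ref{l:signature} (or rather its proof technique) applies. The key observation is that $\mathcal P(s)=0$ with $s\neq 0$ means there is a nonzero vector $\mf w$ with $(\mf A - s\mf B)\mf w = \mf 0$; writing $\mf w = (w_1, \mf w_2)$, the first equation gives $\mf a_{21}^t\mf w_2 = 0$ (using $a_{11}=0$ and $B$'s first row vanishing), and the second gives $\mf a_{21}w_1 + \mf A_{22}\mf w_2 = s\mf B_{22}\mf w_2$. From the Kawashima--Shizuta condition (Hypothesis~\ref{h:ks}) one checks $\mf w_2 \neq \mf 0$, and one can solve the first relation to express the problem as a generalized eigenvalue problem $\mf B_{22}^{-1}(\text{symmetric matrix})\mf w_2 = s\,\mf w_2$ on the subspace orthogonal to $\mf a_{21}$, plus a rank-one correction coming from $w_1$. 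This is precisely the structure that appears in the analysis of $\mf R_0^t\mf A_{22}\mf R_0$ in Lemma~\ref{l:key} and Lemma~\ref{l:signature}: the relevant quadratic form is $\mf B_{22}^{-1}$ composed with a form that, modulo the Kawashima--Shizuta coupling, has the signature dictated by strict hyperbolicity~\eqref{e:sh}. I would then invoke Lemma~\ref{l:sym} and Lemma~\ref{l:counting} to conclude all roots are real, and count signs using~\eqref{e:sh}: the $k-1$ negative and $N-k$ positive eigenvalues of $\mf E^{-1}\mf A$ at $\mf u^\ast$ (with $\lambda_k=0$) translate, after removing the one direction absorbed into the kernel of $\mf B$, into $k-2$ negative roots, $N-k-1$ positive roots, and a simple root at $0$.

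For the root at $0$: $\mathcal P(0) = \mathrm{det}\,\mf A(\mf u^\ast)$, and since $\lambda_k(\mf u^\ast)=0$ is a simple eigenvalue of $\mf E^{-1}\mf A$ by strict hyperbolicity, $\mathrm{det}\,\mf A(\mf u^\ast)=0$, so $0$ is a root; simplicity follows by differentiating $\mathcal P$ at $0$ and relating $\mathcal P'(0)$ to the fact that $\mf A(\mf u^\ast)$ has a one-dimensional kernel transversal to $\mathrm{ker}\,\mf B$ (again Hypothesis~\ref{h:ks}), so that $0$ is not a repeated root of $\mathrm{det}(\mf A - s\mf B)$. The cleanest formalization is likely the homotopy/continuation argument already cited for Lemma~\ref{l:counting}: deform $\mf A$ toward $\mf B$ (or toward a convenient normal form) keeping the Kawashima--Shizuta condition, track the roots of $\mathcal P$, and show none can cross $0$ or become complex along the path, then evaluate at an endpoint where the count is transparent. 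I expect the main obstacle to be the bookkeeping in this sign count — precisely matching the signature of the rank-one-corrected quadratic form on $\mf a_{21}^\perp$ with the eigenvalue count of $\mf E^{-1}\mf A$, and handling the degree drop of $\mathcal P$ (from $N$ to $N-1$) so that the "missing" root is correctly accounted for as the direction in $\mathrm{ker}\,\mf B$ — but all the required linear-algebra inputs (Lemmas~\ref{l:sym},~\ref{l:counting},~\ref{l:signature}, and~\cite[Lemma~4.7]{BianchiniSpinolo:ARMA}) are available, so this is a matter of careful assembly rather than a new idea.
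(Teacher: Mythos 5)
Your setup is sound — the Schur-complement expansion, the observation that $a_{11}(\mf u^\ast)=0$ collapses $\mathcal P$, the derivation of the system $\mf a_{21}^t\mf w_2=0$, $\mf a_{21}w_1+\mf A_{22}\mf w_2=s\mf B_{22}\mf w_2$, and the use of $\mf a_{21}\neq\mf 0$ to get $\mf w_2\neq\mf 0$: all of this is correct and is essentially the content of the paper's Lemma~\ref{l:iff}, which identifies the roots of $\mathcal P$ with the eigenvalues of $\mf R_0^t\mf A_{22}\mf R_0$. But this reduction does not on its own give the sign count, and the step you flag as ``the main obstacle'' is in fact the heart of the lemma, which you leave unproved. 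Worse, the route you fall back on is circular: in the paper, Lemma~\ref{l:key} (the signature of $\mf R_0^t\mf A_{22}\mf R_0$) is \emph{deduced from} Lemma~\ref{l:pieta} together with Lemma~\ref{l:iff}, so you cannot invoke ``the analysis of $\mf R_0^t\mf A_{22}\mf R_0$ in Lemma~\ref{l:key}'' to prove Lemma~\ref{l:pieta}. Lemma~\ref{l:signature} is not circular to cite, but it concerns a different matrix ($-\mf B_{22}^{-1}\mf A_{21}\mf E_{11}^{-1}\mf A_{21}^t$, rank one when $h=1$) and does not yield the signature you need.

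The concrete missing idea is the following. The paper regularizes $\mf B$, not $\mf A$: it sets $d(w,s)=\det\bigl(\mf A-s(\mf B+w\mf I_N)\bigr)$ and for $w>0$ uses Lemma~\ref{l:sym} plus Lemma~\ref{l:counting} to conclude that $d(w,\cdot)$ has exactly $k-1$ strictly negative real roots, $N-k$ strictly positive ones, and one root equal to $0$. But at $\mf u^\ast$ the degree of $\mathcal P$ is $N-2$ — not $N-1$ as you write; $a_{11}=0$ kills the $\det(\mf A_{22}-s\mf B_{22})$ term — so as $w\to0^+$ precisely \emph{two} of the $N$ roots escape to infinity. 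One must then decide which two: if both escaped from the negative side, or both from the positive side, the final count would be wrong. The paper resolves this by reciprocating ($\zeta=1/s$), studying the eigenvalues $w(\zeta)$ of $\zeta\mf A-\mf B$ near $\zeta=0$, and computing the Taylor expansion $w_N(\zeta)=\mf a_{21}^t\mf B_{22}^{-1}\mf a_{21}\,\zeta^2+o(\zeta^2)$, whose leading coefficient is strictly positive (here Lemma~\ref{l:kaw} enters). This forces one escaping root to $-\infty$ and one to $+\infty$, leaving $k-2$ negative, $N-k-1$ positive, and a simple zero. Your sketch contains neither the specific perturbation $\mf B\mapsto\mf B+w\mf I_N$ nor this asymptotic analysis of the escaping roots, and your alternative suggestion (``deform $\mf A$ toward $\mf B$'') is too vague to substitute for it — it is not clear that the Kawashima–Shizuta condition or reality of roots is preserved along such a path, nor what endpoint would make the count transparent.
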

\begin{proof}
By using the block decomposition of $\mf B$ and of $\mf A$, recalling that $a_{11} (\mf u^\ast)=0$ and developing the determinant from the first row we conclude that $\mathcal P$ is a polynomial of degree $N-2$. Next, we introduce a perturbation argument.  We define the function 
          $d: \mathbb R^2 \to \R$ by setting  
          \begin{equation}
          \label{e:psigma}
                   d(w, s) := \mathrm{det} 
                   \Big(  
                   \mf A - s  (\mf B + w \mf I_N )  \Big).
          \end{equation} 
          Owing to Hypothesis~\ref{h:ks}, the matrices $\mf A$ and $\mf B + w \mf I_N$ are both 
          symmetric. Also, the matrix $\mf B + w \mf I_N$ is positive definite provided 
          that $w >0$.  Owing to Lemma~\ref{l:sym}, for every $w>0$ the equation $d(w, s)=0$ has $N$ real roots: we term them $s_1 (w), \dots, s_N (w)$  (as usual, each root is counted according to its multiplicity). We now fix $i=1, \dots, N$ and we investigate the behavior of $s_i (w)$ 
          for $w \to 0^+$.  Owing to classical results on algebraic functions (see for instance~\cite[Chapter 5]{Knopp}) there are only two possibilities: either $s_i(w)$ is continuous at $w =0$ and $s_i (0)$ is a root of the polynomial $\mathcal P$ 
          defined at~\eqref{e:pieta} or ${\lim_{w \to 0^+} |s_i (w) | = + \infty}$. Also, every root of $\mathcal P$ can be obtained as the limit $\lim_{j \to + \infty}
          s_j(w)$ for some $j=1, \dots, N$. If a root $\bar s$ has multiplicity $m$, then there are exactly $m$ functions $s_{j_1}(w), \dots,s_{j_m} (w)$ such that 
           $$
             \lim_{w \to 0^+} s_{j_1} (w) = 
             \dots = \lim_{w \to 0^+} s_{j_m} (w) = \bar s.  
          $$
          We draw two conclusions from the previous considerations: i) 
          since the functions 
          $s_1 (w), \dots, s_N(w)$ are real numbers, then all the roots of $\mathcal P$ 
          are real numbers;
          ii) since the polynomial $\mathcal P$ has degree $N-2$, there are 
          exactly two functions among $s_1 (w), \dots, s_N(w)$ that are unbounded for $w \to 0^+$. Up to a change in the order, we can assume that 
          \begin{equation}
          \label{e:vanainfty}
               \lim_{w \to 0^+} | s_{1} (w)| =
                \lim_{w \to 0^+} | s_{N} (w)| = + \infty.  
          \end{equation}
          Note that, owing to Lemma~\ref{l:counting} and to the 
          definition~\eqref{e:psigma} of $d$, when $w>0$ there are exactly $k-1$ 
          functions among $s_1 (w), \dots, s_N(w)$ that attain
          strictly 
          negative values, exactly $N-k$ that attain strictly positive values and one function which is identically $0$. 
          By recalling~\eqref{e:vanainfty}, to conclude the proof of the lemma we are left to show that (up to a change in the order) 
          \begin{equation}
          \label{e:larma:fg}
                   \lim_{ w \to 0^+ } s_1 (w) = - \infty \qquad \text{and} 
                   \qquad 
                    \lim_{ w \to 0^+ } s_N (w) = + \infty. 
          \end{equation}        
        To this end, we study the behavior for $w \to 0^+$ of $s_1$ and 
        $s_N$  satisfying~\eqref{e:vanainfty}. We set $\zeta_1 (w) := 
        1/ s_1 (w)$ and $\zeta_N (w) : = 1/s_N(w)$ and we 
        point out that they are both well defined since, owing to~\eqref{e:vanainfty}, both $s_1$ and $s_N$ are bounded away from $0$. Also, 
        \begin{equation}
        \label{e:vanazero}
                \lim_{w \to 0^+} \zeta_1(w) = 
                 \lim_{w \to 0^+} \zeta_N (w) =0. 
        \end{equation}
        Since 
        $s_1$ and $s_N$ are both roots of~\eqref{e:psigma}, then we arrive at  
        \begin{equation}
        \label{e:2det}
            \mathrm{det} \big(\zeta_1 (w) \mf A - \mf B - w \mf I_N \big) =0, \qquad 
            \mathrm{det} \big(\zeta_N  (w)\mf A- \mf B - w \mf I_N \big) =0. 
        \end{equation}
        We now study the eigenvalue problem 
        $
            \mathrm{det} \big(\zeta \mf A - \mf B - w(\zeta) \mf I_N \big) =0, 
        $
        namely the problem of determining the eigenvalues of the matrix 
        $\zeta \mf A-\mf B$ as functions of $\zeta$. Motivated by~\eqref{e:vanazero}, 
        we investigate the limit $\zeta \to 0$.
        We term $w_1(\zeta), \dots, w_N(\zeta)$
        the eigenvalues of  $\zeta \mf A - \mf B$ and, by relying again on classical 
        results on algebraic functions~\cite[Chapter 5]{Knopp}, 
        we conclude that we can order 
        $w_1(\zeta), \dots, w_N(\zeta)$ in such a way that the behavior for $\zeta \to 0^+$ is as follows: 
        the eigenvalues $w_1(\zeta), \dots, w_{N-1}(\zeta)$ converge
        to the $N-1$ strictly negative eigenvalues of $-\mf B$ (i.e. of $-\mf B_{22}$);
        the eigenvalue $w_N (\zeta)$ converge for $\zeta \to 0$ to $0$
        (the remaining eigenvalue of $-\mf B$). 
        By relying on the analysis in~\cite{BianchiniHanouzetNatalini}
        one can show (see also~\cite{BianchiniSpinolo:ARMA}) that 
        $w_N (\zeta)$ has the following Taylor expansion:
         \begin{equation}
    \label{e:ab}
             w_N (\zeta) = \mf a^t_{21} (\mf B_{22})^{-1} \mf a_{21} \zeta^2 + o(\zeta^2) \quad \text{as $\zeta \to 0$}.
    \end{equation}    
    Since $\mf B_{22}$ is positive definite and $\mf a_{21} \neq  \mf 0_{N-1}$ owing to Lemma~\ref{l:kaw}, then 
    $
        \mf a^t_{21} (\mf B_{22})^{-1} \mf a_{21} >0. 
    $
    Hence, there are exactly two distinct functions such that $\zeta_1 (w) <0$ and $\zeta_N (w)>0$ for $w>0$ and 
    $
        w_N (\zeta_1 (w) )= w, \;  w_N (\zeta_N (w) )= w.$ 
     By recovering $s_1$ and $s_N$ as $s_1 (w) = 1/ \zeta_1 (w)$, $s_N (w) =
      1/ \zeta_N (w)$ we eventually establish~\eqref{e:larma:fg}. 
\end{proof}
To complete the proof of Lemma~\ref{l:key} we need the following 
\begin{lemma}
\label{l:iff}Assume that $\mf A$ and $\mf B$ satisfy Hypotheses~\ref{h:ks} and~\eqref{e:jde} with $h=1$ and evaluate them at a point $\mf u^\ast$ satisfying~\eqref{e:uast}. 
For a given $\varsigma \in \R$, the following statements are equivalent: 
\begin{itemize}
\item[i)] $\varsigma$ is an eigenvalue of the matrix $(\mf R_0)^t \mf A_{22} \mf R_0(\mf u^\ast, \mf 0_{N-2}, 0)$;
\item[ii)] $\varsigma$ is a root of the polynomial $\mathcal P$ defined as in~\eqref{e:pieta}. 
\end{itemize}
\end{lemma}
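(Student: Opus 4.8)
The plan is to show that the polynomial $\mathcal P(s) = \det(\mf A - s\mf B)$, which has degree $N-2$ by the argument in Lemma~\ref{l:pieta}, factors (up to a nonzero constant) as the characteristic polynomial of $\mf R_0^t \mf A_{22} \mf R_0$ evaluated at $(\mf u^\ast, \mf 0_{N-2}, 0)$. Since the latter is an $(N-2)\times(N-2)$ symmetric matrix, both polynomials have degree $N-2$, so it suffices to establish that every root of one is a root of the other, with the correct multiplicities; by symmetry of roles, the cleanest route is to prove the equivalence (i)$\iff$(ii) directly as stated and then deduce equality of multiplicities from a counting argument at the very end (the number of roots with each sign must match, by Lemma~\ref{l:pieta} and Lemma~\ref{l:key}).

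First I would set up coordinates: write $\mf v = (v_1, \mf v_2)^t$ with $v_1 \in \R$, $\mf v_2 \in \R^{N-1}$, recall the block decomposition~\eqref{e:blockae2} with $a_{11}(\mf u^\ast)=0$, and recall from Lemma~\ref{l:slow} (more precisely from~\eqref{e:definiamod}) that $\mf a_{21}^t \mf R_0(\mf u^\ast, \mf 0_{N-2}, 0) = \mf 0_{N-2}^t$, i.e. the columns of $\mf R_0$ span exactly the $(N-2)$-dimensional orthogonal complement $Z = \{\mf z_2 : \mf a_{21}^t \mf z_2 = 0\}$ (this uses Lemma~\ref{l:kaw} to know $\mf a_{21}\neq \mf 0_{N-1}$). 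The key observation is the following equivalence for $\varsigma \in \R$: $\varsigma$ is a root of $\mathcal P$ iff there is $\mf v = (v_1, \mf v_2)^t \neq \mf 0_N$ with $(\mf A - \varsigma \mf B)\mf v = \mf 0_N$; writing this out with~\eqref{e:blockae2} and~\eqref{e:B} gives $\mf a_{21}^t \mf v_2 = 0$ (the first equation, since $a_{11}(\mf u^\ast)=0$ and the $(1,1)$ entry of $\mf B$ is $0$) together with $\mf a_{21} v_1 + (\mf A_{22} - \varsigma \mf B_{22})\mf v_2 = \mf 0_{N-1}$. The first condition says $\mf v_2 \in Z$, so $\mf v_2 = \mf R_0 \mf z_0$ for some $\mf z_0 \in \R^{N-2}$; substituting and left-multiplying the second condition by $\mf R_0^t$ kills the $\mf a_{21} v_1$ term (again by $\mf a_{21}^t \mf R_0 = \mf 0$) and yields $(\mf R_0^t \mf A_{22} \mf R_0 - \varsigma\, \mf R_0^t \mf B_{22} \mf R_0)\mf z_0 = \mf 0_{N-2}$, i.e. $\varsigma$ is a (generalized) eigenvalue of the pencil $(\mf R_0^t \mf A_{22}\mf R_0, \mf R_0^t \mf B_{22}\mf R_0)$; by~\eqref{e:identity} the matrix $\mf R_0^t \mf B_{22}\mf R_0$ at $(\mf u^\ast, \mf 0_{N-2}, 0)$ is the identity, so this is exactly the condition that $\varsigma$ be an eigenvalue of $\mf R_0^t \mf A_{22}\mf R_0$.

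For the converse direction I would run the same chain backwards: given an eigenvector $\mf z_0 \neq \mf 0_{N-2}$ of $\mf R_0^t \mf A_{22}\mf R_0$ with eigenvalue $\varsigma$, set $\mf v_2 := \mf R_0 \mf z_0$ (nonzero since $\mf R_0$ has full column rank by Lemma~\ref{l:slow}); then $\mf R_0^t(\mf A_{22} - \varsigma \mf B_{22})\mf v_2 = \mf 0_{N-2}$ means $(\mf A_{22} - \varsigma \mf B_{22})\mf v_2$ is orthogonal to $Z$, hence parallel to $\mf a_{21}$, so there is a scalar $v_1$ with $(\mf A_{22} - \varsigma\mf B_{22})\mf v_2 = -\mf a_{21} v_1$; combined with $\mf a_{21}^t \mf v_2 = 0$ this shows $(v_1, \mf v_2)^t$ is a nonzero kernel vector of $\mf A - \varsigma \mf B$, so $\varsigma$ is a root of $\mathcal P$. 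The main obstacle, and the only subtle point, is matching multiplicities rather than just root sets — but here I would argue indirectly rather than computing Jordan structure: both $\mathcal P$ and $\det(s\mf I_{N-2} - \mf R_0^t\mf A_{22}\mf R_0)$ have degree $N-2$, the former has $k-2$ negative, $N-k-1$ positive roots and a simple root at $0$ by Lemma~\ref{l:pieta}, and the latter is the characteristic polynomial of a symmetric matrix whose signature is governed by Lemma~\ref{l:signature}/Lemma~\ref{l:counting}-type reasoning; since the two monic-up-to-scaling polynomials of the same degree share all their roots and the sign counts on both sides are forced to agree, they coincide, whence Lemma~\ref{l:key}'s signature statement follows. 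In the write-up of Lemma~\ref{l:iff} itself only the equivalence (i)$\iff$(ii) is claimed, so the plan there is simply the forward-and-backward substitution argument above; the multiplicity bookkeeping is then done in the surrounding proof of Lemma~\ref{l:key}.
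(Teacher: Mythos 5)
Your argument follows essentially the same route as the paper's: both directions are proved by the same forward-and-backward block substitution, using the decomposition~\eqref{e:blockae2} with $a_{11}(\mf u^\ast)=0$, the relation $\mf a_{21}^t\mf R_0(\mf u^\ast,\mf 0_{N-2},0)=\mf 0_{N-2}^t$ coming from~\eqref{e:definiamod}, and the normalization~\eqref{e:identity}. In the direction i)$\Rightarrow$ii) you phrase the key step a bit more compactly than the paper, passing directly from $\mf R_0^t(\mf A_{22}-\varsigma\mf B_{22})\mf v_2=\mf 0_{N-2}$ to $(\mf A_{22}-\varsigma\mf B_{22})\mf v_2\in\mathrm{span}(\mf a_{21})$ via the orthogonal complement; the paper instead expands $(\mf A_{22}-\varsigma\mf B_{22})\mf R_0\mf p$ in the basis $\{\mf a_{21}\}$ together with the columns of $\mf R_0$ and shows the $\mf R_0$-coefficients vanish. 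These are equivalent and rely on the same facts, so this is not a different approach.

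There is one small omission in the direction ii)$\Rightarrow$i): you deduce $(\mf R_0^t\mf A_{22}\mf R_0-\varsigma\mf I_{N-2})\mf z_0=\mf 0_{N-2}$ but do not check that $\mf z_0\neq\mf 0_{N-2}$, which is needed to conclude that $\varsigma$ is an eigenvalue. This is easy to repair, and the paper does it explicitly: if $\mf z_0=\mf 0_{N-2}$ then $\mf v_2=\mf R_0\mf z_0=\mf 0_{N-1}$, and the second block equation reduces to $\mf a_{21}v_1=\mf 0_{N-1}$; since $\mf a_{21}\neq\mf 0_{N-1}$ by Lemma~\ref{l:kaw} this forces $v_1=0$, contradicting $\mf v\neq\mf 0_N$. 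Apart from this, you correctly observe that Lemma~\ref{l:iff} only asserts the root-set equivalence and that the multiplicity matching is handled separately in the proof of Lemma~\ref{l:key} (which the paper does by a perturbation argument rather than the sign-counting argument you sketch, but that is outside the scope of this lemma).
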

\begin{proof}
{\sc Step 1:} we establish the implication  i)$\implies$ii). First, we recall that by definition the columns of $\mf R_0$ generate the hyperspace of $\R^{N-1}$ orthogonal to $\mf a_{21}$. This implies that   $\mf a_{21}$ and the 
columns of the matrix $\mf R_0$ form a basis of $\R^{N-1}$. We term $\mf p \in \R^{N-2}$ an eigenvector of 
$(\mf R_0)^t \mf A_{22} \mf R_0$ associated to $\varsigma$. We have  
$
   \big[ \mf A_{22} - \varsigma \mf B_{22} \big] \mf R_0 \mf p = \mf R_0 \mf c +  \mf a_{21} c     
$ 
for some $\mf c \in \R^{N-2}$, $c \in \R$. By left multiplying the above expression times $(\mf R_0)^t$ and recalling that $(\varsigma, \mf p)$ is an eigencouple for $(\mf R_0)^t \mf A_{22} \mf R_0$ and using~\eqref{e:identity} and~\eqref{e:definiamod} we arrive at   
$
  (\mf R_0)^t  \mf R_0 \mf c = \mf 0_{N-2},  
$
which implies that $\mf c = \mf 0_{N-2}$ and that 
$
     \big[ \mf A_{22} - \varsigma \mf B_{22} \big] \mf R_0 \mf p =  \mf a_{21} c.      
$
This in turn implies that
$$
\left(
\begin{array}{cc}
0            &    \mf a^t_{21} \\
\mf a_{21}   &    \mf A_{22} - \varsigma \mf B_{22}\\
\end{array}
\right)
\left(
\begin{array}{cc}
- c  \\
\mf R_0 \mf p
\end{array}
\right) =
\left(
\begin{array}{cc}
0 \\
  \big[ \mf A_{22} - \varsigma \mf B_{22} \big] \mf R_0 \mf p - 
   \mf a_{21} c
\end{array}
\right)
=
\left(
\begin{array}{cc}
0 \\
\mf 0_{N-1} \\
\end{array}
\right)
$$
and that $\varsigma$ is a root of the polynomial $\mathcal P$ defined as in~\eqref{e:pieta} because $\mf R_0 \mf p \neq \mf 0_{N-1}$ since $\mf p \neq \mf 0_{N-2}$ and the columns of $\mf R_0$ are linearly independent vectors.  
\\
{\sc Step 2:} we establish the implication ii)$\implies$i). Since $\varsigma$ is a root of the polynomial $\mathcal P$, then there is $\mf b \in \R^N$, $\mf b \neq \mf 0_N$, such that $[\mf A - \varsigma \mf B] \mf b=\mf 0_{N}$. We write $\mf b: = (b_1, \mf b_2)^t$, with $b_1 \in \R$, $\mf b_2 \in \R^{N-1}$ and we point out that the relation  $[\mf A - \varsigma \mf B] \mf b=\mf 0_{N}$ implies that 
\be
\label{e:fanno0}
\mf a_{21}^t \mf b_2  = 0, \qquad 
 \mf a_{21}  b_1 + \big[ \mf A_{22} - \varsigma \mf B_{22} \big] 
 \mf b_2
= \mf  0_{N-1} 
\eq 
From the first equality we infer that $\mf b_2 = \mf R_0 \mf b_3$ for some $\mf b_3 \in \R^{N-2}$. By plugging this relation into the second equality in~\eqref{e:fanno0}, left multiplying times $(\mf R_0 )^t$ and using~\eqref{e:identity} we arrive at 
$\big[ (\mf R_0 )^t \mf A_{22} \mf R_0- \varsigma \mf I_{N-2} \big] \mf b_3 = \mf 0_{N-2} $. This implies that $\varsigma$ is an eigenvalue provided that we show that $\mf b_3 \neq \mf 0_{N-2}.$ Assume by contradiction that $\mf b_3 = \mf 0_{N-2}$, then $\mf b_2 = \mf 0_{N-1}$ and from the second equality in~\eqref{e:fanno0} and the inequality $ \mf a_{21} \neq \mf 0_{N-1}$ we infer that $b_1 =0$, which implies that $\mf b = \mf 0_N$ and contradicts our assumption. This concludes the proof of the lemma.
\end{proof}
\subsection{Conclusion of the proof of Lemma~\ref{l:key}}
If all the all the eigenvalues of $ (\mf R_0)^t \mf A_{22} \mf R_0$ are distinct then Lemma~\ref{l:key} directly follows from~Lemma~\ref{l:pieta} and Lemma~\ref{l:iff}.  To complete the proof, we tackle the case of eigenvalues with higher multiplicity by relying on a perturbation argument.  We proceed according to the following steps. \\
{\sc Step 1:} we show that the multiplicity of $0$ as an eigenvalue of $ (\mf R_0)^t \mf A_{22} \mf R_0$ is exactly $1$. First, we point out that $0$ is an eigenvalue of $ (\mf R_0)^t \mf A_{22} \mf R_0$ by Lemma~\ref{l:iff} because it is a root of $\mathcal P$ by Lemma~\ref{l:pieta}. Next, we 
assume by contradiction that there are $\mf p_1$ and $\mf p_2$, linearly independent, such that $ (\mf R_0)^t \mf A_{22} \mf R_0
\mf p_1 = \mf 0_{N-2}$, $(\mf R_0)^t \mf A_{22} \mf R_0
\mf p_2 = \mf 0_{N-2}$. By arguing as in {\sc Step 1} of the proof of Lemma~\ref{l:iff} we infer that 
there are two linearly independent vectors $\mf q_1$ and $\mf q_2$ such that $\mf A \mf q_1 = \mf A \mf q_2 = \mf 0_{N}$. This implies that $0$ has multiplicity $2$ as an eigenvector of $(\mf E)^{-1} \mf A$ and hence contradicts the strict hyperbolicity.  \\
{\sc Step 2:} we point out that, for every $\nu >0$, there is $\mf A_{22}^\nu$ such that 
\begin{itemize}
\item[i)] $\mf A_{22}^\nu$ is symmetric;
\item[ii)] $(\mf R_0)^t\mf A_{22}^\nu \mf R_0$  has $N-2$ distinct eigenvalues, and one of them is $0$. 
\item[iii)] $\| \mf A_{22}^\nu - \mf A_{22} \| < \nu$. Here $\| \cdot \|$ denotes the Frobenius norm on the space of $(N-1) \times (N-1)$ matrices (any equivalent norm works). 
\end{itemize}
{\sc Step 3:} we now construct the matrix $\mf A^\nu$ by replacing in the block decomposition~\eqref{e:blockae} the block $\mf A_{22}$ with the block $\mf A_{22}^\nu$. We define the polynomial $\mathcal P^\nu(s): = \mathrm{det} (\mf A^\nu - s \mf B )$. By applying Lemma~\ref{l:iff} and recalling property ii) in {\sc Step 2} we conclude that $\mathcal P^\nu$ has $N-2$ distinct roots, and one of them is $0$. On the other hand, if the constant $\nu$ is sufficiently small, then the coefficients of $\mathcal P^\nu$ are close to those of $\mathcal P$ and hence by Lemma~\ref{l:pieta} $\mathcal P^\nu$ has at least $k-1$ strictly negative roots, and $N-k-1$ strictly positive roots. Since one eigenvalue is $0$, we conclude that $\mathcal P^\nu$ has exactly the $k-1$ strictly negative and $N-k-1$ strictly positive roots. By Lemma~\ref{l:iff}, the same holds for the eigenvalues of  $(\mf R_0 )^t \mf A^\nu_{22}\mf R_0$. By letting $\nu \to 0^+$, using {\sc Step 1} and recalling the continuity of eigenvalues we eventually conclude the proof of Lemma~\ref{l:key}. 
\section{The Slaving Manifold Lemma} 
\label{s:slaving}
In this section we discuss a slaving manifold lemma that we have used in the previous analysis. We refer to the classical book by Katok and Hasselblatt~\cite{KatokHasselblatt} for a comprehensive introduction. See also~\cite{BianchiniSpinolo:JDE}. 

Assume that $\mathbf{g}: \R^d \to \R^d$ is a smooth function and consider the ODE
\begin{equation}
\label{e:ode}
       \mf v ' = \mf g ( \mf v). 
\end{equation}
We assume that $\mf v^\ast \in \R^d$ is an equilibrium, i.e.
$
       \mf g(\mf v^\ast) = \mf 0.  
$
We consider the Jacobian matrix $\mf D \mf g (\mf v^\ast)$ and we term $n_-$, $n_+$ and $n_0$  the number of eigenvalues with strictly negative, strictly positive and zero real part, respectively. We assume that  $n_- \neq 0$, $n_0 \neq 0$.
Also, we term $10 \gamma$ the spectral gap, namely 
\begin{equation}
\label{e:gamma}
  10 \gamma: =  \max \Big\{ | \mathrm{Re} \lambda|: \; \text{$\lambda$ is an eigenvalue of $\mf D \mf g (\mf v^\ast)$, $\mathrm{Re} \lambda \neq 0$} \Big\}. 
\end{equation} 
Note that, by the continuity of the eigenvalues, the number of  eigenvalues of $\mf D \mf g (\check{\mf v})$ satisfying $\mathrm{Re}(\lambda) < -8 \gamma$ is $n_-$, provided that $\delta$ is sufficiently small. We now want to state an elementary extension of the Stable Manifold Theorem, and we refer to~\cite[\S2]{Perko} for the classical statement of the Stable Manifold Theorem. 
Note furthermore that we say that $\mf p \in \R^d$, $\mf p \neq \mf 0_d$ is a generalized eigenvector associated to the eigenvalue $\lambda \in \R$ of a given matrix $\mf F \in \mathbb{M}^{d \times d}$ if there is $m \in \mathbb{N}$ such that $(\mf F - \lambda \mf I_d )^m \mf p = \mf 0_d$. 
\begin{lemma}
\label{l:uniformdecay} There is a constant $\delta$, which only depends on $\mf g$ and $\mf v^\ast$, such that the following holds. For every $\check{\mf v} \in \R^d$ such that $|\check{\mf v} - \mf v^\ast|\leq \delta$ and $\mf g (\check{\mf  v}) = \mf 0$, there is an invariant manifold for~\eqref{e:ode} which contains all the orbits of~\eqref{e:ode} satisfying 
\be
\label{e:expdecay}
   \lim_{t \to + \infty} |\mf v(t) - \check{\mf v}| e^{4\gamma t} =0 {\comment .}
\eq
The manifold is parameterized by a map $\mf m_-(\check{\mf v}, \cdot): \R^{n_-}  \to \R^d$ which is continuously differentiable and satisfies 
$\mf m_-(\check{\mf v}, \mf 0_{n_-})= \check{\mf v}$. The columns of the Jacobian matrix $\mf D \mf m_- (\check{\mf v}, \mf 0_{n_-} )$ generate the space of the generalized eigenvectors associated with eigenvalues $\lambda$ satisfying $\mathrm{Re}(\lambda) < -8 \gamma$. Also, the map $\mf m_-$ Lipschitz continuously depends on the variable $\check{\mf v}$.
\end{lemma}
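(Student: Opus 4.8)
The plan is to prove Lemma~\ref{l:uniformdecay} by the Lyapunov--Perron fixed point method, carried out uniformly in the base point $\check{\mf v}$: this is the classical construction of a \emph{strong stable} (pseudo-stable) manifold, see~\cite{KatokHasselblatt}, adapted so that the equilibrium is the perturbed point $\check{\mf v}$ rather than $\mf v^\ast$ and so that all constants depend only on $\mf g$ and $\mf v^\ast$. First I would recenter, setting $\mf w := \mf v - \check{\mf v}$, so that~\eqref{e:ode} becomes $\mf w' = \mf A_{\check{\mf v}}\mf w + \mf N_{\check{\mf v}}(\mf w)$ with $\mf A_{\check{\mf v}} := \mf D\mf g(\check{\mf v})$ and $\mf N_{\check{\mf v}}(\mf w) := \mf g(\check{\mf v} + \mf w) - \mf A_{\check{\mf v}}\mf w$, so that $\mf N_{\check{\mf v}}(\mf 0)= \mf 0$ and $\mf D\mf N_{\check{\mf v}}(\mf 0) = \mf 0_{d \times d}$. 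Since $\mf g$ is smooth, on a fixed small ball $\mf N_{\check{\mf v}}$ and $\mf D\mf N_{\check{\mf v}}$ are small and Lipschitz with constants uniform in $\check{\mf v}$, and $\check{\mf v} \mapsto \mf N_{\check{\mf v}}(\mf w)$ is itself Lipschitz; it is precisely this uniformity, together with the uniform spectral gap below, that produces a radius $\delta$ independent of $\check{\mf v}$.

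Second comes the spectral decomposition and the exponential dichotomy. By the definition~\eqref{e:gamma} and the continuity of the spectrum, at $\mf v^\ast$ the eigenvalues split into the $n_-$ eigenvalues with $\mathrm{Re}\,\lambda < -8\gamma$ and the $n_0 + n_+$ eigenvalues with $\mathrm{Re}\,\lambda \ge 0$, with none having $\mathrm{Re}\,\lambda \in [-8\gamma, 0)$ (otherwise the count recalled just before the statement would fail); hence for $\delta$ small $\mf A_{\check{\mf v}}$ has exactly $n_-$ eigenvalues with $\mathrm{Re}\,\lambda < -8\gamma$ and the rest with $\mathrm{Re}\,\lambda > -\gamma$. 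Let $\mathrm P^{ss}_{\check{\mf v}}$, $\mathrm P^{cu}_{\check{\mf v}} = \mf I_d - \mathrm P^{ss}_{\check{\mf v}}$ be the corresponding Riesz spectral projections; they depend smoothly on $\check{\mf v}$ and are uniformly bounded, and for a uniform constant $C$ one has $\|e^{\mf A_{\check{\mf v}}t}\mathrm P^{ss}_{\check{\mf v}}\| \le C e^{-7\gamma t}$ for $t \ge 0$ and $\|e^{\mf A_{\check{\mf v}}t}\mathrm P^{cu}_{\check{\mf v}}\| \le C e^{\gamma|t|}$ for $t \le 0$. I would then work in the Banach space $X := \{\mf w \in C([0, +\infty); \R^d) : e^{4\gamma t}|\mf w(t)| \to 0 \text{ as } t \to +\infty\}$ with norm $\|\mf w\| := \sup_{t \ge 0} e^{4\gamma t}|\mf w(t)|$, and for $\mf a$ in the $n_-$-dimensional range $E^{ss}_{\check{\mf v}}$ of $\mathrm P^{ss}_{\check{\mf v}}$ consider the operator
\[
  (\mathcal T_{\check{\mf v},\mf a}\mf w)(t) := e^{\mf A_{\check{\mf v}}t}\mf a + \int_0^t e^{\mf A_{\check{\mf v}}(t-s)}\mathrm P^{ss}_{\check{\mf v}}\mf N_{\check{\mf v}}(\mf w(s))\,ds - \int_t^{+\infty} e^{\mf A_{\check{\mf v}}(t-s)}\mathrm P^{cu}_{\check{\mf v}}\mf N_{\check{\mf v}}(\mf w(s))\,ds .
\]
The exponent $4\gamma$, lying strictly between $\gamma$ and $7\gamma$, makes both integrals gain a factor $e^{-3\gamma|t-s|}$, so $\mathcal T_{\check{\mf v},\mf a}$ maps a small ball of $X$ into itself and, by the uniform smallness of the Lipschitz constant of $\mf N_{\check{\mf v}}$, is a contraction there; its unique fixed point $\mf w(\,\cdot\,;\check{\mf v},\mf a)$ is an orbit of the recentered system satisfying~\eqref{e:expdecay}. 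Fixing the isomorphism $\iota_{\check{\mf v}} := \mathrm P^{ss}_{\check{\mf v}}|_{E^{ss}_{\mf v^\ast}}\colon \R^{n_-}\cong E^{ss}_{\mf v^\ast} \to E^{ss}_{\check{\mf v}}$ (invertible for $\delta$ small), I define $\mf m_-(\check{\mf v}, \mf p) := \check{\mf v} + \mf w(0; \check{\mf v}, \iota_{\check{\mf v}}\mf p)$ for $\mf p$ near $\mf 0_{n_-}$.

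The remaining points are the standard verifications. Since $\mf N_{\check{\mf v}}(\mf 0) = \mf 0$, the function $\mf w \equiv \mf 0$ is the fixed point for $\mf a = \mf 0$, so $\mf m_-(\check{\mf v}, \mf 0_{n_-}) = \check{\mf v}$. Differentiability in $\mf p$ and Lipschitz dependence on $\check{\mf v}$ follow from the uniform contraction principle with parameters: differentiating the fixed point equation gives a linear fixed point problem for $\partial_{\mf p}\mf w$, again a contraction (in a weight $e^{3\gamma t}$), whence $\mf m_-$ is $C^1$ in $\mf p$ and Lipschitz in $\check{\mf v}$; and since $\mf D\mf N_{\check{\mf v}}(\mf 0) = \mf 0$, at $\mf a = \mf 0$ the derivative of $\mf a \mapsto \mf w(0;\check{\mf v},\mf a)$ is the inclusion $E^{ss}_{\check{\mf v}} \hookrightarrow \R^d$, so the columns of $\mf D\mf m_-(\check{\mf v},\mf 0_{n_-})$ span $E^{ss}_{\check{\mf v}}$, i.e. the generalized eigenspace of the eigenvalues with $\mathrm{Re}\,\lambda < -8\gamma$. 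Local invariance, and the fact that the image of $\mf m_-$ contains every orbit of~\eqref{e:ode} obeying~\eqref{e:expdecay}, both follow from uniqueness of the fixed point: given such an orbit $\mf v$, set $\mf w := \mf v - \check{\mf v}$, which lies in $X$; the variation-of-constants formula, projected by $\mathrm P^{ss}_{\check{\mf v}}$ from $t = 0$ and by $\mathrm P^{cu}_{\check{\mf v}}$ from $t = +\infty$ (the boundary term at $+\infty$ drops because $\|e^{\mf A_{\check{\mf v}}(t-T)}\mathrm P^{cu}_{\check{\mf v}}\mf w(T)\| \to 0$ by the dichotomy estimate and~\eqref{e:expdecay}), shows that $\mf w = \mf w(\,\cdot\,;\check{\mf v}, \mathrm P^{ss}_{\check{\mf v}}\mf w(0))$, and a time-shift of $\mf w$ solves the analogous equation with parameter $\mathrm P^{ss}_{\check{\mf v}}\mf w(t_0)$.

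\textbf{Main obstacle.} The genuine work --- and what makes this more than a citation of the Stable Manifold Theorem of~\cite{Perko} --- is to make \emph{every} constant (the radius $\delta$, the dichotomy constant $C$, the contraction rate, and the Lipschitz constants of $\mf m_-$) uniform over $\check{\mf v} \in \mathrm B^d_\delta(\mf v^\ast)$. This rests on two uniform facts: the spectrum of $\mf A_{\check{\mf v}}$ stays bounded away from the line $\{\mathrm{Re}\,\lambda = -8\gamma\}$ (so the Riesz projections and the exponential estimates are uniform, and the cut-off $-8\gamma$ genuinely separates a strong stable part from a center--unstable part that may now contain eigenvalues with small negative real part), and $\mf N_{\check{\mf v}}$ together with its differential is uniformly small, uniformly Lipschitz, and Lipschitz in $\check{\mf v}$ on a fixed ball --- both immediate from smoothness of $\mf g$ and compactness. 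A secondary point of care is the choice of the weight $e^{4\gamma t}$: it must be slow enough that the center--unstable integral converges (rate $>\gamma$) yet fast enough that the strong stable integral converges (rate $<7\gamma$) and that~\eqref{e:expdecay} actually holds; the little-$o$ space $X$ is used rather than an $O(e^{-4\gamma t})$ space precisely so that the manifold captures \emph{all} orbits satisfying~\eqref{e:expdecay}.
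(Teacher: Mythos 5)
The paper does not actually prove this lemma: it states it as an ``elementary extension of the Stable Manifold Theorem,'' citing Perko for the classical statement and Bressan's center manifold notes (and~\cite{BianchiniSpinolo:JDE}) for the techniques, and moves on. Your Lyapunov--Perron construction is a correct and complete way to fill that gap, along exactly the lines the paper has in mind, and the issue you isolate in your final paragraph --- uniformity of the radius, dichotomy constants, contraction rate and Lipschitz constants in $\check{\mf v}$ --- is precisely the nonclassical ingredient that distinguishes this lemma from the textbook Stable Manifold Theorem. The exponent bookkeeping is right: with the strong stable part decaying faster than $e^{-7\gamma t}$ and the centre--unstable part growing slower than $e^{\gamma|t|}$, the weight $4\gamma$ (strictly between $\gamma$ and $7\gamma$) makes both Duhamel integrals gain $e^{-3\gamma|t-s|}$, and using the little-$o$ space $X$ is what lets you capture \emph{all} orbits obeying~\eqref{e:expdecay} via the uniqueness of the fixed point.

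One point is worth flagging, though it is a slip in the paper rather than in your argument: equation~\eqref{e:gamma} defines $10\gamma$ as the \emph{maximum} of $|\mathrm{Re}\,\lambda|$ over eigenvalues with nonzero real part, but for the sentence just before the lemma --- that $\mf D\mf g(\check{\mf v})$ has exactly $n_-$ eigenvalues with $\mathrm{Re}\,\lambda < -8\gamma$ once $\delta$ is small --- to be true, $10\gamma$ has to be the \emph{minimum} of $|\mathrm{Re}\,\lambda|$ over nonzero-real-part eigenvalues (otherwise a strictly stable eigenvalue of $\mf D\mf g(\mf v^\ast)$ could sit in $(-8\gamma,0)$ and the count would be wrong). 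You read the definition in the only way consistent with the lemma, and your parenthetical observation that no eigenvalue of $\mf D\mf g(\mf v^\ast)$ lies in $[-8\gamma,0)$ is exactly what the corrected definition guarantees; that separation is what makes the Riesz projections, and hence every constant in your contraction argument, uniform over $\mathrm B^d_\delta(\mf v^\ast)$.
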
 
Since $n_0 \neq 0$, by linearizing~\eqref{e:ode} at the equilibrium $\mf v^\ast$ we can construct a center manifold. We fix an orbit $\mf v_0$ entirely lying on the center manifold and confined in a sufficiently small neighborhood of $\mf v^\ast$. We also fix an orbit lying on the manifold constructed in Lemma~\ref{l:uniformdecay} and we term it $\mf v_-$. We now want to construct a function $\mf v_p: \R \to \R^d$ in such a way that by setting $\mf v: = \mf v_0 + \mf v_- - \check{\mf v} + \mf v_p$ we obtain a solution of~\eqref{e:ode} satisfying 
\be
\label{e:expdecay2}
    \lim_{t \to + \infty} |\mf v(t) - \mf v_0 (t) | e^{2 \gamma t} =0.  
\eq
The following results can be established by relying on the same techniques as in~\cite{Bressan:cm}. See also~\cite[Theorem 3.1]{BianchiniSpinolo:JDE}. 
\begin{lemma}[Slaving Manifold Lemma]
\label{l:slaving}
There is a constant $\delta$, which only depends on $\mf g$ and $\mf v^\ast$, such that the following holds. Let $\check{\mf v}$ be as in the statement of Lemma~\ref{l:uniformdecay} and assume that the orbit $\mf v_0$ satisfies $|\mf v_0 (t) - \mf v^\ast| \leq 4 \delta$, for every $t \in \R$. Then there is a map 
$\mf m_p (\mf v_0 (0), \check{\mf v}, \cdot): \R^{n_-} \to \R^d$ such that for every $\mf x \in \R^{n_-}$ the solution of the Cauchy problem obtained by coupling~\eqref{e:ode} with the initial datum 
\be 
\label{e:dato}
    \mf v(0) = \mf v_0(0) + \mf m_- (\check{\mf v}, \mf x) - \check{\mf v} +   \mf m_p (\mf v_0, \check{\mf v}, \mf x)
\eq
satisfies~\eqref{e:expdecay2}. In the above expression,  $\mf m_-$ is the same as in Lemma~\ref{l:uniformdecay}. Also, $\mf m_p$ depends Lipschitz continuously on both $\check{\mf v}$ and $\mf x$ and satisfies 
\be
\label{e:diffat0}
     | \mf m_p (\mf v_0, \check{\mf v}, \mf x)| \leq \unpo | \mf v_0 (0)- \check{\mf v} | \ |\mf x|.  
\eq 
\end{lemma}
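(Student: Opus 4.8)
The final statement of the excerpt is the Slaving Manifold Lemma (Lemma~\ref{l:slaving}): given the equilibrium $\mf v^\ast$, a nearby equilibrium $\check{\mf v}$, and an orbit $\mf v_0$ lying on a center manifold and confined to $\mathrm{B}^d_{4\delta}(\mf v^\ast)$, we must construct a map $\mf m_p$ so that the perturbed initial datum~\eqref{e:dato} launches an orbit of~\eqref{e:ode} that converges to $\mf v_0$ at exponential rate $e^{-2\gamma t}$, together with the Lipschitz dependence and the quantitative bound~\eqref{e:diffat0}.

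\textbf{Overall approach.} The plan is to run a fixed-point argument in a weighted space of exponentially decaying trajectories, exactly in the spirit of the center-manifold / slaving-manifold constructions in~\cite{Bressan:cm} and~\cite[Theorem~3.1]{BianchiniSpinolo:JDE}, and therefore to keep the argument at the level of a sketch. First I would write $\mf v = \mf v_0 + \mf v_- - \check{\mf v} + \mf v_p$ and derive the ODE satisfied by the correction $\mf v_p$: substituting into~\eqref{e:ode}, using that $\mf v_0$, $\mf v_-$ solve~\eqref{e:ode} and that $\mf g(\check{\mf v})=\mf 0$, one gets $\mf v_p' = \mf D\mf g(\mf v^\ast)\mf v_p + \mf N(t,\mf v_p)$, where $\mf N$ collects all the nonlinear remainders; crucially $\mf N(t,\mf 0)$ is $\mathcal O(1)$ times the product of $|\mf v_0(0)-\check{\mf v}|$ and $|\mf v_-(t)-\check{\mf v}|$ (this is the structural ``interaction'' smallness that eventually yields~\eqref{e:diffat0}), and $\mf N$ is Lipschitz in $\mf v_p$ with a small constant because the norms of $\mf v_0-\mf v^\ast$, $\mf v_- - \mf v^\ast$ and $\mf v_p$ are all $\le\mathcal O(\delta)$. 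I would then split $\R^d = E^- \oplus (E^0 \oplus E^+)$ according to the spectral decomposition of $\mf D\mf g(\mf v^\ast)$ (eigenvalues with $\mathrm{Re}\,\lambda < -8\gamma$ versus the rest), write the corresponding Green's function, and set up the integral equation
\be
\label{e:slavingfixed}
   \mf v_p(t) = e^{t\mf D\mf g(\mf v^\ast)}P^- \mf x_p + \int_0^t e^{(t-s)\mf D\mf g(\mf v^\ast)} P^- \mf N(s,\mf v_p(s))\,ds
   - \int_t^{+\infty} e^{(t-s)\mf D\mf g(\mf v^\ast)} P^{0+} \mf N(s,\mf v_p(s))\,ds,
\eq
in the Banach space $X_{2\gamma} := \{\mf v_p \in C^0([0,+\infty);\R^d): \sup_{t\ge0} |\mf v_p(t)|e^{2\gamma t} < +\infty\}$, where the free parameter $\mf x_p \in E^-$ is chosen so that the full initial value matches~\eqref{e:dato}. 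The exponential weight $2\gamma$ sits strictly between $0$ (the center/unstable rates) and $8\gamma$ (the fast stable rates), so the convolution estimates against $P^{0+}e^{(t-s)\mf D\mf g}$ for $s>t$ and against $P^- e^{(t-s)\mf D\mf g}$ for $s<t$ both produce a gain, making the right-hand side of~\eqref{e:slavingfixed} a contraction on a small ball of $X_{2\gamma}$ once $\delta$ is small.

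\textbf{Key steps, in order.} (1) Derive the equation for $\mf v_p$ and record the bilinear smallness of the forcing $\mf N(t,\mf 0)$ and the small Lipschitz constant of $\mf N(t,\cdot)$ on the relevant ball. (2) Fix the spectral splitting and the exponential weight $2\gamma$; verify the standard decay estimates $\|P^- e^{\tau \mf D\mf g(\mf v^\ast)}\| \le C e^{-8\gamma \tau}$ ($\tau\ge0$) and $\|P^{0+} e^{-\tau \mf D\mf g(\mf v^\ast)}\| \le C e^{\gamma\tau}$ ($\tau\ge0$, using that $10\gamma$ is the spectral gap). (3) Show the operator in~\eqref{e:slavingfixed} maps a ball of $X_{2\gamma}$ of radius $\mathcal O(1)|\mf v_0(0)-\check{\mf v}|$ into itself and is a contraction; this produces $\mf v_p$ as a function of $t$, of the parameter $\mf x_p \in E^-$ (equivalently of the $\mf x \in \R^{n_-}$ appearing in $\mf m_-$), of $\check{\mf v}$, and of $\mf v_0$. (4) Read off $\mf m_p(\mf v_0,\check{\mf v},\mf x) := \mf v_p(0)$; the estimate~\eqref{e:diffat0} follows from the fixed-point radius together with the bilinear bound on $\mf N(\cdot,\mf 0)$, since when $\mf v_0(0)=\check{\mf v}$ the forcing vanishes identically and hence $\mf v_p \equiv \mf 0$. (5) Lipschitz dependence on $\check{\mf v}$ and $\mf x$ follows by differencing two fixed-point problems and using the uniform contraction (the parameters enter linearly in the free term and through the $C^1$ data $\mf m_-(\check{\mf v},\mf x)$); that the resulting orbit actually solves~\eqref{e:ode} and satisfies~\eqref{e:expdecay2} is immediate from~\eqref{e:slavingfixed} and membership in $X_{2\gamma}$.

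\textbf{Main obstacle.} The genuinely delicate point is the bookkeeping in Step~(3)–(4): one must choose the radius of the fixed-point ball and the smallness threshold for $\delta$ so that the \emph{same} $\delta$ works uniformly over all admissible $\check{\mf v}$ (with $|\check{\mf v}-\mf v^\ast|\le\delta$) and all center orbits $\mf v_0$ confined to $\mathrm{B}^d_{4\delta}(\mf v^\ast)$, and simultaneously so that the product structure giving~\eqref{e:diffat0} is preserved — i.e. the nonlinear self-interaction of $\mf v_p$ must be absorbed without spoiling the factor $|\mf v_0(0)-\check{\mf v}|$. This is exactly the type of estimate carried out in~\cite{Bressan:cm} and~\cite{BianchiniSpinolo:JDE}, so in the paper I would state the lemma, indicate the weighted-space fixed point~\eqref{e:slavingfixed} and the role of the spectral gap $10\gamma$ versus the weight $2\gamma$, and refer the reader to those sources for the routine (but lengthy) verification of the contraction and Lipschitz estimates.
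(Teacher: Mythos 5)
The paper does not give a proof of Lemma~\ref{l:slaving}; it simply cites~\cite{Bressan:cm} and~\cite[Theorem~3.1]{BianchiniSpinolo:JDE}, and the fixed-point construction in exponentially weighted spaces that you sketch (split off the fast stable spectrum, set up the Duhamel-type integral equation, contract with weight between the center rate and the fast rate, and read off $\mf m_p=\mf v_p(0)$, with the bilinear structure of the forcing yielding~\eqref{e:diffat0}) is exactly the argument those references carry out. So your proposal is correct and follows essentially the same route as the paper; the only cosmetic remark is that the free parameter $\mf x_p\in E^-$ in your equation~\eqref{e:slavingfixed} can simply be set to zero, since the stable displacement is already encoded through $\mf v_-$ via $\mf m_-(\check{\mf v},\mf x)$.
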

Note that the initial point $\mf v_0(0)$ uniquely determines the orbit $\mf v_0$. 
We are now left to discuss how the map $\mf m_p$ depends on the orbit $\mf v_0(0)$.
\begin{lemma}
\label{l:dipendenzav0}
Under the same assumptions as in the statement of Lemma~\ref{l:slaving}, assume that $\mf v_{01}$ and $\mf v_{02}$ are two orbits satisfying $|\mf v_{01} (t) - \mf v^\ast|  \leq 4 \delta$,  $|\mf v_{02} (t) - \mf v^\ast| \leq 4 \delta$, for every $t \in \R$. Assume furthermore that 
\be 
\label{e:distanza}
    |\mf v_{01} (t) - \mf v_{02} (t)| \leq L  |\mf v_{01} (0) - \mf v_{02} (0)| e^{\gamma |t|}, \quad 
    \text{for every $t \in \R$.}
\eq 
Then 
\be 
\label{e:lipschitzv0}
     | \mf m_p (\mf v_{01}(0), \check{\mf v}, \mf x)  -
     \mf m_p (\mf v_{02}(0), \check{\mf v}, \mf x)| \leq \unpo L  |\mf v_{01} (0) - \mf v_{02} (0)|.
\eq 
\end{lemma}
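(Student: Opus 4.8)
The plan is to go back to the fixed-point construction behind Lemma~\ref{l:slaving} and to track the dependence of the fixed point on the center orbit $\mf v_0$. Recall that for a given $\mf v_0$ the correction $\mf v_p$ in the decomposition $\mf v = \mf v_0 + \mf v_- - \check{\mf v} + \mf v_p$ is the unique fixed point $\mf v_p = \mathcal T_{\mf v_0}[\mf v_p]$ of an integral operator acting on a Banach space $X_\mu$ of continuous functions $\mf w : [0, +\infty[ \to \R^d$ with norm $\| \mf w \|_\mu := \sup_{t \ge 0} e^{\mu t} | \mf w(t)|$, where $\gamma < \mu < 8 \gamma$ (one may take $\mu = 2 \gamma$, consistently with~\eqref{e:expdecay2}); the operator $\mathcal T_{\mf v_0}$ is assembled from the exponential dichotomy kernel $K(t,s)$ of $\mf D \mf g (\mf v^\ast)$ (stable modes propagated forward from $t = 0$, starting from the stable datum carried by $\mf m_- (\check{\mf v}, \mf x) - \check{\mf v}$, which does not depend on $\mf v_0$; center and unstable modes propagated backward from $+\infty$) and from the nonlinear term $\mf g \big( \mf v_0(s) + \boldsymbol{\xi}(s) \big) - \mf g \big( \mf v_0(s) \big)$, with $\boldsymbol{\xi} := \mf v_- - \check{\mf v} + \mf w$. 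Since $\mf m_p (\mf v_0(0), \check{\mf v}, \mf x) = \mf v_p(0)$, it is enough to bound $| \mf v_{p,1}(0) - \mf v_{p,2}(0) |$, where $\mf v_{p,i}$ denotes the fixed point associated with $\mf v_{0i}$; and this will follow from an estimate for $\| \mf v_{p,1} - \mf v_{p,2} \|_\gamma$ in the \emph{weaker} weighted norm with weight $e^{\gamma t}$, which coincides with $| \cdot |$ at $t = 0$.

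The first step is to estimate the distance of the two operators in the $\| \cdot \|_\gamma$-norm. Since only the nonlinear term carries the $\mf v_0$-dependence and $\mf g$ is smooth, the mean value theorem in the $\mf v_0$-slot gives, for $\mf w$ in a small ball of $X_\mu$ and along orbits confined near $\mf v^\ast$,
\be
\big| \mathcal T_{\mf v_{01}}[\mf w](t) - \mathcal T_{\mf v_{02}}[\mf w](t) \big| \le \unpo \int_0^{+\infty} K(t,s) \, \big| \boldsymbol{\xi}(s) \big| \, \big| \mf v_{01}(s) - \mf v_{02}(s) \big| \, ds,
\eq
the point being that the difference of the nonlinear terms equals $\unpo \, | \boldsymbol{\xi}(s)| \, | \mf v_{01}(s) - \mf v_{02}(s)|$ because it is a difference of values of $\mf D \mf g$ at two points whose gap is $\boldsymbol{\xi}(s)$. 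Now $| \boldsymbol{\xi}(s) | = \unpo \, e^{-2 \gamma s}$ (the orbit $\mf v_- - \check{\mf v}$ decays like $e^{-4 \gamma s}$ by Lemma~\ref{l:uniformdecay}, and $\mf w$ like $e^{-\mu s} = e^{-2\gamma s}$), while~\eqref{e:distanza} gives $| \mf v_{01}(s) - \mf v_{02}(s) | \le L \, | \mf v_{01}(0) - \mf v_{02}(0) | \, e^{\gamma s}$; hence the integrand is $\unpo \, L \, | \mf v_{01}(0) - \mf v_{02}(0)| \, K(t,s) \, e^{-\gamma s}$, and, after multiplying by $e^{\gamma t}$, the factor $e^{-\gamma s}$ is integrated against $K(t,s)$ with room to spare: the stable block contributes $e^{\gamma t} \int_0^t e^{-8 \gamma (t - s)} e^{-\gamma s} \, ds = \unpo$, and the center and unstable blocks contribute $e^{\gamma t} \int_t^{+\infty} e^{-\beta (s - t)} e^{-\gamma s} \, ds = \unpo$ for every $\beta \ge 0$ (on the center block one uses that $e^{\mf D \mf g(\mf v^\ast) \Pi^c \tau}$ grows slower than any exponential). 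Therefore $\big\| \mathcal T_{\mf v_{01}}[\mf w] - \mathcal T_{\mf v_{02}}[\mf w] \big\|_\gamma \le \unpo \, L \, | \mf v_{01}(0) - \mf v_{02}(0)|$, uniformly in $\mf w$ in the ball and in $\mf x$.

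The second step is to conclude. Each $\mathcal T_{\mf v_0}$ is a $1/2$-contraction on the $\| \cdot \|_\gamma$-closed ball — the contraction estimate in $X_\gamma$ being obtained exactly as the one in $X_\mu$ used in Lemma~\ref{l:slaving}, since $\gamma < 8 \gamma$ — so the uniform contraction principle (Lipschitz dependence of a fixed point on a parameter) yields $\| \mf v_{p,1} - \mf v_{p,2} \|_\gamma \le \unpo \, L \, | \mf v_{01}(0) - \mf v_{02}(0)|$, and evaluation at $t = 0$ is precisely~\eqref{e:lipschitzv0}. Alternatively, one may re-run the whole slaving construction in $X_\gamma$ with $\mf v_0$ as an extra parameter and quote the parameter-dependence part of the stable/slaving manifold theorems as in~\cite{Bressan:cm} and~\cite[Theorem~3.1]{BianchiniSpinolo:JDE}. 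The step I expect to be the main obstacle is the weighted bookkeeping of the middle display: one has to check that the exponential \emph{growth} rate $\gamma$ of $| \mf v_{01}(s) - \mf v_{02}(s)|$ permitted by~\eqref{e:distanza} is dominated, with a definite amount of slack, by the extra decay $e^{-2\gamma s}$ of the perturbation $\boldsymbol{\xi}$ together with the rates $8 \gamma$ and $\beta$ built into $K$, so that all constants depend only on $\mf g$ and $\mf v^\ast$ and can be absorbed into $\unpo$. This slack is exactly what the choice $10 \gamma$ for the spectral gap in~\eqref{e:gamma} secures (it forces the stable block of $K$ to decay at rate $8 \gamma \gg \gamma$), and it also explains why~\eqref{e:distanza} is stated with the growth rate $\gamma$; once this estimate is in hand, everything else parallels the proof of~\eqref{e:diffat0}.
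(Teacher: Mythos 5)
The paper does not actually give a proof of this lemma: Section~\ref{s:slaving} states it (together with Lemma~\ref{l:uniformdecay} and Lemma~\ref{l:slaving}) after a one-sentence remark that ``the following results can be established by relying on the same techniques as in~\cite{Bressan:cm}. See also~\cite[Theorem 3.1]{BianchiniSpinolo:JDE}.'' Your proposal is therefore not an alternative route to the paper's argument but a reconstruction of the argument the paper delegates to references, and it is, in substance, the correct one: go back to the fixed-point formulation of the slaving construction, isolate the $\mf v_0$-dependent part of the forcing as $\mf g(\mf v_0+\boldsymbol{\xi})-\mf g(\mf v_0)$, bound its variation by $\unpo\,|\boldsymbol{\xi}|\,|\mf v_{01}-\mf v_{02}|$, and feed in the decay $|\boldsymbol{\xi}(s)|=\unpo\,e^{-2\gamma s}$ against the controlled growth $e^{\gamma s}$ coming from~\eqref{e:distanza}. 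Your weighted bookkeeping across the three blocks of the dichotomy kernel is done correctly (the stable block integral, the center block with at most polynomial growth, and the unstable block), and you correctly identify the one delicate point: the weight of the space in which you contract must sit strictly below the stable rate $8\gamma$ and strictly above the rate $\gamma$ that~\eqref{e:distanza} tolerates, with the spectral-gap normalization in~\eqref{e:gamma} providing the slack.

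One place where the phrasing should be tightened, because it silently mixes two function spaces: the operator-difference estimate $\|\mathcal T_{\mf v_{01}}[\mf w]-\mathcal T_{\mf v_{02}}[\mf w]\|_\gamma\le\unpo\,L\,|\mf v_{01}(0)-\mf v_{02}(0)|$ uses $|\boldsymbol{\xi}(s)|=\unpo\,e^{-2\gamma s}$, and that decay rate is available only for $\mf w$ in the small ball of $X_{2\gamma}$, not for a generic $\mf w$ in the ball of $X_\gamma$ on which $\mathcal T_{\mf v_0}$ contracts. If you took $\mf w$ merely in $X_\gamma$ then $|\boldsymbol{\xi}(s)|$ would only be $\unpo\,e^{-\gamma s}$, the $e^{\gamma s}$ from~\eqref{e:distanza} would eat it entirely, and the center-block integral would not converge. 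The usual statement of the uniform contraction principle asks for the parameter-dependence estimate to hold for all $\mf w$ in the ambient ball, so ``uniformly in $\mf w$ in the ball'' is not quite what you want. The argument still closes, because one only ever needs that estimate at $\mf w=\mf v_{p,2}$: writing $\mf v_{p,1}-\mf v_{p,2}=\big(\mathcal T_{\mf v_{01}}[\mf v_{p,1}]-\mathcal T_{\mf v_{01}}[\mf v_{p,2}]\big)+\big(\mathcal T_{\mf v_{01}}[\mf v_{p,2}]-\mathcal T_{\mf v_{02}}[\mf v_{p,2}]\big)$, the first bracket is absorbed by the $\tfrac12$-contraction in $\|\cdot\|_\gamma$, and the second bracket is evaluated at the fixed point $\mf v_{p,2}\in X_{2\gamma}$ where the $e^{-2\gamma s}$ decay of $\boldsymbol{\xi}$ is legitimate. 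Stating the splitting explicitly, rather than invoking ``the uniform contraction principle'' as a black box, would make it clear that the needed estimate lives in the intersection of the two spaces and that no circularity occurs.
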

\section*{Acknowledgments}
The authors wish to thank Denis Serre for interesting discussions. 
Both authors are members of the GNAMPA group of INDAM and of the PRIN National
Project ``Hyperbolic Systems of Conservation Laws and Fluid
Dynamics: Analysis and Applications''.
\bibliographystyle{plain}
\bibliography{boundary}
\end{document}